\documentclass[a4paper]{amsart}
\usepackage{graphicx}
\usepackage{amsmath}
\usepackage{amssymb}
\usepackage{amsfonts}
\usepackage{amsthm}
\usepackage{eucal}
\usepackage{tikz}
\usetikzlibrary{cd}

\theoremstyle{plain}
\newtheorem{theorem}{Theorem}[section]

\newtheorem*{theorem*}{Theorem}
\newtheorem{lemma}[theorem]{Lemma}
\newtheorem{proposition}[theorem]{Proposition}

\newtheorem{corollary}[theorem]{Corollary}

\newtheorem*{proposition*}{Proposition}
\newtheorem*{corollary*}{Corollary}

\theoremstyle{definition}
\newtheorem{definition}[theorem]{Definition}

\newtheorem{remark}[theorem]{Remark}

\makeatletter
\@addtoreset{theorem}{section}
\@addtoreset{subsection}{section}
\makeatother

\setlength{\parindent}{0.0in}
\setlength{\parskip}{0.05in}

\begin{document}

\title{Lie algebras and $v_n$-periodic spaces}

\author[Gijs Heuts]{Gijs Heuts}
\address{Utrecht University, Department of Mathematical Sciences, Budapestlaan 6, 3584CD Utrecht, the Netherlands}
\email{g.s.k.s.heuts@uu.nl}

\date{}

\begin{abstract}
We consider a homotopy theory obtained from that of pointed spaces by inverting the maps inducing isomorphisms in $v_n$-periodic homotopy groups. The case $n=0$ corresponds to rational homotopy theory. In analogy with Quillen's results in the rational case, we prove that this $v_n$-periodic homotopy theory is equivalent to the homotopy theory of Lie algebras in $T(n)$-local spectra. We also compare it to the homotopy theory of commutative coalgebras in $T(n)$-local spectra, where it turns out there is only an equivalence up to a certain convergence issue of the Goodwillie tower of the identity.
\end{abstract}

\maketitle

\tableofcontents

\section{Introduction}

Quillen \cite{rationalhomotopy} proves a remarkable result on the global structure of the homotopy theory of rational spaces, showing that it is in a sense completely algebraic. More precisely, he shows that the homotopy theory of simply-connected rational spaces can be modelled by the homotopy theory of differential graded rational Lie algebras or that of cocommutative differential graded rational coalgebras. This paves the way for the development of very explicit calculational methods for dealing with such spaces, e.g. Sullivan's theory of minimal models \cite{sullivan}, which are not available when dealing with general spaces.

Rationalization is the first (or rather, zeroth) in a hierarchy of localizations of homotopy theory. The rational homotopy groups of a pointed space $X$ arise when considering homotopy classes of maps from spheres $S^k \rightarrow X$ and then inverting the action of the multiplication maps $p\colon S^k \rightarrow S^k$ for all primes $p$. Adams \cite{adams} showed that if one takes the cofiber $S^k/p$ then (for $k$ large enough) there is a self-map $v_1\colon \Sigma^d S^k/p \rightarrow S^k/p$ which induces an isomorphism in $K$-theory. One can now consider homotopy classes of maps $S^k/p \rightarrow X$ and invert the action of the self-map $v_1$ to obtain the (mod $p$) \emph{$v_1$-periodic homotopy groups} of $X$. A \emph{$v_1$-periodic equivalence} of spaces is a map inducing isomorphisms in these $v_1$-periodic homotopy groups. Since the periodicity results of Hopkins and Smith \cite{hopkinssmith} it is known that this pattern continues indefinitely: for every $n \geq 0$ there exist suitable finite type $n$ spaces with $v_n$ self-maps and one can consider the associated notions of $v_n$-periodic homotopy groups and $v_n$-periodic equivalences. Their results, together with the nilpotence theorem of \cite{devinatzhopkinssmith}, have proved to be very powerful organizing principles in stable homotopy theory.

In this paper we study a homotopy theory $\mathcal{S}_{v_n}$ which is essentially obtained from that of pointed spaces by inverting the $v_n$-periodic equivalences. We write $T(n)$ for the spectrum obtained as the telescope of a $v_n$ self-map on a finite type $n$ spectrum. The associated Bousfield localization of stable homotopy theory is independent of choices. The main result of this paper can be stated as follows:

\emph{The homotopy theory $\mathcal{S}_{v_n}$ is equivalent to the homotopy theory of Lie algebras in the category of $T(n)$-local spectra.}

We will make this statement precise below (Theorem \ref{thm:MnfLiealg}) and outline related results and consequences; in particular we also include a version applicable to $K(n)$-local homotopy theory. Our results generalize Quillen's rational homotopy theory to the cases $n > 0$. We also compare $\mathcal{S}_{v_n}$ to the homotopy theory of cocommutative coalgebras in $T(n)$-local spectra. It turns out these theories are not quite the same, but only equivalent `up to Goodwillie convergence'. We make this precise in Theorem \ref{thm:GoodwillietowerMnf}. This second comparison is closely related to recent work of Behrens and Rezk \cite{behrensrezk}.

\subsection*{Acknowledgments} The results of Section \ref{sec:MnfGoodwillie} grew out of an attempt to understand and contextualize the results of Behrens and Rezk \cite{behrensrezk, behrensrezk2} on the relation between the Bousfield--Kuhn functor and topological Andr\'{e}-Quillen homology. I have benefitted much from reading their work, as well as from several inspiring talks by and conversations with Mark Behrens. Theorem \ref{thm:MnfLiealg} (the comparison with Lie algebras) offers a different (and sharper) perspective on $v_n$-periodic homotopy theory, which builds on joint work with Rosona Eldred, Akhil Mathew, and Lennart Meier \cite{ehmm} carried out at the Hausdorff Research Institute for Mathematics. I wish to thank my collaborators for an inspiring semester and the Institute for its hospitality and excellent working conditions. Moreover, I thank Greg Arone and Lukas Brantner for useful conversations relating to this paper. A large intellectual debt is owed to Bousfield and Kuhn, whose work is indispensable. I thank Haynes Miller for comments on an earlier version of this paper. Finally, I thank Jacob Lurie for many useful suggestions, among which a slick proof of the crucial Proposition \ref{prop:siftedcolimitscoanalytic}. While this paper was being written, Mike Hopkins and Jacob Lurie ran a seminar at Harvard on (amongst other things) the results presented here; the reader might find their excellent notes \cite{thursday} to be a useful resource.

\section{Main results}
\label{sec:mainresults}

Throughout this paper we will use \emph{$\infty$-categories}, or \emph{quasicategories}, as our preferred formalism for higher category theory. There are several instances in this paper where this turns out to be convenient, for example in Theorem \ref{thm:Phimonadic}, in applying a theorem of Lurie on bar-cobar duality in Section \ref{subsec:PhiThetamonad}, and when applying the formalism of Goodwillie towers of $\infty$-categories in Section \ref{sec:MnfGoodwillie}. We will assume basic familiarity with the theory of $\infty$-categories. The works of Joyal \cite{joyalpaper, joyal} and Lurie \cite{htt} are the standard references. All of the spaces and spectra we consider will (implicitly) be localized at a fixed prime $p$. In other words, we consider their Bousfield localization at the homology theory determined by $\mathbb{S}_{(p)}$, the $p$-local sphere spectrum. We will write $\mathcal{S}_*$ and $\mathrm{Sp}$ for the $\infty$-categories of $p$-local pointed spaces and $p$-local spectra, respectively.

For an integer $n \geq 0$, a finite pointed space $V$ is \emph{of type $n$} if $K(m)_* V = 0$ for $m < n$ and $K(n)_* V \neq 0$. Here $K(m)_* V$ denotes the reduced $m$th Morava $K$-theory of $V$. The periodicity results of Hopkins and Smith \cite{hopkinssmith} imply that any pointed type $n$ space $V$ (after sufficiently many suspensions) admits a \emph{$v_n$ self-map}, i.e., a map $v\colon \Sigma^d V \rightarrow V$ so that
\begin{equation*}
K(m)_* v \quad \text{is} \quad \begin{cases}
\text{an isomorphism} & \text{if } m = n, \\
\text{nilpotent} & \text{if } m \neq n.
\end{cases}
\end{equation*}
For $X$ a pointed space one can define its \emph{$v$-periodic homotopy groups} with coefficients in $V$ by taking the homotopy groups of the mapping space $\mathrm{Map}_*(V, X)$ and inverting the action of $v$ by precomposition. It is convenient to formulate this definition as follows. One can define a spectrum $\Phi_v X$ by setting
\begin{equation*}
(\Phi_v X)_0 = \mathrm{Map}_*(V, X), \, (\Phi_v X)_d = \mathrm{Map}_*(V, X), \, \ldots \, , \, (\Phi_v X)_{kd} = \mathrm{Map}_*(V, X), \, \ldots
\end{equation*}
and using the maps
\begin{equation*}
(\Phi_v X)_{kd} = \mathrm{Map}_*(V, X) \xrightarrow{v^*} \mathrm{Map}_*(\Sigma^d V, X) = \Omega^d (\Phi_v X)_{(k+1)d} 
\end{equation*}
as structure maps. This defines the \emph{telescopic functor}
\begin{equation*}
\Phi_v\colon \mathcal{S}_* \rightarrow \mathrm{Sp}
\end{equation*}
associated to $v$ (see \cite{kuhntelescopic}). The homotopy groups $\pi_* \Phi_v(X)$ are then precisely the $v$-periodic homotopy groups of $X$ described above. In fact, the functor $\Phi_v$ takes values in the $\infty$-category $\mathrm{Sp}_{T(n)}$ of \emph{$T(n)$-local} spectra (see Theorem 4.2 of \cite{kuhntelescopic}). Here $T(n)$ denotes the telescope of a $v_n$ self-map on a finite type $n$ spectrum. Although $T(n)$ itself depends on the choice of spectrum, the corresponding Bousfield localization does not.

The \emph{Bousfield--Kuhn functor} conveniently packages the various telescopic functors $\Phi_v$ into one. It is a functor
\begin{equation*}
\Phi\colon \mathcal{S}_* \rightarrow \mathrm{Sp}_{T(n)}
\end{equation*}
enjoying the following properties (see Theorem 1.1 and Lemma 8.6 of \cite{kuhntelescopic}):
\begin{itemize}
\item[(i)] For $V$ a finite type $n$ space with $v_n$ self-map $v$, there is a natural equivalence
\begin{equation*}
\mathbf{D}V \otimes \Phi(X) \simeq \Phi_v(X).
\end{equation*}
Here $\mathbf{D}V$ denotes the Spanier--Whitehead dual of $V$. Of course one could replace $\mathbf{D}V \otimes \Phi(X)$ with the function spectrum $F(V,\Phi(X))$.
\item[(ii)] There is a natural equivalence of functors
\begin{equation*}
\Phi \Omega^\infty \simeq L_{T(n)}.
\end{equation*}
\item[(iii)] The functor $\Phi$ preserves finite limits.
\end{itemize}
In fact, Kuhn \cite{kuhntelescopic} shows that $\Phi$ is essentially determined by property (i).

\begin{definition}
A map of pointed spaces $f\colon X \rightarrow Y$ is a \emph{$v_n$-periodic equivalence} if $\Phi(f)$ is an equivalence of spectra.
\end{definition}

A map of $T(n)$-local spectra is an equivalence if and only if it is an equivalence after smashing with a finite type $n$ spectrum. Indeed, the latter condition implies it is an equivalence after smashing with \emph{any} finite type $n$ spectrum (by the thick subcategory theorem \cite{hopkinssmith}) and one then uses the fact that up to $T(n)$-equivalence the sphere spectrum can be written as a filtered colimit of finite type $n$ spectra (this trick goes back to Kuhn \cite{kuhninfiniteloop}). It then follows from property (i) above that a map $f$ of pointed spaces is a $v_n$-periodic equivalence if and only if $\Phi_v(X)$ is an equivalence, i.e., if and only if it induces an isomorphism on $v$-periodic homotopy groups. Our first goal will be to describe a homotopy theory obtained from $\mathcal{S}_*$ by inverting the $v_n$-periodic equivalences. The following was essentially proved by Bousfield in \cite{bousfieldtelescopic}. We formulate it here in a form which suits our purposes (and a proof is included in Section \ref{subsec:Mnf}):

\begin{theorem}
\label{thm:Mnf}
For $n \geq 1$ there exists an $\infty$-category $\mathcal{S}_{v_n}$ and a functor $M\colon \mathcal{S}_* \rightarrow \mathcal{S}_{v_n}$ such that for any $\infty$-category $\mathcal{C}$, precomposition by $M$ gives an equivalence 
\begin{equation*}
\mathrm{Fun}(\mathcal{S}_{v_n}, \mathcal{C}) \xrightarrow{M^*} \mathrm{Fun}_{v_n}(\mathcal{S}_*, \mathcal{C}).
\end{equation*} 
Here $\mathrm{Fun}_{v_n}$ denotes the full subcategory of functors which send $v_n$-periodic equivalences in $\mathcal{S}_*$ to equivalences in $\mathcal{C}$.
\end{theorem}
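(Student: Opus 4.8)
The asserted universal property says precisely that $M\colon\mathcal{S}_*\to\mathcal{M}_n^f$ should exhibit $\mathcal{M}_n^f$ as the localization of $\mathcal{S}_*$ at the class $W$ of $v_n$-periodic equivalences, in the sense of localizations of $\infty$-categories. Such a localization exists abstractly for any $\infty$-category and any class of morphisms, so mere existence is not the point; the content of the statement — and the feature used repeatedly in later sections — is that $\mathcal{M}_n^f$ may be taken to be \emph{presentable}, with $M$ a left adjoint whose fully faithful right adjoint has essential image the ``$v_n$-periodically local'' spaces. The plan is to realize $\mathcal{M}_n^f$ as an accessible reflective localization of $\mathcal{S}_*$ and then to read off the stated universal property.

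\emph{Reduction to a Bousfield localization.} I would first reduce to the statement, essentially due to Bousfield \cite{bousfieldtelescopic}, that there is a \emph{set} $W_0$ of $v_n$-periodic equivalences whose associated local equivalences (the maps inducing equivalences on mapping spaces into every $W_0$-local object) are exactly the $v_n$-periodic equivalences; equivalently, that the left Bousfield localization of the standard (combinatorial, left proper) model structure on pointed simplicial sets at $W_0$ — which exists by Smith's theorem because $W_0$ is a set — has the $v_n$-periodic equivalences for its weak equivalences. Granting this, the underlying $\infty$-category $\mathcal{M}_n^f$ of that model category is presentable, $M$ is the localization functor, and the universal property of a reflective localization (cf.\ \cite[\S5.2.7]{htt}) identifies $\mathrm{Fun}(\mathcal{M}_n^f,\mathcal{C})$, for every $\infty$-category $\mathcal{C}$, with the full subcategory of $\mathrm{Fun}(\mathcal{S}_*,\mathcal{C})$ on the functors inverting the class that $M$ inverts — which by the defining property of $W_0$ is exactly $W$.

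\emph{The cardinality argument.} The heart of the matter is the construction of such a set $W_0$. The key structural input is that the telescopic functors are finitary: for $V$ a finite type $n$ space with $v_n$ self-map $v$, the spectrum $\Phi_v(X)$ is assembled from the functor $\mathrm{Map}_*(V,-)\colon\mathcal{S}_*\to\mathcal{S}_*$, the loop functors $\Omega^d=\mathrm{Map}_*(S^d,-)$, and a sequential colimit, all of which preserve $\kappa$-filtered colimits for $\kappa$ a sufficiently large regular cardinal (using that $V$ and $S^d$ are compact objects of $\mathcal{S}_*$ and that filtered colimits of spectra are computed levelwise). Hence $\Phi_v$ preserves $\kappa$-filtered colimits, so ``being a $v_n$-periodic equivalence'' is a $\kappa$-accessible condition on morphisms of the presentable $\infty$-category $\mathcal{S}_*$. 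A Bousfield--Smith cardinality argument then shows: after possibly enlarging $\kappa$, every $v_n$-periodic equivalence is a $\kappa$-filtered colimit in $\mathrm{Fun}(\Delta^1,\mathcal{S}_*)$ of $v_n$-periodic equivalences between $\kappa$-compact spaces. Taking $W_0$ to be a set of representatives of the latter, one checks — using this presentation together with the closure of the $v_n$-periodic equivalences under $2$-out-of-$3$, cobase change, retracts and transfinite composition, which is part of Bousfield's construction of the telescopic model structure — that the $W_0$-local equivalences are precisely the $v_n$-periodic equivalences.

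\textbf{Main obstacle.} Everything past the production of the set $W_0$ is formal; the genuine difficulty — and the reason the theorem is attributed to Bousfield — is the cardinality bound, i.e.\ showing that the a priori proper class of $v_n$-periodic equivalences is the strong saturation of a set of maps. That the telescopic functors are finitary, which rests on the compactness of finite complexes, is what makes this work and is the one point where the specific nature of $v_n$-periodic homotopy theory (rather than localization at an arbitrary class of maps) enters.
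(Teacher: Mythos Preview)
Your approach has a genuine gap: you assume the class $W$ of $v_n$-periodic equivalences is strongly saturated, i.e., arises as the local equivalences of a left Bousfield localization of $\mathcal{S}_*$. It does not. The map $S^0 \to *$ lies in $W$ (for connected $V$ every pointed map $V \to S^0$ is constant, so $\Phi_v(S^0)=0$), hence any $W$-local object $Z$ would have the map $* \simeq \mathrm{Map}_*(*,Z) \to \mathrm{Map}_*(S^0,Z) \simeq Z$ an equivalence; the only $W$-local object is a point and the reflective localization at $W$ is trivial, whereas $W$ is certainly not the class of all maps. More conceptually, $\Phi$ preserves finite \emph{limits}, so $W$ is closed under base change but not cobase change; your assertion that closure under cobase change ``is part of Bousfield's construction of the telescopic model structure'' is simply false --- Bousfield constructs no such model structure on $\mathcal{S}_*$, only the nullification model structures underlying $L_n^f$.

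The paper proceeds quite differently. It realizes $\mathcal{M}_n^f$ as the full subcategory of $\mathcal{S}_*$ on spaces $(M_n^f X)\langle d_{n+1}\rangle$, with $M_n^f$ the fiber of $L_n^f \to L_{n-1}^f$. The functor $M$ is neither left nor right adjoint to the inclusion $i$; rather one proves $M i \simeq \mathrm{id}$ and exhibits a natural \emph{zigzag} of $v_n$-periodic equivalences
\[
X \longleftarrow X\langle d_{n+1}\rangle \longrightarrow L_n^f X\langle d_{n+1}\rangle \longleftarrow iM(X)
\]
connecting any $X$ to $iM(X)$. The universal property follows formally: $i^*M^* \simeq \mathrm{id}$ since $Mi \simeq \mathrm{id}$, and for any $F$ inverting $W$ the zigzag yields $F \simeq FiM = M^*i^*F$. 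Presentability of $\mathcal{M}_n^f$ --- which you rightly identify as the real content --- comes not from a reflective localization of $\mathcal{S}_*$ but from exhibiting $\mathcal{M}_n^f$ as a \emph{colocalization} of the reflective localization $\mathcal{L}_n^f$, generated under colimits by the single compact object $L_n^f V$ for $V$ a finite type $n$ suspension.
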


Of course one can always formally invert a class of morphisms in a category (or an $\infty$-category), but generally only at the cost of passing to a larger universe. The content of the theorem is therefore that $\mathcal{S}_{v_n}$ is still locally small. Note that the universal property described in the theorem implies that $\mathcal{S}_{v_n}$ and the functor $M$ are unique up to equivalence. In fact, we will construct $\mathcal{S}_{v_n}$ as a full subcategory of $\mathcal{S}_*$. We write $i\colon \mathcal{S}_{v_n} \rightarrow \mathcal{S}_*$ for the inclusion. Viewed in this way, the functor $M$ should be thought of as a projection to this subcategory. The following theorem summarizes the properties we need. Its proof will be given in Section \ref{subsec:Mnf} and again leans very heavily on the work of Bousfield \cite{bousfieldlocalization,bousfieldtelescopic}.

\begin{theorem}
\label{thm:Mnf2}
There is a natural equivalence $M \circ i \simeq \mathrm{id}_{\mathcal{S}_{v_n}}$. Furthermore, $\mathcal{S}_{v_n}$ enjoys the following properties:
\begin{itemize}
\item[(i)] A map $\varphi$ of pointed spaces is a $v_n$-periodic equivalence if and only if $M(\varphi)$ is an equivalence.
\item[(ii)] The Bousfield--Kuhn functor factors through $M$; we still denote the resulting functor by
\begin{equation*}
\Phi\colon \mathcal{S}_{v_n} \rightarrow \mathrm{Sp}_{T(n)}.
\end{equation*}
This functor admits a left adjoint $\Theta$. In particular, this $\Phi$ preserves all limits (rather than just finite limits).
\item[(iii)] The functor $M\colon \mathcal{S}_* \rightarrow \mathcal{S}_{v_n}$ preserves finite limits and filtered colimits.
\item[(iv)] The $\infty$-category $\mathcal{S}_{v_n}$ is compactly generated.
\end{itemize}
\end{theorem}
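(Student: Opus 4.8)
The plan is to take Bousfield's explicit construction of $\mathcal{M}_n^f$ and deduce essentially everything formally from three inputs: the universal property of Theorem \ref{thm:Mnf}, the elementary properties of the Bousfield--Kuhn functor $\Phi$ listed in the introduction, and Bousfield's structural analysis of the $v_n$-periodic localization in \cite{bousfieldtelescopic}. Concretely, following \cite{bousfieldlocalization,bousfieldtelescopic} one fixes a set of maps $f$ between \emph{compact} pointed spaces whose $f$-localization $L_f\colon \mathcal{S}_*\to\mathcal{S}_*$ has the property that its class of $L_f$-equivalences is exactly the class of $v_n$-periodic equivalences; one then takes $\mathcal{M}_n^f\subseteq\mathcal{S}_*$ to be the full subcategory of $f$-local objects, $i$ the inclusion, and $M=L_f$ the reflection. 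Since this is a reflective localization, the unit is an equivalence on local objects, giving $M\circ i\simeq\mathrm{id}$; moreover $M$ is a left adjoint and hence preserves all colimits, limits in $\mathcal{M}_n^f$ are computed in $\mathcal{S}_*$, and --- because the test maps $f$ can be chosen between compact objects --- $\mathcal{M}_n^f$ is closed under filtered colimits in $\mathcal{S}_*$, so $i$ preserves filtered colimits. These bookkeeping facts will be used throughout.

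Next I would set up the factorization of $\Phi$ and prove (i). By definition $\Phi\colon\mathcal{S}_*\to\mathrm{Sp}_{T(n)}$ carries $v_n$-periodic equivalences to equivalences of spectra, so the universal property of Theorem \ref{thm:Mnf} yields the factorization through a functor $\Phi\colon\mathcal{M}_n^f\to\mathrm{Sp}_{T(n)}$; equivalently one may simply define this functor to be the composite $\Phi\circ i$ and check that precomposing with $M$ recovers the original $\Phi$. For (i): if $\varphi$ is a $v_n$-periodic equivalence then $M(\varphi)$ is an equivalence by construction of $\mathcal{M}_n^f$; conversely, if $M(\varphi)$ is an equivalence then $\Phi(\varphi)\simeq\Phi(M(\varphi))$ is an equivalence, so $\varphi$ is a $v_n$-periodic equivalence by definition. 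In particular the unit $X\to iMX$, which becomes an equivalence after applying $M$ by the triangle identity, is itself a $v_n$-periodic equivalence, so $\Phi$ inverts it --- a fact I will use below.

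Now for (iii) and (iv). That $M$ preserves filtered colimits is immediate: $M$ preserves all colimits, and filtered colimits in $\mathcal{M}_n^f$ agree with those in $\mathcal{S}_*$. That $M$ preserves finite limits is the one slightly delicate point among the formal ones: given a finite diagram $D\colon J\to\mathcal{S}_*$, the comparison map $\lim_J D\to\lim_J(iM\circ D)$ becomes an equivalence after applying $\Phi$, since $\Phi$ preserves finite limits and inverts each unit $D(j)\to iMD(j)$; hence by (i) it becomes an equivalence after $M$, and since $\lim_J(iM\circ D)$ is a limit of local objects it is already local, so $M(\lim_J D)\simeq\lim_J^{\mathcal{M}_n^f}(M\circ D)$. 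For (iv): $\mathcal{M}_n^f$ is an accessible localization of the compactly generated $\mathcal{S}_*$, and since $i$ preserves filtered colimits the reflection $M(c)$ of a compact object $c\in\mathcal{S}_*$ is again compact (because $\mathrm{Map}_{\mathcal{M}_n^f}(Mc,-)\simeq\mathrm{Map}_{\mathcal{S}_*}(c,i(-))$ then commutes with filtered colimits); applying $M$ to a presentation of an arbitrary object of $\mathcal{S}_*$ as a colimit of compact objects and using that $M$ preserves colimits and is essentially surjective shows these $M(c)$ generate $\mathcal{M}_n^f$ under colimits.

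Finally, part (ii) --- producing the left adjoint $\Theta$ --- is where the real work lies. Both $\mathcal{M}_n^f$ and $\mathrm{Sp}_{T(n)}$ are presentable, and $\Phi\colon\mathcal{M}_n^f\to\mathrm{Sp}_{T(n)}$ is accessible because it preserves filtered colimits (the telescopic functors $\Phi_v$ are built from $\mathrm{Map}_*(V,-)$ with $V$ compact, together with $\Sigma^\infty$ and a filtered colimit, all of which preserve filtered colimits); by the adjoint functor theorem it then suffices to show that $\Phi$ preserves all small limits. Finite limits are fine, as these are computed in $\mathcal{S}_*$ where $\Phi$ is known to preserve them. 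The subtle point is infinite products, which $\Phi$ on $\mathcal{S}_*$ does \emph{not} preserve: the key input, built into Bousfield's choice of $f$, is that on a local object $X$ the telescope defining $\Phi_v X$ has already stabilized --- i.e., $v^\ast\colon\mathrm{Map}_*(V,X)\to\Omega^d\mathrm{Map}_*(V,X)$ is an equivalence, so that $\mathrm{Map}_*(V,X)\simeq\Omega^\infty\Phi_v(X)$ with no colimit needed. Since $\mathrm{Map}_*(V,-)$ preserves all limits, and products of local objects remain local and are computed in $\mathcal{S}_*$, it follows that $\Phi_v$, hence $\Phi$ (via $\mathbf{D}V\wedge\Phi\simeq\Phi_v$), preserves infinite products of objects of $\mathcal{M}_n^f$. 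This produces $\Theta$, and then $\Phi$ preserves all limits simply because it has become a right adjoint. Thus the only genuinely non-formal step, and the one I expect to be the main obstacle, is extracting from Bousfield's work that the telescope computing $\Phi_v$ has already converged on $v_n$-periodically local spaces; everything else is formal manipulation of the reflective localization together with the basic properties of $\Phi$.
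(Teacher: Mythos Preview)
Your proposal rests on an unjustified premise: you assert that $\mathcal{M}_n^f$ can be realized as a \emph{reflective} localization $L_f$ of $\mathcal{S}_*$, with the $L_f$-equivalences coinciding exactly with the $v_n$-periodic equivalences. This is not the construction in the paper (which follows Bousfield), and you give no argument that such a reflective localization exists. For it to exist the $v_n$-periodic equivalences would have to form a strongly saturated class --- in particular be closed under cobase change --- which is far from obvious since $\Phi$ preserves finite limits rather than colimits. The paper's $\mathcal{M}_n^f$ is the full subcategory on spaces of the form $(M_n^fX)\langle d_{n+1}\rangle$, and $M$ is \emph{not} adjoint to $i$ on either side: there is no unit $X\to iM(X)$, only a zigzag of $v_n$-periodic equivalences (see the proof of Theorem~\ref{thm:Mnf}), and the equivalence $M\circ i\simeq\mathrm{id}$ (Lemma~\ref{lem:Mnidempotent}) is an honest computation with the fiber sequence $M_n^f\to L_n^f\to L_{n-1}^f$, not a triangle identity. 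Structurally, $\mathcal{M}_n^f$ is a \emph{colocalization} of the reflective localization $\mathcal{L}_n^f$: the inclusion $\iota\colon\mathcal{M}_n^f\hookrightarrow\mathcal{L}_n^f$ admits a right adjoint $r$, and $M=r\circ L_n^f$ is a left adjoint followed by a right adjoint.

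This undercuts most of your downstream steps. You cannot conclude that $M$ preserves all colimits, nor that limits in $\mathcal{M}_n^f$ are computed in $\mathcal{S}_*$; indeed the paper's $\mathcal{M}_n^f$ consists of $d_{n+1}$-connected spaces and is not even closed under $\Omega$ in $\mathcal{S}_*$. The paper establishes (iii) instead via the nontrivial fact that $L_n^f$ is left exact on $d_{n+1}$-connected spaces (Theorem~\ref{thm:Lnfinitelimits}), combined with $r$ being a right adjoint; and (iv) by identifying $\mathcal{M}_n^f$ with the subcategory $\mathcal{V}_n\subseteq\mathcal{L}_n^f$ generated under colimits by $L_n^f$-localizations of finite type $n$ suspension spaces (Proposition~\ref{prop:Mncompactlygen}). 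Your observation that the telescope defining $\Phi_v$ has already stabilized on suitably local spaces is correct and is morally how one sees each $\Phi_v$ is a right adjoint, but the paper packages (ii) differently: it cites Bousfield's result that each $\Phi_v$ is a simplicial right Quillen functor and then uses that $\Phi$ is an inverse limit of right adjoints, hence limit-preserving, so the adjoint functor theorem produces $\Theta$.
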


\begin{remark}
When comparing with Bousfield's paper \cite{bousfieldtelescopic} the reader will note a change of notation. Bousfield writes $\mathcal{UN}_n^f$ for a category which is essentially the homotopy category of what we call $\mathcal{S}_{v_n}$ here. 
\end{remark}

\begin{remark}
As already expressed by Theorem \ref{thm:Mnf}, the homotopy theory $\mathcal{S}_{v_n}$ and the functor $M$ are unique up to equivalence. However, the embedding $i\colon \mathcal{S}_{v_n} \rightarrow \mathcal{S}_*$ \emph{does} depend on a choice, namely that of the `connectivity' $d_{n+1}$ featuring in Section \ref{subsec:Lnf}.
\end{remark}

The $\infty$-category $\mathcal{S}_{v_n}$ is related to the $\infty$-category of $T(n)$-local spectra by two different adjunctions (left adjoints on top):
\[
\begin{tikzcd}
\mathrm{Sp}_{T(n)} \arrow[r,"\Theta", shift left] & \mathcal{S}_{v_n} \arrow[r,"\Sigma^\infty_{T(n)}", shift left] \arrow[l, "\Phi", shift left] & \mathrm{Sp}_{T(n)}. \arrow[l, "\Omega^\infty_{T(n)}", shift left] 
\end{tikzcd}
\]
The pair on the left exists by part (ii) of Theorem \ref{thm:Mnf2}. The notation $\Sigma^\infty_{T(n)}$ is short-hand for the composition 
\begin{equation*}
\mathcal{S}_{v_n} \xrightarrow{\quad i \quad} \mathcal{S}_* \xrightarrow{L_{T(n)}\Sigma^\infty} \mathrm{Sp}_{T(n)}.
\end{equation*}
Its right adjoint $\Omega^\infty_{T(n)}$ is the composition $M \circ \Omega^\infty$. We will show (see Proposition \ref{prop:Mnfstabilization} and Remark \ref{rmk:MnfLTn}) that this functor exhibits $\mathrm{Sp}_{T(n)}$ as the stabilization of $\mathcal{S}_{v_n}$, i.e., it is the terminal functor from a stable $\infty$-category to $\mathcal{S}_{v_n}$ that preserves limits. 

The two adjunctions above offer complementary perspectives on the $\infty$-category $\mathcal{S}_{v_n}$. In joint work with Eldred, Mathew, and Meier \cite{ehmm} we prove that the adjoint pair $(\Theta, \Phi)$ is \emph{monadic}, meaning that $\Phi$ gives an equivalence between $\mathcal{S}_{v_n}$ and the $\infty$-category of algebras for the monad $\Phi\Theta$ on $\mathrm{Sp}_{T(n)}$. Here we go further and explicitly identify this monad as the \emph{free Lie algebra monad}. As a functor, $\Phi\Theta$ admits the following description (see Theorem \ref{thm:PhiThetafunctor}):

\begin{equation*}
\Phi\Theta(X) \simeq L_{T(n) }\bigoplus_{k \geq 1} (\partial_k \mathrm{id} \otimes X^{\otimes k})_{h\Sigma_k}.
\end{equation*}

Here $\partial_k \mathrm{id}$ is the \emph{$k$th Goodwillie derivative of the identity}. It is a finite spectrum with $\Sigma_k$-action which can be described explicitly as the Spanier--Whitehead dual of a certain partition complex \cite{aronemahowald}. Informally speaking, the monad structure on $\Phi\Theta$ corresponds to the fact that the spectra $\partial_k \mathrm{id}$ assemble into an operad, as demonstrated by Ching \cite{ching}. This operad can be thought of as a version of the Lie operad in the stable homotopy category. We will adapt Ching's work to the setting in which we work here and give a precise definition of Lie algebras in Section \ref{sec:Liealgebras}. Subsequently we will prove the following:

\begin{theorem}
\label{thm:MnfLiealg}
For $n \geq 1$ there is an equivalence between the $\infty$-category $\mathcal{S}_{v_n}$ and the $\infty$-category $\mathrm{Lie}(\mathrm{Sp}_{T(n)})$ of Lie algebras in $T(n)$-local spectra. This equivalence has the property that the resulting composition
\begin{equation*}
\mathcal{S}_{v_n} \simeq \mathrm{Lie}(\mathrm{Sp}_{T(n)}) \xrightarrow{\mathrm{forget}} \mathrm{Sp}_{T(n)}
\end{equation*}
is equivalent to the Bousfield--Kuhn functor $\Phi$.
\end{theorem}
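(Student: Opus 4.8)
The plan is to combine the monadicity of the Bousfield--Kuhn functor established in \cite{ehmm} with the explicit formula for the functor $\Phi\Theta$, thereby reducing the theorem to an identification of \emph{monads}, and then to settle that identification using Ching's operad structure on the Goodwillie derivatives of the identity together with bar--cobar duality. First, recall from the joint work with Eldred, Mathew and Meier \cite{ehmm}, recorded here as Theorem \ref{thm:Phimonadic}, that $(\Theta,\Phi)$ is a monadic adjunction. The Barr--Beck--Lurie hypotheses hold: $\Phi$ is conservative, since a map in $\mathcal{M}_n^f$ is an equivalence if and only if its image under $\Phi$ is (by part (i) of Theorem \ref{thm:Mnf2}, transported along $M\circ i\simeq\mathrm{id}$), and $\Phi$ preserves colimits of $\Phi$-split simplicial objects. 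Hence $\Phi$ exhibits an equivalence $\mathcal{M}_n^f\simeq\mathrm{Alg}_{\Phi\Theta}(\mathrm{Sp}_{T(n)})$ carrying $\Phi$ to the forgetful functor, and the theorem reduces to showing that the monad $\Phi\Theta$, \emph{together with its multiplication}, is equivalent to the free algebra monad of the $T(n)$-local Lie operad defined in Section \ref{sec:Liealgebras}.

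Second, Theorem \ref{thm:PhiThetafunctor} identifies the underlying endofunctor of $\Phi\Theta$ with
\[
X \longmapsto L_{T(n)}\bigoplus_{k\geq 1}\bigl(\partial_k\mathrm{id}\otimes X^{\otimes k}\bigr)_{h\Sigma_k},
\]
which is exactly the underlying functor of the free algebra monad on $\mathrm{Sp}_{T(n)}$ for the symmetric sequence $\{\partial_k\mathrm{id}\}$. Thus $\Phi\Theta$ is an analytic monad whose $k$-th Goodwillie layer is $(\partial_k\mathrm{id}\otimes(-)^{\otimes k})_{h\Sigma_k}$; what remains is to match the monad structure on this common underlying functor with the operadic one, equivalently to identify the operad structure that $\Phi\Theta$ induces on $\{\partial_k\mathrm{id}\}$ with the one Ching constructs.

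This last step is the crux, and I would carry it out using Lurie's bar--cobar duality, as anticipated in Section \ref{subsec:PhiThetamonad}. The two adjunctions $\mathrm{Sp}_{T(n)}\rightleftarrows\mathcal{M}_n^f\rightleftarrows\mathrm{Sp}_{T(n)}$ satisfy $\Phi\circ\Omega^\infty_{T(n)}\simeq\mathrm{id}$ (property (ii) of the Bousfield--Kuhn functor, since $\Omega^\infty_{T(n)}=M\circ\Omega^\infty$ and $\Phi M$ recovers the original Bousfield--Kuhn functor, so $\Phi\Omega^\infty_{T(n)}\simeq L_{T(n)}$), hence also $\Sigma^\infty_{T(n)}\circ\Theta\simeq\mathrm{id}$, being the left adjoint of the former; so these adjunctions form a Koszul-dual pair linking the monad $\Phi\Theta$ with the comonad $\Sigma^\infty_{T(n)}\Omega^\infty_{T(n)}$ on $\mathrm{Sp}_{T(n)}$ through the intermediate category $\mathcal{M}_n^f$. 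The comonad $\Sigma^\infty_{T(n)}\Omega^\infty_{T(n)}$ is coanalytic, and using the identification of the Goodwillie derivatives of $\Sigma^\infty\Omega^\infty$ with those of $\mathrm{id}_{\mathcal{S}_*}$ together with Kuhn's telescopic splitting \cite{kuhntelescopic}, its comultiplication encodes the derivatives of the identity as a cooperad. Lurie's bar--cobar duality then exhibits $\Phi\Theta$ as the cobar construction on this cooperad; and by Ching's theorem \cite{ching}, adapted to the $T(n)$-local setting in Section \ref{sec:Liealgebras}, that cobar construction is precisely the free algebra monad of the $T(n)$-local Lie operad. Since by the second step both monads have the same underlying functor and the comparison transformation induces the identity there, it is an equivalence of monads.

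Finally, passing to algebras yields $\mathcal{M}_n^f\simeq\mathrm{Alg}_{\Phi\Theta}(\mathrm{Sp}_{T(n)})\simeq\mathrm{Lie}(\mathrm{Sp}_{T(n)})$, and since the monad equivalence lies over $\mathrm{Sp}_{T(n)}$ it intertwines the forgetful functors; combined with the first step this shows the forgetful functor $\mathrm{Lie}(\mathrm{Sp}_{T(n)})\to\mathrm{Sp}_{T(n)}$ corresponds to $\Phi$. The main obstacle is the bar--cobar step. First, the $T(n)$-localization must be carried through the (infinite) cobar construction, which does not commute with products; the whole argument therefore has to be run with a telescopically localized operad, and one must verify that Ching's bar and cobar constructions behave well after this localization. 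Second, and more essentially, one must genuinely match the operadic composition coming from the monad multiplication of $\Phi\Theta$ with the chain-rule composition on $\partial_*\mathrm{id}$, rather than merely matching the symmetric sequences; supplying this coherent comparison is exactly the role of bar--cobar duality together with Ching's operad, and the argument runs closely parallel to --- but is sharper than --- the Behrens--Rezk comparison \cite{behrensrezk} of $\Phi$ with topological Andr\'{e}--Quillen homology.
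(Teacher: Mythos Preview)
Your overall strategy is exactly the paper's: monadicity (Theorem~\ref{thm:Phimonadic}) reduces the theorem to identifying the monad $\Phi\Theta$ with the Lie operad, and the latter is done via bar--cobar duality with the comonad $\Sigma^\infty_{T(n)}\Omega^\infty_{T(n)}$. But two points need correction.

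First, a factual slip: the derivatives of $\Sigma^\infty\Omega^\infty$ are \emph{not} those of $\mathrm{id}_{\mathcal{S}_*}$. The former are all the sphere spectrum (the commutative cooperad), the latter are the partition-complex spectra $\partial_k\mathrm{id}$. What is true, and what you need, is that $\Sigma^\infty_{T(n)}\Omega^\infty_{T(n)}$ is coanalytic with coefficients the sphere (Theorem~\ref{thm:SigmaOmegacoanalytic}); this is the $T(n)$-local commutative cooperad by Definition~\ref{def:commcooperad}. Relatedly, your step ``by Ching's theorem the cobar construction is the Lie operad'' is vacuous in the paper's setup: Definition~\ref{def:Lieoperad} \emph{takes} the $T(n)$-local Lie operad to be $\mathrm{Cobar}(\Sigma^\infty_{T(n)}\Omega^\infty_{T(n)})$. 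There is no external Ching operad to match, and the localization worries you raise about porting Ching's bar/cobar across $L_{T(n)}$ disappear.

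Second, and more substantively, you have not actually argued that the comparison map $\gamma:\Phi\Theta\to\mathrm{Cobar}(\Sigma^\infty_{T(n)}\Omega^\infty_{T(n)})$ is an equivalence. Lurie's bar--cobar formalism produces the \emph{map} $\gamma$ from the twisted-arrow algebra $\Gamma:\Phi\Theta\to\Sigma^\infty_{T(n)}\Omega^\infty_{T(n)}$ (and the paper makes this precise via the endomorphism-object argument of Lemma~\ref{lem:PhiThetaendomorphism}); it does not automatically make $\gamma$ an equivalence. Your justification, that both sides have the same underlying functor by Theorem~\ref{thm:PhiThetafunctor}, only shows their symmetric sequences are abstractly equivalent; it does not show that $\gamma$ \emph{induces} that equivalence. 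The paper closes this gap in the proof of Theorem~\ref{thm:comparisonoperads}: one unwinds $\gamma$ as $\Phi\eta\Theta$ into the totalization $\mathrm{Tot}\bigl(\Phi(\Omega^\infty_{T(n)}\Sigma^\infty_{T(n)})^{\bullet+1}\Theta\bigr)$, applies $P_k$, and then invokes the Arone--Ching convergence result (Theorem~0.3 of \cite{aroneching}) that
\[
P_k(\mathrm{id}_{\mathcal{M}_n^f}) \;\longrightarrow\; \mathrm{Tot}\,P_k\bigl((\Omega^\infty_{T(n)}\Sigma^\infty_{T(n)})^{\bullet+1}\bigr)
\]
is an equivalence for every $k$. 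This is the genuine technical input you are missing; once you add it, your proof coincides with the paper's.
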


\begin{remark}
\label{rmk:whiteheadbracket}
One of the most basic manifestations of the relation between Lie algebras and the homotopy theory of spaces is the Whitehead bracket on the homotopy groups of a pointed space. In Proposition \ref{prop:whiteheadbracket} we show that for a pointed space $X$, the Lie algebra structures on $\Phi(X)$ produced by Theorem \ref{thm:MnfLiealg} in particular agrees with the Whitehead bracket on the $v_n$-periodic homotopy groups of $X$.
\end{remark}

There is a variant of Theorem \ref{thm:MnfLiealg} for $K(n)$-local homotopy theory (which might or might not be the same as $T(n)$-local homotopy theory, depending on the telescope conjecture). Let us say a map $f$ of pointed spaces is a \emph{$\Phi_{K(n)}$-equivalence} if the map of spectra $\Phi(f)$ is a $K(n)_*$-equivalence. We derive the following in Section \ref{subsec:Knlocal}:

\begin{corollary}
\label{cor:Knlocal}
The localization of $\mathcal{S}_{v_n}$ at the $\Phi_{K(n)}$-equivalences exists. More precisely, there exists a full subcategory $\mathcal{M}_{K(n)} \rightarrow \mathcal{S}_{v_n}$ for which the inclusion admits a left adjoint, satisfying the following two properties: 
\begin{itemize}
\item[(i)] The unit is a $\Phi_{K(n)}$-equivalence.
\item[(ii)] A map in $\mathcal{M}_{K(n)}$ is an equivalence if and only if it is a $\Phi_{K(n)}$-equivalence.
\end{itemize}
Moreover, the $\infty$-category $\mathcal{M}_{K(n)}$ is equivalent to the $\infty$-category $\mathrm{Lie}(\mathrm{Sp}_{K(n)})$ of Lie algebras in $K(n)$-local spectra.
\end{corollary}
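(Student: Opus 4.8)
The plan is to deduce the corollary from Theorem \ref{thm:MnfLiealg}. Under the equivalence $\mathcal{M}_n^f \simeq \mathrm{Lie}(\mathrm{Sp}_{T(n)})$ the functor $\Phi$ corresponds to the forgetful functor, so a $\Phi_{K(n)}$-equivalence in $\mathcal{M}_n^f$ corresponds exactly to a map of Lie algebras in $\mathrm{Sp}_{T(n)}$ whose underlying map of spectra is a $K(n)_*$-equivalence. It therefore suffices to exhibit a reflective localization of $\mathrm{Lie}(\mathrm{Sp}_{T(n)})$ at these maps and to identify it with $\mathrm{Lie}(\mathrm{Sp}_{K(n)})$.

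The starting observation is that every $K(n)_*$-equivalence is a $T(n)_*$-equivalence, so Bousfield localization gives $L_{K(n)} \simeq L_{K(n)} \circ L_{T(n)}$; consequently the localization functor $L_{K(n)} \colon \mathrm{Sp}_{T(n)} \to \mathrm{Sp}_{K(n)}$ is symmetric monoidal for the respective smash products, and its right adjoint, the inclusion $\mathrm{Sp}_{K(n)} \hookrightarrow \mathrm{Sp}_{T(n)}$, is lax symmetric monoidal. Hence $L_{K(n)}$ carries the spectral Lie operad in $\mathrm{Sp}_{T(n)}$ to the one in $\mathrm{Sp}_{K(n)}$ — the latter being, by construction, obtained from the former by $K(n)$-localization — and we obtain an adjunction $L^{\mathrm{Lie}}_{K(n)} \colon \mathrm{Lie}(\mathrm{Sp}_{T(n)}) \rightleftarrows \mathrm{Lie}(\mathrm{Sp}_{K(n)}) \colon \iota$ lifting the previous one.

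The key point is that this adjunction is compatible with the forgetful functors. Writing the free Lie algebra monad on $\mathrm{Sp}_{T(n)}$ as $T(X) \simeq L_{T(n)} \bigoplus_{k \geq 1}(\partial_k \mathrm{id} \otimes X^{\otimes k})_{h\Sigma_k}$ and the analogous monad on $\mathrm{Sp}_{K(n)}$ as $T'$, the identity $L_{K(n)} \circ L_{T(n)} \simeq L_{K(n)}$, the finiteness of each $\partial_k \mathrm{id}$, and the fact that $K(n)$-homology commutes with the colimits $(-)_{h\Sigma_k}$ and $\bigoplus_{k\geq 1}$ combine to give a natural equivalence $L_{K(n)} \circ T \simeq T' \circ L_{K(n)}$, exhibiting $L_{K(n)}$ as a morphism of monads. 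It follows that $\iota$ commutes with the forgetful functors; that the counit $L^{\mathrm{Lie}}_{K(n)} \circ \iota \to \mathrm{id}$ is an equivalence (it is so on underlying spectra and the forgetful functor is conservative), so $\iota$ is fully faithful; that a Lie algebra in $\mathrm{Sp}_{T(n)}$ lies in the essential image of $\iota$ precisely when its underlying spectrum is $K(n)$-local; and that the unit of $L^{\mathrm{Lie}}_{K(n)}$ is, on underlying spectra, the $K(n)$-localization map.

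Granting this, the corollary is immediate: transporting the reflective subcategory $\mathrm{Lie}(\mathrm{Sp}_{K(n)}) \subseteq \mathrm{Lie}(\mathrm{Sp}_{T(n)})$ across Theorem \ref{thm:MnfLiealg} produces the full subcategory $\mathcal{M}_{K(n)} \subseteq \mathcal{M}_n^f$ together with its left adjoint; property (i) holds since the unit is a $K(n)_*$-equivalence on underlying spectra, i.e. a $\Phi_{K(n)}$-equivalence, and property (ii) holds since the forgetful functor $\mathrm{Lie}(\mathrm{Sp}_{K(n)}) \to \mathrm{Sp}_{K(n)}$ is conservative and a $K(n)_*$-equivalence between $K(n)$-local spectra is an equivalence. (If one prefers to establish the existence of the localization of $\mathcal{M}_n^f$ intrinsically first, one notes that $\Phi \colon \mathcal{M}_n^f \to \mathrm{Sp}_{T(n)}$ preserves filtered colimits by Theorem \ref{thm:Mnf2} and that the $K(n)_*$-equivalences in $\mathrm{Sp}_{T(n)}$ form an accessible class, so the $\Phi_{K(n)}$-equivalences do too and the Bousfield localization exists by standard presentability arguments; the identification with $\mathrm{Lie}(\mathrm{Sp}_{K(n)})$ then proceeds as above.) The step I expect to require the most care is the one in the previous paragraph: verifying that the monoidal localization $L_{K(n)}$ induces a genuine reflective localization on categories of algebras over the spectral Lie operad and, crucially, that it commutes with the forgetful functors so that the local objects are exactly the Lie algebras with $K(n)$-local underlying spectrum. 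Once the relevant colimits are seen to interact well with $K(n)$-localization — which they do because the derivatives $\partial_k \mathrm{id}$ are finite — this is a matter of carefully applying the $\infty$-operadic machinery.
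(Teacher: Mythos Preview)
Your proposal is correct and follows essentially the same strategy as the paper: reduce via Theorem \ref{thm:MnfLiealg} to showing that $\mathrm{Lie}(\mathrm{Sp}_{K(n)})$ is a reflective localization of $\mathrm{Lie}(\mathrm{Sp}_{T(n)})$, and prove this by observing that the free Lie algebra monad $\Phi\Theta$ preserves $K(n)$-equivalences (your identity $L_{K(n)}\circ T \simeq T'\circ L_{K(n)}$ is exactly the paper's Lemma \ref{lem:KnPhiTheta}). The only difference is in packaging: you phrase the lift in terms of symmetric monoidal localization and transport of operads, whereas the paper works directly with the monad $\Phi\Theta$ and invokes Lurie's compatible-localization machinery (Proposition 2.2.1.9 of \cite{higheralgebra}) for $\mathcal{LM}^\otimes$-algebras to make the ``$L_{K(n)}E$ is again an algebra'' step precise; the paper also \emph{defines} $\mathrm{Lie}(\mathrm{Sp}_{K(n)})$ as the full subcategory of $\mathrm{Lie}(\mathrm{Sp}_{T(n)})$ on $K(n)$-local objects (with the monad description relegated to Remark \ref{rmk:KnLiealgs}), which sidesteps having to separately construct a $K(n)$-local Lie operad and compare it.
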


This corollary is not proved by analogy with the proof of Theorem \ref{thm:MnfLiealg}; rather, it is a formal consequence. We do not know of a direct proof avoiding the use of that theorem. This highlights the fundamental role of $T(n)$-local (as opposed to $K(n)$-local) homotopy theory in this paper.

\begin{remark}
Our proof of Theorem \ref{thm:MnfLiealg} does not extend in an evident way to include the case $n=0$ of rational homotopy theory and therefore does not give an independent proof Quillen's results. The reason is that the definition of the telescopic functors $\Phi_v$ (and the Bousfield--Kuhn functor $\Phi$) does not admit an evident analog in the rational setting. However, many of our results on operads and Lie algebras in $T(n)$-local spectra in Section \ref{sec:Liealgebras} do extend to the case of rational spectra.
\end{remark}

Theorem \ref{thm:MnfLiealg} tells us that the adjoint pair $(\Theta, \Phi)$ is as good as one could hope for. The question remains to what extent this is true for the other adjunction
\[
\begin{tikzcd}
\mathcal{S}_{v_n} \arrow[r,"\Sigma^\infty_{T(n)}", shift left] & \mathrm{Sp}_{T(n)}. \arrow[l, "\Omega^\infty_{T(n)}", shift left] 
\end{tikzcd}
\]
This adjunction gives a comparison between $\mathcal{S}_{v_n}$ and the $\infty$-category of coalgebras for the comonad $\Sigma^\infty_{T(n)}\Omega^\infty_{T(n)}$. As a functor, a theorem of Kuhn implies that this comonad can be described by the formula
\begin{equation*}
\Sigma^\infty_{T(n)}\Omega^\infty_{T(n)} X \simeq L_{T(n)} \bigoplus_{k \geq 1}X^{\otimes k}_{h\Sigma_k},
\end{equation*}
see Theorem \ref{thm:SigmaOmegacoanalytic}. This formula is suggestive of the fact that such coalgebras are closely related to commutative coalgebras in $\mathrm{Sp}_{T(n)}$. We will make this relationship explicit in Section \ref{sec:MnfGoodwillie}, using the formalism of \emph{Goodwillie towers of $\infty$-categories} developed in \cite{heutsgoodwillie}. The result is the following, which we prove in Section \ref{sec:GoodwillietowerMnf}:

\begin{theorem}
\label{thm:GoodwillietowerMnf}
The functor $\Sigma^\infty_{T(n)}$ induces an equivalence between the $k$th Goodwillie approximation $\mathcal{P}_k\mathcal{S}_{v_n}$ of the $\infty$-category $\mathcal{S}_{v_n}$ and the $\infty$-category of $k$-truncated commutative ind-coalgebras in $T(n)$-local spectra, denoted $\mathrm{coAlg}^{\mathrm{ind}}(\tau_k\mathrm{Sp}_{T(n)}^{\otimes})$.
\end{theorem}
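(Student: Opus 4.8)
The plan is to deduce the theorem from the general machinery of Goodwillie towers of $\infty$-categories developed in \cite{heutsgoodwillie}, applied to $\mathcal{C} = \mathcal{M}_n^f$. The starting point is that Goodwillie approximation does not change the stabilization, so that $\mathrm{Sp}(\mathcal{P}_k\mathcal{M}_n^f) \simeq \mathrm{Sp}(\mathcal{M}_n^f) \simeq \mathrm{Sp}_{T(n)}$ (the latter by Proposition \ref{prop:Mnfstabilization}), and that $\Sigma^\infty_{T(n)}$ descends to a functor $\mathcal{P}_k\mathcal{M}_n^f \to \mathrm{Sp}_{T(n)}$. The first main step is to show that this functor is comonadic, with induced comonad on $\mathrm{Sp}_{T(n)}$ the $k$-excisive (degree-$k$ polynomial) approximation $\mathsf{C}_k := P_k(\Sigma^\infty_{T(n)}\Omega^\infty_{T(n)})$ of the comonad $\Sigma^\infty_{T(n)}\Omega^\infty_{T(n)}$; this yields $\mathcal{P}_k\mathcal{M}_n^f \simeq \mathrm{coAlg}(\mathsf{C}_k)$. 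Verifying the hypotheses of the comonadicity results of \cite{heutsgoodwillie} — conservativity of $\Sigma^\infty_{T(n)}$ on $\mathcal{P}_k\mathcal{M}_n^f$ and preservation of the relevant (totalization-shaped) limits — will use the compact generation of $\mathcal{M}_n^f$ from Theorem \ref{thm:Mnf2}(iv) together with the coanalyticity of $\Sigma^\infty_{T(n)}\Omega^\infty_{T(n)}$ recorded in Theorem \ref{thm:SigmaOmegacoanalytic} and Proposition \ref{prop:siftedcolimitscoanalytic}.

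The second main step is to identify $\mathsf{C}_k$, as a comonad on $\mathrm{Sp}_{T(n)}$, with the comonad governing $k$-truncated commutative ind-coalgebras. On underlying functors this is immediate from Theorem \ref{thm:SigmaOmegacoanalytic}: truncating the formula $\Sigma^\infty_{T(n)}\Omega^\infty_{T(n)}X \simeq L_{T(n)}\bigoplus_{j\ge 1}X^{\otimes j}_{h\Sigma_j}$ at polynomial degree $k$ gives $\mathsf{C}_k(X) \simeq L_{T(n)}\bigoplus_{j=1}^{k}X^{\otimes j}_{h\Sigma_j}$, the ``divided power'' truncated symmetric coalgebra functor. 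To promote this to an equivalence of comonads I would use that $L_{T(n)}\Sigma^\infty$ is symmetric monoidal, so the comultiplication and counit of $\mathsf{C}_k$ are the ones induced by the symmetric monoidal structure of $\mathrm{Sp}_{T(n)}^{\otimes}$ truncated at arity $k$; concretely, this amounts to adapting Ching's cooperadic bar construction \cite{ching} to the present setting, in the same spirit as the (dual, operadic) constructions of Section \ref{sec:Liealgebras}. Finally, since the comonad is the \emph{orbit} rather than \emph{fixed-point} version $\bigoplus_{j\le k}(-)^{\otimes j}_{h\Sigma_j}$, its coalgebras are not all $\tau_k\mathrm{Sp}_{T(n)}^{\otimes}$-coalgebras but the ind-objects of ``finite-type'' ones, so that $\mathrm{coAlg}(\mathsf{C}_k) \simeq \mathrm{coAlg}^{\mathrm{ind}}(\tau_k\mathrm{Sp}_{T(n)}^{\otimes})$, which is the assertion of the theorem.

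I expect the main obstacle to be precisely this last identification — pinning down the meaning of the prefix ``ind'' and showing it is forced. The genuine cofree $k$-truncated commutative coalgebra on $X$ would involve the homotopy fixed points $\prod_{j\le k}(X^{\otimes j})^{h\Sigma_j}$, a functor that fails to preserve the filtered colimits one needs for a presentable $\infty$-category of coalgebras; replacing fixed points by orbits corresponds exactly to restricting to ind-objects of finite-type coalgebras, and making this replacement rigorous — i.e. showing the orbit comonad $\mathsf{C}_k$ really is comonadic with coalgebra $\infty$-category $\mathrm{Ind}$ of truncated commutative coalgebras — is where the coanalyticity of $\Sigma^\infty_{T(n)}\Omega^\infty_{T(n)}$ (Theorem \ref{thm:SigmaOmegacoanalytic}, via the slick Proposition \ref{prop:siftedcolimitscoanalytic}) does the essential work. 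A secondary technical point, to be handled together with the first step, is the conservativity of $\Sigma^\infty_{T(n)}$ on $\mathcal{P}_k\mathcal{M}_n^f$: a map there is an equivalence iff it is one on each Goodwillie layer $D_1,\dots,D_k$, and one must check these layers are detected after stabilization, which should follow from the identification of the layers in terms of $\mathrm{Sp}_{T(n)}$ and the derivatives of the identity supplied by the general theory.
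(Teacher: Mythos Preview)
Your approach and the paper's diverge in a meaningful way. The paper does \emph{not} argue via comonadicity of $\Sigma^\infty_{T(n)}$ on $\mathcal{P}_k\mathcal{M}_n^f$ and then identify the comonad. Instead it invokes a black-box classification result from \cite{heutsgoodwillie} (stated here as Proposition~\ref{prop:nstages}): for any pointed compactly generated $\mathcal{C}$ in which Tate spectra for symmetric groups vanish in $\mathrm{Sp}(\mathcal{C})$, one has $\mathcal{P}_k\mathcal{C} \simeq \mathrm{coAlg}^{\mathrm{ind}}(\tau_k\mathrm{Sp}(\mathcal{C})^\otimes)$. All of your steps~1 and~3 (comonadicity, and the ``ind'' issue you flag as the main obstacle) are absorbed into this citation. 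The only new work is Proposition~\ref{prop:SpMnf}: identifying the stable $\infty$-operad $\mathrm{Sp}(\mathcal{M}_n^f)^\otimes$ with the symmetric monoidal $\infty$-category $\mathrm{Sp}_{T(n)}^\otimes$. That is a concrete computation---one shows that the smash product on $M_n^f\mathrm{Sp}$ is the multilinearization of the Cartesian product on $\mathcal{M}_n^f$, by writing the coreduction of $\times^k$ as $L_n^f$ of the smash product of spaces and then linearizing, using the left exactness of $L_n^f$ on $d_{n+1}$-connected spaces (Theorem~\ref{thm:Lnfinitelimits}). Neither Theorem~\ref{thm:SigmaOmegacoanalytic} nor Proposition~\ref{prop:siftedcolimitscoanalytic} is used anywhere in the proof.

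Your route is not wrong in spirit, but the step you treat most lightly is exactly the one the paper isolates as the substantive content. You propose to match the comonad structure on $P_k(\Sigma^\infty_{T(n)}\Omega^\infty_{T(n)})$ with the truncated commutative one by appealing to ``$L_{T(n)}\Sigma^\infty$ is symmetric monoidal'' and ``adapting Ching's cooperadic bar construction.'' Symmetric monoidality of $\Sigma^\infty_{T(n)}$ gives you a comparison map of cooperads, but showing it is an \emph{equivalence} of cooperads---not merely of underlying functors---is precisely the multilinearization computation of Proposition~\ref{prop:SpMnf}. Coanalyticity of $\Sigma^\infty_{T(n)}\Omega^\infty_{T(n)}$ pins down the underlying functor of $\mathsf{C}_k$ (which you correctly identify), but contributes nothing to the identification of its comonad structure; your emphasis on it as doing ``the essential work'' is misplaced. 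In short: the paper's approach buys you a much shorter argument by outsourcing the comonadicity and ind-coalgebra bookkeeping to \cite{heutsgoodwillie}, leaving only a clean operadic computation; your approach would reprove parts of that machinery and still owes the same computation at its heart.
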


We will expain the terms in the statement of this theorem more precisely in Section \ref{sec:GoodwillietowerMnf}, but for now we mention that a $k$-truncated commutative coalgebra is essentially a spectrum $E$ equipped with comultiplication maps
\begin{equation*}
\delta_j\colon E \rightarrow (E^{\otimes j})^{h\Sigma_j}
\end{equation*}
for $2 \leq j \leq k$ together with a coherent system of homotopies expressing the necessary compatibilities between the various $\delta_j$. Theorem \ref{thm:GoodwillietowerMnf} cannot be strengthened to say that $\mathcal{S}_{v_n}$ is equivalent to the $\infty$-category $\mathrm{coAlg}^{\mathrm{ind}}(\mathrm{Sp}_{T(n)}^\otimes)$ of commutative ind-coalgebras in $T(n)$-local spectra (or that of commutative coalgebras, without the ind, for that matter). Rather, one can think of the difference between the two as the issue of \emph{convergence} of the Goodwillie tower of the identity of $\mathcal{S}_{v_n}$.

Theorem \ref{thm:GoodwillietowerMnf} gives another perspective on the Bousfield--Kuhn functor that we discuss in Section \ref{sec:GoodwillieBK}. There is a functor
\begin{equation*}
\mathrm{triv}\colon \mathrm{Sp}_{T(n)} \rightarrow \mathrm{coAlg}(\mathrm{Sp}_{T(n)}^{\otimes})
\end{equation*}
which assigns to a $T(n)$-local spectrum $X$ the \emph{trivial coalgebra structure} on $X$. This functor admits a right adjoint
\begin{equation*}
\mathrm{prim}\colon \mathrm{coAlg}(\mathrm{Sp}_{T(n)}^\otimes) \rightarrow \mathrm{Sp}_{T(n)}
\end{equation*}
which we will refer to as the \emph{primitives functor}. The construction of this functor is formally dual to the construction of topological Andr\'{e}-Quillen homology (TAQ) of commutative ring spectra (see for example \cite{basterramandell}). Similarly, there are primitives functors
\begin{equation*}
\mathrm{prim}_k\colon \mathrm{coAlg}(\tau_k\mathrm{Sp}_{T(n)}^\otimes) \rightarrow \mathrm{Sp}_{T(n)}
\end{equation*}
for every $k \geq 1$. Write
\begin{equation*}
C_{T(n)}\colon \mathcal{S}_{v_n} \rightarrow \mathrm{coAlg}^{\mathrm{ind}}(\mathrm{Sp}_{T(n)}^\otimes)
\end{equation*}
for the functor that assigns to a space $X \in \mathcal{S}_{v_n}$ its $T(n)$-local suspension spectrum $\Sigma^\infty_{T(n)} X$ together with its natural coalgebra structure with respect to the smash product. Similarly, write $\tau_k C_{T(n)}(X)$ for the $k$-truncations of this coalgebra, simply obtained by forgetting the comultiplication maps $\delta_j$ for $j > k$. We will derive the following as a formal consequence of Theorem \ref{thm:GoodwillietowerMnf}:

\begin{theorem}
\label{thm:BR}
The Goodwillie tower of the functor $\Phi$ can be described in terms of primitives by equivalences
\begin{equation*}
P_k\Phi(X) \rightarrow \mathrm{prim}_k(\tau_k C_{T(n)} X)
\end{equation*}
natural in $X$ and $k$.
\end{theorem}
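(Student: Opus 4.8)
The plan is to deduce Theorem \ref{thm:BR} from Theorem \ref{thm:GoodwillietowerMnf} by matching up two Goodwillie towers using the naturality of the constructions involved. Recall that $\Phi$ is (by the composition in Theorem \ref{thm:MnfLiealg}, or more simply by part (ii) of Theorem \ref{thm:Mnf2}) the composite $\mathcal{M}_n^f \xrightarrow{?} \mathrm{Sp}_{T(n)}$ which, upon stabilizing the source, becomes $\Omega^\infty_{T(n)}$'s left adjoint — but the key point is that $\Phi$ is the \emph{linearization} of the identity in the following sense: $\mathrm{Sp}_{T(n)}$ is the stabilization of $\mathcal{M}_n^f$ via $(\Sigma^\infty_{T(n)}, \Omega^\infty_{T(n)})$, and $\Phi$ is the left adjoint $\Sigma^\infty_{T(n)}$ to the limit-preserving functor $\Omega^\infty_{T(n)}$. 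Thus $\Phi$ IS $\Sigma^\infty_{T(n)}$, and applying $P_k$ to $\Phi = \Sigma^\infty_{T(n)}$ commutes with the identification of Goodwillie approximations in Theorem \ref{thm:GoodwillietowerMnf}.

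First I would set up the comparison. By Theorem \ref{thm:GoodwillietowerMnf}, $\Sigma^\infty_{T(n)}$ induces an equivalence $\mathcal{P}_k\mathcal{M}_n^f \xrightarrow{\sim} \mathrm{coAlg}^{\mathrm{ind}}(\tau_k\mathrm{Sp}_{T(n)}^\otimes)$. Under the formalism of \cite{heutsgoodwillie}, the $k$th Goodwillie approximation $\mathcal{P}_k\mathcal{C}$ comes equipped with a canonical functor $p_k: \mathcal{C} \to \mathcal{P}_k\mathcal{C}$, and for any functor $F: \mathcal{C} \to \mathcal{D}$ into a suitable target one has $P_kF \simeq (\text{induced functor } \mathcal{P}_k\mathcal{C} \to \mathcal{D}) \circ p_k$ when $\mathcal{D}$ is stable (or more generally $k$-excisive), because $P_kF$ only depends on the $k$-excisive approximation of the source. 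I would spell out that $p_k: \mathcal{M}_n^f \to \mathcal{P}_k\mathcal{M}_n^f$ corresponds under the equivalence of Theorem \ref{thm:GoodwillietowerMnf} precisely to $\tau_k C_{T(n)}: \mathcal{M}_n^f \to \mathrm{coAlg}^{\mathrm{ind}}(\tau_k \mathrm{Sp}_{T(n)}^\otimes)$ — this is essentially the content of how the equivalence in that theorem is constructed, since $C_{T(n)}$ is by definition the functor sending $X$ to $\Sigma^\infty_{T(n)}X$ with its natural coalgebra structure, and $\tau_k$ forgets the higher comultiplications. Then $P_k\Phi = P_k\Sigma^\infty_{T(n)}$ is computed by postcomposing $p_k$ with the functor on $\mathcal{P}_k\mathcal{M}_n^f$ induced by $\Sigma^\infty_{T(n)}$.

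Next I need to identify that induced functor $\mathcal{P}_k\mathcal{M}_n^f \to \mathrm{Sp}_{T(n)}$ with $\mathrm{prim}_k$. Here I would argue as follows: $\Sigma^\infty_{T(n)}$ is itself the linearization $P_1$ of the identity functor of $\mathcal{M}_n^f$ (since $\Phi \simeq \Sigma^\infty_{T(n)}$ exhibits the stabilization), so the induced functor $\mathcal{P}_k\mathcal{M}_n^f \to \mathrm{Sp}_{T(n)}$ is the one that computes $P_1$ at the level of the $k$th layer. Under the equivalence $\mathcal{P}_k\mathcal{M}_n^f \simeq \mathrm{coAlg}^{\mathrm{ind}}(\tau_k\mathrm{Sp}_{T(n)}^\otimes)$, the trivial coalgebra functor $\mathrm{triv}: \mathrm{Sp}_{T(n)} \to \mathrm{coAlg}^{\mathrm{ind}}$ corresponds to the functor $\mathrm{Sp}_{T(n)} \to \mathcal{P}_k\mathcal{M}_n^f$ that includes the stable objects (zero comultiplication), and $\mathrm{prim}_k$ is its right adjoint; on the other side, the stabilization $\mathrm{Sp}_{T(n)}$ sits inside $\mathcal{P}_k\mathcal{M}_n^f$ via $p_k \circ \Theta$-type constructions (with $\mathrm{triv}$ corresponding to the terminal layer) and $\Sigma^\infty_{T(n)}$ linearization is the right adjoint in the relevant sense. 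Matching these adjoint pairs — $(\mathrm{triv}, \mathrm{prim}_k)$ on the coalgebra side versus the stabilization adjunction restricted to the $k$th approximation — gives that the functor induced by $\Sigma^\infty_{T(n)}$ on $\mathcal{P}_k\mathcal{M}_n^f$ is $\mathrm{prim}_k$, hence $P_k\Phi(X) \simeq \mathrm{prim}_k(\tau_k C_{T(n)} X)$. Naturality in $X$ is automatic, and naturality in $k$ follows since all the functors $p_k$, the equivalences of Theorem \ref{thm:GoodwillietowerMnf}, the truncations $\tau_k$, and the $\mathrm{prim}_k$ are compatible with the tower maps $\mathcal{P}_{k+1} \to \mathcal{P}_k$ and $\tau_{k+1} \to \tau_k$.

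The main obstacle will be the precise identification of $\mathrm{prim}_k$ with the functor induced by $\Sigma^\infty_{T(n)}$ on the $k$th Goodwillie approximation, i.e.\ unwinding the adjunctions in the second paragraph above. This requires being careful about which of $\Sigma^\infty_{T(n)}$, $\Omega^\infty_{T(n)}$, $\mathrm{triv}$, $\mathrm{prim}_k$ are left versus right adjoints, and checking that the equivalence of Theorem \ref{thm:GoodwillietowerMnf} is compatible with the relevant stabilization functors rather than just being an abstract equivalence of $\infty$-categories — concretely, that it sends the linearization functor $\mathcal{P}_k\mathcal{M}_n^f \to \mathrm{Sp}_{T(n)}$ to $\mathrm{prim}_k$. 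I expect this to come down to the observation that both functors are corepresented (resp.\ represented) by the unit object with its canonical coalgebra structure, together with the fact, established in the proof of Theorem \ref{thm:GoodwillietowerMnf}, that $\Sigma^\infty_{T(n)}$ realizes the cross-effects / Goodwillie layers of $\mathcal{M}_n^f$ as the symmetric powers governing the comultiplication. Once that compatibility is in hand, Theorem \ref{thm:BR} is a formal consequence as claimed.
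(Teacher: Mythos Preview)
Your overall strategy --- use Theorem \ref{thm:GoodwillietowerMnf} to translate the Goodwillie tower of $\mathcal{M}_n^f$ into truncated coalgebras and then identify the relevant induced functor with $\mathrm{prim}_k$ by matching adjoint pairs --- is indeed the paper's approach. However, there is a serious error at the outset which propagates through the argument.

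You write that ``$\Phi$ is the left adjoint $\Sigma^\infty_{T(n)}$'' and hence ``$\Phi$ IS $\Sigma^\infty_{T(n)}$''. This is false. The functors $\Phi$ and $\Sigma^\infty_{T(n)}$ both go from $\mathcal{M}_n^f$ to $\mathrm{Sp}_{T(n)}$, but they sit in different adjunctions pointing in opposite directions: $\Phi$ is a \emph{right} adjoint (to $\Theta$), while $\Sigma^\infty_{T(n)}$ is a \emph{left} adjoint (to $\Omega^\infty_{T(n)}$). They are not equivalent; the whole paper is organized around the contrast between these two adjunctions. Concretely, $\Sigma^\infty_{T(n)}$ is $1$-excisive (it is colimit-preserving into a stable target), so if your identification held then $P_k\Phi$ would be constant in $k$, which is absurd. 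What is true is only that $P_1\Phi \simeq \Sigma^\infty_{T(n)}$, using $\Phi\Omega^\infty_{T(n)} \simeq \mathrm{id}$.

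The paper's argument runs as follows. Since $\Phi$ preserves limits one has $P_k\Phi \simeq \Phi \circ P_k\mathrm{id}_{\mathcal{M}_n^f} \simeq \Phi\Omega^\infty_k\Sigma^\infty_k$, where $(\Sigma^\infty_k, \Omega^\infty_k)$ is the adjoint pair relating $\mathcal{M}_n^f$ to $\mathcal{P}_k\mathcal{M}_n^f$. Under the equivalence of Theorem \ref{thm:GoodwillietowerMnf} one has $\Sigma^\infty_k \simeq \tau_k C_{T(n)}$, so it remains to show $\Phi\Omega^\infty_k \simeq \mathrm{prim}_k$. This is obtained by passing to right adjoints in the equivalence
\[
\Sigma^\infty_k \circ \Theta \;\simeq\; \tau_k(C_{T(n)} \circ \Theta) \;\simeq\; \tau_k\,\mathrm{triv} \;=\; \mathrm{triv}_k,
\]
where the middle step is Lemma \ref{lem:Thetatriv}: the coalgebra $C_{T(n)}(\Theta X)$ is naturally the trivial one on $X$. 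That lemma --- proved by a Spanier-Whitehead duality argument showing that $\mathbf{D} \circ C_{T(n)} \circ \Theta \circ \mathbf{D}$ is a finite-limit-preserving functor into nonunital commutative algebras and hence factors through the trivial-algebra functor $\Omega^\infty_{\mathrm{CAlg}}$ --- is the only substantive input beyond Theorem \ref{thm:GoodwillietowerMnf}. It is exactly the step you gesture at in your ``main obstacle'' paragraph but never pin down, and it cannot be reached from the incorrect premise $\Phi = \Sigma^\infty_{T(n)}$.
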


This reproduces a recent result of Behrens and Rezk: in \cite{behrensrezk} they prove the analogous statement in the $K(n)$-local setting, stating it for TAQ rather than for the primitives functor we consider here. Let us say a pointed space $X$ is \emph{$\Phi$-good} if the Goodwillie tower of $\Phi$ converges on $X$, i.e., if the natural map
\begin{equation*}
\Phi(X) \rightarrow \varprojlim_k P_k\Phi(X)
\end{equation*}
is an equivalence. The natural maps of Theorem \ref{thm:BR} arise from a natural transformation $\Phi \rightarrow \mathrm{prim} \circ C_{T(n)}$ which is a variant of the \emph{comparison map} of Behrens and Rezk. In Section \ref{sec:MnfGoodwillie} we show how to derive the following:

\begin{corollary}
\label{cor:BR}
A space $X \in \mathcal{S}_{v_n}$ is $\Phi$-good if and only if the comparison map
\begin{equation*}
\Phi(X) \rightarrow \mathrm{prim}(C_{T(n)} X)
\end{equation*}
is an equivalence.
\end{corollary}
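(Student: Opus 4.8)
The plan is to deduce Corollary \ref{cor:BR} as a formal consequence of Theorem \ref{thm:BR} together with the compatibility of the comparison maps with passage to the Goodwillie tower. First I would record that the natural transformation $\Phi \to \mathrm{prim}\circ C_{T(n)}$ factors, for each $k$, through the $k$th Goodwillie approximation: applying $\mathrm{prim}_k \circ \tau_k$ to the coalgebra $C_{T(n)}X$ and using Theorem \ref{thm:BR} identifies $P_k\Phi(X)$ with $\mathrm{prim}_k(\tau_k C_{T(n)}X)$, and these identifications are compatible as $k$ varies, so that the comparison map is the map on limits
\begin{equation*}
\Phi(X) \longrightarrow \varprojlim_k P_k\Phi(X) \xrightarrow{\ \sim\ } \varprojlim_k \mathrm{prim}_k(\tau_k C_{T(n)}X).
\end{equation*}
The second arrow is an equivalence by Theorem \ref{thm:BR}. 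The key input I still need is that $\mathrm{prim}(C_{T(n)}X) \simeq \varprojlim_k \mathrm{prim}_k(\tau_k C_{T(n)}X)$, i.e. that the primitives functor on an ind-coalgebra is computed as the limit of the truncated primitives; this should follow from the construction of $\mathrm{prim}$ as a totalization of a cobar-type cosimplicial object and the fact that $\tau_k$-truncation of the Goodwillie tower of coalgebras refines this totalization, so that the comparison map of Behrens--Rezk type is literally the map $\Phi(X)\to \varprojlim_k P_k\Phi(X)$ under these identifications.

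Granting this, the corollary is immediate: $X$ is $\Phi$-good precisely when $\Phi(X)\to\varprojlim_k P_k\Phi(X)$ is an equivalence, and under the chain of equivalences above this is exactly the condition that the comparison map $\Phi(X)\to\mathrm{prim}(C_{T(n)}X)$ is an equivalence. I would phrase this as a two-out-of-three argument in the commuting triangle whose third edge is the equivalence $\varprojlim_k P_k\Phi(X)\xrightarrow{\sim}\mathrm{prim}(C_{T(n)}X)$.

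The main obstacle I anticipate is verifying that the comparison map $\Phi \to \mathrm{prim}\circ C_{T(n)}$ really is the inverse limit of its finite stages in the required sense — that is, that passing to $k$-truncations on the coalgebra side is genuinely compatible with the Goodwillie tower of $\Phi$ and the tower of truncated primitives $\mathrm{prim}_k$, and that $\mathrm{prim}$ commutes with the relevant limit over $k$. Concretely, one must check that the natural transformation $P_k\Phi \to \mathrm{prim}_k(\tau_k C_{T(n)}(-))$ of Theorem \ref{thm:BR}, which is built from the equivalence $\mathcal{P}_k\mathcal{M}_n^f \simeq \mathrm{coAlg}^{\mathrm{ind}}(\tau_k\mathrm{Sp}_{T(n)}^{\otimes})$ of Theorem \ref{thm:GoodwillietowerMnf}, is compatible with the structure maps of the two towers; this is a coherence statement about the Goodwillie tower of $\infty$-categories, and the cleanest route is to observe that $\mathrm{prim}$ and $\mathrm{prim}_k$ both arise from the same adjunction $(\mathrm{triv},\mathrm{prim})$ on coalgebras and its truncated analogues, so the tower of $\mathrm{prim}_k(\tau_k(-))$ is the Goodwillie tower of $\mathrm{prim}$ evaluated on $C_{T(n)}X$, with limit $\mathrm{prim}(C_{T(n)}X)$ when the coalgebra is an ind-coalgebra. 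Everything else is then formal.
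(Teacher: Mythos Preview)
Your proposal is correct and follows essentially the same route as the paper: combine Theorem \ref{thm:BR} with the identification $\mathrm{prim} \simeq \varprojlim_k (\mathrm{prim}_k \circ \tau_k)$, which the paper isolates as Lemma \ref{lem:primconvergence}, and then conclude by two-out-of-three. The paper's proof of that lemma is not via a cobar resolution or a Goodwillie-tower interpretation of $\mathrm{prim}$, but rather a direct adjunction argument: one computes $\mathrm{Map}(Y, \varprojlim_k \mathrm{prim}_k(\tau_k X))$ as $\varprojlim_k \mathrm{Map}(\tau_k \mathrm{triv}\, Y, \tau_k X)$ and then invokes the equivalence $\mathrm{coAlg}(\mathrm{Sp}_{T(n)}^\otimes) \xrightarrow{\sim} \varprojlim_k \mathrm{coAlg}(\tau_k \mathrm{Sp}_{T(n)}^\otimes)$ to recover $\mathrm{Map}(\mathrm{triv}\, Y, X) \simeq \mathrm{Map}(Y, \mathrm{prim}\, X)$; this is cleaner than the approaches you sketch, though your adjunction-based suggestion in the last paragraph is in the right spirit.
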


It is an interesting question which spaces are $\Phi$-good. Arone and Mahowald (Theorem 4.1 of \cite{aronemahowald}) prove that spheres are $\Phi$-good. Behrens and Rezk (Section 8 of \cite{behrensrezk2}) show that the same is true for the special unitary groups and the symplectic groups (at least in the $K(n)$-local setting). In \cite{brantnerheuts} it is proved that wedges of spheres are \emph{not} $\Phi$-good and that the same is true for Moore spaces. We will establish a novel class of $\Phi$-good spaces in Corollary \ref{cor:aronemahowald}, namely the spaces of the form $\Theta(L_{T(n)}\mathbb{S}^\ell)$, with $\mathbb{S}^{\ell}$ denoting the $\ell$-fold suspension of the sphere spectrum and $\ell$ any integer.

The fact that not all spaces are $\Phi$-good explains why Theorem \ref{thm:MnfLiealg} gives a much sharper description of $v_n$-periodic unstable homotopy theory than the coalgebra model of Theorem \ref{thm:GoodwillietowerMnf} and the resulting comparison between the Bousfield--Kuhn functor and primitives (or TAQ). In particular, it should be noted that the Lie algebras $\Phi X$ produced by Theorem \ref{thm:MnfLiealg} are generally \emph{not} the same as the Lie algebras $\mathrm{prim}(C_{T(n)} X)$ or $\mathrm{TAQ}(\mathbb{S}_{T(n)}^{X_+})$, which are considered by Behrens and Rezk in \cite{behrensrezk2}.



Finally, we sample some calculational consequences of our results in Section \ref{subsec:applications}. The following theorem shows that the calculation of the $v_n$-periodic homotopy groups of a large class of type $n$ spaces with $v_n$ self-maps can be translated completely into stable terms. 

\begin{theorem}
\label{thm:Phitypenspace}
Suppose $V$ is a pointed finite type $n$ space with a $v_n$ self-map and write $W = \Sigma^2 V$. Then there is an equivalence of spectra as follows:
\begin{equation*}
\Phi(W) \simeq L_{T(n)}\bigoplus_{k \geq 1}(\partial_k \mathrm{id} \otimes \Sigma^\infty W^{\otimes k})_{h\Sigma_k}.
\end{equation*}
\end{theorem}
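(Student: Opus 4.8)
The plan is to compute $\Phi(W)$ from its Goodwillie tower, using that $W=\Sigma^2 V$ is a double suspension to split the tower and the type $n$ hypothesis to make it converge. Throughout write $\Sigma^\infty_{T(n)}$ for $L_{T(n)}\Sigma^\infty$.

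\emph{Reformulation.} By Theorem~\ref{thm:PhiThetafunctor} the asserted right-hand side is canonically the underlying spectrum $\Phi\Theta(\Sigma^\infty_{T(n)}W)$ of the free Lie algebra on $\Sigma^\infty_{T(n)}W$. Since $\Phi$ is conservative --- it is the monadic forgetful functor of Theorem~\ref{thm:MnfLiealg}, with $\Theta$ the free Lie algebra functor --- the theorem is equivalent to the statement that $M(W)\in\mathcal{M}_n^f$ is the free Lie algebra on $\Sigma^\infty_{T(n)}W$. It is also equivalent, by Corollary~\ref{cor:BR}, to the assertion that $W$ is $\Phi$-good: the reduced diagonals of a double suspension are coherently null, so $C_{T(n)}W$ is the trivial cocommutative coalgebra on $\Sigma^\infty_{T(n)}W$, and the primitives of a trivial coalgebra are computed by Koszul duality between the commutative cooperad and the spectral Lie operad $\{\partial_k\mathrm{id}\}$ --- dually to the computation of $\mathrm{TAQ}$ of a trivial square-zero algebra --- so $\mathrm{prim}(C_{T(n)}W)\simeq\Phi\Theta(\Sigma^\infty_{T(n)}W)$ is once more the right-hand side.

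\emph{Splitting the tower.} Because $\Phi\colon\mathcal{S}_*\to\mathrm{Sp}_{T(n)}$ preserves finite limits and filtered colimits it commutes with the functors $T_k$ and the colimits defining $P_k$, so $P_k\Phi\simeq\Phi\circ P_k\mathrm{id}$ with layers $D_k\Phi(W)\simeq L_{T(n)}(\partial_k\mathrm{id}\otimes\Sigma^\infty W^{\wedge k})_{h\Sigma_k}$. As $V$ is a finite type $n$ complex, hence connected (since $n>0$), and $W=\Sigma^2 V$, the spectrum $\Sigma^\infty W^{\wedge k}$ has connectivity growing linearly in $k$, while $\partial_k\mathrm{id}$ is $(1-k)$-connective and $L_{T(n)}$ shifts connectivity by only a bounded amount, so the connectivity of $D_k\Phi(W)$ tends to infinity. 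Moreover, in the coalgebra model of Theorem~\ref{thm:GoodwillietowerMnf} the image of $M(W)$ in each stage $\mathcal{P}_k\mathcal{M}_n^f$ is the trivial $k$-truncated coalgebra on $\Sigma^\infty_{T(n)}W$, and the tower of such a trivial coalgebra splits. Hence $\varprojlim_k P_k\Phi(W)\simeq\bigoplus_k D_k\Phi(W)=L_{T(n)}\bigoplus_{k\geq 1}(\partial_k\mathrm{id}\otimes\Sigma^\infty W^{\wedge k})_{h\Sigma_k}$, which is exactly the right-hand side; it is the two suspensions that both produce the growth of connectivity and kill the attaching maps.

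\emph{Convergence ($\Phi$-goodness), the main obstacle.} What remains is to show that the natural map $\Phi(W)\to\varprojlim_k P_k\Phi(W)$ is an equivalence, i.e.\ that $W=\Sigma^2 V$ is $\Phi$-good. This is the genuinely difficult point: high connectivity of the Goodwillie layers does \emph{not} by itself imply $\Phi$-goodness (wedges of spheres furnish a counterexample, by \cite{brantnerheuts}), so the type $n$ hypothesis must be used in an essential way --- concretely, that $\Sigma^\infty V$ and all its smash powers are finite spectra of type $n$, which forces the partition complexes and Tate constructions entering the Arone--Mahowald analysis to collapse in the required way. I would establish it by adapting the argument of Arone and Mahowald for spheres (Theorem~4.1 of \cite{aronemahowald}), with the $\Phi$-goodness of the spaces $\Theta(L_{T(n)}\Sigma^\ell S)$ from Corollary~\ref{cor:aronemahowald} serving as the basic building block. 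Granting this, $\Phi(\Sigma^2 V)\simeq\varprojlim_k P_k\Phi(\Sigma^2 V)\simeq L_{T(n)}\bigoplus_{k\geq 1}(\partial_k\mathrm{id}\otimes\Sigma^\infty W^{\otimes k})_{h\Sigma_k}$, as claimed. Read through Theorem~\ref{thm:MnfLiealg}, the statement becomes: the $v_n$-periodic homotopy type of a double suspension of a finite type $n$ complex is the free Lie algebra on its $T(n)$-local suspension spectrum --- the exact analogue of Quillen's computation of the rational homotopy Lie algebra of a double suspension.
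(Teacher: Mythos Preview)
Your approach has a genuine gap at the crucial step: you leave convergence of the Goodwillie tower for $\Phi(W)$ (equivalently, $\Phi$-goodness of $W$) unproven, promising only to ``adapt the argument of Arone and Mahowald.'' That adaptation is not obvious --- the Arone--Mahowald analysis is specific to spheres and relies on detailed knowledge of the mod~$p$ cohomology of the layers as modules over the Steenrod algebra --- and without it your argument does not close. Some of the auxiliary steps are also not correct as stated: $T(n)$-localization does \emph{not} shift connectivity by a bounded amount (indeed $T(n)$-local spectra are typically unbounded below), so your connectivity argument for identifying the inverse limit with the direct sum does not work.

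The paper's proof is entirely different and much shorter; it sidesteps Goodwillie convergence altogether. The missing idea is Lemma~\ref{lem:valuetheta}: if $V$ has a $v_n$ self-map $v$, a direct mapping-space computation shows
\[
L_n^f \Sigma^2 V \;\simeq\; \Theta(\Sigma^\infty_{T(n)} \Sigma^2 V)
\]
already in $\mathcal{M}_n^f$. The point is that $\Sigma^i\mathrm{cof}(v)$ is a finite type $(n{+}1)$ suspension for $i \geq 1$, so by Theorem~\ref{thm:bousfieldclasses} it has contractible mapping space into any $L_n^f$-local $X$; the long exact sequence then identifies $\pi_0\mathrm{Map}_*(\Sigma^2 V, X)$ with its $v$-periodic version, which by the definition of $\Phi$ and adjunction is $\pi_0\mathrm{Map}_*(\Theta(\Sigma^\infty_{T(n)}\Sigma^2 V), X)$. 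Granting this, the theorem is one line:
\[
\Phi(W) \;\simeq\; \Phi(L_n^f W) \;\simeq\; \Phi\Theta(\Sigma^\infty_{T(n)} W),
\]
and Theorem~\ref{thm:PhiThetafunctor} computes the right-hand side. So your reformulation of the target as $\Phi\Theta(\Sigma^\infty_{T(n)} W)$ was exactly right; what you missed is that the $v_n$ self-map gives a direct, elementary identification $M(W) \simeq \Theta(\Sigma^\infty_{T(n)} W)$, making the detour through $\Phi$-goodness unnecessary. In fact the logic runs the other way: once the theorem is proved in this manner, $\Phi$-goodness of $W$ becomes a consequence rather than an input.
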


This formula will result from an identification of the Goodwillie derivatives of $\Phi$ and a splitting of the Goodwillie tower of this functor when evaluated on spaces $W$ of the form descibed in the theorem. As an example, if $n=1$ and $p$ is odd, then the mod $p$ Moore space $M(\mathbf{Z}/p, 3)$ supports a $v_1$ self-map. In this case Theorem \ref{thm:Phitypenspace} expresses the $v_1$-periodic homotopy groups of $M(\mathbf{Z}/p, j)$, for $j \geq 5$, in terms of the stable homotopy groups of the homotopy orbit spectra on the right. It would be interesting to compare this splitting with the computations of Thompson \cite{thompson}, who uses the splitting of loop spaces of Moore spaces constructed by Cohen, Moore, and Neisendorfer.

Another application of Theorem \ref{thm:MnfLiealg} is the following. After identifying $\mathcal{S}_{v_n}$ with $\mathrm{Lie}(\mathrm{Sp}_{T(n)})$, the functor 
\begin{equation*}
\Omega^\infty_{T(n)}\colon \mathrm{Sp}_{T(n)} \rightarrow \mathcal{S}_{v_n}
\end{equation*}
can be identified with the \emph{trivial Lie algebra functor}
\begin{equation*}
\mathrm{triv}\colon \mathrm{Sp}_{T(n)} \rightarrow \mathrm{Lie}(\mathrm{Sp}_{T(n)}).
\end{equation*}
The left adjoint $\Sigma^\infty_{T(n)}$ is then identified with the functor taking \emph{derived indecomposables} or \emph{topological Quillen homology} (see for example \cite{basterramandell}). Roughly speaking, for any operad $\mathcal{O}$ (say in spectra) and an $\mathcal{O}$-algebra $A$, there is a natural filtration of the topological Quillen homology of $A$ whose associated graded can be expressed in terms of the bar construction of $\mathcal{O}$ (see Section 4 of \cite{behrensrezk}, where this filtration is called the \emph{Kuhn filtration}). We will define the relevant filtration of $\Sigma^\infty_{T(n)} X$ in the case of interest to us in Section \ref{subsec:applications} and show that the associated graded is the free (nonunital) symmetric algebra on $\Phi(X)$:

\begin{theorem}
\label{thm:Sigmainftyfiltration}
For $X \in \mathcal{S}_{v_n}$, the suspension spectrum $\Sigma^\infty_{T(n)} X$ admits a natural filtration with associated graded the spectrum
\begin{equation*}
\mathrm{Sym}^\ast(\Phi (X)) = L_{T(n)}\bigoplus_{k \geq 1}\Phi(X)^{\otimes k}_{h\Sigma_k}.
\end{equation*}
\end{theorem}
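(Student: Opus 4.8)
The plan is to combine Theorem~\ref{thm:MnfLiealg} with the \emph{Kuhn filtration} on topological Quillen homology, and to compute the relevant layers by Koszul duality for the spectral Lie operad. First I would use Theorem~\ref{thm:MnfLiealg} to identify $\mathcal{M}_n^f$ with $\mathrm{Lie}(\mathrm{Sp}_{T(n)})$ in such a way that $\Phi$ becomes the forgetful functor and $\Theta$ the free Lie algebra functor; concretely, the monad $\Phi\Theta$ is the free Lie algebra monad, given on underlying objects by the formula of Theorem~\ref{thm:PhiThetafunctor}. As recorded in the discussion preceding the statement, under this identification $\Omega^\infty_{T(n)}$ is the trivial Lie algebra functor $\mathrm{triv}$, so its left adjoint $\Sigma^\infty_{T(n)}$ is the derived indecomposables (topological Quillen homology) functor $Q_{\mathrm{Lie}}$, left adjoint to $\mathrm{triv}$. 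Writing $\mathfrak{g}$ for the Lie algebra corresponding to a space $X$, whose underlying spectrum is $\Phi(X)$, the point is then to equip $Q_{\mathrm{Lie}}(\mathfrak{g})$ with a natural filtration whose $k$th layer is the $k$th symmetric power of $\Phi(X)$. Since $Q_{\mathrm{Lie}}$ sends a free Lie algebra to its generating spectrum and preserves geometric realizations, applying it to the monadic bar resolution of $\mathfrak{g}$ by free Lie algebras exhibits $\Sigma^\infty_{T(n)} X \simeq Q_{\mathrm{Lie}}(\mathfrak{g})$ as the realization of a simplicial $T(n)$-local spectrum whose $q$-simplices are obtained by applying the free Lie monad $q$ times to $\Phi(X)$ --- equivalently, as a two-sided bar construction of the spectral Lie operad against $\Phi(X)$.

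Next I would invoke the standard analysis of such bar constructions, following Kuhn and Behrens--Rezk (Section~4 of \cite{behrensrezk}, see also \cite{behrensrezk2} and \cite{basterramandell}): for an algebra $A$ over a reduced operad $\mathcal{O}$ with $\mathcal{O}(1)\simeq\mathbb{S}$, the skeletal filtration of the bar construction computing $Q_{\mathcal{O}}(A)$ refines to a natural weight filtration whose $k$th associated graded piece is
\begin{equation*}
\bigl(\mathrm{Bar}(\mathcal{O})(k)\otimes A^{\otimes k}\bigr)_{h\Sigma_k},
\end{equation*}
where $\mathrm{Bar}(\mathcal{O})$ denotes the cooperad obtained as the operadic bar construction of $\mathcal{O}$. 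I would apply this with $\mathcal{O}$ the spectral Lie operad in $\mathrm{Sp}_{T(n)}$, i.e.\ $L_{T(n)}$ of Ching's operad $\partial_*\mathrm{id}$, and with $A=\Phi(X)$. Throughout one uses that $L_{T(n)}\colon\mathrm{Sp}\to\mathrm{Sp}_{T(n)}$ is symmetric monoidal --- the $T(n)$-acyclics form a $\otimes$-ideal --- so that free Lie algebras, the monadic bar resolution, and the operadic bar construction are all compatible with the localization and the whole discussion takes place inside $\mathrm{Sp}_{T(n)}$.

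Finally I would identify the cooperad $\mathrm{Bar}(\mathcal{O})$. Over spectra, the operadic bar construction of the spectral Lie operad $\partial_*\mathrm{id}$ is the non-counital cocommutative cooperad: its $k$th term is $\mathbb{S}$ with trivial $\Sigma_k$-action for every $k\geq 1$. This is the Koszul duality between Lie and commutative structures in spectra (Ching \cite{ching}; it also amounts to the facts that $\partial_k(\Sigma^\infty\Omega^\infty)\simeq\mathbb{S}$ with trivial action and that these derivatives assemble into $\mathrm{Bar}(\partial_*\mathrm{id})$). Since the bar construction is built from finite colimits and smash products, it is preserved by the symmetric monoidal localization, so in $\mathrm{Sp}_{T(n)}$ we still have $\mathrm{Bar}(\mathcal{O})(k)\simeq L_{T(n)}\mathbb{S}$ with trivial action. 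Substituting, the $k$th layer of the filtration of $\Sigma^\infty_{T(n)}X$ becomes $L_{T(n)}\Phi(X)^{\otimes k}_{h\Sigma_k}=\mathrm{Sym}^k(\Phi(X))$, whence the associated graded is $\mathrm{Sym}^\ast(\Phi(X))$; naturality in $X$ is clear since every step is functorial. I expect the main obstacle to be the bookkeeping in the middle step: making the Kuhn filtration and, above all, the identification of its layers with the operadic bar construction precise in the $\infty$-categorical, $T(n)$-local setting, and checking that the Koszul-duality computation of $\mathrm{Bar}(\partial_*\mathrm{id})$ survives $L_{T(n)}$ --- conceptually a base change along a symmetric monoidal functor, but one that requires care with the relevant (co)limits.
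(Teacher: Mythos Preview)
Your proposal is correct and follows essentially the same approach as the paper: both use the monadic bar resolution of $X$ by free $\Phi\Theta$-algebras, filter the resulting simplicial spectrum by weight (the paper phrases this via the dual Goodwillie approximations $P^k$ applied to the coanalytic functors $(\Phi\Theta)^\bullet$, which for coanalytic functors amounts exactly to truncating at arity $k$), and identify the $k$th layer with the $k$th term of $\mathrm{Bar}(\Phi\Theta)$ evaluated on $\Phi(X)$.

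The one place where the paper's argument is a bit cleaner than yours is the identification of $\mathrm{Bar}$ of the Lie operad. You invoke Ching's Koszul duality $\mathrm{Bar}(\partial_*\mathrm{id})\simeq\mathrm{Comm}^{\mathrm{co}}$ and then worry about base-changing along $L_{T(n)}$. The paper sidesteps this entirely: the simplicial object $(\Phi\Theta)^\bullet$ computing the bar construction is obtained by pre- and post-composing the canonical comonadic resolution of $\mathrm{id}_{\mathcal{M}_n^f}$ with $\Omega^\infty_{T(n)}$ and $\Sigma^\infty_{T(n)}$, so its realization is tautologically $\Sigma^\infty_{T(n)}\Omega^\infty_{T(n)}$, and then Theorem~\ref{thm:SigmaOmegacoanalytic} (Kuhn's splitting) gives $D^k(\Sigma^\infty_{T(n)}\Omega^\infty_{T(n)})(Y)\simeq L_{T(n)}Y^{\otimes k}_{h\Sigma_k}$ directly. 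This avoids any appeal to Ching's result or to compatibility of the operadic bar construction with localization, and resolves exactly the ``main obstacle'' you anticipated.
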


\begin{remark}
In the analogous case of differential graded Lie algebras over the rational numbers, topological Quillen homology is equivalent to the complex calculating Chevalley-Eilenberg homology. That complex, evaluated on a differential graded Lie algebra $L$, indeed has an evident filtration whose associated graded is the free nonunital symmetric algebra on the (shifted) underlying chain complex of $L$.
\end{remark}

Of course Theorem \ref{thm:Sigmainftyfiltration} becomes more useful when one knows something about $\Phi(X)$. As an example, take $n=1$ and $p$ odd again. Then Bousfield's interpretation \cite{bousfieldHspace} of a computation of Thompson  \cite{thompsonunstablesphere} (which is the odd-primary version of a calculation of Mahowald \cite{mahowaldImJ} at $p=2$) describes the spectrum $\Phi(S^{2\ell+1})$ as the localization of a mod $p^\ell$ Moore spectrum:
\begin{equation*}
\Phi(S^{2\ell+1}) \simeq L_{K(1)} \mathbb{S}^{2\ell}/p^{\ell}.
\end{equation*}

This leads to the following:
\begin{corollary}
\label{cor:K1localsphere}
Take $\ell \geq 1$. Then the $K(1)$-local sphere spectrum admits a filtration with associated graded the spectrum
\begin{equation*}
L_{K(1)}\Sigma^{-2\ell - 1}\Bigl(\bigoplus_{k \geq 1}(\mathbb{S}^{2\ell}/p^{\ell})^{\otimes k}_{h\Sigma_k}\Bigr).
\end{equation*}
\end{corollary}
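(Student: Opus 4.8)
The plan is to deduce Corollary~\ref{cor:K1localsphere} by specializing Theorem~\ref{thm:Sigmainftyfiltration} to the odd sphere and feeding in Bousfield's computation of its $v_1$-periodic homotopy type. Throughout take $n = 1$ and $p$ odd; I will use that the telescope conjecture holds at height $1$ (Miller at odd primes), so that $L_{T(1)} = L_{K(1)}$, and write $L_{K(1)}$ from now on. The first step is to apply Theorem~\ref{thm:Sigmainftyfiltration} to $X = M(S^{2\ell+1}) \in \mathcal{M}_1^f$. This produces a natural filtration of the spectrum $\Sigma^\infty_{T(1)}(M S^{2\ell+1})$ whose associated graded is $\mathrm{Sym}^\ast(\Phi(M S^{2\ell+1})) = L_{K(1)}\bigoplus_{k \geq 1}\Phi(M S^{2\ell+1})^{\otimes k}_{h\Sigma_k}$. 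Since $\Phi$ factors through $M$ (Theorem~\ref{thm:Mnf2}(ii)) we have $\Phi(M S^{2\ell+1}) \simeq \Phi(S^{2\ell+1})$, and by Bousfield's computation this equals $L_{K(1)}\Sigma^{2\ell}S/p^\ell$. Substituting this in and using the elementary fact that $L_{K(1)}(L_{K(1)}A \otimes B) \simeq L_{K(1)}(A \otimes B)$ --- so that the inner localizations are absorbed into the outer one and the $T(1)$-local tensor powers are rewritten as ordinary smash powers --- the associated graded becomes $L_{K(1)}\bigoplus_{k \geq 1}(\Sigma^{2\ell}S/p^\ell)^{\otimes k}_{h\Sigma_k}$.

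The second step, which is the crux, is to identify the filtered object: I claim $\Sigma^\infty_{T(1)}(M S^{2\ell+1}) \simeq L_{K(1)}\Sigma^{2\ell+1}S$, the $(2\ell+1)$-fold suspension of the $K(1)$-local sphere. Unwinding the definition $\Sigma^\infty_{T(n)} = L_{T(n)}\Sigma^\infty \circ i$, this is equivalent to showing that the localization unit $S^{2\ell+1} \to i M(S^{2\ell+1})$ becomes an equivalence after $L_{K(1)}\Sigma^\infty$; that is, that the $v_1$-periodic localization of an odd sphere has the same $K(1)$-local suspension spectrum as the sphere itself. This is genuinely special to odd spheres of dimension at least $3$: for instance $M(S^2) \simeq M(S^3)$ via the Hopf map (since $S^1$ is $\Phi$-acyclic, being an Eilenberg--MacLane space, and $\Phi$ preserves the Hopf fibre sequence $S^1 \to S^3 \to S^2$), so $\Sigma^\infty_{T(1)}(M S^2)$ is \emph{not} the suspension spectrum of $S^2$; and for $S^1$ itself the claim fails outright, which is exactly why the hypothesis $\ell \geq 1$ is needed. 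For odd spheres the required statement --- concretely, that $S^{2\ell+1} \to i M(S^{2\ell+1})$ is a $K(1)_\ast$-homology isomorphism --- is contained in Bousfield's study of the $K$-theoretic and $v_1$-periodic localizations of spheres \cite{bousfieldHspace}, and can alternatively be extracted from the properties of $\Sigma^\infty_{T(n)}$ established earlier in the paper.

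Granting this identification, the final step is immediate: the desuspension $\Sigma^{-2\ell-1}$ is exact, so applying it to the filtration of the first step transports it to a filtration of $\Sigma^{-2\ell-1}\Sigma^\infty_{T(1)}(M S^{2\ell+1}) \simeq L_{K(1)}S$, the $K(1)$-local sphere, with associated graded $L_{K(1)}\Sigma^{-2\ell-1}\bigl(\bigoplus_{k\geq 1}(\Sigma^{2\ell}S/p^\ell)^{\otimes k}_{h\Sigma_k}\bigr)$ --- precisely the spectrum in the statement. The hard part is the middle step: the specialization of Theorem~\ref{thm:Sigmainftyfiltration} and all the $K(1)$-localization bookkeeping are routine, but pinning down the $T(1)$-local suspension spectrum of the $v_1$-periodic localization of an odd sphere is the one non-formal input, and the failure of the analogous statement for $S^1$ and $S^2$ shows it cannot be obtained by soft arguments alone.
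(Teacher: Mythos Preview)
Your proof is correct and follows precisely the argument the paper intends: apply Theorem~\ref{thm:Sigmainftyfiltration} to $M(S^{2\ell+1})$, substitute Bousfield's value for $\Phi(S^{2\ell+1})$, and desuspend by $2\ell+1$.

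One clarification on your ``crux'' step~2: the identification $\Sigma^\infty_{T(n)}(MX) \simeq L_{T(n)}\Sigma^\infty X$ is not special to odd spheres but holds for \emph{any} $d_{n+1}$-connected $X$. Indeed, for such $X$ the zigzag in the proof of Theorem~\ref{thm:Mnf} consists entirely of $v_n$-periodic equivalences between $d_{n+1}$-connected spaces, and these are $T(n)_*$-equivalences --- a fact the paper already invokes in the proofs of Theorem~\ref{thm:SigmaOmegacoanalytic} and Proposition~\ref{prop:siftedcolimitscoanalytic}. Your counterexamples $S^1$ and $S^2$ are failures of this connectivity hypothesis (the $d_2$-connected cover of $S^2$ is $S^3$, and the Hopf map is not a $T(1)_*$-equivalence), not evidence of something deeper about sphere parity. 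So step~2 is softer than you suggest, and no appeal to \cite{bousfieldHspace} beyond the computation of $\Phi(S^{2\ell+1})$ is needed.
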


It would be interesting to see if recent calculations of the Bousfield--Kuhn functor on spheres at heights greater than 1 \cite{wang, zhu} can be used to deduce further results along these lines.

\section{The $v_n$-periodic homotopy theory of spaces}
\label{sec:Mnf}

The goal of this section is to present the construction of the $\infty$-category $\mathcal{S}_{v_n}$ and prove Theorems \ref{thm:Mnf} and \ref{thm:Mnf2}. Most of this material is based on the work of Bousfield and Dror-Farjoun on localizations (or `nullifications') of spaces \cite{bousfieldlocalization,farjoun}. We will frequently cite results from Bousfield's papers \cite{bousfieldlocalization} and \cite{bousfieldtelescopic}. An alternative exposition of the theory of localizations specifically aimed at the results of this paper is contained in the notes for Harvard's Thursday seminar \cite{thursday}.

\subsection{The $\infty$-category $\mathcal{L}_n^f$}
\label{subsec:Lnf}

Fix a finite type $n+1$ suspension space $V_{n+1}$. We need not assume that $V_{n+1}$ admits a $v_{n+1}$ self-map. Bousfield chooses $V_{n+1}$ so that its connectivity is as low as possible. We do not make this restriction, because the additional freedom in choosing $V_{n+1}$ is useful in proving Theorem \ref{thm:Phimonadic}. Define $d_{n+1}$ to be $\mathrm{conn}(V_{n+1}) + 1$. In other words, $d_{n+1}$ is the dimension of the lowest nonvanishing homotopy group of $V_{n+1}$. As an example, $V_1$ may be taken to be the Moore space $M(\mathbb{Z}/p, 2)$, in which case $d_1 = 2$. If $k > l$, we will always assume that our choices of $V_{k+1}$ and $V_{l+1}$ are such that $d_k \geq d_l$.

For $d$ a positive integer, we will write $\mathcal{S}_*\langle d\rangle$ for the full subcategory of $\mathcal{S}_*$ spanned by the $d$-connected spaces. We will carry out many constructions using the $\infty$-category $\mathcal{S}_*\langle 1 \rangle$ of pointed $p$-local simply-connected spaces. Recall that our convention is to leave the adjective $p$-local implicit and assume it applies throughout. The $\infty$-category $\mathcal{S}_*\langle 1 \rangle$ is easily seen to be compactly generated.

\begin{definition}
The $\infty$-category $L_n^f\mathcal{S}_*\langle 1 \rangle$ is the localization of $\mathcal{S}_*\langle 1 \rangle$ with respect to the map $V_{n+1} \rightarrow *$. We write $L_n^f\colon \mathcal{S}_*\langle 1 \rangle \rightarrow \mathcal{S}_*\langle 1 \rangle$ for the corresponding localization functor, so that $L_n^f\mathcal{S}_*\langle 1 \rangle$ is the essential image of $L_n^f$.
\end{definition}

Here the word \emph{localization} is intended in the sense of Definition 5.2.7.2 of \cite{htt}. Thus $L_n^f\mathcal{S}_*\langle 1 \rangle$ is the full subcategory of $\mathcal{S}_*\langle 1 \rangle$ on the objects which are local with respect to the map $V_{n+1} \rightarrow *$ and the inclusion of this subcategory into $\mathcal{S}_*\langle 1 \rangle$ is a right adjoint functor. This is the $\infty$-categorical analogue of left Bousfield localization of model categories.

When comparing our definition of $L_n^f$ to Bousfield's in 4.3 of \cite{bousfieldtelescopic} there might seem to be a discrepancy: Bousfield also localizes with respect to prime to $p$ Moore spaces. Since we work in the $p$-local setting throughout, this point can be safely ignored.

\begin{lemma}
\label{lem:Lnfcompactlygen}
An object $Y \in L_n^f\mathcal{S}_*\langle 1 \rangle$ is compact if and only if it is equivalent to a retract of one of the form $L_n^f X$ for $X$ a compact object of $\mathcal{S}_*\langle 1 \rangle$. Moreover, the $\infty$-category $L_n^f\mathcal{S}_*\langle 1 \rangle$ is compactly generated.
\end{lemma}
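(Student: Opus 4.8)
The plan is to deduce both assertions from the corresponding facts about $\mathcal{S}_*\langle 1 \rangle$ together with the general machinery for localizations of compactly generated $\infty$-categories. First I would recall that $\mathcal{S}_*\langle 1 \rangle$ is compactly generated, with a set of compact generators given by (finite) simply-connected pointed spaces, e.g. the spheres $S^m$ for $m \geq 2$ and their finite cell extensions. The key structural input is that $L_n^f$ is a localization at a \emph{single} map $V_{n+1} \to *$ with $V_{n+1}$ a finite (hence compact) space, so the class of $L_n^f$-equivalences is generated under the usual closure operations (cobase change, transfinite composition, retracts, smashing with objects of $\mathcal{S}_*\langle 1 \rangle$, filtered colimits) by the single map $V_{n+1} \to *$ between compact objects. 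This is precisely the setup in which a left Bousfield localization of a compactly generated $\infty$-category is again compactly generated and the localization functor $L_n^f$ commutes with filtered colimits; I would cite Bousfield \cite{bousfieldlocalization} (the localization is \emph{$f$-localization} at a map of finite complexes) for the space-level statement, or phrase it $\infty$-categorically. Granting that $L_n^f$ preserves filtered colimits, the inclusion $L_n^f\mathcal{S}_*\langle 1 \rangle \hookrightarrow \mathcal{S}_*\langle 1 \rangle$ preserves filtered colimits as well (being computed by applying $L_n^f$ to the colimit in $\mathcal{S}_*\langle 1 \rangle$), which is the crucial point.

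For the characterization of compact objects I would argue as follows. The left adjoint $L_n^f : \mathcal{S}_*\langle 1 \rangle \to L_n^f\mathcal{S}_*\langle 1 \rangle$ preserves all colimits (it is a left adjoint) and preserves compact objects precisely because its right adjoint, the inclusion $i$, preserves filtered colimits by the previous paragraph. Hence $L_n^f X$ is compact for every compact $X \in \mathcal{S}_*\langle 1 \rangle$, and so is any retract of such. Conversely, let $Y \in L_n^f\mathcal{S}_*\langle 1 \rangle$ be compact. Since $\mathcal{S}_*\langle 1 \rangle$ is compactly generated, $Y$ (viewed in $\mathcal{S}_*\langle 1 \rangle$ via $i$) is a filtered colimit $Y \simeq \operatorname*{colim}_\alpha X_\alpha$ of compact objects $X_\alpha$; applying $L_n^f$ and using that $L_n^f i \simeq \mathrm{id}$ on the local subcategory gives $Y \simeq L_n^f Y \simeq \operatorname*{colim}_\alpha L_n^f X_\alpha$, a filtered colimit in $L_n^f\mathcal{S}_*\langle 1 \rangle$ of the compact objects $L_n^f X_\alpha$. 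Compactness of $Y$ then forces the identity of $Y$ to factor through some $L_n^f X_\alpha$, exhibiting $Y$ as a retract of $L_n^f X_\alpha$ with $X_\alpha$ compact in $\mathcal{S}_*\langle 1 \rangle$. This proves the first sentence. Finally, the set $\{L_n^f X : X \text{ a compact generator of } \mathcal{S}_*\langle 1 \rangle\}$ is a set of compact objects, and it generates $L_n^f\mathcal{S}_*\langle 1 \rangle$: any $Y$ in the local category that is right-orthogonal to all of them is, by adjunction, right-orthogonal in $\mathcal{S}_*\langle 1 \rangle$ to a generating set, hence trivial. Therefore $L_n^f\mathcal{S}_*\langle 1 \rangle$ is compactly generated.

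The main obstacle is establishing that $L_n^f$ commutes with filtered colimits, equivalently that the inclusion $i$ preserves filtered colimits; everything else is a formal consequence of having a localization of a compactly generated $\infty$-category whose localizing subcategory is generated by a map between compact objects. This is a genuine input and not merely formal: it relies on Bousfield's analysis of $f$-localizations at maps of finite complexes (the small object argument can be run with compact objects, so the fibrant replacement / localization functor is a filtered colimit of pushouts along maps built from $V_{n+1} \to *$ smashed with compact spaces, hence preserves filtered colimits). I would either cite the relevant statement from \cite{bousfieldlocalization} (or \cite{farjoun}) directly, or reproduce the short argument that the $L_n^f$-equivalences are the smallest class containing $V_{n+1}\wedge K \to K$ for $K$ ranging over a set of compact generators and closed under the relevant colimits, from which filtered-colimit-preservation of $L_n^f$ is immediate.
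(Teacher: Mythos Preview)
Your proposal is correct and follows essentially the same approach as the paper: the paper's proof consists of the single observation that since $V_{n+1}$ is finite, the $L_n^f$-local objects are closed under filtered colimits and $L_n^f$ preserves filtered colimits, after which ``the lemma follows easily.'' You have simply spelled out in detail the formal deductions the paper leaves implicit; the only minor difference is that the paper deduces filtered-colimit-preservation directly from compactness of $V_{n+1}$ (via $\mathrm{Map}_*(V_{n+1},-)$ commuting with filtered colimits), whereas you route through the small object argument, but these are equivalent and equally valid.
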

\begin{proof}
Since $V_{n+1}$ is finite, the class of $L_n^f$-local objects of $\mathcal{S}_*\langle 1 \rangle$ is closed under filtered colimits and $L_n^f$ preserves filtered colimits. The lemma follows easily from this.
\end{proof}

At this point the reader might wonder to what extent the localization $L_n^f$ depends on the choice of $V_{n+1}$. It turns out only its connectivity is relevant. To be precise, let us say that spaces $W$ and $W'$ have the same \emph{Bousfield class} if the localizations with respect to $W \rightarrow *$ and $W' \rightarrow *$ are the same. We write $\langle W \rangle$ for the equivalence class of $W$ with respect to this relation and call it the Bousfield class of $W$. Define $\langle W \rangle \leq \langle W' \rangle$ if every space local with respect to $W' \rightarrow *$ is also local with respect to $W \rightarrow *$. Then Bousfield establishes the following unstable analogue of a stable classification result of Hopkins and Smith (see Theorem 9.15 of \cite{bousfieldlocalization}), which we state here for future reference:

\begin{theorem}
\label{thm:bousfieldclasses}
For $W$ and $W'$ finite suspension spaces of type greater than zero, the following are equivalent:
\begin{itemize}
\item[(i)] $\langle W \rangle \leq \langle W' \rangle$,
\item[(ii)] $\mathrm{type}(W) \geq \mathrm{type}(W')$ and $\mathrm{conn}(W) \geq \mathrm{conn}(W')$.
\end{itemize}
\end{theorem}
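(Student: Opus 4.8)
The plan is to reduce the statement to an intrinsic description of the class of $W$-null spaces depending only on the pair $(\mathrm{type}(W),\mathrm{conn}(W))$, after which both implications become formal. The first point is that $\langle W\rangle\le\langle W'\rangle$ can be rephrased cellularly: for any space $W'$, the collection of $Z$ with $\mathrm{Map}_*(Z,X)\simeq *$ for every $W'$-null $X$ is a \emph{closed class} in the sense of Dror--Farjoun \cite{farjoun}, since $\mathrm{Map}_*(-,X)$ carries pointed homotopy colimits to homotopy limits. This class contains $W'$, hence its closed (cellular) closure $\bar C(W')$, and conversely one checks that $\langle W\rangle\le\langle W'\rangle$ forces $W\in\bar C(W')$. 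So (i) is equivalent to $W$ being constructible from $W'$ by iterated pointed homotopy colimits. A useful by-product of this reformulation: every space in $\bar C(W')$ is at least $\mathrm{conn}(W')$-connected (a pushout of $c$-connected simply connected spaces is $c$-connected), which already proves one half of the connectivity inequality in (i)$\Rightarrow$(ii).

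For the remainder of (i)$\Rightarrow$(ii) I would argue with test spaces, in the spirit of the Morava $K$-theories in the stable thick subcategory theorem. To force $\mathrm{type}(W)\ge\mathrm{type}(W')$ one uses $W'$-null spaces that still carry nontrivial $v_m$-periodic homotopy for each $m<\mathrm{type}(W')$ — roughly, infinite loop spaces of suitable $T(m)$-local spectra, whose suspension spectra are $T(m)$-acyclic of type $>m$ and hence receive no essential maps from $W'$ — since a space with nontrivial height-$m$ Bousfield--Kuhn invariant cannot be $W$-null when $\mathrm{type}(W)=m$. To force $\mathrm{conn}(W)\ge\mathrm{conn}(W')$ beyond the bound above, one feeds in $W'$-null spaces realizing prescribed Postnikov data up to degree exactly $\mathrm{conn}(W')$ (for $W'$ a Moore space this is just the observation that $X$ is $M(\mathbf Z/p,c)$-null iff $\pi_iX$ is uniquely $p$-divisible for $i\ge c$).

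The substance is (ii)$\Rightarrow$(i), the unstable analogue of the thick subcategory theorem, where the Hopkins--Smith periodicity theorem \cite{hopkinssmith} is essential. Two moves preserve membership in the cellular closure: reduced suspension, which raises connectivity by one and preserves type; and, for a type $n$ complex $W''$ with $v_n$-self-map $v$, the cofibre $W''/v^N=\mathrm{cofib}(v^N\colon\Sigma^{Nd}W''\to W'')$, which is a pushout, hence lies in $\bar C(W'')$, and — for $N$ large enough that $v^N$ acts trivially on the next Morava $K$-theory — has type exactly $n+1$ and unchanged connectivity. Iterating these moves from $W'$ one reaches, inside $\bar C(W')$, finite complexes with the same $(\mathrm{type},\mathrm{conn})$ as $W$. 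The main obstacle, and the point demanding real care, is that a $v_n$-self-map is only guaranteed after a possibly large number of suspensions, so naive iteration overshoots in connectivity; the genuinely delicate case is $\mathrm{conn}(W)=\mathrm{conn}(W')$ with $\mathrm{type}(W)>\mathrm{type}(W')$ — for instance at $p=2$, where $M(\mathbf Z/2,2)$ supports no $v_1$-self-map at all.

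I would get around this, and at the same time make the argument self-contained, by proving a direct characterization of nullification: a space $X$ is $W$-null if and only if its connective cover $\tau_{>\mathrm{conn}(W)}X$ is $L^f_{\mathrm{type}(W)-1}$-local (with the convention that $L^f_0$-local means rational). This is the step that genuinely uses the periodicity theorem and the internal structure of the telescopic localizations $L^f_k$. Granting it, Theorem \ref{thm:bousfieldclasses} is then purely formal: the subcategories of $L^f_k$-local spaces increase in $k$ and are closed under connective covers, so condition (ii) is exactly what makes the $W'$-null condition imply the $W$-null condition, while suitable test spaces as in the second paragraph give the converse. I would present this characterization of $W$-null spaces as the technical heart of the proof.
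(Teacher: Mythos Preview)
The paper does not prove this theorem; it is merely stated ``for future reference'' with a citation to Theorem~9.15 of \cite{bousfieldlocalization}, where Bousfield establishes it. There is therefore no proof in the paper against which to compare your proposal.

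That said, your outline is reasonable and broadly follows the shape of Bousfield's argument. The reformulation of (i) via cellular closures is standard and correct, and the connectivity half of (i)$\Rightarrow$(ii) does follow immediately from the fact that closed classes are closed under homotopy colimits. However, your proposal defers essentially all of the substantive work to the final characterization---that $X$ is $W$-null if and only if the $(\mathrm{conn}(W))$-connected cover of $X$ is $L^f_{\mathrm{type}(W)-1}$-local---which you state but do not prove. That characterization is indeed the heart of the matter, and establishing it is roughly as hard as the theorem itself. One must also be careful with its formulation: the notion ``$L^f_{n-1}$-local'' already presupposes a choice of finite type~$n$ space with some connectivity, and showing that the characterization holds for \emph{every} finite type~$n$ suspension space $W$, of arbitrary connectivity, is precisely where the periodicity theorem and careful inductive bookkeeping enter in Bousfield's proof. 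Your test-space sketch for the type inequality in (i)$\Rightarrow$(ii) is also quite informal; Bousfield handles this via the interaction of nullification with Morava $K$-theories rather than via unstable $T(m)$-local constructions. In short: the strategy is sound and points at the right ingredients, but as written it is an outline that names the key lemma rather than a proof that supplies it.
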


The localization functor $L_n^f$ captures the $v_i$-periodic homotopy groups for $i \leq n$, but kills them for $i > n$. More precisely, let $X$ be any pointed simply-connected space and $F \in \mathcal{S}_*\langle 1 \rangle$ a finite pointed type $i$ space with a $v_i$ self-map $v_i\colon \Sigma^d F \rightarrow F$. Then Theorem 5.2 of \cite{bousfieldlocalization} gives
\begin{equation*}
\pi_k L_0^f X \simeq \pi_k X \otimes \mathbb{Q} \quad\quad \text{if } k > d_1
\end{equation*}
and 4.6 of \cite{bousfieldtelescopic} states, for $i,n \geq 1$, that
\begin{equation*}
v_i^{-1}\pi_*(L_n^f X; F) \simeq \begin{cases} v_i^{-1} \pi_*(X; F) & \text{if } i \leq n, \\
0 & \text{if } i > n.
\end{cases}
\end{equation*}

We will need a functor $M_n^f$ which captures only the $v_n$-periodic homotopy groups of a pointed space. For this we use the following:

\begin{lemma}
\label{lem:LnLn-1}
An $L_{n-1}^f$-local pointed space is also $L_n^f$-local, so that $L_{n-1}^f \simeq L_n^f L_{n-1}^f$. 
\end{lemma}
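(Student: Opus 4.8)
The plan is to deduce both assertions directly from Bousfield's classification of Bousfield classes of finite suspension spaces, Theorem \ref{thm:bousfieldclasses}. First I would unwind the definitions: by construction the $L_{n-1}^f$-local objects of $\mathcal{S}_*\langle 1\rangle$ are precisely those local with respect to the map $V_n \to *$, and the $L_n^f$-local objects are precisely those local with respect to $V_{n+1} \to *$. Hence the statement that every $L_{n-1}^f$-local space is $L_n^f$-local is, by the very definition of the ordering on Bousfield classes, the inequality $\langle V_{n+1}\rangle \leq \langle V_n\rangle$.

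To establish this inequality I would invoke Theorem \ref{thm:bousfieldclasses}, whose hypotheses are met since $V_n$ and $V_{n+1}$ are finite suspension spaces of types $n$ and $n+1$, both positive (recall $n \geq 1$ here). It then suffices to check the two numerical conditions in part (ii) of that theorem with $W = V_{n+1}$ and $W' = V_n$. The type condition $\mathrm{type}(V_{n+1}) = n+1 \geq n = \mathrm{type}(V_n)$ is immediate. The connectivity condition $\mathrm{conn}(V_{n+1}) \geq \mathrm{conn}(V_n)$, i.e. $d_{n+1} \geq d_n$, is exactly our standing convention on the connectivities of the chosen type-$m$ suspension spaces from Section \ref{subsec:Lnf}. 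Note that no further property of $V_{n+1}$ (such as admitting a $v_{n+1}$ self-map) enters.

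Finally, for the displayed consequence $L_{n-1}^f \simeq L_n^f L_{n-1}^f$: for any $X \in \mathcal{S}_*\langle 1\rangle$ the space $L_{n-1}^f X$ is $L_{n-1}^f$-local, hence $L_n^f$-local by the first part, so the localization map $L_{n-1}^f X \to L_n^f L_{n-1}^f X$ is an $L_n^f$-equivalence between $L_n^f$-local objects and is therefore an equivalence; naturality in $X$ then yields the asserted equivalence of functors. The only genuine content is the appeal to Theorem \ref{thm:bousfieldclasses}; everything else is bookkeeping, and the one point to get right is the indexing convention guaranteeing $d_{n+1}\geq d_n$.
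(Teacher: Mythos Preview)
Your proposal is correct and follows essentially the same approach as the paper: both deduce the inclusion of local objects from the inequality $\langle V_{n+1}\rangle \leq \langle V_n\rangle$, which in turn is obtained from Theorem~\ref{thm:bousfieldclasses}. You simply spell out the verification of the type and connectivity hypotheses and the formal deduction of $L_{n-1}^f \simeq L_n^f L_{n-1}^f$ in more detail than the paper's one-line proof.
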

\begin{proof}
This is immediate from Theorem \ref{thm:bousfieldclasses} and the fact that $\langle V_{n+1} \rangle \leq \langle V_n \rangle$.
\end{proof}

By the previous lemma, the natural transformation $L_n^f \rightarrow L_n^f L_{n-1}^f$ induces a natural transformation $L_n^f \rightarrow L_{n-1}^f$.

\begin{definition}
The functor $M_n^f$ is the fiber of $L_n^f \rightarrow L_{n-1}^f$.
\end{definition}

Our previous description of the periodic homotopy groups of $L_n^f X$ gives the following for $i, n \geq 1$:
\begin{equation*}
v_i^{-1}\pi_*(M_n^f X; F) \simeq \begin{cases} v_n^{-1} \pi_*(X; F) & \text{if } i = n, \\
0 & \text{if } i \neq n.
\end{cases}
\end{equation*}
Of course $M_0^f = L_0^f$. We have described the behaviour of $L_n^f$ and $M_n^f$ with respect to periodic homotopy groups. However, it is not quite true that these functors \emph{detect} periodic homotopy equivalences in any reasonable sense, since they do not affect the homotopy groups of a space in dimensions below the connectivity of $V_{n+1}$. This can be fixed by Theorem \ref{thm:periodicequivalence} below. First we need to know how $L_n^f$ interacts with taking highly connected covers. The following is Proposition 13.1 of \cite{bousfieldlocalization}:

\begin{proposition}
\label{prop:conncover}
The functor $L_n^f$ commutes with $d_{n+1}$-connected covers. More precisely, for any $X \in \mathcal{S}_*$ the space $L_n^f(X\langle d_{n+1}\rangle)$ is $d_{n+1}$-connected, so that there is a natural map
\begin{equation*}
L_n^f(X\langle d_{n+1}\rangle) \rightarrow (L_n^f X)\langle d_{n+1} \rangle,
\end{equation*}
and this map is an equivalence.
\end{proposition}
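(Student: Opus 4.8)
The plan is to work throughout with the nullification description $L_n^f \simeq P_{V_{n+1}}$ and to abbreviate $d := d_{n+1} = \mathrm{conn}(V_{n+1})+1$; one reduces immediately to the case of simply-connected $X$, since $\tau_{\le 1}X$ will be seen to be $L_n^f$-local and $X\langle d\rangle = (X\langle 1\rangle)\langle d\rangle$. For the connectivity assertion, recall that the homotopy fiber of the unit $Z \to L_n^f Z$ is $V_{n+1}$-cellular (by the standard fiber description of a nullification), hence $(d-1)$-connected, so $Z \to L_n^f Z$ is an isomorphism on $\pi_i$ for $i < d$ and an epimorphism on $\pi_d$. Applied to the $d$-connected space $X\langle d\rangle$ this shows $L_n^f(X\langle d\rangle)$ is $d$-connected, and since a $d$-connected space maps uniquely and compatibly through the $d$-connected cover of any target, we obtain the natural comparison map $\alpha_X \colon L_n^f(X\langle d\rangle) \to (L_n^f X)\langle d\rangle$.

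The crucial preliminary is that $L_n^f$-local spaces are closed under $d$-connected covers. First, for any $Z$ and any $k \le d-1$, the Postnikov piece $\tau_{\le k}Z$ is $L_n^f$-local: it is an iterated homotopy limit of spaces $K(A,i)$ with $i \le d-1 = \mathrm{conn}(V_{n+1})$, and every homotopy group of $\mathrm{Map}_*(V_{n+1}, K(A,i))$ has the form $\widetilde{H}^{i-j}(V_{n+1};A)$, which vanishes because $V_{n+1}$ is $(d-1)$-connected; since $L_n^f$-local spaces are closed under limits, $\tau_{\le k}Z$ is local. Now if $Z$ is local, the fibration $Z\langle d\rangle \to Z \to \tau_{\le d}Z$ yields, on applying $\mathrm{Map}_*(V_{n+1},-)$, a fiber sequence with contractible total space and with base $\mathrm{Map}_*(V_{n+1},\tau_{\le d}Z) \simeq \mathrm{Map}_*(V_{n+1}, K(\pi_d Z,d))$ (the lower stages being cancelled by the previous point) which is homotopy discrete, since its positive homotopy groups are $\widetilde{H}^{d-j}(V_{n+1};\pi_d Z) = 0$ for $j \ge 1$; hence $\mathrm{Map}_*(V_{n+1},Z\langle d\rangle)\simeq *$, i.e., $Z\langle d\rangle$ is local. (This also shows $d_{n+1}$ is the largest degree for which the closure property holds, which accounts for its appearance in the statement.) In particular $(L_n^f X)\langle d\rangle$ is $L_n^f$-local.

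Given this, it suffices to show $\alpha_X$ is an equivalence, and since its target is now $L_n^f$-local, this amounts to showing the natural map $\beta_X \colon X\langle d\rangle \to (L_n^f X)\langle d\rangle$ (the $d$-connected cover of the localization $X \to L_n^f X$) is an $L_n^f$-equivalence. I would approach this by comparing fiber sequences: $\tau_{\le d-1}X$ is $L_n^f$-local, so by Bousfield's theorem that a nullification carries a fibration over a local base to a fibration, the sequence $X\langle d-1\rangle \to X \to \tau_{\le d-1}X$ localizes to $L_n^f(X\langle d-1\rangle) \to L_n^f X \to \tau_{\le d-1}X$, giving $L_n^f(X\langle d-1\rangle) \simeq (L_n^f X)\langle d-1\rangle$. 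Via the fibration $X\langle d\rangle \to X\langle d-1\rangle \to K(\pi_d X, d)$ one is then reduced to analysing the effect of $L_n^f$ on the single Eilenberg--MacLane layer $K(\pi_d X, d)$ and to showing that this last fibration remains a fibration after applying $L_n^f$ in degrees $> d$.

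I expect this final reduction to be the main obstacle. The base $K(\pi_{d_{n+1}}X, d_{n+1})$ is not $L_n^f$-local — its nullification in general acquires higher homotopy — so no soft fibration-preservation statement is available, and this is precisely where one must invoke Bousfield's direct analysis \cite[Proposition 13.1]{bousfieldlocalization}, exploiting the finiteness and low connectivity of $V_{n+1}$. This has to be done without using the characterization of $v_n$-periodic equivalences in terms of periodic homotopy groups, since Proposition \ref{prop:conncover} is itself one of the inputs to that characterization (Theorem \ref{thm:periodicequivalence}).
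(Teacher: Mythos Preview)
The paper does not supply its own proof of this proposition; it simply cites Proposition 13.1 of \cite{bousfieldlocalization}. Your proposal correctly establishes two useful preliminaries --- that $L_n^f(X\langle d_{n+1}\rangle)$ is $d_{n+1}$-connected (via the $V_{n+1}$-cellularity of the fiber of the unit) and that $L_n^f$-local spaces are closed under $d_{n+1}$-connected covers (via your cohomological vanishing argument for $\mathrm{Map}_*(V_{n+1}, -)$ on Postnikov pieces). Both arguments are sound.

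However, as you yourself acknowledge, these two facts alone do not force $\alpha_X$ to be an equivalence: knowing that source and target are both $d_{n+1}$-connected and both local is not enough. One still needs $X\langle d_{n+1}\rangle \to (L_n^fX)\langle d_{n+1}\rangle$ to be an $L_n^f$-equivalence, and your reduction via the $(d_{n+1}-1)$-connected cover and the local-base fibration lemma leaves you with the fibration over $K(\pi_{d_{n+1}}X, d_{n+1})$, whose base is not local. At that point you invoke Bousfield's Proposition 13.1 --- but that is the full statement under discussion, not a special case of it, so your reduction has not actually simplified the problem. The endpoint is the same citation the paper makes. Your added exposition is correct and gives helpful context for why $d_{n+1}$ is the critical threshold, but the approach is not genuinely different from the paper's: both ultimately defer to Bousfield.
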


The reader should note that $v_n$-periodic homotopy groups are invariant under taking arbitrarily highly connected covers (at least for $n \geq 1$), so that replacing $X$ by $X\langle d_{n+1} \rangle$ is harmless from the point of view of periodic homotopy. The compatibility expressed by Proposition \ref{prop:conncover} is very useful because of the following two theorems. We write $\mathcal{S}_*\langle d_{n+1} \rangle$ for the full subcategory of $\mathcal{S}_*$ spanned by $d_{n+1}$-connected spaces; also, we write $\mathcal{L}_n^f$ for the $L_n^f$-localization of $\mathcal{S}_*\langle d_{n+1} \rangle$, which is the full subcategory of $\mathcal{S}_*\langle d_{n+1} \rangle$ spanned by the $L_n^f$-local spaces.

\begin{theorem}
\label{thm:periodicequivalence}
A map $\varphi\colon X \rightarrow Y$ in $\mathcal{S}_*\langle d_{n+1} \rangle$ is a $v_i$-periodic equivalence for each $0 \leq i \leq n$ if and only if $(L_n^f \varphi)\langle d_{n+1}\rangle$ is an equivalence.  Furthermore, it is a $v_n$-periodic equivalence if and only if $(M_n^f \varphi)\langle d_{n+1}\rangle$ is an equivalence.
\end{theorem}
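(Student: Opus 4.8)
The plan is to reduce both assertions to the displayed computations of periodic homotopy groups plus one genuinely periodicity-theoretic input of Bousfield, and then to deduce the $M_n^f$-statement formally from the $L_n^f$-statement. First I would clear away the connectivity bookkeeping. Since $X$ and $Y$ are $d_{n+1}$-connected, Proposition \ref{prop:conncover} shows $L_n^f X$ and $L_n^f Y$ are again $d_{n+1}$-connected, so $(L_n^f\varphi)\langle d_{n+1}\rangle \simeq L_n^f\varphi$; the first assertion thus becomes: $\varphi$ is a $v_i$-periodic equivalence for every $0 \le i \le n$ if and only if $\varphi$ is an $L_n^f$-equivalence. The ``only if'' direction is immediate: naturality of the equivalences $v_i^{-1}\pi_*(L_n^f X; F) \simeq v_i^{-1}\pi_*(X; F)$ handles $1 \le i \le n$, and for $i = 0$ one uses $L_0^f\circ L_n^f \simeq L_0^f$ (a consequence of $\langle V_{n+1}\rangle \le \langle V_1\rangle$, which holds by Theorem \ref{thm:bousfieldclasses}) together with the fact that $\pi_* L_0^f X \simeq \pi_* X \otimes \mathbb{Q}$ in every degree — in degrees $> d_1$ by the quoted theorem of Bousfield, and in degrees $\le d_1$ because $X$ is $d_{n+1}$-connected with $d_{n+1} \ge d_1$ while $L_0^f X$ is $d_1$-connected by Proposition \ref{prop:conncover}.

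The ``if'' direction carries the content. Given such a $\varphi$, naturality of the same isomorphisms shows $L_n^f\varphi$ is again a $v_i$-periodic equivalence for $0 \le i \le n$, so I may replace $\varphi$ by a map $\psi\colon A \to B$ of $L_n^f$-local, $d_{n+1}$-connected spaces. I then climb the tower $A = L_n^f A \to L_{n-1}^f A \to \cdots \to L_0^f A$ whose successive fibers are $M_1^f A, \dots, M_n^f A$ and whose bottom term is $L_0^f A$. Using the displayed formulas one checks that, for $i \ge 1$, the space $M_i^f A$ is simply connected (it is $(d_i-1)$-connected and $d_i \ge d_1 \ge 2$), is $L_i^f$-local (the fiber of a map between $L_i^f$-local spaces, by Lemma \ref{lem:LnLn-1}), is rationally trivial (the map $L_i^f A \to L_{i-1}^f A$ is a rational equivalence, both sides having rational homotopy $\pi_* A \otimes \mathbb{Q}$), and has $v_j$-periodic homotopy $0$ for $j \ne i$ and $v_i^{-1}\pi_*(A;F)$ for $j = i$. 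The bottom map $L_0^f\psi$ is an equivalence because $\psi$ is a rational equivalence; and for each $i \ge 1$ the map $M_i^f\psi$ is a $v_j$-periodic equivalence for every $j$ — vacuously for $j \ne i$, by hypothesis for $j = i$. Here I would invoke Bousfield's analysis of periodic equivalences (\cite{bousfieldtelescopic}): a $v_i$-periodic equivalence between $L_i^f$-local spaces that are rationally trivial with vanishing $v_j$-periodic homotopy for all $j < i$ must be an equivalence. Climbing the tower through the resulting fiber sequences of simply connected spaces (five lemma on homotopy groups, then Whitehead) yields that $\psi = L_n^f\psi$ is an equivalence. This appeal to Bousfield's periodicity analysis is the step I expect to be the genuine obstacle; everything surrounding it is formal.

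Finally I would deduce the $M_n^f$-statement from the $L_n^f$-statement. The space $M_n^f X = \mathrm{fib}(L_n^f X \to L_{n-1}^f X)$ is $L_n^f$-local (Lemma \ref{lem:LnLn-1}), simply connected, rationally trivial, and has vanishing $v_i$-periodic homotopy for $i \ne n$, while $v_n^{-1}\pi_*(M_n^f X; F) \simeq v_n^{-1}\pi_*(X; F)$ naturally; moreover $(M_n^f X)\langle d_{n+1}\rangle$ is still $L_n^f$-local (Proposition \ref{prop:conncover}) and $d_{n+1}$-connected. If $\varphi$ is a $v_n$-periodic equivalence then, using that $v_j$-periodic homotopy for $j \ge 1$ is unchanged by passage to connected covers and that the spaces in sight are rationally trivial, the map $(M_n^f\varphi)\langle d_{n+1}\rangle$ is a $v_i$-periodic equivalence for every $0 \le i \le n$, hence an equivalence by the first part. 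Conversely, if $(M_n^f\varphi)\langle d_{n+1}\rangle$ is an equivalence, the natural isomorphism on $v_n$-periodic homotopy (again via connected-cover invariance) forces $\varphi$ to be a $v_n$-periodic equivalence.
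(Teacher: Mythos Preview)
Your treatment of the second sentence (the $M_n^f$ statement) matches the paper's proof essentially verbatim: reduce to the first sentence by observing that $(M_n^f X)\langle d_{n+1}\rangle$ is $L_n^f$-local and has trivial $v_i$-periodic homotopy for $i < n$, so a $v_n$-periodic equivalence between such objects is automatically a $v_i$-periodic equivalence for all $0 \le i \le n$.

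For the first sentence the paper gives no independent argument at all; it simply cites Corollary~4.8 of \cite{bousfieldtelescopic}. Your more elaborate approach, climbing the tower $L_n^f \to L_{n-1}^f \to \cdots \to L_0^f$, is a reasonable way to organize the proof, but it does not actually avoid the dependence on Bousfield: at the crucial step you still need that a $v_i$-periodic equivalence between $L_i^f$-local, rationally trivial spaces with vanishing lower periodic homotopy is an equivalence, which you correctly flag as the genuine obstacle. This is essentially the same periodicity input as Corollary~4.8 itself (specialized to monochromatic spaces), so your decomposition trades one Bousfield citation for a tower of similar ones. Note also that your layers $M_i^f A$ are only $(d_i-1)$-connected, not $d_{i+1}$-connected, so you cannot literally invoke the height-$i$ analogue of the first sentence; the precise statement you need is not quite what Bousfield's Corollary~4.8 says and would require tracing through his arguments rather than citing a single result. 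Minor point: you have the labels ``if'' and ``only if'' reversed throughout.
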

\begin{proof}
The first sentence is Corollary 4.8 of \cite{bousfieldtelescopic}. If $n=0$ the second sentence adds no information, since $L_0^f = M_0^f$. Therefore take $n > 0$. Note first that $\varphi$ is a $v_n$-periodic equivalence if and only if $(M_n^f \varphi)\langle d_{n+1} \rangle$ is a $v_n$-periodic equivalence. Indeed, this follows from our description of the homotopy groups of $M_n^f$ above, together with the observation that the $v_n$-periodic homotopy groups of $(M_n^f X)\langle d_{n+1} \rangle$ are the same as those of $M_n^f X$ and similarly for $M_n^f Y$. Note that we may as well say that $(M_n^f \varphi)\langle d_{n+1} \rangle$ is a $v_i$-periodic equivalence for $0 \leq i \leq n$, since this is vacuous for $i < n$. We claim that $(M_n^f X)\langle d_{n+1} \rangle$ and $(M_n^f Y)\langle d_{n+1} \rangle$ are $L_n^f$-local; the conclusion that $(M_n^f \varphi)\langle d_{n+1} \rangle$ is a $v_n$-periodic equivalence if and only if it is an actual equivalence then follows from the first part of the theorem. To establish this claim, observe that $M_n^f X$ is $L_n^f$-local because it is the fiber of a map between $L_n^f$-local spaces. Applying Proposition \ref{prop:conncover} we find
\begin{equation*}
(M_n^f X) \langle d_{n+1} \rangle \simeq (L_n^f M_n^f X)\langle d_{n+1} \rangle \simeq L_n^f\bigl((M_n^f X)\langle d_{n+1} \rangle\bigr)
\end{equation*}
and similarly for $Y$, which completes the proof.
\end{proof}

\begin{theorem}
\label{thm:Lnfinitelimits}
The functor 
\begin{equation*}
\mathcal{S}_*\langle d_{n+1}\rangle \rightarrow \mathcal{L}_n^f: X \mapsto L_n^f X
\end{equation*}
preserves finite limits.
\end{theorem}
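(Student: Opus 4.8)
The plan is to reduce the statement to the fact that $L_n^f$ preserves finite \emph{products} and then use a standard fiber-sequence argument together with Proposition \ref{prop:conncover}. Since any $\infty$-category with finite products and pullbacks has all finite limits, and a functor between such categories preserving both preserves all finite limits, it suffices to treat the terminal object (trivial), binary products, and pullbacks. The terminal object $*$ is already $L_n^f$-local. For products, note that $L_n^f$ is a left adjoint composed with a right adjoint (the localization functor factors as $\mathcal{S}_*\langle d_{n+1}\rangle \to \mathcal{L}_n^f \hookrightarrow \mathcal{S}_*\langle d_{n+1}\rangle$, with the first functor a left adjoint and the inclusion a right adjoint), so it is not obviously product-preserving; instead I would argue directly that for $X,Y \in \mathcal{S}_*\langle d_{n+1}\rangle$ the natural map $L_n^f(X\times Y) \to L_n^f X \times L_n^f Y$ is an $L_n^f$-equivalence between $L_n^f$-local spaces (the target is local since $\mathcal{L}_n^f$ is closed under finite products, being reflective and the class of local objects being characterized by a mapping-space condition that is closed under products), and that it is an $L_n^f$-equivalence because the localization map $Z \to L_n^f Z$ is built by iterated cofiber constructions against $V_{n+1}$, all of which are compatible with finite products of highly-connected spaces. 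Hence this map is an equivalence.

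The main work is pullbacks. Given a cospan $X \to B \leftarrow Y$ in $\mathcal{S}_*\langle d_{n+1}\rangle$ with pullback $P$, I want $L_n^f P \simeq L_n^f X \times_{L_n^f B} L_n^f Y$. Since everything in sight is $d_{n+1}$-connected and $\Phi$ (equivalently, the telescopic functors) preserves finite limits, one reduces to checking that the comparison map is a $v_i$-periodic equivalence for all $0 \le i \le n$, by Theorem \ref{thm:periodicequivalence}: more precisely, the map $P \to L_n^f X \times_{L_n^f B} L_n^f Y$ (after the relevant connected cover) should be shown to be a $v_i$-periodic equivalence. But a fiber product of $L_n^f$-local spaces along $L_n^f$-local spaces is again $L_n^f$-local (fibers of maps between local spaces are local, and this is a twice-iterated fiber), so the target \emph{is} $L_n^f P$ once we know the map $P \to L_n^f X \times_{L_n^f B} L_n^f Y$ is an $L_n^f$-equivalence, i.e. a $v_i$-periodic equivalence for $i \le n$ on $d_{n+1}$-connected covers. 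This in turn follows because $\Phi$ (hence each $\Phi_v$, hence each $v_i^{-1}\pi_*(-;F)$) carries the pullback square for $P$ to a pullback square of spectra, and carries the square defining $L_n^f X \times_{L_n^f B} L_n^f Y$ to a pullback square as well; since $L_n^f$ is a $v_i$-periodic equivalence for $i \le n$ (by the formula for $v_i^{-1}\pi_*(L_n^f X; F)$ quoted before Lemma \ref{lem:LnLn-1}, at least after passing to $d_{n+1}$-connected covers via Proposition \ref{prop:conncover}), the two pullback squares of spectra are identified, so the induced map is a $v_i$-periodic equivalence.

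I expect the delicate point to be the connectivity bookkeeping: $L_n^f$ only controls homotopy groups above dimension $d_{n+1}$, and a pullback of $d_{n+1}$-connected spaces need not be $d_{n+1}$-connected (only $(d_{n+1}-1)$-connected in general, though since our pointed spaces are $d_{n+1}$-connected the pullback of pointed maps is at worst one dimension less connected). This is precisely why Proposition \ref{prop:conncover} — that $L_n^f$ commutes with $d_{n+1}$-connected covers — is essential: it lets me replace the possibly-insufficiently-connected pullback by its $d_{n+1}$-connected cover without changing $v_n$-periodic information, and it guarantees the comparison map lands among $L_n^f$-local, $d_{n+1}$-connected objects. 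Once these connectivity issues are handled, the argument is the formal "a limit-preserving statement for a localization reduces to the cofibers being killed" pattern; alternatively, one can invoke the general fact (in the style of \cite{htt}, or as used by Bousfield) that a nullification functor with respect to a finite suspension space of type $> n$ is compatible with finite limits of sufficiently connected spaces, citing the relevant result of \cite{bousfieldlocalization} directly.
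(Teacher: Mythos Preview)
Your route is workable but genuinely different from the paper's, and one of the steps you lean on is not actually supplied by what you cite.

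The paper does \emph{not} argue by applying telescopic functors to the pullback and invoking Theorem~\ref{thm:periodicequivalence}. Instead it first cites a result of Bousfield (Lemma~12.5 of \cite{bousfieldtelescopic}) to the effect that $L_n^f$ on $\mathcal{S}_*\langle d_{n+1}\rangle$ preserves \emph{fiber sequences}; in particular $L_n^f\Omega \simeq \Omega L_n^f$. It then proves a purely formal lemma (Lemma~\ref{lem:loopspullback}): if $F$ preserves the terminal object and commutes with $\Omega$, then for any pullback square $P$ the square $\Omega F(P)$ is again a pullback. A second lemma (Lemma~\ref{lem:loopsequiv}) says a map in $\mathcal{L}_n^f$ is an equivalence as soon as $\Omega$ of it is an equivalence and it is a rational equivalence. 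Combining these with the classical fact that rationalization preserves pullbacks of simply-connected spaces finishes the argument. Thus the paper bootstraps from fiber sequences to general pullbacks via the $\Omega$-trick; the connectivity bookkeeping you worry about is absorbed by working with fiber sequences (where the cited Bousfield lemma already handles the connected-cover issue) rather than arbitrary pullbacks.

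Your direct approach via $\Phi_v$ and Theorem~\ref{thm:periodicequivalence} is cleaner for $1 \le i \le n$: the displayed formula before Lemma~\ref{lem:LnLn-1} does give that $X \to L_n^f X$ is a $v_i$-periodic equivalence in that range, and each $\Phi_v$ manifestly preserves finite limits. But for $i=0$ that formula only concerns $L_0^f$; it does \emph{not} tell you that $X \to L_n^f X$ is a rational equivalence. You need a separate argument (e.g.\ that $V_{n+1}$, being of type $\ge 1$, is rationally trivial, so the nullification is a rational equivalence), and you must still use that rationalization preserves pullbacks of simply-connected spaces --- the very same input the paper uses. Your product argument is also soft (``cofiber constructions compatible with finite products'' is not obviously true and is anyway subsumed once you have pullbacks, since $X\times Y \simeq X\times_{\ast} Y$). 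So your proposal trades the paper's two auxiliary lemmas and the Bousfield citation for an appeal to Theorem~\ref{thm:periodicequivalence} plus a missing rational step; both are valid once completed, but neither is quite the formal ``nullification by a finite complex preserves finite limits'' one-liner your last sentence suggests.
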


In other words, when restricted to $d_{n+1}$-connected spaces the localization $L_n^f$ is left exact. This will be important when comparing Goodwillie calculus in $\mathcal{S}_*$ to Goodwillie calculus in $\mathcal{L}_n^f$. For the proof of Theorem \ref{thm:Lnfinitelimits} we need two lemmas.

\begin{lemma}
\label{lem:loopspullback}
Let $\mathcal{C}$ and $\mathcal{D}$ be $\infty$-categories admitting finite limits and let $F\colon \mathcal{C} \rightarrow \mathcal{D}$ be a functor satisfying the following:
\begin{itemize}
\item[(i)] $F$ preserves terminal objects,
\item[(ii)] the evident natural transformation $F\Omega \rightarrow \Omega F$ is an equivalence.
\end{itemize}
Then for any pullback square $P$ in $\mathcal{C}$, the square $\Omega F(P)$ is a pullback in $\mathcal{D}$.
\end{lemma}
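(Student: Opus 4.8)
The plan is to reduce the claim to a statement about $F$ and pullbacks of loop objects, and then to attack that using the group structures carried by loops. The first point is that the loop functor on a pointed $\infty$-category with finite limits, being the pullback $\ast \times_{(-)} \ast$, preserves all finite limits; in particular, applied levelwise to the pullback square $P$ it produces a pullback square $\Omega(P)$ in $\mathcal{C}$. Hypothesis (ii) is a natural equivalence $F\Omega \simeq \Omega F$ of functors $\mathcal{C} \to \mathcal{D}$; evaluating it on $P$, viewed as a functor $(\Delta^1)^2 \to \mathcal{C}$, gives an equivalence of squares $\Omega F(P) \simeq F(\Omega(P))$ in $\mathcal{D}$. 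Hence the lemma is equivalent to the assertion that $F$ sends the single pullback square $\Omega(P)$ — every vertex of which is a loop object and every edge a loop map — to a pullback square.

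To prove that, write $P$ as $W = X \times_Z Y$, so that $\Omega(P)$ identifies $\Omega W$ with $\Omega X \times_{\Omega Z} \Omega Y$. Since $\Omega Z$ is a group object of $\mathcal{C}$, this pullback is the fiber of a difference map $\delta \colon \Omega X \times \Omega Y \to \Omega Z$ built from $\Omega x$, $\Omega y$, and the multiplication and inversion of $\Omega Z$; the same recipe in $\mathcal{D}$ presents $\Omega FX \times_{\Omega FZ} \Omega FY$ as $\mathrm{fib}(\delta')$. I would then argue that $F$ carries the fiber sequence $\Omega W \to \Omega X \times \Omega Y \xrightarrow{\ \delta\ } \Omega Z$ to the corresponding one in $\mathcal{D}$: the objects involved are all loop objects, so $F$ is pinned down on them by (ii), and one uses this, together with the fact that $F$ preserves the terminal object, to control $F$ on the product $\Omega X \times \Omega Y \simeq \Omega(X \times Y)$ and on the group operations of $\Omega Z$, and thereby to transport the construction of $\delta$ and the passage to its fiber. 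Identifying $F(\Omega W)$ with $\mathrm{fib}(\delta') \simeq \Omega FX \times_{\Omega FZ} \Omega FY$ is exactly the statement that $F(\Omega(P))$ is a pullback square.

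\textbf{Main obstacle.} The hard part, I expect, is precisely this last transport: showing that $F$ commutes with the formation of the fiber of a map into a loop object, and with products of loop objects. The difficulty is twofold. First, $\delta$ is not itself a loop map (since $\Omega Z$ need not be commutative), so one cannot simply "deloop" the whole fiber sequence at once. Second, the tempting reduction of "$F$ preserves $\mathrm{fib}(\delta)$" to "$F$ preserves a smaller finite limit" is circular. One genuinely has to use that each object in the sequence is a loop object, that $\mathrm{fib}(\delta) = \Omega W$ is again a loop object, and that (ii) holds as a coherent natural equivalence of functors rather than merely objectwise — it is at this point that I would need to be careful, all of the remaining steps (the exactness of $\Omega$, the passage from $\Omega F(P)$ to $F(\Omega(P))$, and the construction of the difference maps) being formal.
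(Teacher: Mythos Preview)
Your opening reduction is correct and useful: since $\Omega$ preserves pullbacks and (ii) is a natural equivalence, the square $\Omega F(P)$ is equivalent to $F(\Omega P)$, so the lemma amounts to showing that $F$ carries the particular pullback $\Omega(P)$ to a pullback. The paper's argument is effectively proving the same reformulation.

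The gap is exactly where you locate it, and the difference-map strategy does not close it. To run your argument you must first know that $F$ preserves the product $\Omega X \times \Omega Y$. You observe that this product is $\Omega(X\times Y)$, whence $F(\Omega X\times\Omega Y)\simeq \Omega F(X\times Y)$; but the target you need is $F\Omega X \times F\Omega Y \simeq \Omega(FX\times FY)$, and identifying these two is precisely the lemma applied to the pullback square $X\times_{*}Y$. So the product case is not a smaller problem --- it is a full instance of what you are trying to prove, and the reduction to $\mathrm{fib}(\delta)$ presupposes it. Even granting the product, you would still need $F$ to preserve the fiber of $\delta$, and as you note $\delta$ is not a loop map, so (ii) gives no grip on that fiber either. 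Invoking the group structure on $\Omega Z$ does not help: the multiplication and inversion are themselves built from limits you have no control over.

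The paper sidesteps this circularity by constructing an explicit inverse rather than trying to transport a fiber sequence. Write $\eta\colon F(A)\to F(B)\times_{F(D)}F(C)$ for the comparison map. One first writes down a $4\times 4$ grid of pullback squares in $\mathcal{C}$ whose outer region is populated by terminal objects and loop objects, with $A,B,C,D$ and the one-sided fibers $\ast\times_B A$, $A\times_C\ast$ in the interior, and whose upper-left composite square exhibits $\Omega A$. Apply $F$ to the entire diagram; hypotheses (i) and (ii) identify all the outer entries. Now \emph{replace} the upper-left square by the honest pullback $\Omega F(C)\times_{\Omega F(D)}\Omega F(B)$ in $\mathcal{D}$. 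The commutativity of the resulting big square produces a map
\[
\varepsilon\colon \Omega F(C)\times_{\Omega F(D)}\Omega F(B)\longrightarrow \Omega F(A),
\]
and a short chase shows that $\varepsilon$ and $\Omega\eta$ are mutually inverse up to the switch equivalence $\tau$. The key point is that no auxiliary limit is ever asked of $F$: the only pullback formed in $\mathcal{D}$ is the one defining the target of $\Omega\eta$, and everything else is pinned down by (i) and (ii) alone.
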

\begin{proof}
Consider a pullback square $P$ as follows:
\[
\begin{tikzcd}
A \arrow[r]\arrow[d] & C \arrow[d] \\
B \arrow[r] & D.
\end{tikzcd}
\]
Write $\eta$ for the resulting map $F(A) \rightarrow F(B) \times_{F(D)} F(C)$. We will define a map
\begin{equation*}
\varepsilon\colon \Omega F(C) \times_{\Omega F(D)} \Omega F(B) \rightarrow \Omega F(A)
\end{equation*}
and show it is homotopy inverse to the composition of $\Omega\eta$ and the evident `switch map'
\begin{equation*}
\tau\colon \Omega F(B) \times_{\Omega F(D)} \Omega F(C)  \simeq \Omega F(C) \times_{\Omega F(D)} \Omega F(B). 
\end{equation*} 
To do this consider the following diagram, in which all squares are pullbacks:
\[
\begin{tikzcd}
\Omega A	\arrow[d]\arrow[r]	& \Omega B \arrow[r]\arrow[d]		& \ast \arrow[d]					& \\
\Omega C \arrow[d]\arrow[r]	& \Omega D \arrow[r]\arrow[d]		& A \times_C \ast \arrow[d]\arrow[r]	& \ast \arrow[d] \\
\ast \arrow[r]				& \ast \times_B A \arrow[r]\arrow[d]	& A \arrow[d]\arrow[r]				& C \arrow[d] \\
						& \ast \arrow[r]					& B \arrow[r]					& D.
\end{tikzcd}
\]
Applying $F$ to this and replacing the square in the upper left-hand corner by a pullback, we obtain a diagram
\[
\begin{tikzcd}
\Omega F(C) \times_{\Omega F(D)} \Omega F(B) \arrow[d]\arrow[r]	& \Omega F(B) \arrow[r]\arrow[d]		& \ast \arrow[d]							& \\
\Omega F(C) \arrow[d]\arrow[r]								& \Omega F(D) \arrow[r]\arrow[d]		& F(A \times_C \ast) \arrow[d]\arrow[r]		& \ast \ar[d]  \\
\ast \arrow[r]											& F(\ast \times_B A) \arrow[r]\arrow[d]	& F(A) \arrow[d]\arrow[r]					& F(C) \arrow[d] \\
													& \ast \arrow[r]						& F(B) \arrow[r]					& F(D).
\end{tikzcd}
\]
Note that we have used assumptions (i) and (ii) in identifying the objects in this diagram. The four squares in the top left compose to give a larger square, whose homotopy-coherent commutativity defines the necessary map 
\begin{equation*}
\varepsilon\colon \Omega F(C) \times_{\Omega F(D)} \Omega F(B) \rightarrow \Omega F(A).
\end{equation*}
Note that the definition of $\varepsilon$ immediately implies that $\varepsilon \circ \tau \circ \Omega\eta$ is homotopic to the identity map of $\Omega F(A)$, as one observes by precomposing with 
\begin{equation*}
\tau \circ \Omega\eta\colon \Omega F(A) \rightarrow \Omega F(C) \times_{\Omega F(D)} \Omega F(B)
\end{equation*}
in the upper left-hand corner. The map $\eta$ arises from the square on the lower right. An elementary chase of the homotopies defined by the diagram now shows that $\Omega \eta \circ \varepsilon$ is homotopic to $\tau$, which completes the proof.
\end{proof}

\begin{lemma}
\label{lem:loopsequiv}
A map $\varphi$ in $\mathcal{L}_n^f$ is an equivalence if $\Omega \varphi$ is an equivalence and $\varphi$ is a rational equivalence. Here $\Omega$ is to be interpreted in the $\infty$-category $\mathcal{L}_n^f$, so that it is the $d_{n+1}$-connected cover of the usual loop space functor.
\end{lemma}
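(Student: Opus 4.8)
The plan is to analyze the fiber $F$ of $\varphi$ and prove it is contractible. Write $\varphi: X \to Y$, where $X$ and $Y$ are both $d_{n+1}$-connected and $L_n^f$-local. Since the loop functor on $\mathcal{L}_n^f$ is the $d_{n+1}$-connected cover of the ordinary loop functor, the hypothesis that $\Omega\varphi$ is an equivalence says precisely that $\pi_j\varphi$ is an isomorphism for every $j \geq d_{n+1}+2$; because $X$ and $Y$ are $d_{n+1}$-connected, the only degree in which $\varphi$ can fail to induce an isomorphism on homotopy groups is $d_{n+1}+1$. Equivalently, the fiber $F$ (formed in $\mathcal{S}_*$) has homotopy concentrated in degrees $d_{n+1}$ and $d_{n+1}+1$, where it equals $\mathrm{coker}(\pi_{d_{n+1}+1}\varphi)$ and $\ker(\pi_{d_{n+1}+1}\varphi)$ respectively. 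Two further observations: first, $F$ is $L_n^f$-local, being a finite limit of $L_n^f$-local spaces (reflective subcategories are closed under limits, and $F$ is simply-connected since $d_{n+1} \geq 2$); second, since $\varphi$ is a rational equivalence, the long exact sequence of the fibration shows $\pi_*F \otimes \mathbf{Q} = 0$, so both homotopy groups of $F$ are $p$-local torsion.

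I would then peel off the two homotopy groups of $F$ one at a time using Eilenberg--MacLane spaces. By Proposition~\ref{prop:conncover} the cover $F\langle d_{n+1}\rangle$ is again $L_n^f$-local, and it is the space $K(T, d_{n+1}+1)$ with $T = \ker(\pi_{d_{n+1}+1}\varphi)$ torsion. The crux is the computation that $L_n^f$-locality forces $\mathrm{Map}_*(V_{n+1}, K(T, d_{n+1}+1)) \simeq *$; since $\widetilde H^{d_{n+1}}(V_{n+1}; T)$ occurs among the homotopy groups of this mapping space, we get $\widetilde H^{d_{n+1}}(V_{n+1}; T) = 0$, and as $V_{n+1}$ is $(d_{n+1}-1)$-connected the universal coefficient theorem identifies this group with $\mathrm{Hom}(\widetilde H_{d_{n+1}}(V_{n+1}; \mathbf{Z}), T)$. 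But $V_{n+1}$ is a nontrivial finite complex of type $n+1 \geq 1$, hence rationally acyclic, so its bottom homology group $\widetilde H_{d_{n+1}}(V_{n+1}; \mathbf{Z})$ is a nonzero finite $p$-group; it therefore surjects onto $\mathbf{Z}/p$, and if $T$ were nonzero it would contain a copy of $\mathbf{Z}/p$, whence $\mathrm{Hom}(\widetilde H_{d_{n+1}}(V_{n+1}; \mathbf{Z}), T) \neq 0$, a contradiction. Thus $T = 0$: the map $\pi_{d_{n+1}+1}\varphi$ is injective, so $F \simeq K(B, d_{n+1})$ with $B = \mathrm{coker}(\pi_{d_{n+1}+1}\varphi)$. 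This Eilenberg--MacLane space is $F$ itself, hence $L_n^f$-local, and the identical argument now yields $B = 0$. Therefore $F \simeq *$ and $\varphi$ is an equivalence.

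The step I expect to be the main obstacle is the reduction to Eilenberg--MacLane spaces: Postnikov \emph{truncations} of $L_n^f$-local spaces need not themselves be $L_n^f$-local, so one cannot simply strip off the bottom homotopy group of $F$. Proposition~\ref{prop:conncover}, stating that $L_n^f$ commutes with $d_{n+1}$-connected \emph{covers}, is precisely what makes $F\langle d_{n+1}\rangle$ local and is used here in an essential way; it is fortunate that $F$ has only two nonzero homotopy groups, so that one application of it suffices. After that the argument rests on the elementary but crucial point that a nonzero finite rationally acyclic complex has nonzero (necessarily torsion) bottom homology, which is what excludes the torsion that the rational-equivalence hypothesis by itself permits --- so both hypotheses of the lemma genuinely enter the proof.
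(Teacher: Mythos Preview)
Your argument is correct and takes a genuinely different route from the paper. The paper's proof is a two-line reduction to Theorem~\ref{thm:periodicequivalence}: since $v_i$-periodic homotopy groups (for $i\geq 1$) are insensitive to looping and to highly connected covers, the hypothesis that $\Omega\varphi$ is an equivalence forces $\varphi$ itself to be a $v_i$-periodic equivalence for $1\leq i\leq n$, and the rational-equivalence hypothesis supplies $i=0$; then Theorem~\ref{thm:periodicequivalence} applies directly. Your approach instead avoids Theorem~\ref{thm:periodicequivalence} entirely and works by hand on the fiber, using only Proposition~\ref{prop:conncover}, the definition of $L_n^f$-locality, and the elementary observation that the bottom homology of the rationally acyclic finite complex $V_{n+1}$ is a nonzero finite $p$-group. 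What this buys you is a more self-contained argument that does not invoke Bousfield's Corollary~4.8 of \cite{bousfieldtelescopic} (on which Theorem~\ref{thm:periodicequivalence} rests); what the paper's approach buys is brevity and a clean conceptual explanation of why both hypotheses are needed, phrased entirely in terms of periodic homotopy. Your remark that Postnikov \emph{truncations} need not preserve locality, so that Proposition~\ref{prop:conncover} on connected \emph{covers} is essential, is exactly the right diagnosis of the subtle step.
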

\begin{proof}
Assume $\varphi$ satisfies the two conditions of the lemma. Note that $\Omega \varphi$ is a $v_i$-periodic equivalence for each $1 \leq i \leq n$ if and only if $\varphi$ itself is, since the $v_i$-periodic homotopy groups of a pointed space $X$ (for $i \geq 1$) coincide up to a shift with those of $\Omega X$ (or any highly connected cover of it). Combining this with the second assumption, we see that $\varphi$ is a $v_i$-periodic equivalence for $0 \leq i \leq n$ and Theorem \ref{thm:periodicequivalence} applies.
\end{proof}

\begin{proof}[Proof of Theorem \ref{thm:Lnfinitelimits}]
Lemma 12.5 of \cite{bousfieldtelescopic} implies that the functor
\begin{equation*}
L_n^f\colon \mathcal{S}_*\langle d_{n+1} \rangle \rightarrow \mathcal{L}_n^f
\end{equation*}
preserves fiber sequences. Indeed, one simply uses that fibers in $\mathcal{S}_*\langle d_{n+1}\rangle$ and $\mathcal{L}_n^f$ are computed by taking the $d_{n+1}$-connected cover of the usual homotopy fiber. Moreover, the functor $L_n^f$ above clearly preserves the terminal object and therefore satisfies the conditions of Lemma \ref{lem:loopspullback}. It is well-known that rationalization preserves homotopy pullback squares of simply-connected spaces; alternatively, this is not hard to derive directly from the preservation of fiber sequences mentioned above. We can now apply Lemma \ref{lem:loopsequiv} together with the conclusion of Lemma \ref{lem:loopspullback} to see that $L_n^f$ (in the $d_{n+1}$-connected setting) preserves pullbacks. Any functor between $\infty$-categories with finite limits that preserves terminal objects and pullbacks preserves all finite limits (cf. the dual of Corollary 4.4.2.5 of \cite{htt}), which completes the proof.
\end{proof}

\subsection{The $\infty$-category $\mathcal{S}_{v_n}$}
\label{subsec:Mnf}

We aim to describe a homotopy theory of spaces in which the equivalences are the $v_n$-periodic equivalences, so that Theorem \ref{thm:periodicequivalence} clearly suggests the following:

\begin{definition}
The $\infty$-category $\mathcal{S}_{v_n}$ is the full subcategory of $\mathcal{S}_*$ spanned by the spaces of the form $(M_n^f X)\langle d_{n+1} \rangle$.
\end{definition}

We write $i$ for the inclusion of $\mathcal{S}_{v_n}$ in $\mathcal{S}_*$ and $M$ for the functor
\begin{equation*}
\mathcal{S}_* \rightarrow \mathcal{S}_{v_n}\colon X \mapsto (M_n^f X)\langle d_{n+1} \rangle.
\end{equation*}
This section is devoted to proving the necessary properties of $\mathcal{S}_{v_n}$. We briefly summarize these at the end when we state the proofs of Theorems \ref{thm:Mnf} and \ref{thm:Mnf2}.

\begin{lemma}
\label{lem:Mnidempotent}
There is a natural equivalence $M \circ i \simeq \mathrm{id}_{\mathcal{S}_{v_n}}$.
\end{lemma}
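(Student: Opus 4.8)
The plan is to unwind the definitions of $M$ and $i$ and then reduce everything to the idempotency of the localization $L_n^f$ together with Proposition~\ref{prop:conncover}. An object of $\mathcal{M}_n^f$ is by definition of the form $(M_n^f X)\langle d_{n+1}\rangle$ for some $X \in \mathcal{S}_*$, and under the inclusion $i$ it is simply regarded as this space in $\mathcal{S}_*$. So I must show that applying $M$ to it, i.e.\ forming $\bigl(M_n^f (M_n^f X)\langle d_{n+1}\rangle\bigr)\langle d_{n+1}\rangle$, returns the same object naturally in $X$ (or more precisely, naturally in the object of $\mathcal{M}_n^f$).

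First I would record that $M_n^f X$ is $L_n^f$-local, being the fiber of a map $L_n^f X \to L_{n-1}^f X$ between $L_n^f$-local spaces (using Lemma~\ref{lem:LnLn-1} that $L_{n-1}^f$-local spaces are $L_n^f$-local, and that the class of $L_n^f$-local spaces is closed under finite limits since it is a reflective subcategory). Next, by Proposition~\ref{prop:conncover} the $d_{n+1}$-connected cover $(M_n^f X)\langle d_{n+1}\rangle$ is again $L_n^f$-local: indeed $(M_n^f X)\langle d_{n+1}\rangle \simeq L_n^f\bigl((M_n^f X)\langle d_{n+1}\rangle\bigr)$ by that proposition applied to $M_n^f X$, exactly as in the proof of Theorem~\ref{thm:periodicequivalence}. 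Now I need to understand $M_n^f$ applied to an object $Y$ that is already $L_n^f$-local and $d_{n+1}$-connected. Since $Y$ is $L_n^f$-local, $L_n^f Y \simeq Y$. For $L_{n-1}^f Y$: by Lemma~\ref{lem:LnLn-1} we have $L_{n-1}^f \simeq L_n^f L_{n-1}^f$, but more to the point, $L_{n-1}^f Y$ is the $L_{n-1}^f$-localization, and since $Y$ is $d_{n+1}$-connected with $d_{n+1} \geq d_n$, Proposition~\ref{prop:conncover} (applied at level $n-1$) shows $L_{n-1}^f Y$ is $d_n$-connected; I then want its $d_{n+1}$-connected cover to vanish — but that is not automatic. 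Instead the cleaner route is: $M_n^f Y$ is the fiber of $L_n^f Y \to L_{n-1}^f Y$, i.e.\ of $Y \to L_{n-1}^f Y$, and the map $Y \to L_{n-1}^f Y$ induces an isomorphism on homotopy groups in all degrees $> d_{n+1}$ once we pass to $d_{n+1}$-connected covers — more precisely $(M_n^f Y)\langle d_{n+1}\rangle \to Y\langle d_{n+1}\rangle = Y$ is an equivalence because the fiber of $Y \to L_{n-1}^f Y$ agrees with $Y$ in the range above $d_{n+1}$. This last point is where the low-dimensional behaviour of $L_{n-1}^f$ must be controlled, and that control is exactly what Proposition~\ref{prop:conncover} at level $n-1$ provides: $L_{n-1}^f$ commutes with $d_n$-connected covers and $d_n \le d_{n+1}$, so on $d_{n+1}$-connected inputs the relevant comparison is an equivalence.

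Assembling this: for $Z = (M_n^f X)\langle d_{n+1}\rangle \in \mathcal{M}_n^f$ we get $M(Z) = (M_n^f Z)\langle d_{n+1}\rangle \simeq Z$, and naturality is inherited from the naturality of the localization functors, the connected cover functor, and the fiber sequence, all of which are functorial. I would phrase the whole argument so that the natural transformation $M \circ i \to \mathrm{id}_{\mathcal{M}_n^f}$ (or its inverse) is exhibited as a concrete composite of unit/counit maps, and then checked to be an equivalence objectwise by the computation above.

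\textbf{Main obstacle.} The genuinely delicate point is handling the bottom-dimensional homotopy groups: $M_n^f$ and $L_n^f$ do not behave well below $d_{n+1}$, so the equivalence $M \circ i \simeq \mathrm{id}$ really depends on the compatibility of localization with highly connected covers (Proposition~\ref{prop:conncover}) at \emph{both} levels $n$ and $n-1$, and on the inequality $d_n \le d_{n+1}$ built into the choice of the spaces $V_{k+1}$. Getting the fiber-sequence-plus-connected-cover bookkeeping exactly right — so that $(M_n^f Y)\langle d_{n+1}\rangle \simeq Y$ for $Y$ already in $\mathcal{M}_n^f$ — is the crux; everything else is formal functoriality.
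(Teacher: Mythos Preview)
Your argument has a genuine gap at the crucial step. You reduce to showing that $(M_n^f Y)\langle d_{n+1}\rangle \simeq Y$ for $Y$ which is $L_n^f$-local and $d_{n+1}$-connected, and you claim this follows from Proposition~\ref{prop:conncover} at level $n-1$. But that claim is false for such a general $Y$: take $n\geq 1$ and $Y = K(\mathbb{Q}, m)$ with $m > d_{n+1}$. This space is $L_n^f$-local (any map from a type $n+1$ suspension space to a rational space is null) and $d_{n+1}$-connected, yet $L_{n-1}^f Y \simeq Y$ for the same reason, so $M_n^f Y \simeq *$ and $(M_n^f Y)\langle d_{n+1}\rangle \simeq * \not\simeq Y$. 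Proposition~\ref{prop:conncover} only tells you that $L_{n-1}^f$ commutes with $d_n$-connected covers; it says nothing about $L_{n-1}^f Y$ being trivial above degree $d_{n+1}$.

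What the paper does, and what you are missing, is to use the \emph{specific form} $Y = (M_n^f X)\langle d_{n+1}\rangle$ together with the left exactness of $L_{n-1}^f$ on $d_n$-connected spaces (Theorem~\ref{thm:Lnfinitelimits}). Applying $L_{n-1}^f(-)\langle d_n\rangle$ to the defining fiber sequence $M_n^f X \to L_n^f X \to L_{n-1}^f X$ and using $L_{n-1}^f L_n^f \simeq L_{n-1}^f$ shows that $L_{n-1}^f(M_n^f X)\langle d_n\rangle$ is contractible; a further $v_i$-periodic equivalence argument then gives $L_{n-1}^f Y \simeq *$. So the key ingredients are Theorem~\ref{thm:Lnfinitelimits} and Theorem~\ref{thm:periodicequivalence}, not just Proposition~\ref{prop:conncover}. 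Your ``Main obstacle'' paragraph correctly senses that the low-dimensional bookkeeping is delicate, but the actual obstacle is that the statement you are trying to prove is simply false without using that $Y$ came from an $M_n^f$ of something.
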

\begin{proof}
This is easy for $n=0$, so let us assume $n > 0$ for the rest of this proof. Let $X = (M_n^f Y)\langle d_{n+1} \rangle$ for some pointed space $Y$. By definition there is a fiber sequence
\begin{equation*}
M_n^f X \rightarrow L_n^f X \rightarrow L_{n-1}^f X.
\end{equation*}
Since $X$ is already $L_n^f$-local and $d_{n+1}$-connected we have $X \simeq L_n^f X \simeq L_n^f X \langle d_{n+1} \rangle$ and it suffices to prove that the first map in the sequence is an equivalence. In other words, it suffices to prove that $L_{n-1}^f X$ is contractible. For this, we apply $L_{n-1}^f\langle d_n \rangle$ to the fiber sequence
\begin{equation*}
M_n^f Y \rightarrow L_n^f Y \rightarrow L_{n-1}^f Y
\end{equation*}
and apply Theorem \ref{thm:Lnfinitelimits} to conclude that
\begin{equation*}
L_{n-1}^f(M_n^f Y)\langle d_n \rangle \rightarrow L_{n-1}^f(L_n^fY)\langle d_n \rangle \rightarrow L_{n-1}^fY\langle d_n\rangle
\end{equation*}
is a fiber sequence in $\mathcal{L}_{n-1}^f$. Since $L_{n-1}^f L_n^f \simeq L_{n-1}^f$ the map on the right is an equivalence and the fiber $L_{n-1}^f(M_n^f Y)\langle d_n \rangle$ is contractible. To finish the proof, we claim that the map 
\begin{equation*}
L_{n-1}^f X = L_{n-1}^f((M_n^f Y)\langle d_{n+1} \rangle) \rightarrow L_{n-1}^f(M_n^f Y)\langle d_n \rangle
\end{equation*}
is an equivalence. This follows from Theorem \ref{thm:periodicequivalence} if
\begin{equation*}
(M_n^f Y)\langle d_{n+1} \rangle \rightarrow (M_n^f Y)\langle d_n \rangle
\end{equation*}
is a $v_i$-periodic equivalence for all $i$. This is clear for $i=0$ (since both spaces are rationally trivial) and true for $i>0$ since the homotopy groups of $(M_n^f Y)\langle d_{n+1} \rangle$ and $(M_n^f Y)\langle d_n \rangle$ can differ only in the \emph{finite} range of dimensions $[d_n + 1, d_{n+1}]$.
\end{proof}

\begin{corollary}
\label{cor:Mnequiv}
A map $\varphi\colon X \rightarrow Y$ in $\mathcal{S}_{v_n}$ is an equivalence if and only if it is a $v_n$-periodic equivalence.
\end{corollary}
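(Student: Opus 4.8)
The plan is to read this off directly from Theorem \ref{thm:periodicequivalence} together with Lemma \ref{lem:Mnidempotent}. Every object of $\mathcal{M}_n^f$ is by construction of the form $(M_n^f Z)\langle d_{n+1}\rangle$, hence in particular $d_{n+1}$-connected, so any map $\varphi\colon X\rightarrow Y$ in $\mathcal{M}_n^f$ is (via the inclusion $i$) a map between $d_{n+1}$-connected pointed spaces. Thus the second sentence of Theorem \ref{thm:periodicequivalence} applies and tells us that $\varphi$ is a $v_n$-periodic equivalence if and only if $(M_n^f\varphi)\langle d_{n+1}\rangle$ is an equivalence; but $(M_n^f\varphi)\langle d_{n+1}\rangle$ is precisely $M(i(\varphi))$.

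First I would therefore note that it suffices to show $M(i(\varphi))$ is an equivalence if and only if $\varphi$ is. This is immediate from Lemma \ref{lem:Mnidempotent}: the natural equivalence $M\circ i\simeq\mathrm{id}_{\mathcal{M}_n^f}$ identifies $M(i(\varphi))$ with $\varphi$ up to equivalence, and a natural equivalence of functors carries equivalences to equivalences in both directions. Combining the two equivalences gives: $\varphi$ is a $v_n$-periodic equivalence $\iff$ $M(i(\varphi))$ is an equivalence $\iff$ $\varphi$ is an equivalence. For $n=0$ one argues identically using the first sentence of Theorem \ref{thm:periodicequivalence} with $i=0$ (recall $M_0^f=L_0^f$), since a $v_0$-periodic equivalence is exactly a rational equivalence.

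There is no substantial obstacle here; this is a formal consequence of the two cited results. The only point requiring care is the bookkeeping around the inclusion $i$: the phrase ``$\varphi$ is a $v_n$-periodic equivalence'' for a map in $\mathcal{M}_n^f$ must be interpreted as a statement about $i(\varphi)\in\mathcal{S}_*$, and one should make sure the $v_n$-periodic homotopy groups detected by Theorem \ref{thm:periodicequivalence} agree with the invariant $\Phi(i(\varphi))$ governing $v_n$-periodic equivalences in the sense of the definition in Section \ref{sec:mainresults} — which was recorded in the discussion of the properties of the Bousfield--Kuhn functor there.
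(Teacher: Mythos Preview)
Your proof is correct and follows essentially the same approach as the paper's own proof: both invoke Theorem \ref{thm:periodicequivalence} to translate the question to whether $M(i\varphi)$ is an equivalence, and then use Lemma \ref{lem:Mnidempotent} (the natural equivalence $M\circ i\simeq\mathrm{id}_{\mathcal{M}_n^f}$) to identify $M(i\varphi)$ with $\varphi$. The paper phrases the last step as a two-out-of-three argument in a commutative square, while you phrase it as transport along a natural equivalence; these are the same thing.
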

\begin{proof}
Assume $\varphi$ is a $v_n$-periodic equivalence. Then $M(i\varphi)$ is an equivalence by Theorem \ref{thm:periodicequivalence}. The previous lemma gives a commutative square
\[
\begin{tikzcd}
M(iX) \arrow[r,"M(i\varphi)"]\arrow[d] & M(iY) \arrow[d] \\
X \arrow[r,"\varphi"] & Y
\end{tikzcd}
\]
in which the vertical arrows are equivalences, so that $\varphi$ is an equivalence by two-out-of-three.
\end{proof}

The last ingredient we will need for the proof of Theorem \ref{thm:Mnf2} is the following:

\begin{proposition}
\label{prop:Mncompactlygen}
The $\infty$-category $\mathcal{S}_{v_n}$ is compactly generated. For $n > 0$ and $V$ a finite $d_{n+1}$-connected type $n$ space that is also a suspension, the space $L_n^f V$ is contained in $\mathcal{S}_{v_n}$ and is a compact generator.
\end{proposition}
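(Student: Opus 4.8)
The plan is to establish two things: first, that $\mathcal{M}_n^f$ is compactly generated, and second, that for a finite $d_{n+1}$-connected type $n$ suspension $V$ the space $L_n^f V$ lies in $\mathcal{M}_n^f$ and generates it compactly. For the first point, the natural strategy is to transport compact generation from $\mathcal{S}_*\langle d_{n+1}\rangle$ (or $\mathcal{L}_n^f$) along the localization functor $M$. Since $M_n^f$ is the fiber of $L_n^f \to L_{n-1}^f$, I would first argue that $\mathcal{M}_n^f$ is the localization of $\mathcal{S}_*\langle d_{n+1}\rangle$ at the $v_n$-periodic equivalences (using Theorem \ref{thm:periodicequivalence} and Lemma \ref{lem:Mnidempotent}/Corollary \ref{cor:Mnequiv}), hence a presentable $\infty$-category. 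Using Lemma \ref{lem:Lnfcompactlygen}, the compact objects of $\mathcal{L}_n^f$ are retracts of $L_n^f X$ for $X$ a finite simply-connected space; I expect the images $M(X)$ for $X$ finite (specifically, $d_{n+1}$-connected covers of finite complexes, or rather suspensions thereof shifted into the right connectivity range) to be compact in $\mathcal{M}_n^f$ and to generate, because $M$ preserves filtered colimits (this is part (iii) of Theorem \ref{thm:Mnf2}, whose proof I may invoke here or prove in tandem) and every object of $\mathcal{M}_n^f$ is of the form $M(X)$.

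For the concrete generator, I would proceed as follows. Let $V$ be a finite $d_{n+1}$-connected type $n$ suspension. \textbf{Step 1:} Show $L_n^f V \in \mathcal{M}_n^f$, i.e. that $L_n^f V$ is already $M_n^f$-local-and-$d_{n+1}$-connected, equivalently that $L_{n-1}^f(L_n^f V) \simeq L_{n-1}^f V$ is contractible and that $L_n^f V$ is $d_{n+1}$-connected. The connectivity follows from Proposition \ref{prop:conncover} since $V$ is already $d_{n+1}$-connected. The vanishing $L_{n-1}^f V \simeq \ast$ is the key input: since $V$ has type $n$, it is $K(m)$-acyclic for $m < n$, hence (being a finite suspension) it has the Bousfield class of $V_n$ or lower — one uses Theorem \ref{thm:bousfieldclasses} together with the fact that a type $n$ space is $v_i$-periodically trivial for $i < n$, so $L_{n-1}^f V \simeq \ast$. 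Then $M_n^f V = \mathrm{fib}(L_n^f V \to L_{n-1}^f V) \simeq L_n^f V$, and taking $d_{n+1}$-connected covers changes nothing, so $M(V) \simeq L_n^f V \in \mathcal{M}_n^f$.

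\textbf{Step 2:} Show $L_n^f V$ is compact in $\mathcal{M}_n^f$. This follows because $\mathrm{Map}_{\mathcal{M}_n^f}(L_n^f V, -) \simeq \mathrm{Map}_{\mathcal{S}_*\langle d_{n+1}\rangle}(V, i(-))$ (adjunction, using that $V$ is already connected and local), and $i$ preserves filtered colimits up to the issue that filtered colimits in $\mathcal{M}_n^f$ are computed by applying $M$ to the colimit in $\mathcal{S}_*\langle d_{n+1}\rangle$; since $V$ is a finite complex it is compact in $\mathcal{S}_*\langle d_{n+1}\rangle$, and one chases that the relevant mapping spaces commute with the filtered colimit. \textbf{Step 3:} Show $L_n^f V$ generates. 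Here I would use that the finite type $n$ suspensions, as $V$ ranges over a cofinal collection, detect $v_n$-periodic equivalences: if $f$ in $\mathcal{M}_n^f$ induces an equivalence on $\mathrm{Map}(L_n^f V, -)$ for all such $V$, then by the Bousfield–Kuhn property (i) (each $\Phi_v$ is built from such mapping spaces) $\Phi(f)$ is an equivalence, hence $f$ is an equivalence by Corollary \ref{cor:Mnequiv}. To pass from a single fixed $V$ to all of them one uses that any two finite type $n$ complexes are related by the thick subcategory theorem, so $L_n^f V$ for one choice suffices to detect acyclicity of the others after smashing.

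The main obstacle I anticipate is \textbf{Step 1}, specifically the verification that a finite type $n$ suspension is $L_{n-1}^f$-acyclic — i.e. genuinely pinning down the relationship between the \emph{type} of a finite space and its unstable Bousfield class, and making sure that "$d_{n+1}$-connected type $n$ suspension" is compatible with the connectivity conventions built into the definition of $\mathcal{L}_{n-1}^f$ (whose loops/fibers are taken after $d_n$-connected covers). Theorem \ref{thm:bousfieldclasses} is the right tool, but one must check the hypotheses carefully: $V$ must have type strictly greater than zero (true, as $n \geq 1$) and the connectivity comparison with $V_n$ must go the right way. A secondary subtlety is ensuring the compact generator is \emph{a single object} rather than a set — this is where the thick subcategory theorem for finite type $n$ spectra, imported through Kuhn's description of $\Phi_v$, does the work of collapsing the naive generating set to one generator.
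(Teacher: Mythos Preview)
Your Steps 1 and 3 match the paper's argument almost exactly: the paper also uses Theorem~\ref{thm:bousfieldclasses} to show $L_{n-1}^f V \simeq *$ (this is in fact a one-line check, not the main obstacle you flag), and the paper also chooses a finite type $n$ suspension $W$ with a $v_n$ self-map and argues that mapping out of $L_n^f W$ detects $v_n$-periodic equivalences via $\Phi_v$, hence equivalences in $\mathcal{M}_n^f$ by Corollary~\ref{cor:Mnequiv}.

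The gap is in your framing step: the inference ``hence a presentable $\infty$-category'' is unjustified. The functor $M$ is not left adjoint to the inclusion $i$ (there is no natural map $X \to iM(X)$, only the zigzag used in the proof of Theorem~\ref{thm:Mnf}), so $\mathcal{M}_n^f$ is not exhibited as a reflective localization of anything, and the universal property of Theorem~\ref{thm:Mnf} by itself does not give cocompleteness. You also cannot invoke Theorem~\ref{thm:Mnf2}(iii), since its proof in the paper uses the present proposition. One can show directly that $\mathcal{M}_n^f$ is closed under \emph{filtered} colimits in $\mathcal{L}_n^f$ (since $M_n^f$ and $\langle d_{n+1}\rangle$ preserve them), which rescues Step 2, but without closure under \emph{all} colimits a compact object that detects equivalences does not yet yield ``compactly generated''.

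The paper handles this by introducing the auxiliary $\infty$-category $\mathcal{V}_n \subseteq \mathcal{L}_n^f$ generated under all colimits by the objects $L_n^f V$: this is compactly generated and cocomplete by construction, and the adjoint functor theorem furnishes a right adjoint $r$ to its inclusion in $\mathcal{L}_n^f$. Your Step 1 shows $\mathcal{V}_n \subseteq \mathcal{M}_n^f$, and your Step 3 argument, repackaged, shows that $r$ restricted to $\mathcal{M}_n^f$ is conservative; hence the adjunction $(\iota, r)$ is an equivalence $\mathcal{V}_n \simeq \mathcal{M}_n^f$. This simultaneously gives compact generation and, as a byproduct, that $\mathcal{M}_n^f$ is a \emph{colocalization} of $\mathcal{L}_n^f$ --- a fact the paper relies on again later.
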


To prove this result let us introduce an auxiliary $\infty$-category:

\begin{definition}
\label{def:Vn}
The $\infty$-category $\mathcal{V}_n$ is the full subcategory of $\mathcal{L}_n^f$ generated under colimits by pointed spaces of the form $L_n^f V$, with $V$ ranging over suspension spaces which are finite, $d_{n+1}$-connected, and of type $n$.
\end{definition}

Clearly $\mathcal{V}_n$ is compactly generated by the spaces $L_n^f V$. Also, the inclusion $\mathcal{V}_n \rightarrow \mathcal{L}_n^f$ preserves all colimits (by definition). The adjoint functor theorem (Corollary 5.5.2.9 of \cite{htt}) implies that it admits a right adjoint $r\colon \mathcal{L}_n^f \rightarrow \mathcal{V}_n$. We will prove that $\mathcal{V}_n$ and $\mathcal{S}_{v_n}$ coincide. Thus $\mathcal{S}_{v_n}$ is a \emph{colocalization} of $\mathcal{L}_n^f$, meaning a full subcategory for which the inclusion functor admits a right adjoint.

\begin{lemma}
\label{lem:VninMn}
The subcategory $\mathcal{V}_n$ is contained in the subcategory $\mathcal{S}_{v_n}$.
\end{lemma}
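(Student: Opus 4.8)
The plan is to show that every object of $\mathcal{V}_n$ lies in $\mathcal{M}_n^f$, i.e.\ is of the form $(M_n^f Z)\langle d_{n+1}\rangle$ for some pointed space $Z$. Since $\mathcal{V}_n$ is generated under colimits by the spaces $L_n^f V$ (with $V$ finite, $d_{n+1}$-connected, of type $n$, and a suspension), and since $\mathcal{M}_n^f$ is closed under colimits in $\mathcal{L}_n^f$ — this needs to be checked, but it follows because $\mathcal{M}_n^f$ is a reflective/coreflective piece of $\mathcal{L}_n^f$ once we know it equals $\mathcal{V}_n$, so instead I would argue directly that $M$ preserves colimits on the relevant subcategory, or more simply that $\mathcal{M}_n^f$ is the essential image of the colimit-preserving functor $X \mapsto (M_n^f X)\langle d_{n+1}\rangle$ restricted appropriately — it suffices to treat the generators. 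So the crux is: for $V$ as above, show $L_n^f V \in \mathcal{M}_n^f$.

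First I would observe that since $V$ is of type $n$, its $v_i$-periodic homotopy groups vanish for $i < n$; in particular $V$ is rationally trivial (as $n>0$), so $L_{n-1}^f V$ is already $L_n^f$-local only if it is contractible — more precisely, I would use the computation recorded after the definition of $M_n^f$, namely $v_i^{-1}\pi_*(L_{n-1}^f V; F) \cong v_i^{-1}\pi_*(V;F)$ for $i \le n-1$, which all vanish since $V$ has type $n$. Combined with the fact that $L_{n-1}^f V$ is $d_{n}$-connected (by Proposition \ref{prop:conncover}, since $d_n \le d_{n+1}$) and has trivial $v_i$-periodic homotopy for all $i \le n-1$, I would invoke Theorem \ref{thm:periodicequivalence} to conclude that the map $V\langle d_{n+1}\rangle \to \ast$ becomes an equivalence after $L_{n-1}^f\langle d_{n+1}\rangle$ — wait, more carefully: I want $L_{n-1}^f V \simeq \ast$. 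This follows because a $d_n$-connected $L_{n-1}^f$-local space with vanishing $v_i$-periodic homotopy for all $0 \le i \le n-1$ is contractible, which is exactly the detection statement of Theorem \ref{thm:periodicequivalence} applied to the map to a point. Hence the fiber sequence $M_n^f V \to L_n^f V \to L_{n-1}^f V$ shows $M_n^f V \simeq L_n^f V$.

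Next, since $V$ is $d_{n+1}$-connected and $L_n^f$ commutes with $d_{n+1}$-connected covers (Proposition \ref{prop:conncover}), the space $L_n^f V$ is itself $d_{n+1}$-connected, so $(M_n^f V)\langle d_{n+1}\rangle \simeq (L_n^f V)\langle d_{n+1}\rangle \simeq L_n^f V$. Therefore $L_n^f V = (M_n^f V)\langle d_{n+1}\rangle \in \mathcal{M}_n^f$ by definition of $\mathcal{M}_n^f$. Finally, to pass from the generators to all of $\mathcal{V}_n$, I would note that colimits in $\mathcal{V}_n$ and in $\mathcal{L}_n^f$ agree with those computed via the colimit-preserving localization of $d_{n+1}$-connected spaces, and that $M = (M_n^f(-))\langle d_{n+1}\rangle$ applied to a colimit of spaces in $\mathcal{M}_n^f$ recovers that colimit (using $M\circ i \simeq \mathrm{id}$ from Lemma \ref{lem:Mnidempotent}); hence the closure of $\mathcal{M}_n^f$ under the colimits defining $\mathcal{V}_n$ gives $\mathcal{V}_n \subseteq \mathcal{M}_n^f$.

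The main obstacle I anticipate is the last step — verifying that $\mathcal{M}_n^f$ is closed under the colimits used to build $\mathcal{V}_n$. One cannot simply say $\mathcal{M}_n^f$ is closed under colimits in $\mathcal{S}_*$ (it is not, since connected covers do not commute with colimits in general), so one must work inside $\mathcal{L}_n^f$ (or $\mathcal{S}_*\langle d_{n+1}\rangle$) where the relevant colimits are better behaved, and carefully track that the $d_{n+1}$-connected cover and the fiber defining $M_n^f$ interact correctly with filtered and finite colimits. I expect this to be where most of the genuine work lies; the identification $L_n^f V \in \mathcal{M}_n^f$ for the generators is comparatively soft given the results already established.
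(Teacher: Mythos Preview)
Your treatment of the generators is essentially correct and close to the paper's: for a finite $d_{n+1}$-connected type $n$ suspension $V$ one shows $L_{n-1}^f V \simeq *$ (the paper does this in one line via Theorem~\ref{thm:bousfieldclasses}, using that $V$ is a suspension with $\mathrm{type}(V)\geq n$ and $\mathrm{conn}(V)\geq \mathrm{conn}(V_n)$), and then the fiber sequence gives $L_n^f V \simeq M_n^f V \simeq (M_n^f V)\langle d_{n+1}\rangle \in \mathcal{M}_n^f$.

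The genuine gap is exactly where you flagged it: passing from the generators to all of $\mathcal{V}_n$. You try to argue that $\mathcal{M}_n^f$ is closed under the relevant colimits in $\mathcal{L}_n^f$, but this is essentially what Proposition~\ref{prop:Mncompactlygen} (i.e.\ $\mathcal{V}_n=\mathcal{M}_n^f$) is proving, so appealing to it here is circular; and your alternative suggestion of using $M\circ i\simeq \mathrm{id}$ together with colimit-preservation of $M$ doesn't work either, since $M$ is a \emph{right} adjoint composed with a connected cover and has no reason to preserve arbitrary colimits. The paper sidesteps this completely by propagating a different property through colimits: not ``$X\in\mathcal{M}_n^f$'' but ``$L_{n-1}^f X\simeq *$''. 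Since $L_{n-1}^f\colon \mathcal{L}_n^f\to\mathcal{L}_{n-1}^f$ is a left adjoint, it preserves all colimits, and the contractible object is preserved under colimits; hence $L_{n-1}^f X\simeq *$ for every $X\in\mathcal{V}_n$. Then for any such $X$ one has $X\simeq L_n^f X\simeq M_n^f X$, and since $X$ is already $d_{n+1}$-connected (it lies in $\mathcal{L}_n^f\subseteq\mathcal{S}_*\langle d_{n+1}\rangle$), also $X\simeq (M_n^f X)\langle d_{n+1}\rangle\in\mathcal{M}_n^f$. That is the missing idea: push the \emph{acyclicity condition} through colimits, not membership in $\mathcal{M}_n^f$.
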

\begin{proof}
Let $V$ be as in Definition \ref{def:Vn}. Then $L_{n-1}^f V$ is null by the classification of Bousfield classes of spaces described in Theorem \ref{thm:bousfieldclasses} (note that this uses that $V$ is a suspension). It follows that $L_{n-1}^f X$ is null for any $X \in \mathcal{V}_n$, so that $X \simeq M_n^f X \simeq M_n^f X\langle d_{n+1} \rangle$.
\end{proof}

\begin{proof}[Proof of Proposition \ref{prop:Mncompactlygen}]
It is easy to see that $\mathcal{V}_0 = \mathcal{M}_0^f$, since both are in fact the $\infty$-category of $d_1$-connected rational pointed spaces, so we focus our attention on the case $n>0$. The right adjoint $r\colon \mathcal{L}_n^f \rightarrow \mathcal{V}_n$ restricts to give a right adjoint $r\colon \mathcal{S}_{v_n} \rightarrow \mathcal{V}_n$ to the inclusion $\iota\colon \mathcal{V}_n \rightarrow \mathcal{S}_{v_n}$. We will prove that the latter adjoint pair is an equivalence. First, the unit
\begin{equation*}
\eta\colon \mathrm{id}_{\mathcal{V}_n} \rightarrow r \circ \iota
\end{equation*}
is an equivalence because $\iota$ is fully faithful (being an inclusion of full subcategories). We claim (see below) that $r$ detects equivalences. It follows that the adjoint pair $(\iota,r)$ is an adjoint equivalence by the following standard argument: to check that the counit $\varepsilon\colon \iota \circ r \rightarrow \mathrm{id}_{\mathcal{S}_{v_n}}$ is an equivalence we may check that $r\varepsilon$ is an equivalence. This follows from the triangle identity
\[
\begin{tikzcd}
& r \circ \iota \circ r \arrow[dr,"\eta r"] & \\
r \arrow[rr, equal]\arrow[ur,"r\varepsilon"] && r
\end{tikzcd}
\]
and the fact that $\eta r$ is an equivalence.

To establish our claim, consider a map $\varphi\colon X \rightarrow Y$ in $\mathcal{S}_{v_n}$ and assume $r(\varphi)$ is an equivalence. Pick a highly connected finite type $n$ suspension space $W$ (so that $L_n^f W \in \mathcal{V}_n$) that admits a $v_n$ self-map $v\colon \Sigma^d W \rightarrow W$. By assumption, the map
\begin{equation*}
\mathrm{Map}_*(L_n^f W, rX) \rightarrow \mathrm{Map}_*(L_n^f W,rY)
\end{equation*} 
is an equivalence. By adjunction these spaces can be identified with $\mathrm{Map}_*(W,X)$ and $\mathrm{Map}_*(W,Y)$ respectively (omitting the inclusion $i$ from the notation). It follows immediately that $\Phi_v(\varphi)$ is an equivalence, so that $\varphi$ is a $v_n$-periodic equivalence, and Corollary \ref{cor:Mnequiv} implies that $\varphi$ itself is an equivalence.

Since $\mathcal{V}_n = \mathcal{S}_{v_n}$ it follows that $\mathcal{S}_{v_n}$ is compactly generated. The argument above actually shows that $W$ is a generator. But for any $V$ as in the statement of Proposition \ref{prop:Mncompactlygen}, some suspension of $V$ admits a $v_n$ self-map, so that $V$ is a generator as well.
\end{proof}

\begin{proof}[Proof of Theorem \ref{thm:Mnf}]
This theorem is a straightforward consequence of Lemma \ref{lem:Mnidempotent} and our constructions. Indeed, consider the functor
\begin{equation*}
i^*\colon \mathrm{Fun}_{v_n}(\mathcal{S}_*, \mathcal{C}) \rightarrow \mathrm{Fun}(\mathcal{S}_{v_n}, \mathcal{C})\colon F \mapsto F \circ i.
\end{equation*}
Then $i^*M^*$ is equivalent to the identity simply because $M \circ i \simeq \mathrm{id}_{\mathcal{S}_{v_n}}$. To show that $M^*i^*$ is equivalent to the identity, consider a functor $F\colon \mathcal{S}_* \rightarrow \mathcal{C}$ which sends $v_n$-periodic equivalences to equivalences in $\mathcal{C}$. For $X$ a pointed space there is a natural zigzag of maps
\[
\begin{tikzcd}
X & X\langle d_{n+1} \rangle \arrow[l] \arrow[r] & L_n^f X \langle d_{n+1} \rangle & iM(X) \arrow[l],
\end{tikzcd} 
\]
and all of these are $v_n$-periodic equivalences. Consequently there is a natural zigzag of equivalences
\[
\begin{tikzcd}
F(X) & F(X\langle d_{n+1} \rangle) \arrow[l] \arrow[r] & F(L_n^f X \langle d_{n+1} \rangle) & F(iM(X)) \arrow[l],
\end{tikzcd} 
\]
showing in particular that $F$ is naturally equivalent to $M^*i^*F$.
\end{proof}

\begin{proof}[Proof of Theorem \ref{thm:Mnf2}]
The first sentence of the theorem is Lemma \ref{lem:Mnidempotent}. Item (i) is part of Theorem \ref{thm:periodicequivalence}, whereas the fact that $\Phi$ factors through $M$ is immediate from Theorem \ref{thm:Mnf}. For part (iii) of the theorem, note that $M$ is the composition of functors
\begin{equation*}
\mathcal{S}_*\langle d_{n+1} \rangle \xrightarrow{\quad L_n^f \quad} \mathcal{L}_n^f \xrightarrow{\quad r \quad} \mathcal{S}_{v_n}.
\end{equation*}
The first one preserves finite limits by Theorem \ref{thm:Lnfinitelimits} and filtered colimits by the fact that it is a left adjoint. The second functor preserves finite limits because it is a right adjoint and filtered colimits since its left adjoint sends a compact generator of $\mathcal{S}_{v_n}$ to a compact object of $\mathcal{L}_n^f$, cf. Lemma \ref{lem:Lnfcompactlygen} and Proposition \ref{prop:Mncompactlygen}. That proposition of course also implies part (iv).

Finally, we should establish the existence of the left adjoint $\Theta$ to $\Phi$. Bousfield shows that $\Theta$ exists on the level of homotopy categories in Theorem 5.4(i),(ii) of \cite{bousfieldtelescopic}. However, his techniques also prove the stronger result. Indeed, first one considers a type $n$ space $V$ with $v_n$ self-map $v$ and the resulting telescopic functor
\begin{equation*}
\Phi_v\colon \mathcal{L}_n^f \rightarrow \mathrm{Sp}_{T(n)}.
\end{equation*}
The $\infty$-categories involved can be constructed from the simplicial model categories Bousfield uses and he shows that $\Phi_v$ is a simplicial right Quillen functor between those categories (Lemma 10.6 of \cite{bousfieldtelescopic}). Therefore its adjoint, being a simplicial left Quillen functor, gives a functor
\begin{equation*}
\Theta_v\colon \mathrm{Sp}_{T(n)} \rightarrow \mathcal{L}_n^f 
\end{equation*}
after passing back to the corresponding $\infty$-categories. (An alternative exposition of the same ideas can be found in Section 6 of \cite{kuhntelescopic}.) The Bousfield--Kuhn functor $\Phi$ is a homotopy limit of telescopic functors $\Phi_v$. Indeed, following Kuhn \cite{kuhntelescopic,kuhninfiniteloop}, one fixes a directed system of finite type $n$ spectra
\begin{equation*}
F(1) \rightarrow F(2) \rightarrow F(3) \rightarrow \cdots 
\end{equation*}
with a map to the sphere spectrum
\begin{equation*}
\varinjlim_k F(k) \rightarrow S
\end{equation*}
which is a $T(n)$-equivalence. Then
\begin{equation*}
\Phi \simeq \varprojlim_k \mathbf{D}F(k) \otimes \Phi,
\end{equation*}
and each term $\mathbf{D}F(k) \otimes \Phi$ is equivalent to a telescopic functor of the form $\Phi_v$. It follows that $\Phi$ preserves limits, being a limit of right adjoints. Since it also preserves filtered colimits, it is accessible, and the adjoint functor theorem (Corollary 5.5.2.9 of \cite{htt}) applies to guarantee the existence of a left adjoint 
\begin{equation*}
\Theta\colon \mathrm{Sp}_{T(n)} \rightarrow \mathcal{L}_n^f.
\end{equation*}
Theorem 5.4 of \cite{bousfieldtelescopic} guarantees that the essential image of $\Theta$ is contained in $\mathcal{S}_{v_n}$, so that the adjoint pair $(\Theta, \Phi)$ restricts to give an adjunction between $\mathrm{Sp}_{T(n)}$ and $\mathcal{S}_{v_n}$.
\end{proof}

We will come back to $\Theta$ in Section \ref{subsec:BKleftadjoint} and describe it more explicitly. To end this section we record the following for later use:

\begin{lemma}
\label{lem:Mnffinitelim}
The inclusion $\iota\colon \mathcal{S}_{v_n} \rightarrow \mathcal{L}_n^f$ preserves colimits and finite limits.
\end{lemma}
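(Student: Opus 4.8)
The plan is to handle colimits and finite limits separately. For colimits there is essentially nothing to do: by Proposition \ref{prop:Mncompactlygen} we have $\mathcal{M}_n^f = \mathcal{V}_n$, and by Definition \ref{def:Vn} the subcategory $\mathcal{V}_n$ is generated under colimits \emph{inside} $\mathcal{L}_n^f$ by the spaces $L_n^f V$; in particular it is closed under colimits formed in $\mathcal{L}_n^f$, so $\iota$ preserves them.

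For finite limits the key formal point is that $\iota$ admits a right adjoint $r$ (constructed in the proof of Proposition \ref{prop:Mncompactlygen}). Hence the limit of a finite diagram $D$ in $\mathcal{M}_n^f$ is computed as $r$ applied to $P := \lim_{\mathcal{L}_n^f}\iota D$, and $\iota$ preserves this limit precisely when the counit $\iota r P \to P$ is an equivalence, i.e. when $P$ already lies in $\mathcal{M}_n^f$. Since a functor between $\infty$-categories with finite limits which preserves the terminal object and pullbacks preserves all finite limits, and since $\ast \in \mathcal{M}_n^f$ (as $M_n^f \ast \simeq \ast$), I may assume $P = B\times_C A$ for a cospan in $\mathcal{M}_n^f$. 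Because $\mathcal{L}_n^f$ is reflective in $\mathcal{S}_*\langle d_{n+1}\rangle$ it is closed under limits, so $P$ is equally the pullback taken in $\mathcal{S}_*\langle d_{n+1}\rangle$; in particular $P$ is $d_{n+1}$-connected and $L_n^f$-local. In view of the defining fiber sequence $M_n^f P \to L_n^f P \to L_{n-1}^f P$, proving $P \in \mathcal{M}_n^f$ thus reduces to showing $L_{n-1}^f P \simeq \ast$.

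To prove this I would argue exactly as in Lemma \ref{lem:Mnidempotent}. Since $A, B, C \in \mathcal{M}_n^f$, each of $L_{n-1}^f A$, $L_{n-1}^f B$, $L_{n-1}^f C$ is contractible. Let $P'$ be the pullback of $A\to C\leftarrow B$ formed in $\mathcal{S}_*\langle d_n\rangle$, which makes sense since $d_{n+1}\geq d_n$ forces $A, B, C$ to be $d_n$-connected. Applying Theorem \ref{thm:Lnfinitelimits} at height $n-1$, the functor $L_{n-1}^f$ is left exact on $\mathcal{S}_*\langle d_n\rangle$, so $L_{n-1}^f P'$ is the pullback of $L_{n-1}^f A \to L_{n-1}^f C \leftarrow L_{n-1}^f B$, hence contractible. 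The natural comparison map $P\to P'$ is a $v_i$-periodic equivalence for all $i\geq 0$: for $i=0$ both $P$ and $P'$ are rationally trivial, since rationalization preserves pullbacks of simply-connected spaces and $A, B, C$ are rationally trivial; for $i\geq 1$ the homotopy groups of $P$ and $P'$ agree outside the finite range $[d_n+1, d_{n+1}]$, over which $v_i$-periodic homotopy is insensitive. Theorem \ref{thm:periodicequivalence} at height $n-1$ then shows that $(L_{n-1}^f(P\to P'))\langle d_n\rangle$ is an equivalence, and Proposition \ref{prop:conncover} at height $n-1$ shows $L_{n-1}^f P$ is itself $d_n$-connected; hence $L_{n-1}^f P \simeq (L_{n-1}^f P)\langle d_n\rangle \simeq (L_{n-1}^f P')\langle d_n\rangle \simeq \ast$. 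Consequently $M_n^f P \simeq L_n^f P \simeq P$, so $P\simeq (M_n^f P)\langle d_{n+1}\rangle$ belongs to $\mathcal{M}_n^f$, as required.

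The genuinely formal parts — the reduction through the adjunction $(\iota, r)$, and the identification of the connectivity and $L_n^f$-locality of $P$ — are painless. The only delicate point is the connected-cover bookkeeping in the third paragraph: one must pass from the pullback formed in $\mathcal{S}_*\langle d_{n+1}\rangle$ to the one formed in $\mathcal{S}_*\langle d_n\rangle$ so that the height-$(n-1)$ version of Theorem \ref{thm:Lnfinitelimits} applies, and then transport the resulting contractibility back up along a $v_n$-periodic equivalence. This is precisely the maneuver already carried out in the proof of Lemma \ref{lem:Mnidempotent}, so I do not anticipate any serious difficulty.
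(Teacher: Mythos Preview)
Your proposal is correct and follows essentially the same approach as the paper: colimits are handled via the colocalization $\mathcal{M}_n^f = \mathcal{V}_n \subseteq \mathcal{L}_n^f$, and for finite limits one shows that the $\mathcal{L}_n^f$-limit of a diagram in $\mathcal{M}_n^f$ already lies in $\mathcal{M}_n^f$ by proving its $L_{n-1}^f$-localization is contractible using Theorem~\ref{thm:Lnfinitelimits} at height $n-1$. The paper's proof treats a general finite diagram directly (rather than reducing to pullbacks) and simply asserts that Theorem~\ref{thm:Lnfinitelimits} gives the vanishing of $L_{n-1}^f(\varprojlim_{\mathcal{L}_n^f}\iota F)\langle d_{n+1}\rangle$, whereas you spell out the passage between the $d_{n+1}$- and $d_n$-connected settings more carefully; your extra bookkeeping is exactly what is implicitly needed to justify that step.
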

\begin{proof}
We already concluded above that $\mathcal{S}_{v_n} = \mathcal{V}_n$ is a colocalization of $\mathcal{L}_n^f$, meaning in particular that the inclusion $\iota$ preserves colimits. Now consider a finite diagram
\begin{equation*}
F\colon I \rightarrow \mathcal{S}_{v_n}.
\end{equation*}
Writing $\varprojlim_{\mathcal{L}_n^f} \iota F$ for its limit when considered as a diagram in $\mathcal{L}_n^f$, we have a fiber sequence in $\mathcal{L}_n^f$ as follows:
\begin{equation*}
M_n^f(\varprojlim_{\mathcal{L}_n^f} \iota F)\langle d_{n+1} \rangle \rightarrow \varprojlim_{\mathcal{L}_n^f} \iota F \rightarrow L_{n-1}^f(\varprojlim_{\mathcal{L}_n^f} \iota F)\langle d_{n+1} \rangle.
\end{equation*}
But $L_{n-1}^f(\iota F(i))$ is null for every $i \in I$, so that Theorem \ref{thm:Lnfinitelimits} implies that the rightmost expression is contractible. Thus we find
\begin{equation*}
\varprojlim_{\mathcal{S}_{v_n}} F = M_n^f(\varprojlim_{\mathcal{L}_n^f} \iota F)\langle d_{n+1} \rangle \simeq  \varprojlim_{\mathcal{L}_n^f} \iota F. 
\end{equation*}
\end{proof}

\subsection{The stabilization of $\mathcal{S}_{v_n}$}

In this section we determine the stabilization of $\mathcal{S}_{v_n}$ and verify a claim made in Section \ref{sec:mainresults}. The results in this section will also be useful later, when we determine the Goodwillie tower of $\mathcal{S}_{v_n}$. 

The localization functor $L_n^f$ studied before has a stable counterpart, where one localizes the $\infty$-category $\mathrm{Sp}$ with respect to the map $V \rightarrow *$ for a finite type $n+1$ \emph{spectrum} (rather than space) $V$. Note that here we mean localization in the stable sense, so that a spectrum $E$ is $L_n^f$-local if and only if $[\Sigma^i V, E] = 0$ for all $i \in \mathbb{Z}$, rather than just $i \geq 0$. We write
\begin{equation*}
L_n^f\colon \mathrm{Sp} \rightarrow L_n^f\mathrm{Sp}
\end{equation*}
for this localization; whether the stable or unstable $L_n^f$ is meant should always be clear from context.

\begin{proposition}
\label{prop:Lnfstabilization}
The functor $L_n^f\Sigma^\infty\colon \mathcal{L}_n^f \rightarrow L_n^f\mathrm{Sp}$ induces an equivalence of stable $\infty$-categories $\mathrm{Sp}(\mathcal{L}_n^f) \rightarrow L_n^f\mathrm{Sp}$.
\end{proposition}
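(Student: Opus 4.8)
The plan is to verify that the functor $L_n^f\Sigma^\infty \colon \mathcal{L}_n^f \to L_n^f\mathrm{Sp}$ enjoys the universal property of the stabilization: namely, that for any stable $\infty$-category $\mathcal{D}$, restriction along $L_n^f\Sigma^\infty$ induces an equivalence between limit-preserving functors $L_n^f\mathrm{Sp} \to \mathcal{D}$ and limit-preserving functors $\mathcal{L}_n^f \to \mathcal{D}$. Equivalently, and more concretely, I would show that the usual presentation of $\mathrm{Sp}(\mathcal{L}_n^f)$ as the limit of the tower $\cdots \xrightarrow{\Omega} \mathcal{L}_n^f \xrightarrow{\Omega} \mathcal{L}_n^f$ (with $\Omega$ the loop functor \emph{of} $\mathcal{L}_n^f$, i.e. the $d_{n+1}$-connected cover of the ordinary loop functor) maps by $L_n^f\Sigma^\infty$ compatibly to the analogous stable tower computing $L_n^f\mathrm{Sp}$, and that the induced comparison is an equivalence.

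First I would recall that $\mathcal{L}_n^f$ is a presentable $\infty$-category (it is an accessible localization of $\mathcal{S}_*\langle d_{n+1}\rangle$), so its stabilization exists and is computed as $\lim(\cdots \xrightarrow{\Omega_{\mathcal{L}_n^f}} \mathcal{L}_n^f)$, with $\Omega^\infty_{\mathcal{L}_n^f} \colon \mathrm{Sp}(\mathcal{L}_n^f) \to \mathcal{L}_n^f$ the evident projection. Second, the key structural input is Theorem \ref{thm:Lnfinitelimits}: the functor $L_n^f \colon \mathcal{S}_*\langle d_{n+1}\rangle \to \mathcal{L}_n^f$ is left exact. This is what makes $L_n^f\Sigma^\infty$ behave well with respect to loops; in particular, combined with the fact that $\Sigma^\infty$ on spaces commutes with $\Omega$ up to the connectivity issues that the $d_{n+1}$-connected cover repairs, one gets a natural equivalence $L_n^f\Sigma^\infty \circ \Omega_{\mathcal{L}_n^f} \simeq \Omega_{L_n^f\mathrm{Sp}} \circ L_n^f\Sigma^\infty$. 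This identification assembles the two towers into a single compatible diagram. Third, one knows $L_n^f\Sigma^\infty$ is a left adjoint (its right adjoint being $L_n^f$ applied to $\Omega^\infty$, suitably corrected by the connected cover), so it preserves colimits; and $L_n^f \mathrm{Sp}$ is generated under colimits by the image of the finite type $\geq n$ spectra, which are suspension spectra of finite spaces. Taking stabilizations is functorial, so we obtain an induced functor $\mathrm{Sp}(\mathcal{L}_n^f) \to \mathrm{Sp}(L_n^f\mathrm{Sp}) \simeq L_n^f\mathrm{Sp}$ (the last equivalence because $L_n^f\mathrm{Sp}$ is already stable), and this is exactly the asserted functor.

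To prove this induced functor is an equivalence, I would argue it is fully faithful and essentially surjective. Essential surjectivity is the easier half: the target $L_n^f\mathrm{Sp}$ is compactly generated by $L_n^f\Sigma^\infty V$ for $V$ finite of type $\geq n$, and such $V$ (after suspending) lie in the essential image of the unstable picture, so their suspension spectra are hit. For fully faithfulness, the cleanest route is to reduce to mapping spectra: since both sides are stable and presentable and the functor preserves colimits, it suffices to check that on a set of compact generators the map on mapping spectra $\mathrm{map}_{\mathrm{Sp}(\mathcal{L}_n^f)}(\Sigma^\infty_{\mathcal{L}_n^f} V, \Sigma^\infty_{\mathcal{L}_n^f} W) \to \mathrm{map}_{L_n^f\mathrm{Sp}}(L_n^f\Sigma^\infty V, L_n^f\Sigma^\infty W)$ is an equivalence. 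The left side, by the standard formula for stable mapping spectra, is $\lim_k \Sigma^\infty \mathrm{Map}_{\mathcal{L}_n^f}(V, \Sigma^k W)$ (shifted), and the right side is the colimit-over-shifts / telescope that computes $L_n^f$-local stable maps; the equality of the two is essentially the content of Bousfield's comparison between $v_n$-periodic unstable and stable phenomena, together with the fact that mapping out of a finite type $\geq n$ complex into $\mathcal{L}_n^f$ only sees periodic homotopy. \textbf{The main obstacle} I anticipate is precisely this last identification: controlling the interaction between the connective loop-tower defining $\mathrm{Sp}(\mathcal{L}_n^f)$ and the $\mathbb{Z}$-graded stable hom in $L_n^f\mathrm{Sp}$, i.e. making sure no information is lost or gained in the range below $d_{n+1}$ and that the telescopic stabilization of the unstable mapping spaces agrees on the nose with the $L_n^f$-local stable mapping spectrum. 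Here I would lean heavily on Theorem \ref{thm:Lnfinitelimits} (to know the tower is built from left-exact functors) and on the periodicity results already cited (Theorem \ref{thm:bousfieldclasses} and the computation of $v_i^{-1}\pi_*$ of $L_n^f X$) to see that the relevant limits and colimits are eventually constant, forcing the comparison to be an equivalence.
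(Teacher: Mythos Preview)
Your proposal contains a genuine error. You claim that ``one gets a natural equivalence $L_n^f\Sigma^\infty \circ \Omega_{\mathcal{L}_n^f} \simeq \Omega_{L_n^f\mathrm{Sp}} \circ L_n^f\Sigma^\infty$,'' attributing this to ``the fact that $\Sigma^\infty$ on spaces commutes with $\Omega$ up to the connectivity issues that the $d_{n+1}$-connected cover repairs.'' But $\Sigma^\infty$ does \emph{not} commute with $\Omega$, and no connective cover repairs this: already $\Sigma^\infty\Omega S^m$ is, by the James splitting, an infinite wedge of spheres, nothing like $\Omega\Sigma^\infty S^m \simeq S^{m-1}$. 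The functor $L_n^f\Sigma^\infty$ is a left adjoint, so it commutes with $\Sigma$, not with $\Omega$; the $\Omega$-tower presentation of $\mathrm{Sp}(\mathcal{L}_n^f)$ is the wrong one to map \emph{out} of via $\Sigma^\infty$. Even granting the comparison functor by other means, the ``main obstacle'' you identify --- matching the unstable loop-tower mapping spectrum with the $L_n^f$-local stable hom --- is real, and your appeal to ``Bousfield's comparison'' and periodicity is too vague to resolve it.

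The paper's proof is short and entirely formal; it uses neither Theorem~\ref{thm:Lnfinitelimits} nor any periodicity input nor any mapping-spectrum computation. For any presentable stable $\mathcal{C}$, the universal property of stabilization gives $\mathrm{Fun}^L(\mathrm{Sp}(\mathcal{L}_n^f),\mathcal{C}) \simeq \mathrm{Fun}^L(\mathcal{L}_n^f,\mathcal{C})$. Since $\mathcal{L}_n^f$ is the localization of $\mathcal{S}_*\langle d_{n+1}\rangle$ at $V_{n+1}\to *$, the latter is $\mathrm{Fun}^L_{V_{n+1}}(\mathcal{S}_*\langle d_{n+1}\rangle,\mathcal{C})$. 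On the other side, $L_n^f\mathrm{Sp}$ is the localization of $\mathrm{Sp}$ at $\Sigma^\infty V_{n+1}\to *$, so $\mathrm{Fun}^L(L_n^f\mathrm{Sp},\mathcal{C}) \simeq \mathrm{Fun}^L_{\Sigma^\infty V_{n+1}}(\mathrm{Sp},\mathcal{C}) \simeq \mathrm{Fun}^L_{V_{n+1}}(\mathcal{S}_*\langle d_{n+1}\rangle,\mathcal{C})$, the last step because $\mathrm{Sp}$ is the stabilization of $\mathcal{S}_*\langle d_{n+1}\rangle$. In short: stabilization commutes with nullification at a compact object purely by matching universal properties. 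Your opening instinct --- verify the universal property --- was exactly right; the detour through loop towers and explicit mapping spectra is where the argument goes off the rails.
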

\begin{proof}
This follows from universal properties. Indeed, let $\mathcal{C}$ be any presentable stable $\infty$-category. The universal property of the stabilization $\mathrm{Sp}(\mathcal{L}_n^f)$ is that there is a natural equivalence
\begin{equation*}
\mathrm{Fun}^L(\mathrm{Sp}(\mathcal{L}_n^f), \mathcal{C}) \rightarrow \mathrm{Fun}^L(\mathcal{L}_n^f, \mathcal{C}),
\end{equation*}
where $\mathrm{Fun}^L$ denotes the $\infty$-category of colimit-preserving functors. By the universal property of localization, the latter is naturally equivalent to $\mathrm{Fun}^L_{V_{n+1}}(\mathcal{S}_*\langle d_{n+1} \rangle, \mathcal{C})$, where the subscript $V_{n+1}$ indicates the full subcategory of $\mathrm{Fun}^L(\mathcal{S}_*\langle d_{n+1}\rangle, \mathcal{C})$ spanned by functors sending the map $V_{n+1} \rightarrow *$ to an equivalence. We can now conclude by observing the natural equivalences
\begin{equation*}
\mathrm{Fun}^L(L_n^f\mathrm{Sp}, \mathcal{C}) \rightarrow \mathrm{Fun}^L_{\Sigma^\infty V_{n+1}}(\mathrm{Sp}, \mathcal{C}) \rightarrow \mathrm{Fun}^L_{V_{n+1}}(\mathcal{S}_*\langle d_{n+1} \rangle, \mathcal{C}).
\end{equation*}
\end{proof} 

Considering $\mathcal{S}_{v_n}$ as a full subcategory of $\mathcal{L}_n^f$, we can restrict the functor of the previous proposition to obtain a functor
\begin{equation*}
L_n^f\Sigma^\infty\colon \mathcal{S}_{v_n} \rightarrow L_n^f\mathrm{Sp}.
\end{equation*}
As before there is a functor $M_n^f$ defined as the fiber between the stable localizations $L_n^f \rightarrow L_{n-1}^f$. We write $M_n^f\mathrm{Sp}$ for the full subcategory of $L_n^f\mathrm{Sp}$ on spectra of the form $M_n^f X$. The subcategory $M_n^f\mathrm{Sp}$ is precisely the one generated under colimits by spectra of the form $L_n^f V$, for $V$ ranging over finite type $n$ spectra. The reader not familiar with these facts can consult Section 3 of \cite{bousfieldtelescopic} for an exposition. The notation $M_n^f$ is Bousfield's and refers to the term \emph{monocular spectra}, which was coined by Ravenel.

Since $\mathcal{S}_{v_n}$ is generated under colimits by the $L_n^f$-localization of a highly connected finite type $n$ suspension space, the essential image of the functor $L_n^f\Sigma^\infty$ is contained in the subcategory $M_n^f\mathrm{Sp}$.

\begin{proposition}
\label{prop:Mnfstabilization}
The left adjoint functor $L_n^f\Sigma^\infty$ induces an equivalence of stable $\infty$-categories $\mathrm{Sp}(\mathcal{S}_{v_n}) \rightarrow M_n^f\mathrm{Sp}$. Its right adjoint $M_n^f\mathrm{Sp} \rightarrow \mathcal{S}_{v_n}$ is the functor $M \Omega^\infty$, or equivalently just the $d_{n+1}$-connected cover of the usual functor $\Omega^\infty$.
\end{proposition}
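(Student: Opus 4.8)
The plan is to bootstrap from the stable statement of Proposition \ref{prop:Lnfstabilization}, using that $\mathcal{M}_n^f$ is a coreflective subcategory of $\mathcal{L}_n^f$. First I would record that by Lemma \ref{lem:Mnffinitelim} the inclusion $\iota: \mathcal{M}_n^f \to \mathcal{L}_n^f$ preserves colimits and finite limits, and that it admits a right adjoint $r$ with $r\iota \simeq \mathrm{id}$ — this is the content of $\mathcal{M}_n^f = \mathcal{V}_n$ being a colocalization of $\mathcal{L}_n^f$, established in the proof of Proposition \ref{prop:Mncompactlygen}. Since $\iota$ preserves colimits it induces a colimit-preserving functor $\mathrm{Sp}(\iota): \mathrm{Sp}(\mathcal{M}_n^f) \to \mathrm{Sp}(\mathcal{L}_n^f)$ with $\mathrm{Sp}(\iota) \circ \Sigma^\infty_{\mathcal{M}_n^f} \simeq \Sigma^\infty_{\mathcal{L}_n^f} \circ \iota$, and composing with the equivalence $\mathrm{Sp}(\mathcal{L}_n^f) \simeq L_n^f\mathrm{Sp}$ of Proposition \ref{prop:Lnfstabilization} produces a colimit-preserving functor $G: \mathrm{Sp}(\mathcal{M}_n^f) \to L_n^f\mathrm{Sp}$ whose precomposition with $\Sigma^\infty_{\mathcal{M}_n^f}$ is exactly $L_n^f\Sigma^\infty$. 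The whole problem then reduces to showing that $G$ is fully faithful with essential image $M_n^f\mathrm{Sp}$, together with identifying the right adjoint.

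For full faithfulness I would use the description $\mathrm{Sp}(\mathcal{C}) \simeq \varprojlim(\cdots \xrightarrow{\Omega} \mathcal{C}_* \xrightarrow{\Omega} \mathcal{C}_*)$: since $\iota$ preserves finite limits, $\mathrm{Sp}(\iota)$ is computed levelwise on this description, and since $\iota$ is moreover fully faithful this levelwise functor — hence $G$ — is fully faithful. To pin down the essential image I would fix a highly connected finite type $n$ suspension space $W$ carrying a $v_n$ self-map, so that $L_n^f W$ is a compact generator of $\mathcal{M}_n^f$ by Proposition \ref{prop:Mncompactlygen}. The shifted suspension spectra $\Sigma^\infty_{\mathcal{M}_n^f}(L_n^f W)[k]$, $k \in \mathbf{Z}$, generate $\mathrm{Sp}(\mathcal{M}_n^f)$ under colimits, and $G$ carries them to the spectra $\Sigma^k L_n^f\Sigma^\infty W$; here I use that $\Sigma^\infty$ of the unstable localization map $W \to L_n^f W$ is an $L_n^f$-equivalence of spectra, which holds because $\Omega^\infty$ of an $L_n^f$-local spectrum $E$ is an $L_n^f$-local space (as $\mathrm{Map}_*(V_{n+1}, \Omega^\infty E) \simeq \Omega^\infty F(\Sigma^\infty V_{n+1}, E)$ is contractible). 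Since $\Sigma^\infty W$ is a finite type $n$ spectrum, the thick subcategory theorem shows it generates the thick subcategory of all finite type $\geq n$ spectra, so $L_n^f\Sigma^\infty W$ generates $M_n^f\mathrm{Sp}$ under colimits — using Bousfield's description of $M_n^f\mathrm{Sp}$ as the localizing subcategory of $L_n^f\mathrm{Sp}$ generated by localizations of finite type $n$ spectra. As $G$ is colimit-preserving and fully faithful, its essential image is the colimit closure of the $\Sigma^k L_n^f\Sigma^\infty W$, namely $M_n^f\mathrm{Sp}$; this gives the first assertion.

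It remains to identify the right adjoint of $L_n^f\Sigma^\infty: \mathcal{M}_n^f \to M_n^f\mathrm{Sp}$. It is $r$ composed with the right adjoint of $L_n^f\Sigma^\infty: \mathcal{L}_n^f \to L_n^f\mathrm{Sp}$, restricted to $M_n^f\mathrm{Sp}$; a direct adjunction chase (using again that $\Omega^\infty E$ is $L_n^f$-local for $E$ $L_n^f$-local, together with Proposition \ref{prop:conncover}) identifies the latter with $E \mapsto (\Omega^\infty E)\langle d_{n+1}\rangle$. Now the counit of $\iota \dashv r$ is a $v_n$-periodic equivalence: for $Y \in \mathcal{L}_n^f$ and $V$ a finite type $n$ suspension space with a $v_n$ self-map, $L_n^f V \in \mathcal{M}_n^f$, so the counit $\iota r Y \to Y$ is an equivalence after applying $\mathrm{Map}_*(L_n^f V, -) \simeq \mathrm{Map}_*(V, -)$, hence after applying $\Phi_v$. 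Composing with the $d_{n+1}$-connected cover map (itself a $v_n$-periodic equivalence, for $n \geq 1$), we get that $r((\Omega^\infty E)\langle d_{n+1}\rangle)$ is connected to $\Omega^\infty E$ by $v_n$-periodic equivalences; the same is true of $M\Omega^\infty E$ by the zigzag appearing in the proof of Theorem \ref{thm:Mnf}. Since applying $M$ turns $v_n$-periodic equivalences into equivalences (Theorem \ref{thm:Mnf2}(i)) and $M i \simeq \mathrm{id}$, it follows that $r((\Omega^\infty E)\langle d_{n+1}\rangle) \simeq M\Omega^\infty E$, which is the second assertion.

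I expect the only genuinely non-formal input to be the identification of the essential image: it relies on the thick subcategory theorem to guarantee that a single compact generator of $\mathcal{M}_n^f$ stabilizes to a compact generator of $M_n^f\mathrm{Sp}$. Everything else is formal category theory plus bookkeeping with $d_{n+1}$-connected covers, which should be routine given Propositions \ref{prop:conncover} and \ref{prop:Lnfstabilization}.
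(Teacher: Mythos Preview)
Your proof is correct and follows essentially the same strategy as the paper: establish full faithfulness of the induced map on stabilizations using that $\iota$ is fully faithful, then identify the essential image by tracking a compact generator of $\mathcal{M}_n^f$ to a generator of $M_n^f\mathrm{Sp}$. The only cosmetic difference is that the paper uses the colimit model $\mathrm{Sp}(\mathcal{C})^\omega \simeq \varinjlim_\Sigma \mathcal{C}^\omega$ on compact objects rather than your inverse limit along $\Omega$, which lets it dispatch the right adjoint in a single sentence (as $r\circ\Omega^\infty$, written there as $M\Omega^\infty$) instead of your longer detour through $v_n$-periodic equivalences.
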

\begin{proof}
Consider the following commutative diagram of $\infty$-categories, where the superscripts $\omega$ indicate the full subcategories on compact objects:
\[
\begin{tikzcd}
(\mathcal{S}_{v_n})^\omega \arrow[r,"\Sigma"] \arrow[d,"\iota"] & (\mathcal{S}_{v_n})^\omega \arrow[r,"\Sigma"] \arrow[d,"\iota"] & (\mathcal{S}_{v_n})^\omega \arrow[r,"\Sigma"] \arrow[d,"\iota"] & \cdots \\
(\mathcal{L}_n^f)^{\omega} \arrow[r,"\Sigma"] & (\mathcal{L}_n^f)^{\omega} \arrow[r,"\Sigma"] & (\mathcal{L}_n^f)^{\omega} \arrow[r,"\Sigma"] & \cdots.
\end{tikzcd}
\]
The vertical functors are fully faithful, so that the colimit
\begin{equation*}
\mathrm{Sp}(\mathcal{S}_{v_n})^{\omega} \rightarrow (L_n^f\mathrm{Sp})^{\omega}
\end{equation*}
is fully faithful as well. This functor factors through the full subcategory $(M_n^f\mathrm{Sp})^\omega$. To see that this subcategory is also its essential image, we only have to show that this image contains a generator of $M_n^f\mathrm{Sp}$. As already mentioned, the $L_n^f$-localization of a finite type $n$ spectrum is such a generator. Any generator of $\mathcal{S}_{v_n}$ as described in Proposition \ref{prop:Mncompactlygen} is sent to such a spectrum by $L_n^f\Sigma^\infty$. The identification of the right adjoint $M\Omega^\infty$ is immediate from the fact that $M\colon \mathcal{L}_n^f \rightarrow \mathcal{S}_{v_n}$ is right adjoint to the inclusion $\mathcal{S}_{v_n} \rightarrow \mathcal{L}_n^f$. To conclude the final statement of the proposition, note that for a spectrum $X \in M_n^f\mathrm{Sp}$ the space $\Omega^\infty X$ is $L_n^f$-local and that its $v_i$-periodic homotopy groups vanish for $i < n$. Theorem \ref{thm:periodicequivalence} then implies that $L_{n-1}^f(\Omega^\infty X) \langle d_{n+1} \rangle \simeq 0$, so that 
\begin{equation*}
M\Omega^\infty X = (M_n^f\Omega^\infty X) \langle d_{n+1} \rangle \simeq (L_n^f \Omega^\infty X)\langle d_{n+1} \rangle \simeq \Omega^\infty X \langle d_{n+1} \rangle.
\end{equation*}
\end{proof}

\begin{remark}
\label{rmk:MnfLTn}
It is a standard fact (and easy to show) that the functors
\[
\begin{tikzcd}
M_n^f\mathrm{Sp} \arrow[r,"L_{T(n)}", shift left] & \mathrm{Sp}_{T(n)} \arrow[l,"M_n^f", shift left]
\end{tikzcd}
\]
form an adjoint equivalence of stable $\infty$-categories, where $\mathrm{Sp}_{T(n)}$ denotes the Bousfield localization of $\mathrm{Sp}$ with respect to $T(n)$-homology (see for example Theorem 3.3 of \cite{bousfieldtelescopic}). Note that there is no corresponding simple statement in the unstable setting. The relation between the $\infty$-category $\mathcal{S}_{v_n}$ and the localization $L_{T(n)}\mathcal{S}_*$ is much more subtle. The interested reader is encouraged to delve into Bousfield's detailed results expressing the relation between $v_n$-periodic equivalences and $T(n)_*$-equivalences of spaces, e.g. Theorem 13.15 of \cite{bousfieldlocalization}.

The adjoint equivalence above shows that the adjoint pair
\[
\begin{tikzcd}
\mathcal{S}_{v_n} \arrow[r,"\Sigma^\infty_{T(n)}", shift left] & \mathrm{Sp}_{T(n)} \arrow[l,"\Omega^\infty_{T(n)}", shift left]
\end{tikzcd}
\]
exhibits $\mathrm{Sp}_{T(n)}$ as the stabilization of $\mathcal{S}_{v_n}$ as well. Here $\Sigma^\infty_{T(n)} = L_{T(n)}(L_n^f\Sigma^\infty) \simeq L_{T(n)}\Sigma^\infty$ and for the right adjoint we have the formula 
\begin{equation*}
\Omega^\infty_{T(n)} = M\Omega^\infty\circ M_n^f = (\Omega^\infty M_n^f) \langle d_{n+1} \rangle.
\end{equation*}
\end{remark}

\subsection{The Bousfield--Kuhn functor}
\label{subsec:BKleftadjoint}

In this section we collect some facts about the Bousfield--Kuhn functor and its left adjoint $\Theta$, some of which we will use later.

\begin{proposition}
\label{prop:Phifilteredcolimits}
The Bousfield--Kuhn functor $\Phi\colon \mathcal{S}_{v_n} \rightarrow \mathrm{Sp}_{T(n)}$ preserves filtered colimits.
\end{proposition}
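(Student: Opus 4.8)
The plan is to reduce the statement, by means of the natural equivalence $\mathbf{D}V\wedge\Phi\simeq\Phi_v$ of \cite{kuhntelescopic}, to the corresponding and much easier statement for a single telescopic functor, which in turn rests on the compactness of the generators of $\mathcal{M}_n^f$ furnished by Proposition \ref{prop:Mncompactlygen}. Assume $n>0$; the case $n=0$ is elementary, since $\Phi$ then computes rational homotopy. Fix a finite, $d_{n+1}$-connected, type $n$ suspension space $V$ admitting a $v_n$ self-map $v\colon\Sigma^d V\to V$ --- such a $V$ exists after sufficiently many suspensions by Hopkins--Smith \cite{hopkinssmith} --- so that by Proposition \ref{prop:Mncompactlygen} the space $L_n^f V$ is a compact generator of $\mathcal{M}_n^f$, and its Spanier--Whitehead dual $\mathbf{D}V$ is a finite spectrum of type $n$. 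Because a map of $T(n)$-local spectra is an equivalence if and only if it is one after smashing with a finite type $n$ spectrum, and because $\mathbf{D}V\wedge(-)$ preserves colimits as well as $L_{T(n)}$-localization (smashing with a finite spectrum preserves both $T(n)$-local spectra and $T(n)$-equivalences), it will suffice to show that the composite $\Phi_v\circ i\colon\mathcal{M}_n^f\to\mathrm{Sp}$ preserves filtered colimits.

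To prove this, recall that Kuhn's construction realizes $\Phi_v(Z)$, naturally in $Z\in\mathcal{S}_*$, as the sequential colimit $\varinjlim_k\Sigma^{-kd}\Sigma^\infty\mathrm{Map}_*(V,Z)$ along the maps induced by $v^*$. Since $\Sigma^\infty$ and $\Sigma^{-kd}$ preserve all colimits and a sequential colimit commutes with filtered colimits, everything reduces to the claim that $X\mapsto\mathrm{Map}_*(V,iX)$ preserves filtered colimits as a functor $\mathcal{M}_n^f\to\mathcal{S}_*$. This is precisely where Proposition \ref{prop:Mncompactlygen} enters: as $iX$ is $L_n^f$-local and $V$ is $d_{n+1}$-connected, the localization adjunction (together with $L_n^f V\in\mathcal{M}_n^f$) identifies $\mathrm{Map}_*(V,iX)$ naturally with $\mathrm{Map}_{\mathcal{M}_n^f}(L_n^f V,X)$, and $L_n^f V$ is a compact object of $\mathcal{M}_n^f$, so this mapping-space functor does preserve filtered colimits. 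Hence $\Phi_v\circ i$ does too, and for a filtered diagram $(X_j)$ with colimit $X$ the comparison map $\varinjlim_j\Phi(X_j)\to\Phi(X)$ becomes, upon smashing with $\mathbf{D}V$, the chain of equivalences $\mathbf{D}V\wedge\varinjlim_j\Phi(X_j)\simeq L_{T(n)}\varinjlim_j\bigl(\mathbf{D}V\wedge\Phi(X_j)\bigr)\simeq L_{T(n)}\varinjlim_j\Phi_v(iX_j)\simeq L_{T(n)}\Phi_v(iX)\simeq\mathbf{D}V\wedge\Phi(X)$; therefore it is an equivalence, as desired.

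The only genuine input here is Proposition \ref{prop:Mncompactlygen} --- the compactness of the generator $L_n^f V$, together with the ensuing identification of $\mathrm{Map}_*(V,i(-))$ as a corepresentable functor --- combined with the reduction, via the single finite type $n$ spectrum $\mathbf{D}V$, from the Bousfield--Kuhn functor to a telescopic functor; I expect the remaining bookkeeping of colimits across $\mathcal{S}_*$, $\mathcal{M}_n^f$, $\mathrm{Sp}$, and $\mathrm{Sp}_{T(n)}$ to present no real difficulty. It is worth emphasizing that the analogous assertion for $\Phi$ on all of $\mathcal{S}_*$ is genuinely more delicate: there $\Phi$ is only an inverse limit of telescopic functors (see the proof of Theorem \ref{thm:Mnf2}), and such limits need not commute with filtered colimits, so it is really the passage to $\mathcal{M}_n^f$ --- where one finite spectrum detects equivalences --- that makes the proof go through.
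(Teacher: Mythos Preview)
Your proof is correct and follows the same overall strategy as the paper: reduce the question about $\Phi$ to one about a single telescopic functor $\Phi_v$ by smashing with $\mathbf{D}V$, then argue that $\Phi_v$ preserves filtered colimits. The paper's proof is terser, simply noting that $\Phi_v$ ``is built from functors of the form $\mathrm{Map}(V,-)$ with $V$ finite''; your version unpacks this step more carefully by identifying $\mathrm{Map}_*(V,i(-))\simeq\mathrm{Map}_{\mathcal{M}_n^f}(L_n^f V,-)$ and invoking the compactness of $L_n^f V$ from Proposition~\ref{prop:Mncompactlygen}, which makes explicit why the argument works internally to $\mathcal{M}_n^f$ without having to separately verify that the inclusion $i$ preserves filtered colimits. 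This is a minor presentational difference rather than a genuinely different route.
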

\begin{proof}
Let $F\colon I \rightarrow \mathcal{S}_{v_n}$ be a filtered diagram. We need to verify that the canonical map
\begin{equation*}
\varinjlim_I \Phi \circ F \rightarrow \Phi(\varinjlim_I F)
\end{equation*}
is an equivalence of $T(n)$-local spectra. It suffices to check this after smashing both sides with $\mathbf{D}V$ for some finite type $n$ space $V$. We may choose $V$ so that it has a $v_n$ self-map $v\colon \Sigma^d V \rightarrow V$ and thus (by property (i) of $\Phi$ given in Section \ref{sec:mainresults}) reduce to checking that
\begin{equation*}
\varinjlim_I \Phi_v \circ F \rightarrow \Phi_v(\varinjlim_I F)
\end{equation*}
is an equivalence. But $\Phi_v$ is easily seen to preserve filtered colimits, since it is built from functors of the form $\mathrm{Map}(V, -)$ with $V$ finite.
\end{proof}

\begin{corollary}
The left adjoint $\Theta\colon \mathrm{Sp}_{T(n)} \rightarrow \mathcal{S}_{v_n}$ preserves compact objects.
\end{corollary}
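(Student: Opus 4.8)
The plan is to deduce this immediately from Proposition \ref{prop:Phifilteredcolimits} via the standard adjunction yoga: a left adjoint between presentable $\infty$-categories preserves compact objects precisely when its right adjoint preserves filtered colimits. Concretely, I would argue as follows. Let $X \in \mathrm{Sp}_{T(n)}$ be compact; we must show $\Theta(X) \in \mathcal{M}_n^f$ is compact, i.e. that $\mathrm{Map}_{\mathcal{M}_n^f}(\Theta X, -)$ commutes with filtered colimits. Given a filtered diagram $F : I \to \mathcal{M}_n^f$, the adjunction $\Theta \dashv \Phi$ (Theorem \ref{thm:Mnf2}(ii)) together with Proposition \ref{prop:Phifilteredcolimits} and the compactness of $X$ gives a chain of natural equivalences
\begin{equation*}
\mathrm{Map}_{\mathcal{M}_n^f}\bigl(\Theta X, \varinjlim_I F\bigr) \simeq \mathrm{Map}_{\mathrm{Sp}_{T(n)}}\bigl(X, \Phi \varinjlim_I F\bigr) \simeq \mathrm{Map}_{\mathrm{Sp}_{T(n)}}\bigl(X, \varinjlim_I \Phi F\bigr) \simeq \varinjlim_I \mathrm{Map}_{\mathrm{Sp}_{T(n)}}(X, \Phi F) \simeq \varinjlim_I \mathrm{Map}_{\mathcal{M}_n^f}(\Theta X, F),
\end{equation*}
which is exactly the claim.

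I do not anticipate any real obstacle: every ingredient is already in hand. The only point worth a remark is that $\mathcal{M}_n^f$ is compactly generated and presentable (Theorem \ref{thm:Mnf2}(iv)), so the notion of compact object behaves as expected and colimits exist, but even this is not strictly needed for the formal argument above. One could alternatively invoke the fact that compact objects of $\mathrm{Sp}_{T(n)}$ are, up to retracts, the $L_{T(n)}$-localizations of finite type $\geq n$ spectra and trace through the construction of $\Theta$ as a limit of the Bousfield adjoints $\Theta_v$ from the proof of Theorem \ref{thm:Mnf2}, but the adjunction argument is cleaner and makes the role of Proposition \ref{prop:Phifilteredcolimits} transparent. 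I would therefore present the proof in the two-line form above, citing Theorem \ref{thm:Mnf2}(ii) and Proposition \ref{prop:Phifilteredcolimits}.
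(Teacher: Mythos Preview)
Your proposal is correct and is exactly the argument the paper has in mind: the corollary is stated in the paper without proof, immediately following Proposition \ref{prop:Phifilteredcolimits}, precisely because it follows from that proposition by the standard adjunction yoga you wrote out.
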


\begin{remark}
\label{rmk:Tncompact}
An object of $\mathrm{Sp}_{T(n)}$ is compact precisely if it is a retract of a spectrum of the form $L_{T(n)}F$, with $F$ a finite spectrum of type $n$.
\end{remark}

In fact one can be much more explicit about the values of $\Theta$ when evaluated on (the $T(n)$-localizations of) finite type $n$ spectra (cf. Corollary 5.9 of \cite{bousfieldtelescopic}):

\begin{lemma}
\label{lem:valuetheta}
Let $V$ be a $(d_{n+1}-2)$-connected finite type $n$ space, so that $L_n^f\Sigma^2 V \in \mathcal{S}_{v_n}$. If $V$ admits a $v_n$ self map $v\colon \Sigma^d V \rightarrow V$, then there is a canonical equivalence 
\begin{equation*}
L_n^f \Sigma^2 V \simeq \Theta(\Sigma^\infty_{T(n)}\Sigma^2 V).
\end{equation*}
\end{lemma}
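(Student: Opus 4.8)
The plan is to exhibit the equivalence as the counit of the adjunction $(\Theta, \Phi)$ applied to $L_n^f\Sigma^2 V$, and to prove that counit is an equivalence by a mapping-space argument keyed to the compact generator of $\mathcal{M}_n^f$. First I would recall from Proposition \ref{prop:Mncompactlygen} that, since $V$ is $(d_{n+1}-2)$-connected and finite of type $n$, the double suspension $\Sigma^2 V$ is a $d_{n+1}$-connected finite type $n$ suspension space, so $L_n^f\Sigma^2 V$ lies in $\mathcal{M}_n^f$ and is in fact a compact generator; call it $W$. The counit $\varepsilon\colon \Theta\Phi(W)\to W$ is the canonical map in question, once one identifies $\Phi(W)$: using property (ii) of $\Phi$ (namely $\Phi\Omega^\infty\simeq L_{T(n)}$) together with the fact that $W$ is already the $L_n^f$-localization of a suspension, one gets $\Phi(W)\simeq L_{T(n)}\Sigma^\infty\Sigma^2 V = \Sigma^\infty_{T(n)}\Sigma^2 V$; more carefully, since $\Phi$ factors through $M$ and $\Sigma^2 V\to iW$ is a $v_n$-periodic equivalence, $\Phi(\Sigma^2 V)\simeq \Phi(W)$, and the value on a suspension spectrum's worth of data is controlled by the telescopic description. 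So the claim reduces to: the counit $\varepsilon\colon \Theta(\Sigma^\infty_{T(n)}\Sigma^2 V)\to W$ is an equivalence in $\mathcal{M}_n^f$.

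By Corollary \ref{cor:Mnequiv} it suffices to show $\varepsilon$ is a $v_n$-periodic equivalence, i.e.\ that $\Phi(\varepsilon)$ is an equivalence of $T(n)$-local spectra. Here I would use the triangle identity for the adjunction $(\Theta,\Phi)$: the composite
\[
\Phi(W)\xrightarrow{\ \mathrm{unit}\ } \Phi\Theta\Phi(W)\xrightarrow{\ \Phi(\varepsilon)\ } \Phi(W)
\]
is the identity, so $\Phi(\varepsilon)$ is a retraction; it remains to see it is also a section, equivalently that the unit $\Phi(W)\to\Phi\Theta\Phi(W)$ is an equivalence. Equivalently, one can argue directly that $\varepsilon$ is an equivalence by testing against the generator $W$: the map
\[
\mathrm{Map}_{\mathcal{M}_n^f}(W, \Theta\Phi(W))\to \mathrm{Map}_{\mathcal{M}_n^f}(W,W)
\]
is, by the $(\Theta,\Phi)$-adjunction, identified with $\mathrm{Map}_{\mathrm{Sp}_{T(n)}}(\Sigma^\infty_{T(n)}\Sigma^2 V,\Phi(W))$, and one checks this receives $\mathrm{Map}_{\mathcal{M}_n^f}(W,W)$ compatibly with $\varepsilon$, so that $\varepsilon$ is an equivalence once this comparison of mapping spaces is. Since $W$ generates $\mathcal{M}_n^f$ under colimits and both sides of $\varepsilon$ are detected on $\mathrm{Map}(W,-)$ (both being in $\mathcal{M}_n^f = \mathcal{V}_n$), this finishes the argument.

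The main obstacle I expect is pinning down the identification $\Phi(W)\simeq \Sigma^\infty_{T(n)}\Sigma^2 V$ \emph{as the relevant map}, i.e.\ making sure the canonical map produced by the adjunction is the one Bousfield's Corollary 5.9 produces, and more fundamentally checking that the unit $\Phi(W)\to\Phi\Theta\Phi(W)$ is an equivalence rather than merely a split injection. This is exactly the place where one needs genuine input beyond formal nonsense: one uses that $\Phi$ restricted to (localizations of) double suspensions of finite type $n$ complexes behaves like a stabilization — concretely, that the telescopic functor $\Phi_v$ is computed by a mapping spectrum out of $V$ and that on the $d_{n+1}$-connected double suspension $W$ the comparison $\Sigma^\infty$ followed by $\Omega^\infty$ is, after applying $\Phi$, the identity. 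I would lean on property (i) of $\Phi$ (smashing with $\mathbf{D}V'$ for a type $n$ complex $V'$ with a $v_n$ self-map reduces everything to a statement about $\Phi_{v'}$) and on the already-established fact (Proposition \ref{prop:Mnfstabilization} and Remark \ref{rmk:MnfLTn}) that $\Sigma^\infty_{T(n)}$ exhibits $\mathrm{Sp}_{T(n)}$ as the stabilization of $\mathcal{M}_n^f$, so that on a compact generator the unstable-to-stable comparison is as tight as possible; the remaining work is to transport this across the $(\Theta,\Phi)$ adjunction, which is where I'd cite Theorem 5.4 and Corollary 5.9 of \cite{bousfieldtelescopic} for the concrete model-categorical computation.
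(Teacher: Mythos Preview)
Your proposal has a genuine gap, and you have correctly located it: the identification $\Phi(W)\simeq \Sigma^\infty_{T(n)}\Sigma^2 V$ together with the claim that the unit (equivalently counit) is an equivalence rather than merely split. You do not supply an argument for either, and the references you invoke at the end (Bousfield's Theorem 5.4 and Corollary 5.9) are essentially the statement being reproved. The missing idea is short and is exactly what the paper uses: since $v\colon \Sigma^d V\to V$ is a $v_n$ self-map, its cofiber $\mathrm{cof}(v)$ is a finite type $n{+}1$ space, and after one suspension its connectivity is at least $d_{n+1}-1$; by Theorem~\ref{thm:bousfieldclasses} one then has $\mathrm{Map}_*(\Sigma^i\mathrm{cof}(v),X)\simeq *$ for every $X\in\mathcal{L}_n^f$ and $i\ge 1$. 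The cofiber sequence shows that $v^*\colon \mathrm{Map}_*(\Sigma^2 V,X)\to \mathrm{Map}_*(\Sigma^{2+d}V,X)$ is already an equivalence, i.e.\ the mapping space out of $\Sigma^2 V$ into any $L_n^f$-local object is \emph{already} $v$-periodic. This yields directly, for all $X\in\mathcal{M}_n^f$,
\[
\mathrm{Map}_*(L_n^f\Sigma^2 V,X)\;\simeq\;\mathrm{Map}_*(\Sigma^2 V,X)\;\simeq\;v^{-1}\mathrm{Map}_*(\Sigma^2 V,X)\;\simeq\;\mathrm{Map}(\Sigma^\infty_{T(n)}\Sigma^2 V,\Phi X)\;\simeq\;\mathrm{Map}_*(\Theta(\Sigma^\infty_{T(n)}\Sigma^2 V),X),
\]
and Yoneda finishes. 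No separate analysis of the unit or counit is needed.

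There is also a secondary error in your mapping-space test: you write that the $(\Theta,\Phi)$-adjunction identifies $\mathrm{Map}_{\mathcal{M}_n^f}(W,\Theta\Phi(W))$ with a mapping space in $\mathrm{Sp}_{T(n)}$, but $\Theta$ is the \emph{left} adjoint, so the adjunction moves $\Theta$ out of the \emph{source}, not the target. To apply it with $W$ in the source you would need to already know $W\simeq\Theta(\text{something})$, which is the assertion to be proved.
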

\begin{proof}
For any $i \geq 1$, the space $\Sigma^i\mathrm{cof}(v)$ is a $d_{n+1}$-connective suspension space of type $n+1$. Theorem \ref{thm:bousfieldclasses} then implies that $\mathrm{Map}_*(\Sigma^i\mathrm{cof}(v), X)$ is contractible for $X \in \mathcal{L}_n^f$. The evident long exact sequence argument gives the first isomorphism in the following sequence of identifications:
\begin{eqnarray*}
\pi_0\mathrm{Map}_*(\Sigma^2 V, X) & \cong & v^{-1}\pi_0\mathrm{Map}_*(\Sigma^2 V, X) \\
& \cong & \pi_0\bigl(\mathbf{D}(\Sigma^2 V) \otimes \Phi(X)\bigr) \\
& \cong & \pi_0\mathrm{Map}(\Sigma^\infty\Sigma^2 V, \Phi(X)) \\
& \cong & \pi_0\mathrm{Map}_*(\Theta(\Sigma^\infty_{T(n)}\Sigma^2 V), X).
\end{eqnarray*}
This implies the lemma.
\end{proof}

\begin{remark}
\label{rmk:valuetheta}
If one assumes that the type $n+1$ space $V_{n+1}$ used to define the localization $L_n^f$ has been chosen so that its connectivity is as low as possible, then the assumption on the connectivity of $V$ in the previous lemma can be omitted. Indeed, the connectivity $c$ of $\Sigma\mathrm{cof}(v)$ is by assumption at least that of $V_{n+1}$. But $c = \mathrm{conn}(V) + 1$, so that $V$ itself is at least $(d_{n+1} -1)$-connective, or equivalently $(d_{n+1}-2)$-connected.
\end{remark}

A consequence of the previous lemma is the following. Consider a finite type $n$ spectrum $F$. To describe $\Theta(L_{T(n)}F)$, choose a $v_n$ self-map $v\colon \Sigma^d F \rightarrow F$ and pick $j$ sufficiently large so that $\Sigma^{jd} F$ is equivalent to a suspension spectrum $\Sigma^\infty\Sigma^2 V$ for some highly connected finite pointed space $V$. Then the choice of $v$ and the preceding lemma give equivalences
\begin{equation*}
\Theta(L_{T(n)} F) \simeq \Theta(L_{T(n)} \Sigma^{jd} F) \simeq L_n^f\Sigma^2V.
\end{equation*}

From this one also gets a description of the value of $\Theta$ on the localized sphere spectrum $L_{T(n)} \mathbb{S}$. Indeed, first one chooses a directed system of finite type $n$ spectra
\begin{equation*}
F(1) \xrightarrow{f(1)} F(2) \xrightarrow{f(2)} F(3) \xrightarrow{f(3)} \cdots
\end{equation*}
with a $T(n)$-equivalence
\begin{equation*}
\varinjlim_k F(k) \rightarrow \mathbb{S}.
\end{equation*}
Thus we also have $\Theta(\mathbb{S}) \simeq \varinjlim_{k} \Theta(F(k))$. To make this more explicit using Lemma \ref{lem:valuetheta}, one `lifts' the diagram of $F(k)$'s to a diagram of pointed spaces of the following form (much as in Section 6.3 of \cite{kuhntelescopic}):
\[
\begin{tikzcd}
\Sigma^{2+ i(1)d(1)} V(1) \ar{d}[swap]{v(1)^{i(1)}} \ar{dr} & \Sigma^{2+ i(2)d(2)} V(2) \ar{d}[swap]{v(2)^{i(2)}} \ar{dr} & \Sigma^{2+ i(3)d(3)} V(3) \ar{d}[swap]{v(3)^{i(3)}}\ar{dr} & \\
\Sigma^2 V(1)	& \Sigma^2 V(2) & \Sigma^2 V(3) & \cdots . 
\end{tikzcd}
\]
To build this diagram, one first chooses a $v_n$ self-map $u(k)\colon \Sigma^{d(k)} F(k) \rightarrow F(k)$ for every $k \geq 1$. Then one chooses $V(k)$ to be a $d_{n+1}$-connective finite type $n$ space for which $\Sigma^\infty\Sigma^2 V(k)$ is equivalent to $\Sigma^{j(k)d(k)} F(k)$ for some sufficiently large $j(k)$ and for which $\Sigma^{j(k)-2}u(k)$ desuspends to a self-map $v(k)\colon \Sigma^{d(k)}V(k) \rightarrow V(k)$. Enlarging the $j(k)$ if necessary, one can assume they are such that there is a commutative diagram
\[
\begin{tikzcd}
\Sigma^{(i(k) + j(k))d(k)} F(k) \ar{d}[swap]{u(k)^{i(k)}} \ar{r}{f(k)} & \Sigma^{j(k+1)d(k+1)} F(k+1) \ar{d}{u(k+1)^{J}} \\
\Sigma^{j(k)d(k)} F(k) \ar{r}{f(k)} & \Sigma^{j(k)d(k)} F(k+1)
\end{tikzcd}
\]
for all $k$, where $j(k)d(k) + Jd(k+1) = j(k+1)d(k+1)$ and where we have omitted the necessary suspensions of $f(k)$ and $u(k)$ from the notation. Furthermore, one can assume that the top horizontal map desuspends to a map
\begin{equation*}
\Sigma^{2+ i(k)d(k)} V(k) \rightarrow \Sigma^2 V(k+1),
\end{equation*}
which is the map featuring in the earlier diagram above. After applying $L_n^f$ to that diagram, all the vertical arrows of course become equivalences. Moreover, Lemma \ref{lem:valuetheta} and the fact that $\Theta$ is left adjoint show that the colimit of the resulting diagram in $\mathcal{S}_{v_n}$ produces $\Theta(\mathbb{S})$. Informally one might write
\begin{equation*}
\Theta(\mathbb{S}) \simeq \varinjlim_k L_n^f(\Sigma^2V(k)),
\end{equation*}
although one should keep in mind that the spaces $V(k)$ themselves do not quite form a directed system.


\section{Lie algebras in $T(n)$-local spectra}
\label{sec:Liealgebras}

As with any adjunction, the adjoint pair $(\Theta, \Phi)$ gives a monad $\Phi\Theta$ on the $\infty$-category $\mathrm{Sp}_{T(n)}$. In other words, $\Phi\Theta$ has the structure of a monoid in the $\infty$-category of functors from $\mathrm{Sp}_{T(n)}$ to itself, of which the monoidal structure is given by composition of functors. A \emph{left module} (also called an \emph{algebra}) for this monad is a $T(n)$-local spectrum $X$ equipped with a map $\Phi\Theta(X) \rightarrow X$, and homotopies expressing the coherent associativity and unitality of this action. We write $\mathrm{LMod}_{\Phi\Theta}(\mathrm{Sp}_{T(n)})$ for the $\infty$-category of such left modules. The reader can consult Section 4.7 of \cite{higheralgebra} for a detailed treatment of monads in the $\infty$-categorical setting or \cite{riehlverity} for a different perspective.

The Bousfield--Kuhn functor factors through this $\infty$-category of algebras to give a functor
\begin{equation*}
\phi\colon \mathcal{S}_{v_n} \rightarrow \mathrm{LMod}_{\Phi\Theta}(\mathrm{Sp}_{T(n)}).
\end{equation*}
In joint work with Eldred, Mathew, and Meier \cite{ehmm} we prove the following theorem. We include a sketch of the proof for the reader's convenience.

\begin{theorem}
\label{thm:Phimonadic}
The functor $\phi$ is an equivalence of $\infty$-categories. In other words, the adjoint pair $(\Theta, \Phi)$ is monadic.
\end{theorem}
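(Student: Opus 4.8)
The plan is to apply Lurie's $\infty$-categorical Barr--Beck monadicity theorem (Section 4.7 of \cite{higheralgebra}) to the adjoint pair $(\Theta, \Phi)$. This reduces the statement to two assertions: (a) the functor $\Phi$ is conservative, and (b) the $\infty$-category $\mathcal{M}_n^f$ admits geometric realizations of $\Phi$-split simplicial objects and $\Phi$ preserves them. Granting (a) and (b), Barr--Beck will identify $\mathcal{M}_n^f$ with $\mathrm{LMod}_{\Phi\Theta}(\mathrm{Sp}_{T(n)})$ via $\phi$.

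For (a) I would simply invoke Corollary \ref{cor:Mnequiv}: a map in $\mathcal{M}_n^f$ is an equivalence exactly when it is a $v_n$-periodic equivalence, that is, exactly when $\Phi$ sends it to an equivalence of $T(n)$-local spectra. The existence half of (b) is equally immediate, since $\mathcal{M}_n^f$ is compactly generated by Theorem \ref{thm:Mnf2}(iv), hence presentable and in particular cocomplete, so all geometric realizations exist.

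The real content is the preservation half of (b), and here I would aim for the stronger statement that $\Phi$ preserves all sifted colimits --- as one expects of a forgetful functor from algebras over an operad. Filtered colimits are already covered by Proposition \ref{prop:Phifilteredcolimits}, so the remaining task is geometric realizations. My approach would be to reduce to telescopic functors: by Lemma \ref{lem:Mnffinitelim} a geometric realization in $\mathcal{M}_n^f$ agrees with the one computed in $\mathcal{L}_n^f$, hence is obtained by applying $L_n^f$ to the corresponding realization of spaces; and to test whether the natural map $|\Phi C_\bullet| \rightarrow \Phi|C_\bullet|$ is an equivalence of $T(n)$-local spectra it is enough to smash with $\mathbf{D}V$ for $V$ a finite type $n$ space equipped with a $v_n$ self-map $v$. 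Since smashing with a fixed spectrum commutes with colimits, property (i) of the Bousfield--Kuhn functor then reduces the claim to showing that the telescopic functor $\Phi_v \colon \mathcal{L}_n^f \rightarrow \mathrm{Sp}_{T(n)}$ commutes with geometric realizations.

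That last point is the main obstacle, and the one for which I would lean on \cite{ehmm}. The subtlety is genuine: $\mathrm{Map}_*(V, -)$ does not commute with geometric realizations of spaces, so $\Phi_v$ cannot be handled naively. The heuristic for why passing to the mapping telescope along $v$ repairs this is that inverting the $v_n$ self-map turns the $v$-periodic homotopy groups into a genuinely stable invariant, so that the obstructions to commuting $\mathrm{Map}_*(V, -)$ past geometric realizations wash out in the colimit defining $\Phi_v$. Once $\Phi_v$ --- and therefore $\Phi$ --- is known to preserve geometric realizations, both hypotheses of Barr--Beck are in place and the monadicity theorem gives that $\phi$ is an equivalence of $\infty$-categories; everything apart from this preservation statement is formal.
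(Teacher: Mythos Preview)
Your overall architecture matches the paper's: apply the $\infty$-categorical Barr--Beck theorem, with conservativity coming from Corollary \ref{cor:Mnequiv} and the substantive work being that $\Phi$ preserves geometric realizations, reduced via smashing with $\mathbf{D}V$ to the telescopic functor $\Phi_v$. That much is correct.

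The gap is in your last paragraph. Your heuristic --- that telescoping along $v$ ``washes out'' the obstructions to $\mathrm{Map}_*(V,-)$ commuting with realizations --- is not the mechanism, and I do not see how to make it into one. The paper's argument (following \cite{ehmm}) is a connectivity trick rather than a stabilization-by-telescoping trick. One exploits the freedom in the construction of $\mathcal{M}_n^f$ to choose the defining type $n{+}1$ space $V_{n+1}$ so that its connectivity $d_{n+1}$ exceeds the \emph{dimension} of the finite type $n$ complex $V$. A geometric realization in $\mathcal{M}_n^f$ is then $L_n^f$ applied to the realization in $\mathcal{S}_*\langle d_{n+1}\rangle$, and since $L_n^f$ does not affect $v_n$-periodic homotopy one is reduced to showing that
\[
\mathrm{Map}_*(V,-)\colon \mathcal{S}_*\langle d_{n+1}\rangle \longrightarrow \mathcal{S}_*
\]
preserves geometric realizations. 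This holds for the elementary reason that mapping out of a finite CW complex commutes with realizations of simplicial spaces whose connectivity exceeds the dimension of that complex (a Blakers--Massey/skeletal filtration argument; this is Proposition 4.2 of \cite{ehmm}). No telescoping is needed at this step --- each individual $\mathrm{Map}_*(V,-)$ already commutes with the relevant realizations once the connectivity bound is in place. So the missing idea is precisely this freedom to raise $d_{n+1}$ above $\dim V$, not anything about the $v_n$-periodicity making the invariant ``stable''.
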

\begin{proof}[Sketch of proof]
We verify the following two facts:
\begin{itemize}
\item[(1)] The functor $\Phi$ is conservative, i.e., a map $\varphi$ in $\mathcal{S}_{v_n}$ is an equivalence if and only if $\Phi(\varphi)$ is an equivalence. This follows from Theorem \ref{thm:Mnf}(i).
\item[(2)] The functor $\Phi$ preserves geometric realizations (i.e., colimits of diagrams indexed by $\mathbf{\Delta}^{\mathrm{op}}$). We prove this below.
\end{itemize}
It follows that $\Phi$ satisfies the conditions of Lurie's version of the Barr-Beck theorem (cf. Theorem 4.7.4.5 of \cite{higheralgebra} or Theorem 7.2.4 of \cite{riehlverity}), which proves the theorem. To check (2), note that the same argument as in the proof of Proposition \ref{prop:Phifilteredcolimits} implies that it suffices to check that
\begin{equation*}
\Phi_v\colon \mathcal{S}_{v_n} \rightarrow \mathrm{Sp}_{T(n)}
\end{equation*}
preserves geometric realizations, where $\Phi_v$ is the telescopic functor associated to a $v_n$ self-map $v\colon \Sigma^d V_n \rightarrow V_n$. Recall from the beginning of Section \ref{sec:Mnf} that the construction of $\mathcal{S}_{v_n}$ involves choosing a finite type $n+1$ space $V_{n+1}$. For the purposes of this proof we choose this space so that its connectivity $d_{n+1}$ is larger than the dimension of $V_n$ (note that the statement of the theorem does not depend on this choice). Consider a diagram
\begin{equation*}
F\colon \mathbf{\Delta}^{\mathrm{op}} \rightarrow \mathcal{S}_{v_n}.
\end{equation*}
The inclusion $\mathcal{S}_{v_n} \rightarrow \mathcal{L}_n^f$ preserves colimits, so the colimit of $F$ may be computed in the latter $\infty$-category. Since $\mathcal{L}_n^f$ is a localization of $\mathcal{S}_*\langle d_{n+1} \rangle$ we have
\begin{equation*}
\varinjlim_{\mathbf{\Delta}^{\mathrm{op}}} F \simeq L_n^f \varinjlim_{\mathbf{\Delta}^{\mathrm{op}}} (i \circ F),
\end{equation*}
where the colimit on the right is computed in $\mathcal{S}_*\langle d_{n+1} \rangle$, or equivalently just in $\mathcal{S}_*$. Since $L_n^f$ preserves $v_n$-periodic homotopy groups it follows that there is a natural equivalence
\begin{equation*}
\Phi_v(\varinjlim_{\mathbf{\Delta}^{\mathrm{op}}} F) \simeq \Phi_v(\varinjlim_{\mathbf{\Delta}^{\mathrm{op}}} (i \circ F)).
\end{equation*}
Thus it suffices to show that the functor
\begin{equation*}
\Phi_v\colon \mathcal{S}_*\langle d_{n+1} \rangle \rightarrow \mathrm{Sp}_{T(n)}
\end{equation*}
preserves geometric realizations. This functor can be expressed as the following colimit:
\begin{equation*}
\Phi_v \simeq \varinjlim\bigl(\Sigma^\infty \mathrm{Map}_*(V,-) \rightarrow \Sigma^{\infty-d} \mathrm{Map}_*(V,-) \rightarrow  \Sigma^{\infty-2d} \mathrm{Map}_*(V,-)  \rightarrow \cdots \bigr).
\end{equation*}
But each of the functors
\begin{equation*}
\mathrm{Map}_*(V,-)\colon \mathcal{S}_*\langle d_{n+1} \rangle \rightarrow \mathcal{S}_*
\end{equation*}
preserves geometric realizations, since the connectivity $d_{n+1}$ of the spaces involved exceeds the dimension of $V$ (see Proposition 4.2 of \cite{ehmm}). This finishes the proof.
\end{proof}

We record the following aspect of this proof for future reference:

\begin{lemma}
\label{lem:Phisiftedcolims}
The Bousfield--Kuhn functor $\Phi\colon \mathcal{S}_{v_n} \rightarrow \mathrm{Sp}_{T(n)}$ preserves sifted colimits.
\end{lemma}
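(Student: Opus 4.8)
The plan is to deduce this from the two colimit-preservation properties of $\Phi$ that have already been established, using the fact that sifted colimits are generated by filtered colimits together with geometric realizations. Concretely, I would invoke Corollary 5.5.8.17 of \cite{htt}: a functor between $\infty$-categories that admit sifted colimits preserves all sifted colimits if and only if it preserves filtered colimits and geometric realizations of simplicial objects. Both $\mathcal{M}_n^f$ and $\mathrm{Sp}_{T(n)}$ are presentable---indeed compactly generated, by Theorem \ref{thm:Mnf2}(iv) and Remark \ref{rmk:Tncompact}---and hence admit all small colimits, so the hypotheses of this criterion apply to $\Phi$.

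It then remains only to cite the two inputs. Preservation of filtered colimits is exactly Proposition \ref{prop:Phifilteredcolimits}. Preservation of geometric realizations was verified in the course of the proof of Theorem \ref{thm:Phimonadic} (it is item (2) there): that argument reduces, via the facts that the inclusion $\mathcal{M}_n^f \hookrightarrow \mathcal{L}_n^f$ preserves colimits and that $L_n^f$ preserves $v_n$-periodic homotopy groups, to showing that each functor $\mathrm{Map}_*(V,-)$ on sufficiently highly connected spaces preserves geometric realizations, which is Proposition 4.2 of \cite{ehmm}. Assembling these two facts via the criterion above yields the lemma.

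I do not expect a genuine obstacle here: the statement is essentially a bookkeeping consequence of material already in hand. The only point meriting a moment's care is checking that the geometric-realization step in the proof of Theorem \ref{thm:Phimonadic} was really carried out for the functor $\Phi$ on $\mathcal{M}_n^f$ and not merely for an auxiliary functor---which it was, since on $\mathcal{M}_n^f$ the telescopic functor $\Phi_v$ factors through $\Phi_v$ on $\mathcal{S}_*\langle d_{n+1}\rangle$ in the required way, and $\Phi$ itself is assembled from the $\Phi_v$ by a filtered limit that does not disturb the argument after smashing with a finite type $n$ spectrum. If one preferred to avoid quoting \cite{htt} directly, one could instead recall that every sifted colimit can be written as a geometric realization of a simplicial object each of whose terms is a filtered colimit, and apply the two established properties in succession; this is the same proof in slightly more elementary packaging.
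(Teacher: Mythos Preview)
Your proposal is correct and follows essentially the same approach as the paper: invoke Corollary 5.5.8.17 of \cite{htt}, then cite Proposition \ref{prop:Phifilteredcolimits} for filtered colimits and item (2) in the proof of Theorem \ref{thm:Phimonadic} for geometric realizations. The paper's proof is in fact just these two sentences, without the additional commentary you supply.
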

\begin{proof}
By Corollary 5.5.8.17 of \cite{htt} it suffices to show that $\Phi$ preserves filtered colimits and geometric realizations. The first is Proposition \ref{prop:Phifilteredcolimits}, the second was part of the proof of Theorem \ref{thm:Phimonadic} above (which first appeared as Proposition 4.1 in \cite{ehmm}).
\end{proof}

The purpose of this section is to prove Theorem \ref{thm:MnfLiealg}, stating that $\mathcal{S}_{v_n}$ is equivalent to the $\infty$-category of Lie algebras in $\mathrm{Sp}_{T(n)}$. Given Theorem \ref{thm:Phimonadic} above this amounts to understanding the monad $\Phi\Theta$. The algebraic and coalgebraic structures we consider in this section are conveniently described using operads and cooperads. We now give a brief informal review of these, as well as of bar-cobar (or Koszul) duality between the two. Then we outline our strategy for the remainder of this section.

For $\mathbf{C}$ a symmetric monoidal category, a \emph{symmetric sequence} in $\mathbf{C}$ is a collection $\{\mathcal{O}(k)\}_{k \geq 1}$ of objects of $\mathbf{C}$, where $\mathcal{O}(k)$ is equipped with an action of the symmetric group $\Sigma_k$. If $\mathbf{C}$ has sufficiently many colimits, such a symmetric sequence determines a functor
\begin{equation*}
F_{\mathcal{O}}\colon \mathbf{C} \rightarrow \mathbf{C}\colon X \mapsto \coprod_{k \geq 1} (\mathcal{O}(k) \otimes X^{\otimes k})_{\Sigma_k}.
\end{equation*}
The category of functors $\mathrm{Fun}(\mathbf{C}, \mathbf{C})$ is monoidal, with tensor product given by composition of functors. The category $\mathrm{SymSeq}(\mathbf{C})$ of symmetric sequences in $\mathbf{C}$ carries a corresponding monoidal structure $\circ$, called the \emph{composition product} of symmetric sequences, which is essentially determined by the requirement that the assignment
\begin{equation*}
\mathcal{O} \rightarrow F_{\mathcal{O}}
\end{equation*}
is monoidal, so that
\begin{equation*}
F_{\mathcal{O}' \circ \mathcal{O}} \simeq F_{\mathcal{O}'} \circ F_{\mathcal{O}}.
\end{equation*}
An operad in $\mathbf{C}$ is a monoid in the category of symmetric sequences (with respect to this composition product). Thus, an operad $\mathcal{O}$ gives rise to a monoid $F_{\mathcal{O}}$ in $\mathrm{Fun}(\mathbf{C}, \mathbf{C})$ or, in other words, a monad on $\mathbf{C}$. Similarly, a cooperad is a comonoid with respect the composition product, and a cooperad gives rise to a comonad on $\mathbf{C}$. The reader should note that the operads and cooperads we consider here are nonunital, in the sense that there is no $\mathcal{O}(0)$ term.

An algebra over an operad $\mathcal{O}$ is the same thing as an algebra for the monad $F_{\mathcal{O}}$, i.e., an object $X \in \mathbf{C}$ equipped with a structure map $F_{\mathcal{O}}(X) \rightarrow X$ satisfying the usual axioms. The evident dual of this definition does not give the usual notion of coalgebra for a cooperad; indeed, for a cooperad $\mathcal{C}$, a coalgebra for the comonad $F_{\mathcal{C}}$ is sometimes referred to as a \emph{conilpotent divided power} coalgebra (e.g. in \cite{francisgaitsgory}). This issue will not play a role for us here.

Algebras and coalgebras in differential graded categories are related by bar-cobar duality, or Koszul duality, and this duality was extended to operads in influential papers of Ginzburg and Kapranov \cite{ginzburgkapranov} and subsequently Getzler and Jones \cite{getzlerjones}. In particular, they identified the cobar construction of the commutative cooperad as (a degree shift of) the Lie operad. This bar-cobar duality was extended to the setting of stable homotopy theory by Ching \cite{ching,chingbar}, who works with operads and cooperads in the category of spectra. His motivating example is an operad whose underlying symmetric sequence of spectra $\{\partial_k \mathrm{id}\}_{k \geq 1}$ is given by the \emph{Goodwillie derivatives of the identity functor}. This operad is the cobar construction on the derivatives of the functor $\Sigma^\infty\Omega^\infty$, which form a cooperad essentially because $\Sigma^\infty\Omega^\infty$ is a comonad. This cooperad is easily identified as the commutative cooperad; consequently, the derivatives of the identity functor form an analogue of the (shifted) Lie operad in stable homotopy theory. (Taking integral homology of the spectra $\partial_k \mathrm{id}$ also reproduces a degree shift of the ordinary Lie operad in abelian groups.)

In Section \ref{subsec:Liealg} we discuss symmetric sequences, operads and cooperads in the $\infty$-category of $T(n)$-local spectra and define the $T(n)$-local Lie operad as the cobar construction of the $T(n)$-local commutative cooperad. We will use Lurie's version of bar-cobar duality between monoids and comonoids in an $\infty$-categorical setting.

In Section \ref{subsec:PhiTheta} we study the functor $\Phi\Theta$ and prove that it satisfies the formula
\begin{equation*}
\Phi\Theta(X) \simeq \bigoplus_{k \geq 1} L_{T(n)}(\partial_k \mathrm{id} \otimes X^{\otimes k})_{h\Sigma_k}
\end{equation*}
claimed in Section \ref{sec:mainresults}. In other words, $\Phi\Theta$ is the functor corresponding to the ($T(n)$-local) symmetric sequence of derivatives of the identity functor.

In Section \ref{subsec:PhiThetamonad} we study the monad structure of $\Phi\Theta$ and show it arises from the commutative cooperad by a cobar construction. In other words, the algebras for the monad $\Phi\Theta$ are precisely Lie algebras in $T(n)$-local spectra. We use this result to prove Theorem \ref{thm:MnfLiealg}.

Section \ref{subsec:applications} discusses Theorems \ref{thm:Phitypenspace} and \ref{thm:Sigmainftyfiltration} as applications of our results. Finally, Section \ref{subsec:Knlocal} establishes Corollary \ref{cor:Knlocal}, which is a $K(n)$-local version of Theorem \ref{thm:MnfLiealg}.




\subsection{Operads and cooperads of $T(n)$-local spectra}
\label{subsec:Liealg}

There are several approaches to the theory of operads in an $\infty$-categorical setting, for example the $\infty$-operads of Lurie \cite{higheralgebra} as well as the dendroidal sets of Moerwijk--Weiss \cite{moerdijkweiss} and Cisinski--Moerdijk \cite{cisinskimoerdijk1}. These all describe a theory of higher operads in spaces; the theory of $\infty$-operads in general symmetric monoidal $\infty$-categories was first considered in \cite{haugsengchu}. We will not need the general theory here. Rather, in this section we demonstrate that it is rather simple to develop a theory of operads in the symmetric monoidal $\infty$-category $\mathrm{Sp}_{T(n)}$ of $T(n)$-local spectra (with the $T(n)$-localized smash product as monoidal structure) using some facts from (dual) Goodwillie calculus. We take an approach using symmetric sequences, since our main examples arise in this way and because it allows for a smooth treatment of bar-cobar duality.

\begin{definition}
\label{def:symmetricsequences}
A functor $F\colon \mathrm{Sp}_{T(n)} \rightarrow \mathrm{Sp}_{T(n)}$ is \emph{coanalytic} if there is a natural equivalence
\begin{equation*}
F(X) \simeq \bigoplus_{k \geq 1} (\mathcal{O}(k) \otimes X^{\otimes k})_{h\Sigma_k}
\end{equation*}
for $\{\mathcal{O}(k)\}_{k \geq 1}$ a symmetric sequence in $\mathrm{Sp}_{T(n)}$. Write $\mathrm{coAn}(\mathrm{Sp}_{T(n)})$ for the full subcategory of $\mathrm{Fun}(\mathrm{Sp}_{T(n)}, \mathrm{Sp}_{T(n)})$ on the coanalytic functors.
\end{definition}

The term coanalytic is motivated by Theorem \ref{thm:recognition} below, which states that a functor is coanalytic if and only if its `dual Goodwillie tower' converges. (Beware that this is not precisely dual to Goodwillie's notion of analytic functor, which involves explicit connectivity estimates on the maps in the Goodwillie tower of a functor.) Observe that a smash product of coanalytic functors is again coanalytic. More importantly, the composition of coanalytic functors is coanalytic; therefore, the composition of functors restricts to give a monoidal structure on the $\infty$-category $\mathrm{coAn}(\mathrm{Sp}_{T(n)})$. This $\infty$-category will serve as the monoidal $\infty$-category of symmetric sequences of $T(n)$-local spectra. We will justify this in Proposition \ref{lem:coansymseq} below. For future reference we first state the following rather obvious fact, characterizing the coefficients of a symmetric sequence in terms of Goodwillie derivatives \cite[Section 5]{goodwillie3}, \cite[Section 6.3]{higheralgebra}. 

\begin{lemma}
\label{lem:coancoefficients}
Let $F \in \mathrm{coAn}(\mathrm{Sp}_{T(n)})$ and write $\partial_k F$ for the $k$th Goodwillie derivative of $F$. Then there is a natural equivalence
\begin{equation*}
F(X) \simeq \bigoplus_{k \geq 1} (\partial_k F \otimes X^{\otimes k})_{h\Sigma_k}.
\end{equation*}
\end{lemma}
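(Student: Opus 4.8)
The plan is to compute the Goodwillie derivatives of a coanalytic functor $F(X) \simeq \bigoplus_{k \geq 1} (\mathcal{O}(k) \otimes X^{\otimes k})_{h\Sigma_k}$ and check they recover the symmetric sequence $\mathcal{O}$, i.e. that $\partial_k F \simeq \mathcal{O}(k)$ with its $\Sigma_k$-action. The point is that Goodwillie calculus in the stable setting is especially well-behaved: since $\mathrm{Sp}_{T(n)}$ is stable, a homogeneous degree-$k$ functor is precisely one of the form $X \mapsto (C \otimes X^{\otimes k})_{h\Sigma_k}$ for a spectrum $C$ with $\Sigma_k$-action, and the $k$th derivative of such a functor is $C$ (this is standard; see e.g. Lurie's \emph{Higher Algebra} or Goodwillie's \emph{Calculus III}). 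The summand $(\mathcal{O}(k) \otimes X^{\otimes k})_{h\Sigma_k}$ is exactly such a homogeneous functor, so the only real content is to show that the Goodwillie tower of $F$ splits as the (finite-stage) product/sum of these homogeneous pieces, and that taking $\partial_k$ of the whole sum only sees the $k$th summand.

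Concretely, I would proceed as follows. First, observe that each summand $F_k(X) := (\mathcal{O}(k) \otimes X^{\otimes k})_{h\Sigma_k}$ is $k$-homogeneous: it is $k$-excisive (being built from the $k$-fold smash, which is $k$-excisive in each of the relevant senses, followed by a colimit, and $k$-excisive functors are closed under colimits in the target), and it is $(k-1)$-reduced because $X^{\otimes k}$ vanishes to order $k$ — evaluating at a $(k-1)$-cube with one initial vertex $0$ gives a total cofiber that is $0$. Hence $P_{k-1} F_k \simeq 0$ and $P_k F_k \simeq F_k$, so $F_k$ is its own $k$-th layer and $\partial_k F_k \simeq \mathcal{O}(k)$, $\partial_j F_k \simeq 0$ for $j \neq k$; the identification of the $\Sigma_k$-action on $\partial_k F_k$ with the given one on $\mathcal{O}(k)$ is part of the standard classification of homogeneous functors. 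Second, I would argue that $\partial_k$ commutes with the direct sum $\bigoplus_k F_k$ in the relevant range: since $\partial_k$ is obtained from $P_k$ and $P_{k-1}$, and $P_m$ commutes with filtered colimits and with finite direct sums (it is itself built from finite limits of values of the functor, all of which commute with finite sums in a stable category, and the infinite sum is a filtered colimit of finite ones), we get $\partial_k(\bigoplus_j F_j) \simeq \bigoplus_j \partial_k F_j \simeq \partial_k F_k \simeq \mathcal{O}(k)$. Third, assembling these for all $k$ and reading off the formula gives the claimed natural equivalence $F(X) \simeq \bigoplus_{k\geq 1}(\partial_k F \otimes X^{\otimes k})_{h\Sigma_k}$, which is really just the original defining formula with $\mathcal{O}(k)$ renamed $\partial_k F$.

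The step I expect to require the most care — though the paper seems content to call it "rather obvious" — is justifying that $P_k$ (equivalently $\partial_k$) commutes with the infinite direct sum, and more subtly that no "cross terms" between different $F_j$'s contribute to $\partial_k F$. In a general $\infty$-category this could fail, but here it is rescued by stability of $\mathrm{Sp}_{T(n)}$ together with the fact that $\bigoplus_{j} F_j = \varinjlim_N \bigoplus_{j \leq N} F_j$ is a filtered colimit of finite sums and $P_k$ preserves filtered colimits (as each $P_k$ is a sequential colimit of $T_k$'s, and $T_k$ is a finite limit of evaluations, all commuting with filtered colimits in the stable target). So I would spell out: $\partial_k$ preserves finite direct sums because it is a composite of finite-limit and filtered-colimit constructions in a stable category where finite limits and finite colimits agree; then extend to infinite sums by the filtered-colimit argument. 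Once that bookkeeping is done, the rest is immediate from the classification of homogeneous functors on a stable $\infty$-category, and no genuine calculation is needed.
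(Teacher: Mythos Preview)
Your proposal is correct and follows essentially the same approach as the paper: compute the derivatives of each homogeneous summand $(\mathcal{O}(k)\otimes X^{\otimes k})_{h\Sigma_k}$ and then use that forming Goodwillie derivatives commutes with direct sums (via filtered colimits of finite sums). The paper's proof is a two-line version of exactly this argument, so your more detailed justification of the key step---that $P_k$ (hence $\partial_k$) preserves filtered colimits and finite sums in the stable target---is precisely what the paper takes for granted.
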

\begin{proof}
Forming Goodwillie derivatives of functors between stable $\infty$-categories commutes with filtered colimits and direct sums. The conclusion follows from
\begin{equation*}
\partial_j  (\mathcal{O}(k) \otimes (-)^{\otimes k})_{h\Sigma_k} \simeq \begin{cases} \mathcal{O}(k) & \text{if } j=k, \\
0 & \text{otherwise}.
\end{cases}
\end{equation*}
\end{proof}

It will be useful to have a characterization of the class of coanalytic functors in terms of dual Goodwillie calculus (as introduced by McCarthy \cite{mccarthy}). We review the kind of dual calculus we have in mind in Appendix \ref{app:Goodwillie}. In particular, for a functor $F\colon \mathrm{Sp}_{T(n)} \rightarrow \mathrm{Sp}_{T(n)}$ preserving filtered colimits, its \emph{dual} $k$-excisive approximation is a natural transformation $P^k F \rightarrow F$ that is universal with respect to natural transformations from $k$-excisive functors (which also preserve filtered colimits) into $F$. These dual approximations assemble into a filtration
\[
\begin{tikzcd}
P^1F \arrow[r]\arrow[d] & P^2F \arrow[r]\arrow[dl] & P^3F \arrow[r]\arrow[dll] & \cdots \\
F
\end{tikzcd}
\]
which is formally dual to the usual Goodwillie tower. In particular, there is a natural map
\begin{equation*}
\varinjlim_k P^k F \rightarrow F.
\end{equation*}

We will apply the following recognition theorem in the next section to prove that $\Phi\Theta$ is coanalytic:

\begin{theorem}
\label{thm:recognition}
Let $F\colon \mathrm{Sp}_{T(n)} \rightarrow \mathrm{Sp}_{T(n)}$ be a reduced functor preserving filtered colimits. Then $F$ is coanalytic if and only if the map
\begin{equation*}
\varinjlim_k P^k F \rightarrow F
\end{equation*}
is an equivalence.
\end{theorem}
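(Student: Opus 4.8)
The plan is to prove the two implications separately, with the ``only if'' direction being straightforward and the ``if'' direction containing the real content.

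First, suppose $F$ is coanalytic, so $F(X) \simeq \bigoplus_{k \geq 1}(\mathcal{O}(k) \otimes X^{\otimes k})_{h\Sigma_k}$. Here I would use that the dual $k$-excisive approximation of a functor preserving filtered colimits picks out precisely the partial sums of such a ``coTaylor'' expansion: the functor $X \mapsto \bigoplus_{j=1}^{k}(\mathcal{O}(j) \otimes X^{\otimes j})_{h\Sigma_j}$ is $k$-excisive and preserves filtered colimits, and the inclusion of the first $k$ summands into $F$ is the universal map from such a functor. This is the dual statement to the classical fact that the $k$th layer of the Goodwillie tower of an analytic functor is its degree-$k$ homogeneous piece; here I would either cite the appendix's development of dual calculus or give the short argument that $P^k$ of the summand $(\mathcal{O}(j)\otimes(-)^{\otimes j})_{h\Sigma_j}$ is the identity for $j \leq k$ and zero for $j > k$, using that homotopy orbits and smash powers commute with the relevant colimits. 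It follows that $\varinjlim_k P^k F \simeq \bigoplus_{k}(\mathcal{O}(k) \otimes (-)^{\otimes k})_{h\Sigma_k} \simeq F$.

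For the converse, assume $\varinjlim_k P^k F \xrightarrow{\sim} F$. The strategy is to analyze the dual Goodwillie tower layer by layer and show each layer is homogeneous of the expected form, then assemble. Write $D^k F$ for the fiber of $P^{k} F \to P^{k-1} F$ (the $k$th dual layer). The key input is a dual version of the classification of homogeneous functors: a reduced $k$-excisive functor $\mathrm{Sp}_{T(n)} \to \mathrm{Sp}_{T(n)}$ preserving filtered colimits which is ``dual $k$-homogeneous'' (i.e. $P^{k-1}$ of it vanishes) is of the form $X \mapsto (C \otimes X^{\otimes k})_{h\Sigma_k}$ for a unique $\Sigma_k$-spectrum $C$ — this is the dual of Goodwillie's classification of $n$-homogeneous functors in terms of $(\partial_n F \otimes X^{\wedge n})_{h\Sigma_n}$, and in the stable setting cross-effects make this clean. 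Granting this, set $\mathcal{O}(k) = \partial_k F$; each layer $D^k F$ is $(\partial_k F \otimes (-)^{\otimes k})_{h\Sigma_k}$. Since smash powers are exact in each variable and homotopy orbits preserve the relevant colimits, the coanalytic functor $G(X) := \bigoplus_{k\geq 1}(\partial_k F \otimes X^{\otimes k})_{h\Sigma_k}$ has $P^m G \simeq \bigoplus_{k=1}^m(\partial_k F \otimes (-)^{\otimes k})_{h\Sigma_k}$, and the evident layer-by-layer comparison gives $P^m F \simeq P^m G$ for all $m$. Passing to the colimit and using the hypothesis on $F$ and the ``only if'' direction applied to $G$, we get $F \simeq \varinjlim_m P^m F \simeq \varinjlim_m P^m G \simeq G$, so $F$ is coanalytic.

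The main obstacle I expect is making the dual calculus statements rigorous, in particular: (a) that dual $k$-excisive approximation of a coanalytic functor is the partial sum, and more delicately (b) the dual classification of homogeneous functors, i.e. that a dual $k$-homogeneous functor on $\mathrm{Sp}_{T(n)}$ is $(C \otimes (-)^{\otimes k})_{h\Sigma_k}$ rather than $(C \otimes (-)^{\otimes k})^{h\Sigma_k}$. The appearance of \emph{orbits} rather than \emph{fixed points} is exactly what distinguishes the dual (coanalytic) theory from the usual one, and hinges on the finite-colimit-preservation hypothesis together with the Tate-vanishing-type arguments available $T(n)$-locally; I would lean on the appendix for the foundational development of $P^k$ and the dual cross-effect comparisons, and isolate the homogeneous classification as the one step that genuinely uses the structure of $\mathrm{Sp}_{T(n)}$. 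A secondary subtlety is the convergence bookkeeping: one must check the natural map $\varinjlim_m P^m F \to F$ is compatible with the identifications $P^m F \simeq P^m G$ so that the colimits match up on the nose, which is a diagram chase once the layers are identified naturally in $F$.
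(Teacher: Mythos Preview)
Your proof has a genuine gap in the forward direction, and you have the relative difficulty of the two directions backwards.

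\textbf{The forward direction is not easy.} You propose to show $P^k F \simeq \bigoplus_{j\leq k}(\mathcal{O}(j)\otimes(-)^{\otimes j})_{h\Sigma_j}$ by computing $P^k$ of each summand separately. This does not work: $P^k$ is a right adjoint (a colocalization), so it preserves limits but has no reason to commute with an \emph{infinite} direct sum. Knowing $P^k F_j = F_j$ for $j \leq k$ and $P^k F_j = 0$ for $j > k$ does not let you conclude anything about $P^k(\bigoplus_j F_j)$. The paper isolates exactly this statement as a separate lemma (Lemma~\ref{lem:nilpotence}) and devotes an entire appendix to proving it via a uniform nilpotence argument due to Mathew: one shows that for a finite type~$n$ spectrum $V$, the Euler-class maps $V \otimes S^{-C\bar\rho_j} \to V$ are null with a bound $C$ independent of $j$, which forces the relevant pro-system to be pro-trivial and hence $D^\ell(\bigoplus_{j>k} F_j) = 0$ for $\ell \leq k$. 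Your ``short argument'' misses this entirely, and your reference to ``the appendix's development of dual calculus'' suggests you expect only foundational material there, not the actual crux of the theorem.

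\textbf{The converse is simpler than you make it.} Rather than classifying ``dual $k$-homogeneous'' functors, the paper just invokes Kuhn's theorem that every $k$-excisive functor $\mathrm{Sp}_{T(n)} \to \mathrm{Sp}_{T(n)}$ has a split Goodwillie tower (a consequence of $T(n)$-local Tate vanishing for symmetric groups), giving $P^k F \simeq \bigoplus_{j=1}^k D_j(P^k F)$ immediately. Taking the colimit over $k$ then exhibits $F$ as coanalytic. Your route through dual homogeneous classification would eventually need the same Tate-vanishing input, but wrapped in extra bookkeeping about matching up dual layers with ordinary layers and reconstructing $P^m F$ from its $D^k$'s; the paper's approach avoids all of that.
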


This result hinges on a nilpotence lemma of Mathew, which is contained in Appendix \ref{app:akhil}. Although Goodwillie's $k$-excisive approximation $P_k$ commutes with filtered colimits of functors, this is generally \emph{not} the case for the dual approximation $P^k$. Nonetheless, in the $T(n)$-local setting the following holds (cf. Lemma \ref{lem:mathew}):

\begin{lemma}
\label{lem:nilpotence}
If
\begin{equation*}
F \simeq \bigoplus_{j=1}^\infty F_j,
\end{equation*}
with $F_j\colon \mathrm{Sp}_{T(n)} \rightarrow \mathrm{Sp}_{T(n)}$ a $j$-homogeneous functor, then the natural map
\begin{equation*}
\bigoplus_{j=1}^k F_j \rightarrow P^k F
\end{equation*}
is an equivalence.
\end{lemma}

\begin{proof}[Proof of Theorem \ref{thm:recognition}]
Suppose $F$ is coanalytic. Then Lemma \ref{lem:nilpotence} clearly implies that $\varinjlim_k P^kF \rightarrow F$ is an equivalence. Conversely, suppose that this natural transformation is an equivalence. Kuhn \cite{kuhntate} shows that every $k$-excisive functor from $\mathrm{Sp}_{T(n)}$ to itself has a \emph{split} Goodwillie tower, meaning it is the direct sum of homogeneous functors:
\begin{equation*}
P^k F \simeq \bigoplus_{j=1}^k D_j P^k F.
\end{equation*}
He proves this by exploiting work of McCarthy \cite{mccarthy}, which classifies the Goodwillie towers of functors from $\mathrm{Sp}$ to itself in terms of certain Tate spectra associated to the symmetric groups. Kuhn shows that $T(n)$-locally such Tate spectra vanish (an alternative proof of the same fact is given in \cite{mathewclausen}). Taking the colimit over $k$ we conclude that $F$ is of the form
\begin{equation*}
F(X) \simeq \bigoplus_{k \geq 1} (\mathcal{O}(k) \otimes X^{\otimes k})_{h\Sigma_k}
\end{equation*}
for some symmetric sequence $\mathcal{O}$, so $F$ is coanalytic. Of course it also follows that $\mathcal{O}(k) = \partial_k F$.
\end{proof}

Note that the preceding argument in fact shows that for any reduced functor $F\colon \mathrm{Sp}_{T(n)} \rightarrow \mathrm{Sp}_{T(n)}$ preserving filtered colimits, the functor $\varinjlim_k P^k F$ is coanalytic. Generally, for functors $F$ from the $\infty$-category $\mathrm{Sp}$ to itself, the assignments $F \mapsto \varprojlim_k P_k F$ and $F \mapsto \varinjlim_k P^k F$ are not localizations or colocalizations; indeed, they need not even be idempotent. However, Theorem \ref{thm:recognition} and Lemma \ref{lem:nilpotence} show that the situation is much better in the $T(n)$-local case:

\begin{corollary}
\label{cor:recognition}
Write $\mathrm{Fun}_*^\omega(\mathrm{Sp}_{T(n)}, \mathrm{Sp}_{T(n)})$ for the $\infty$-category of reduced functors from $\mathrm{Sp}_{T(n)}$ to itself which preserve filtered colimits. Then the inclusion  
\begin{equation*}
\mathrm{coAn}(\mathrm{Sp}_{T(n)}) \rightarrow \mathrm{Fun}_*^\omega(\mathrm{Sp}_{T(n)}, \mathrm{Sp}_{T(n)})
\end{equation*}
admits a right adjoint $F \mapsto \varinjlim_k P^k F$. In particular, the inclusion preserves all colimits.
\end{corollary}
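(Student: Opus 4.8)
The plan is to verify directly that the inclusion $\iota\colon \mathrm{coAn}(\mathrm{Sp}_{T(n)}) \hookrightarrow \mathrm{Fun}_*^\omega(\mathrm{Sp}_{T(n)}, \mathrm{Sp}_{T(n)})$ is a coreflective full subcategory, with coreflection $R\colon F \mapsto \varinjlim_k P^k F$ and counit the canonical map $\varepsilon_F\colon \varinjlim_k P^k F \to F$. Concretely this means checking two things: (a) the functor $R$ takes values in $\mathrm{coAn}(\mathrm{Sp}_{T(n)})$; and (b) for every coanalytic $G$, postcomposition with $\varepsilon_F$ induces an equivalence $\mathrm{Map}(G, \varinjlim_k P^k F) \xrightarrow{\sim} \mathrm{Map}(G, F)$. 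Granting these, $R$ is right adjoint to $\iota$ by the characterization of adjunctions via a universal counit, and the final assertion is then automatic: a full subcategory inclusion which admits a right adjoint is itself a left adjoint, hence preserves all colimits.

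For (a), I would first observe that each individual functor $P^k F$ is already coanalytic. Indeed, $P^k F$ is $k$-excisive and preserves filtered colimits, so by Kuhn's splitting theorem (recalled in the proof of Theorem \ref{thm:recognition}) its Goodwillie tower is split, $P^k F \simeq \bigoplus_{j=1}^k D_j P^k F$, and each $j$-homogeneous summand is of the form $(\partial_j P^k F \otimes (-)^{\otimes j})_{h\Sigma_j}$ — the same identification of homogeneous functors in the $T(n)$-local setting used in that proof. A finite direct sum of coanalytic functors is coanalytic, so $P^k F \in \mathrm{coAn}(\mathrm{Sp}_{T(n)})$. It then suffices to note that $\mathrm{coAn}(\mathrm{Sp}_{T(n)})$ is closed under filtered colimits inside $\mathrm{Fun}_*^\omega(\mathrm{Sp}_{T(n)}, \mathrm{Sp}_{T(n)})$: a filtered colimit commutes with the direct sum $\bigoplus_k$, with the smash powers $(-)^{\otimes k}$, and with the orbit construction $(-)_{h\Sigma_k}$, so a filtered colimit of coanalytic functors is again coanalytic, with coefficient symmetric sequence the colimit of the coefficients. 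Hence $R(F) = \varinjlim_k P^k F$ is coanalytic.

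For (b), I would reduce to the case where $G$ is a \emph{compact} coanalytic functor. Every coanalytic functor is a filtered colimit of compact coanalytic ones: the coefficient spectra $\mathcal{O}(k) \in \mathrm{Fun}(B\Sigma_k, \mathrm{Sp}_{T(n)})$ are filtered colimits of compact objects (this category being compactly generated), the sum $\bigoplus_{k\geq 1}$ is itself a filtered colimit of its finite truncations, and a functor $\bigoplus_{k=1}^{N}(C_k \otimes (-)^{\otimes k})_{h\Sigma_k}$ with each $C_k$ compact is a compact object of $\mathrm{Fun}_*^\omega$ (mapping out of a $k$-homogeneous functor with coefficient $C_k$ is computed by a $\Sigma_k$-equivariant mapping spectrum into the $k$-th cross effect, which commutes with filtered colimits in the target). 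Since $\mathrm{Map}(-, \varinjlim_k P^k F)$ and $\mathrm{Map}(-, F)$ both send filtered colimits to limits, it is enough to treat $G$ compact coanalytic. Such a $G$ is $N$-excisive for some $N$, so $\mathrm{Map}(G, \varinjlim_k P^k F) \simeq \varinjlim_k \mathrm{Map}(G, P^k F)$; and for $k \geq N$ the functor $G$ is $k$-excisive, so the universal property of the dual approximation $P^k F \to F$ identifies $\mathrm{Map}(G, P^k F) \xrightarrow{\sim} \mathrm{Map}(G, F)$ compatibly in $k$. Passing to the colimit, $\mathrm{Map}(G, \varinjlim_k P^k F) \simeq \mathrm{Map}(G, F)$, the comparison map being postcomposition with $\varepsilon_F$, as desired.

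The main obstacle — and the reason the argument is routed through compact objects rather than through the dual tower of $\varinjlim_k P^k F$ itself — is the failure of $P^k$ to commute with filtered colimits of functors (emphasized just before Lemma \ref{lem:nilpotence}): one cannot simply write $P^j(\varinjlim_k P^k F) \simeq \varinjlim_k P^j(P^k F) \simeq P^j F$, even though this is morally what is happening. The splitting results in the $T(n)$-local case (Kuhn, and Lemma \ref{lem:nilpotence}) are exactly what rescue the situation, both by forcing each $P^k F$ to be coanalytic and by making the relevant mapping spectra out of compact $N$-excisive functors stabilize along the dual tower; beyond this the proof is a formal manipulation of coreflective subcategories.
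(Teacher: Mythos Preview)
Your part (a) is correct and in fact makes explicit a step the paper leaves implicit. The gap is in (b): the assertion that $\bigoplus_{k=1}^N (C_k \otimes (-)^{\otimes k})_{h\Sigma_k}$ with each $C_k$ compact is a compact object of $\mathrm{Fun}_*^\omega(\mathrm{Sp}_{T(n)}, \mathrm{Sp}_{T(n)})$ is not justified, and is almost certainly false. Your stated reason conflates the cross effect with the dual derivative. Natural transformations out of a $k$-homogeneous functor into an arbitrary $F$ factor through $P^k F$ (that is the universal property of the \emph{dual} approximation), so they are governed by the $k$th co-derivative $\partial^k F$, which is the \emph{colinearization} of the cross effect --- an inverse limit --- and not the cross effect itself. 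Already for $k=1$ one has $\mathrm{Nat}(\mathrm{id}, F) \simeq (P^1 F)(S) \simeq \varprojlim_m \Sigma^m F(\Sigma^{-m} S)$, and there is no reason for this to commute with filtered colimits in $F$. Indeed, if homogeneous functors with compact coefficients were compact in $\mathrm{Fun}_*^\omega$, then $P^k$ would preserve filtered colimits outright, and Lemma~\ref{lem:nilpotence} (together with its nontrivial proof via nilpotence in Appendix~\ref{app:akhil}) would be superfluous. So the obstacle you correctly name in your final paragraph has not been circumvented; it has only been relocated into the unproved compactness claim.

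The paper's argument handles (b) without any compactness. It writes $G \simeq \varinjlim_j P^j G$ (valid for coanalytic $G$ by Theorem~\ref{thm:recognition}) to reduce to $G$ being $j$-excisive, then applies the universal property of $P^j$ on both sides to reduce to showing $P^j\bigl(\varinjlim_k P^k F\bigr) \simeq P^j F$. Since $\varinjlim_k P^k F$ is a direct sum of homogeneous functors --- essentially your part (a) --- this is exactly an instance of Lemma~\ref{lem:nilpotence}. That lemma is therefore doing unavoidable work here, and your route through compact objects does not succeed in replacing it.
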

\begin{proof}
It suffices to prove that for any $F \in \mathrm{Fun}_*^\omega(\mathrm{Sp}_{T(n)}, \mathrm{Sp}_{T(n)})$ and any coanalytic functor $G$, the natural map
\begin{equation*}
\mathrm{Nat}(G, \varinjlim_k P^k F) \rightarrow \mathrm{Nat}(G, F)
\end{equation*}
is an equivalence. Since $G \simeq \varinjlim_j P^j G$, it suffices to prove the claim for every $P^j G$, i.e., we may reduce to the case where $G$ is $j$-excisive for some $j \geq 0$. In the diagram
\[
\begin{tikzcd}
\mathrm{Nat}(P^j G, \varinjlim_k P^k F) \ar{r} & \mathrm{Nat}(P^j G, F) \\
\mathrm{Nat}(P^j G, P^j(\varinjlim_k P^k F)) \ar{r}\ar{u} &  \mathrm{Nat}(P^j G, P^j F) \ar{u},
\end{tikzcd}
\]
the two vertical maps are equivalences by the universal property of $P^j$. Moreover, the lower horizontal map is an equivalence by Lemma \ref{lem:nilpotence}. Therefore the upper horizontal map is an equivalence as well.

\end{proof}

\begin{proposition}
\label{lem:coansymseq}
The $\infty$-category $\mathrm{coAn}(\mathrm{Sp}_{T(n)})$ of coanalytic functors is equivalent to the $\infty$-category $\mathrm{SymSeq}(\mathrm{Sp}_{T(n)})$ of symmetric sequences of $T(n)$-local spectra.
\end{proposition}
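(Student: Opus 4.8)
The plan is to show that the assignment $\mathcal{O} \mapsto F_{\mathcal{O}}$, with $F_{\mathcal{O}}(X) = \bigoplus_{k \geq 1}(\mathcal{O}(k)\otimes X^{\otimes k})_{h\Sigma_k}$, defines an equivalence $\mathrm{SymSeq}(\mathrm{Sp}_{T(n)}) \xrightarrow{\sim} \mathrm{coAn}(\mathrm{Sp}_{T(n)})$. First I would promote this to an actual functor of $\infty$-categories: each summand $A \mapsto (A\otimes(-)^{\otimes k})_{h\Sigma_k}$ is functorial in $A \in \mathrm{Fun}(B\Sigma_k, \mathrm{Sp}_{T(n)})$, being the composite of the $k$-fold smash power with the homotopy-orbit colimit, and one takes the coproduct over $k$. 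The resulting functor $\mathrm{SymSeq}(\mathrm{Sp}_{T(n)}) \to \mathrm{Fun}(\mathrm{Sp}_{T(n)}, \mathrm{Sp}_{T(n)})$ factors through $\mathrm{coAn}(\mathrm{Sp}_{T(n)})$ and is essentially surjective onto it, this being precisely the content of Definition \ref{def:symmetricsequences}. So everything reduces to full faithfulness.

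The key input is the classification of homogeneous functors: for each $k$, the functor $A \mapsto (A\otimes(-)^{\otimes k})_{h\Sigma_k}$ is an equivalence from $\mathrm{Fun}(B\Sigma_k, \mathrm{Sp}_{T(n)})$ onto the $\infty$-category of reduced, filtered-colimit-preserving, $k$-homogeneous endofunctors of $\mathrm{Sp}_{T(n)}$, with homotopy inverse $F \mapsto \partial_k F$ (compare Lemma \ref{lem:coancoefficients}). This is Goodwillie's classification of $k$-homogeneous functors via symmetric multilinear functors (Section 6.1 of \cite{higheralgebra}); the only point specific to our setting is the identification of colimit-preserving symmetric multilinear functors $\mathrm{Sp}_{T(n)}^{\times k} \to \mathrm{Sp}_{T(n)}$ with $\mathrm{Fun}(B\Sigma_k, \mathrm{Sp}_{T(n)})$, which holds because $\mathrm{Sp}_{T(n)}$ is a presentable stable symmetric monoidal $\infty$-category generated under colimits by its unit. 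The further feature of the $T(n)$-local world that we exploit below is the vanishing of the relevant Tate constructions (Kuhn \cite{kuhntate}), already used in the proof of Theorem \ref{thm:recognition} and responsible for Lemma \ref{lem:nilpotence}.

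Granting this, full faithfulness is a mapping-space computation. Write $F_{\mathcal{O}} = \bigoplus_k F_{\mathcal{O}(k)}$, with $F_{\mathcal{O}(k)} = (\mathcal{O}(k)\otimes(-)^{\otimes k})_{h\Sigma_k}$ a $k$-homogeneous functor. Mapping out of a coproduct gives $\mathrm{Nat}(F_{\mathcal{O}}, F_{\mathcal{P}}) \simeq \prod_k \mathrm{Nat}(F_{\mathcal{O}(k)}, F_{\mathcal{P}})$. Since $F_{\mathcal{O}(k)}$ is $k$-excisive and all functors here preserve filtered colimits, the dual universal property of $P^k$ (Appendix \ref{app:Goodwillie}) identifies $\mathrm{Nat}(F_{\mathcal{O}(k)}, F_{\mathcal{P}})$ with $\mathrm{Nat}(F_{\mathcal{O}(k)}, P^k F_{\mathcal{P}})$, and Lemma \ref{lem:nilpotence} gives $P^k F_{\mathcal{P}} \simeq \bigoplus_{j \leq k} F_{\mathcal{P}(j)}$, a finite coproduct; hence $\mathrm{Nat}(F_{\mathcal{O}(k)}, F_{\mathcal{P}}) \simeq \prod_{j \leq k}\mathrm{Nat}(F_{\mathcal{O}(k)}, F_{\mathcal{P}(j)})$. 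The terms with $j < k$ vanish, since a natural transformation out of the $k$-reduced functor $F_{\mathcal{O}(k)}$ into the $j$-excisive functor $F_{\mathcal{P}(j)}$ factors through $P_j F_{\mathcal{O}(k)} \simeq 0$; the term $j = k$ is $\mathrm{Map}_{\mathrm{Fun}(B\Sigma_k, \mathrm{Sp}_{T(n)})}(\mathcal{O}(k), \mathcal{P}(k))$ by the classification of the previous paragraph. Assembling over $k$ yields $\mathrm{Nat}(F_{\mathcal{O}}, F_{\mathcal{P}}) \simeq \prod_k \mathrm{Map}_{\mathrm{Fun}(B\Sigma_k, \mathrm{Sp}_{T(n)})}(\mathcal{O}(k), \mathcal{P}(k)) = \mathrm{Map}_{\mathrm{SymSeq}(\mathrm{Sp}_{T(n)})}(\mathcal{O}, \mathcal{P})$, and one verifies this chain of equivalences is the map induced by $\mathcal{O} \mapsto F_{\mathcal{O}}$.

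The main obstacle is the second paragraph: upgrading the classification of $k$-homogeneous endofunctors of $\mathrm{Sp}_{T(n)}$ to an honest equivalence of $\infty$-categories (not merely a bijection on objects or an identification of mapping spaces) and, relatedly, checking that the identifications of the last paragraph are genuinely induced by the comparison functor. A more clerical point is to verify that all functors involved — the $F_{\mathcal{O}(k)}$ and their finite and infinite coproducts — are reduced and preserve filtered colimits, so that the dual calculus of Appendix \ref{app:Goodwillie} applies; this is immediate since $\otimes$, $(-)^{\otimes k}$, $(-)_{h\Sigma_k}$ and direct sums all preserve filtered colimits and annihilate $0$. One could alternatively package the whole argument by identifying $\mathrm{coAn}(\mathrm{Sp}_{T(n)})$ with the limit $\varprojlim_k \mathrm{Exc}^k$ of the $\infty$-categories $\mathrm{Exc}^k$ of reduced, filtered-colimit-preserving, $k$-excisive endofunctors along the maps $P^k$, using that Kuhn's splitting gives $\mathrm{Exc}^k \simeq \prod_{j \leq k}\mathrm{Fun}(B\Sigma_j, \mathrm{Sp}_{T(n)})$ — but this repackaging rests on the same two ingredients.
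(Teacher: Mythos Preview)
Your proposal is correct and follows essentially the same route as the paper: construct the functor $\mathcal{O}\mapsto F_{\mathcal{O}}$, note essential surjectivity by definition, and prove full faithfulness by decomposing $\mathrm{Nat}(F_{\mathcal{O}},F_{\mathcal{P}})$ as a product over $k$, replacing the target by $P^kF_{\mathcal{P}}$ via the dual universal property, invoking Lemma~\ref{lem:nilpotence} to split $P^kF_{\mathcal{P}}$, killing the cross-degree terms, and appealing to Goodwillie's classification of homogeneous functors for the diagonal terms. The paper's proof is slightly terser (it simply asserts that natural transformations between homogeneous functors of different degrees are trivial, where you spell out the factorization through $P_jF_{\mathcal{O}(k)}\simeq 0$), and it does not dwell on the point you flag about upgrading the degree-$k$ classification to an equivalence of $\infty$-categories---it takes this as part of Goodwillie's Theorem~3.5 in \cite{goodwillie3}.
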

\begin{proof}
Consider the functor
\begin{equation*}
S\colon \mathrm{SymSeq}(\mathrm{Sp}_{T(n)}) \rightarrow \mathrm{coAn}(\mathrm{Sp}_{T(n)})
\end{equation*}
which assigns to a symmetric sequence $\{\mathcal{O}(k)\}_{k \geq 1}$ the functor given by the formula of Definition \ref{def:symmetricsequences}. Then $S$ is essentially surjective by definition. To see it is fully faithful, consider coanalytic functors $F$ and $G$, corresponding to symmetric sequences $\mathcal{O}$ and $\mathcal{P}$ respectively. We can analyze the space of natural transformations from $F$ to $G$ by first observing that
\begin{eqnarray*}
\mathrm{Nat}(F, G) & \simeq & \prod_{k \geq 1} \mathrm{Nat}\bigl((\mathcal{O}(k) \otimes (-)^{\otimes k})_{h\Sigma_k}, G\bigr) \\
& \simeq &  \prod_{k \geq 1}  \mathrm{Nat}\bigl((\mathcal{O}(k) \otimes (-)^{\otimes k})_{h\Sigma_k}, P^k G\bigr).
\end{eqnarray*}
Since $G$ is coanalytic, Lemma \ref{lem:nilpotence} applies and
\begin{equation*}
P^k G(X) \simeq \oplus_{j=1}^k (\mathcal{P}(j) \otimes X^{\otimes j})_{h\Sigma_j}.
\end{equation*}
The space of natural transformations between homogeneous functors to $\mathrm{Sp}_{T(n)}$ of different degrees is trivial. Indeed, if $K$ and $L$ are homogeneous of degrees $k$ and $l$ respectively and $k > l$, then 
\begin{equation*}
\mathrm{Nat}(K,L) \simeq \mathrm{Nat}(P_l K, L) \simeq *
\end{equation*}
since $P_l K$ vanishes. On the other hand, if $k < l$, we observe
\begin{equation*}
\mathrm{Nat}(K,L) \simeq \mathrm{Nat}(K, P^k L) \simeq *.
\end{equation*}
Here we have used that $L$ is also $l$-cohomogeneous (i.e. of the form $(\partial_l L \otimes (-)^{\otimes l})^{h\Sigma_l}$), which is another application of the $T(n)$-local vanishing of Tate spectra \cite{kuhntate}.

We conclude
\begin{equation*}
\mathrm{Nat}(F, G)  \simeq \prod_{k \geq 1} \mathrm{Nat}\bigl((\mathcal{O}(k) \otimes (-)^{\otimes k})_{h\Sigma_k}, (\mathcal{P}(k) \otimes (-)^{\otimes k})_{h\Sigma_k}\bigr).
\end{equation*}
But the space of natural transformations between homogeneous degree $k$ functors is equivalent to the space of $\Sigma_k$-equivariant maps between their coefficients, by Goodwillie's classification of homogeneous functors (cf. Theorem 3.5 of \cite{goodwillie3}). Consequently, $\mathrm{Nat}(F,G)$ is equivalent to the space of maps between the symmetric sequences $\mathcal{O}$ and $\mathcal{P}$, from which the lemma easily follows.
\end{proof}

\begin{remark}
It is useful to note that any coanalytic functor $F$ preserves sifted colimits. This is a straightforward consequence of the fact that the functor $X \rightarrow X^{\otimes k}$ preserves sifted colimits for any $k \geq 1$. In the next section, specifically Proposition \ref{prop:siftedcolimitscoanalytic}, we will show that the converse is true as well. It is rather striking that coanalytic functors on $\mathrm{Sp}_{T(n)}$ can be characterized in this way; the corresponding statement does not hold without $T(n)$-localization.
\end{remark}

\begin{definition}
\label{def:operad}
An \emph{operad in $T(n)$-local spectra} is an associative algebra object of $\mathrm{coAn}(\mathrm{Sp}_{T(n)})$. Likewise, a \emph{cooperad in $T(n)$-local spectra} is an associative coalgebra object of $\mathrm{coAn}(\mathrm{Sp}_{T(n)})$ or, equivalently, an associative algebra object of the opposite $\infty$-category $\mathrm{coAn}(\mathrm{Sp}_{T(n)})^{\mathrm{op}}$.
\end{definition}

Thus an operad in $T(n)$-local spectra is a monad on $\mathrm{Sp}_{T(n)}$ whose underlying functor is coanalytic. Therefore it also makes sense to speak of \emph{algebras} for such an operad, by using the definition of algebras for a monad.

We conclude this section with a brief review of Koszul duality for such operads and cooperads, which is a special case of bar-cobar duality between associative monoids and comonoids. The relevant results for us are contained in Section 5.2 of \cite{higheralgebra}.

Note that the identity functor of $\mathrm{Sp}_{T(n)}$, which is the unit object of the monoidal $\infty$-category $\mathrm{coAn}(\mathrm{Sp}_{T(n)})$, is both an operad and a cooperad in $T(n)$-local spectra in an essentially unique way; we will denote both by $\mathbf{1}$. An \emph{augmentation} of an operad $\mathcal{O}$ in $T(n)$-local spectra is a morphism of operads $\mathcal{O} \rightarrow \mathbf{1}$. Dually, a \emph{coaugmentation} of a cooperad $\mathcal{C}$ in $T(n)$-local spectra is a morphism of cooperads $\mathbf{1} \rightarrow \mathcal{C}$. We write $\mathrm{Op}^{\mathrm{aug}}(\mathrm{Sp}_{T(n)})$ and $\mathrm{coOp}^{\mathrm{aug}}(\mathrm{Sp}_{T(n)})$ for the $\infty$-categories of augmented operads and coaugmented cooperads in $T(n)$-local spectra respectively. By Theorem 5.2.2.17 and the subsequent Remark 5.2.2.19 of \cite{higheralgebra}, these $\infty$-categories are related by an adjunction
\[
\begin{tikzcd}
\mathrm{Op}^{\mathrm{aug}}(\mathrm{Sp}_{T(n)}) \ar[shift left]{r}{\mathrm{Bar}} & \mathrm{coOp}^{\mathrm{aug}}(\mathrm{Sp}_{T(n)}) \ar[shift left]{l}{\mathrm{Cobar}}.
\end{tikzcd}
\]
If $\varepsilon\colon \mathcal{O} \rightarrow \mathbf{1}$ is an augmented operad, the underlying coanalytic functor of the bar construction $\mathrm{Bar}(\mathcal{O})$ can be described as the geometric realization of a simplicial object as follows:
\[
\begin{tikzcd}
\cdots \ar[shift right = .18cm]{r}\ar[shift right = .06cm]{r}\ar[shift left = .06cm]{r}\ar[shift left = .18cm]{r}  & \mathcal{O} \circ \mathcal{O} \ar[shift left = .12cm]{r} \ar{r} \ar[shift right = .12cm]{r} & \mathcal{O} \ar[shift left = .06cm]{r} \ar[shift right = .06cm]{r} & \mathbf{1}.
\end{tikzcd}
\]
Here the simplicial face maps are formed using the augmentation and the multiplication of $\mathcal{O}$, whereas the degeneracies use the unit $\mathbf{1} \rightarrow \mathcal{O}$. In different words, regarding $\mathbf{1}$ as both a left and a right module over $\mathcal{O}$ via the augmentation $\varepsilon$, we can view $\mathrm{Bar}(\mathcal{O})$ as the geometric realization of the two-sided bar construction:
\begin{equation*}
\mathrm{Bar}(\mathcal{O}) = |\mathrm{Bar}(\mathbf{1}, \mathcal{O}, \mathbf{1})_{\bullet}|.
\end{equation*}
Dually, for a coaugmented cooperad $\mathbf{1} \rightarrow \mathcal{C}$, the underlying coanalytic functor of the cobar construction $\mathrm{Cobar}(\mathcal{C})$ is the totalization of a cosimplicial object as follows:
\[
\begin{tikzcd}
\mathbf{1} \ar[shift left = .06cm]{r} \ar[shift right = .06cm]{r} & \mathcal{C} \ar[shift left = .12cm]{r} \ar{r} \ar[shift right = .12cm]{r} & \mathcal{C} \circ \mathcal{C} \ar[shift right = .18cm]{r}\ar[shift right = .06cm]{r}\ar[shift left = .06cm]{r}\ar[shift left = .18cm]{r} & \cdots.
\end{tikzcd}
\]
Lurie establishes the bar-cobar adjunction by considering augmented associative algebras in the \emph{twisted arrow category}. For an $\infty$-category $\mathcal{E}$, the twisted arrow category $\mathrm{TwArr}(\mathcal{E})$ is the $\infty$-category whose $n$-simplices are maps
\begin{equation*}
\Delta^n \star (\Delta^n)^{\mathrm{op}} \rightarrow \mathcal{E}.
\end{equation*}
Restricting to the first and second part of this join gives a map
\begin{equation*}
\mathrm{TwArr}(\mathcal{E}) \rightarrow \mathcal{E} \times \mathcal{E}^{\mathrm{op}}.
\end{equation*}
Moreover if $\mathcal{E}$ is monoidal, then $\mathcal{E}^{\mathrm{op}}$ and $\mathrm{TwArr}(\mathcal{E})$ inherit monoidal structures in a natural way and the map above can be made a monoidal functor. In particular, an associative algebra object of $\mathrm{TwArr}(\mathcal{E})$ then projects to an associative algebra object of $\mathcal{E}$ and an associative algebra object of $\mathcal{E}^{\mathrm{op}}$. Lurie proves that maps $\mathrm{Bar}(\mathcal{O}) \rightarrow \mathcal{C}$ (or equivalently maps $\mathcal{O} \rightarrow \mathrm{Cobar}(\mathcal{C})$) are classified by lifts of the pair $(\mathcal{O}, \mathcal{C})$ along the functor above to an augmented algebra object of the monoidal $\infty$-category $\mathrm{TwArr}(\mathcal{E})$. More precisely his results give natural equivalences of spaces
\begin{eqnarray*}
\mathrm{Map}_{\mathrm{Alg}^{\mathrm{aug}}(\mathcal{E})}(\mathcal{O}, \mathrm{Cobar}(\mathcal{C})) & \simeq  & \{\mathcal{O}\} \times_{\mathrm{Alg}^{\mathrm{aug}}(\mathcal{E})} \mathrm{Alg}^{\mathrm{aug}}(\mathrm{TwArr}(\mathcal{E})) \times_{\mathrm{Alg}^{\mathrm{aug}}(\mathcal{E}^{\mathrm{op}})} \{\mathcal{C}\} \\
& \simeq & \mathrm{Map}_{\mathrm{coAlg}^{\mathrm{aug}}(\mathcal{E})}(\mathrm{Bar}(\mathcal{O}), \mathcal{C}).
\end{eqnarray*}
If one has an object $\mathcal{O} \rightarrow \mathcal{C}$ of $\mathrm{TwArr}(\mathcal{E})$ with the structure of an augmented associative algebra, its simplicial bar resolution gives a diagram in $\mathcal{E}$ as follows:
\[
\begin{tikzcd}
\cdots \ar[shift right = .18cm]{r}\ar[shift right = .06cm]{r}\ar[shift left = .06cm]{r}\ar[shift left = .18cm]{r}  & \mathcal{O} \circ \mathcal{O} \ar[shift left = .12cm]{r} \ar{r} \ar[shift right = .12cm]{r} \ar{d} & \mathcal{O} \ar[shift left = .06cm]{r} \ar[shift right = .06cm]{r} \ar{d} & \mathbf{1} \ar[equal]{d} \\
\cdots & \mathcal{C} \circ \mathcal{C} \ar[shift right = .18cm]{l}\ar[shift right = .06cm]{l}\ar[shift left = .06cm]{l}\ar[shift left = .18cm]{l} & \mathcal{C} \ar[shift left = .12cm]{l} \ar{l} \ar[shift right = .12cm]{l} & \mathbf{1}. \ar[shift left = .06cm]{l} \ar[shift right = .06cm]{l} 
\end{tikzcd}
\]
The commutativity of this diagram indeed induces maps in $\mathcal{E}$ of the form $\mathrm{Bar}(\mathcal{O}) \rightarrow \mathcal{C}$ (by considering the top row) and $\mathcal{O} \rightarrow \mathrm{Cobar}(\mathcal{C})$ (by considering the bottom row). The content of Lurie's result is that these are in fact maps of associative (co)algebra objects and that every such map arises in this way.

The example of a cooperad in $T(n)$-local spectra that will concern us here is the \emph{commutative cooperad}. As a symmetric sequence, it is simply the localized sphere spectrum $L_{T(n)} \mathbb{S}$ in every degree. In fact, we will take our cue from the observation of Arone and Ching that the usual commutative cooperad in the category of spectra arises form the derivatives of the comonad $\Sigma^\infty \Omega^\infty$, see Section 15 of \cite{aroneching}. We will show in the next section (Theorem \ref{thm:SigmaOmegacoanalytic}) that there is a natural equivalence
\begin{equation*}
\Sigma^\infty_{T(n)}\Omega^\infty_{T(n)}(X) \simeq L_{T(n)}\bigoplus_{k=1}^\infty X^{\otimes k}_{h\Sigma_k},
\end{equation*}
so in particular $\Sigma^\infty_{T(n)}\Omega^\infty_{T(n)}$ is coanalytic. It is also a comonad and therefore an associative coalgebra object of $\mathrm{coAn}(\mathrm{Sp}_{T(n)})$.

\begin{definition}
\label{def:commcooperad}
The \emph{commutative cooperad in $T(n)$-local spectra} is the comonad $\Sigma^\infty_{T(n)}\Omega^\infty_{T(n)}$.
\end{definition}

Ching \cite{ching} constructs an operad structure on the derivatives of the identity functor by observing that they arise from the commutative cooperad by the cobar construction. The resulting operad plays the role of the (shifted) Lie operad in stable homotopy theory. We therefore define the following:

\begin{definition}
\label{def:Lieoperad}
The \emph{Lie operad in $T(n)$-local spectra} is the cobar construction of the commutative cooperad.
\end{definition}


\subsection{The functor $\Phi\Theta$}
\label{subsec:PhiTheta}

The goal of this section is to analyze the functor $\Phi\Theta$ and prove Theorem \ref{thm:PhiThetafunctor} below. In the next section we investigate the monad structure of $\Phi\Theta$ and relate it to Lie algebras in $T(n)$-local spectra.

\begin{theorem}
\label{thm:PhiThetafunctor}
There is a natural equivalence as follows:
\begin{equation*}
\Phi\Theta(X) \simeq L_{T(n)}\bigoplus_{k \geq 1} (\partial_k\mathrm{id} \otimes X^{\otimes k})_{h\Sigma_k}.
\end{equation*}
Here $\partial_k\mathrm{id}$ denotes the $k$th derivative of the identity functor on $\mathcal{S}_*$.
\end{theorem}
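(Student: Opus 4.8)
The plan is to show that $\Phi\Theta$ is coanalytic --- so that it is determined by its symmetric sequence of Goodwillie derivatives --- and then to identify those derivatives with $L_{T(n)}\partial_\ast\mathrm{id}$.

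First, coanalyticity. The functor $\Theta$, being a left adjoint, preserves all colimits, and $\Phi$ preserves sifted colimits by Lemma~\ref{lem:Phisiftedcolims}; both are reduced (a left adjoint sends the zero object to the zero object, and $\Phi(\ast)\simeq 0$). Hence $\Phi\Theta\colon\mathrm{Sp}_{T(n)}\to\mathrm{Sp}_{T(n)}$ is reduced and preserves sifted colimits, so Proposition~\ref{prop:siftedcolimitscoanalytic} shows it is coanalytic: $\Phi\Theta(X)\simeq\bigoplus_{k\geq 1}(\mathcal{O}(k)\otimes X^{\otimes k})_{h\Sigma_k}$ with $\mathcal{O}(k)\simeq\partial_k(\Phi\Theta)$. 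Next I would compute the Goodwillie derivatives of the Bousfield--Kuhn functor itself. Because $\Phi$ preserves finite limits, a short chain-rule degree count shows that the fibre of $\mathrm{id}_{\mathcal{S}_\ast}\to P_k\mathrm{id}_{\mathcal{S}_\ast}$ becomes $(k{+}1)$-reduced after composing with the reduced functor $\Phi$, so $P_k\Phi\simeq\Phi\circ P_k\mathrm{id}_{\mathcal{S}_\ast}$; taking fibres of successive stages and using property (ii), namely $\Phi\Omega^\infty\simeq L_{T(n)}$, gives $D_k\Phi(X)\simeq L_{T(n)}\bigl((\partial_k\mathrm{id}\otimes(\Sigma^\infty X)^{\otimes k})_{h\Sigma_k}\bigr)$. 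Comparing with the classification of homogeneous functors then yields $\partial_k\Phi\simeq L_{T(n)}\partial_k\mathrm{id}$.

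It remains to pass from $\partial_\ast\Phi$ to $\partial_\ast(\Phi\Theta)$. Here I would use that the adjunction $(\Theta,\Phi)$ is monadic (Theorem~\ref{thm:Phimonadic}), so that $\Theta$ is the corresponding free-algebra functor. One first checks $\Phi\circ\Omega^\infty_{T(n)}\simeq\mathrm{id}_{\mathrm{Sp}_{T(n)}}$: since $\Omega^\infty_{T(n)}=M\Omega^\infty$ and $\Phi\circ M\simeq\Phi_{\mathcal{S}_\ast}$ (Theorem~\ref{thm:Mnf2}(ii)), for a $T(n)$-local spectrum $E$ one gets $\Phi\Omega^\infty_{T(n)}E\simeq\Phi_{\mathcal{S}_\ast}\Omega^\infty E\simeq L_{T(n)}E\simeq E$. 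Composing the adjunctions $\Theta\dashv\Phi$ and $\Sigma^\infty_{T(n)}\dashv\Omega^\infty_{T(n)}$ then exhibits $\Sigma^\infty_{T(n)}\Theta$ as left adjoint to $\Phi\Omega^\infty_{T(n)}\simeq\mathrm{id}$, so $\Sigma^\infty_{T(n)}\Theta\simeq\mathrm{id}_{\mathrm{Sp}_{T(n)}}$ --- the derived indecomposables of a free algebra are its generators. Now the chain rule for Goodwillie calculus of $\infty$-categories (\cite{heutsgoodwillie}) computes $\partial_\ast(\Phi\Theta)$ as a relative composition product $\partial_\ast\Phi\circ_{\partial_\ast\mathrm{id}_{\mathcal{M}_n^f}}\partial_\ast\Theta$; since $\Theta$ is free, $\partial_\ast\Theta$ is the free left $\partial_\ast\mathrm{id}_{\mathcal{M}_n^f}$-module on the unit (it has trivial derived indecomposables, by $\Sigma^\infty_{T(n)}\Theta\simeq\mathrm{id}$), whence $\partial_\ast(\Phi\Theta)\simeq\partial_\ast\Phi\simeq L_{T(n)}\partial_\ast\mathrm{id}$. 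Combined with the first step this is exactly the asserted formula. A more structural alternative, which I would spell out in Section~\ref{subsec:PhiThetamonad}, is to observe that the bar construction of the operad $\Phi\Theta$ is the comonad $\Sigma^\infty_{T(n)}\Omega^\infty_{T(n)}$, which by Kuhn's theorem (Theorem~\ref{thm:SigmaOmegacoanalytic}) is the commutative cooperad; then $\Phi\Theta$ is its cobar construction, the $T(n)$-local Lie operad of Definition~\ref{def:Lieoperad}, whose $k$-th term is $L_{T(n)}\partial_k\mathrm{id}$.

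The step I expect to be the main obstacle is this last passage from ``$\Sigma^\infty_{T(n)}\Theta\simeq\mathrm{id}$'' (or ``$\mathrm{Bar}(\Phi\Theta)$ is the commutative cooperad'') to the value of $\partial_\ast(\Phi\Theta)$: it requires the relevant bar and cobar resolutions to converge --- that a left $\partial_\ast\mathrm{id}_{\mathcal{M}_n^f}$-module with trivial indecomposables is genuinely free, and dually that the operad $\Phi\Theta$ is recovered from its bar construction. In the $T(n)$-local setting this is precisely what the nilpotence input of Lemma~\ref{lem:nilpotence} and the recognition Theorem~\ref{thm:recognition} supply (they are responsible for the good behaviour of the dual Goodwillie tower that makes coanalytic functors on $\mathrm{Sp}_{T(n)}$ tractable), whereas the analogous statements fail before $T(n)$-localization.
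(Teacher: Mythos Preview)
Your coanalyticity step is exactly the paper's proof of Theorem~\ref{thm:PhiThetacoanalytic}: $\Theta$ preserves colimits, $\Phi$ preserves sifted colimits by Lemma~\ref{lem:Phisiftedcolims}, apply Proposition~\ref{prop:siftedcolimitscoanalytic}.

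For the derivatives, however, you are working much harder than necessary. The paper's proof of Theorem~\ref{thm:PhiThetaderivatives} observes directly that
\[
P_k(\Phi\Theta) \;\simeq\; \Phi \circ P_k(\mathrm{id}_{\mathcal{M}_n^f}) \circ \Theta,
\]
simply because $\Theta$, as a left adjoint, preserves the pushouts used to build the cubes in Goodwillie's construction of $P_k$ (so precomposition with $\Theta$ commutes with $P_k$), while $\Phi$ preserves the finite limits and filtered colimits on the output side (so postcomposition with $\Phi$ commutes with $P_k$). One then identifies $P_k\mathrm{id}_{\mathcal{M}_n^f}\simeq M\circ P_k\mathrm{id}_{\mathcal{S}_*}\circ i$ using the left-exactness of $L_n^f$ (Theorem~\ref{thm:Lnfinitelimits}) and reads off $\partial_k(\Phi\Theta)\simeq L_{T(n)}\partial_k\mathrm{id}$ immediately. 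No chain rule, no module-freeness, no bar--cobar convergence: the ``main obstacle'' you flag does not arise. In fact you already have this argument in hand --- your justification of $P_k\Phi\simeq\Phi\circ P_k\mathrm{id}$ is precisely the output-side half of it --- you just neglected to apply the same elementary commutation to $\Theta$ on the input side.

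Your route via the Arone--Ching chain rule $\partial_\ast(\Phi\Theta)\simeq\partial_\ast\Phi\circ_{\partial_\ast\mathrm{id}_{\mathcal{M}_n^f}}\partial_\ast\Theta$ could presumably be made to work, but the step ``$\Sigma^\infty_{T(n)}\Theta\simeq\mathrm{id}$ implies $\partial_\ast\Theta$ is a free module'' is exactly the convergence issue you identify, and it is genuinely nontrivial. The bar--cobar alternative you sketch is essentially the content of Section~\ref{subsec:PhiThetamonad}, but there it is used to identify the \emph{operad structure} on $\Phi\Theta$ (Theorem~\ref{thm:comparisonoperads}), \emph{after} Theorem~\ref{thm:PhiThetafunctor} has already determined the underlying functor by the direct argument above.
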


The theorem is a consequence of the following two results and Lemma \ref{lem:coancoefficients}:

\begin{theorem}
\label{thm:PhiThetaderivatives}
The $k$th derivative of $\Phi\Theta$ is equivalent to the spectrum $L_{T(n)}\partial_k\mathrm{id}$, where $\partial_k\mathrm{id}$ is the $k$th derivative of the identity functor of $\mathcal{S}_*$.
\end{theorem}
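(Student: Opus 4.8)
The plan is to show that the Bousfield--Kuhn functor commutes with the formation of Goodwillie towers, which reduces the computation of $\partial_k(\Phi\Theta)$ to that of $\partial_k\Theta$, and then to pin down the latter using the explicit description of $\Theta$ together with the left-exactness of $L_n^f$, or alternatively via a bar--cobar argument.

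The basic mechanism is the following observation: if a functor $F\colon\mathcal{D}\to\mathcal{E}$ preserves finite limits and filtered colimits, then for any $G\colon\mathcal{C}\to\mathcal{D}$ one has $P_k(F\circ G)\simeq F\circ P_kG$, and hence $D_k(F\circ G)\simeq F\circ D_kG$, since Goodwillie's functor $T_k$ is built from finite homotopy limits and $P_k=\varinjlim_j T_k^j$ is a filtered colimit. By Theorem~\ref{thm:Mnf2} the functor $\Phi\colon\mathcal{M}_n^f\to\mathrm{Sp}_{T(n)}$ preserves all limits, and by Proposition~\ref{prop:Phifilteredcolimits} it preserves filtered colimits; likewise $M\colon\mathcal{S}_*\to\mathcal{M}_n^f$ preserves finite limits and filtered colimits (Theorem~\ref{thm:Mnf2}(iii)), as does $L_n^f$ on $d_{n+1}$-connected spaces (Theorem~\ref{thm:Lnfinitelimits}).

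Applying this with $F=\Phi$ and $G=\Theta$ gives $D_k(\Phi\Theta)\simeq\Phi\circ D_k\Theta$. Since $\mathrm{Sp}_{T(n)}$ is stable and $\mathrm{Sp}(\mathcal{M}_n^f)\simeq\mathrm{Sp}_{T(n)}$ (Proposition~\ref{prop:Mnfstabilization}), Goodwillie's classification of homogeneous functors (see e.g. \cite{goodwillie3}) gives $D_k\Theta(X)\simeq\Omega^\infty_{T(n)}\big((\partial_k\Theta\otimes X^{\otimes k})_{h\Sigma_k}\big)$ for a $T(n)$-local spectrum $\partial_k\Theta$ with $\Sigma_k$-action. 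Composing with $\Phi$ and using $\Phi\Omega^\infty_{T(n)}\simeq\mathrm{id}_{\mathrm{Sp}_{T(n)}}$ — which follows from property~(ii) of $\Phi$ ($\Phi\Omega^\infty\simeq L_{T(n)}$), the identity $\Omega^\infty_{T(n)}=M\Omega^\infty$, and the fact that $\Phi$ factors through $M$ — yields $D_k(\Phi\Theta)(X)\simeq(\partial_k\Theta\otimes X^{\otimes k})_{h\Sigma_k}$, so that $\partial_k(\Phi\Theta)\simeq\partial_k\Theta$. Moreover $M\circ(i\Theta)\simeq(M\circ i)\circ\Theta\simeq\Theta$ by Lemma~\ref{lem:Mnidempotent}, so applying the mechanism once more (with $F=M$) identifies $\partial_k\Theta\simeq L_{T(n)}\partial_k(i\Theta)$ for $i\Theta\colon\mathrm{Sp}_{T(n)}\to\mathcal{S}_*$. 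The theorem therefore amounts to a natural equivalence $\partial_k(i\Theta)\simeq\partial_k\mathrm{id}_{\mathcal{S}_*}$ of $\Sigma_k$-spectra, after $T(n)$-localization.

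This last identification is the real content. I would obtain it from the explicit description of $\Theta$ at the end of Section~\ref{subsec:BKleftadjoint}: on ($T(n)$-localizations of) finite type $n$ spectra, $\Theta$ agrees with $L_n^f$ applied to a corresponding suspension space (Lemma~\ref{lem:valuetheta}), and $\Theta$ preserves colimits, so that $i\Theta$ is — up to filtered colimits, desuspensions, and the left-exact localization $L_n^f$ — assembled from the linear functor $\Omega^\infty$ and a smash product against a fixed finite complex. Since $L_n^f$ commutes with Goodwillie towers, $\partial_*(i\Theta)$ is computed before localizing, and tracking this through (using the standard Arone--Mahowald identification of $\partial_k\mathrm{id}_{\mathcal{S}_*}$ with the dual of a partition complex) gives $\partial_k(i\Theta)\simeq L_n^f\partial_k\mathrm{id}_{\mathcal{S}_*}$, hence $\partial_k(\Phi\Theta)\simeq L_{T(n)}\partial_k\mathrm{id}_{\mathcal{S}_*}$. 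An alternative route runs the Arone--Ching chain rule: $\partial_*(\Phi\Theta)\simeq\partial_*\Phi\circ_{\partial_*\mathrm{id}_{\mathcal{M}_n^f}}\partial_*\Theta$; the mechanism above applied to $G=\mathrm{id}_{\mathcal{M}_n^f}$ shows $\partial_*\Phi\simeq\partial_*\mathrm{id}_{\mathcal{M}_n^f}$ as left modules over $\partial_*\mathrm{id}_{\mathcal{M}_n^f}$, so $\partial_*(\Phi\Theta)\simeq\partial_*\Theta$, and the dual computation — using $\Sigma^\infty_{T(n)}\Theta\simeq\mathrm{id}$ and bar--cobar duality against the commutative cooperad $\Sigma^\infty_{T(n)}\Omega^\infty_{T(n)}$ (Ching's theorem, $T(n)$-localized) — identifies $\partial_*\Theta$ with $L_{T(n)}\partial_*\mathrm{id}_{\mathcal{S}_*}$. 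Either way, the main obstacle is this final step; everything preceding it is formal bookkeeping with the two adjunctions and the left-exactness of the localizations.
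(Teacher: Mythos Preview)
Your formal reduction is correct and coincides with the paper's opening move: since $\Phi$ preserves finite limits and filtered colimits and $\Theta$ preserves colimits, one has $P_k(\Phi\Theta)\simeq\Phi\circ P_k(\mathrm{id}_{\mathcal{M}_n^f})\circ\Theta$, so the problem reduces to computing $\partial_k\mathrm{id}_{\mathcal{M}_n^f}$ (equivalently $\partial_k\Theta$, as you observe). Where you diverge from the paper is in what you do next, and that is where the genuine gap lies.

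You factor $\Theta=M\circ(i\Theta)$ and reduce to computing $\partial_k(i\Theta)$ for the composite $i\Theta\colon\mathrm{Sp}_{T(n)}\to\mathcal{S}_*$. But this functor is awkward to analyze: the inclusion $i\colon\mathcal{M}_n^f\to\mathcal{S}_*$ does \emph{not} preserve colimits (pushouts in $\mathcal{M}_n^f$ involve applying $L_n^f$), so there is no way to peel off a further factor and land on $P_k\mathrm{id}_{\mathcal{S}_*}$ directly. Your option~(a), via the explicit description of $\Theta$ in Section~\ref{subsec:BKleftadjoint}, does not produce the derivatives without substantial additional work; nothing in that description exhibits $i\Theta$ as the identity precomposed with something colimit-preserving. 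Your option~(b), via the chain rule and bar--cobar duality, is essentially the content of Section~\ref{subsec:PhiThetamonad} (Theorem~\ref{thm:comparisonoperads}), and that argument already \emph{uses} Theorem~\ref{thm:PhiThetaderivatives} as input; invoking it here would be circular.

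The paper's route is much shorter and sidesteps $i\Theta$ entirely. It computes $P_k\mathrm{id}_{\mathcal{M}_n^f}$ directly by factoring through $\mathcal{L}_n^f$: writing $\mathrm{id}_{\mathcal{M}_n^f}=r\circ\iota$ with $\iota\colon\mathcal{M}_n^f\to\mathcal{L}_n^f$ a left adjoint and $r$ a right adjoint preserving filtered colimits, one gets $P_k\mathrm{id}_{\mathcal{M}_n^f}\simeq r\circ P_k\mathrm{id}_{\mathcal{L}_n^f}\circ\iota$. Then, since $L_n^f\colon\mathcal{S}_*\langle d_{n+1}\rangle\to\mathcal{L}_n^f$ is left exact (Theorem~\ref{thm:Lnfinitelimits}) and preserves filtered colimits, $P_k\mathrm{id}_{\mathcal{L}_n^f}\simeq L_n^f\circ P_k\mathrm{id}_{\mathcal{S}_*}$ on $\mathcal{L}_n^f$, whence $P_k\mathrm{id}_{\mathcal{M}_n^f}\simeq M\circ P_k\mathrm{id}_{\mathcal{S}_*}\circ i$. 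Reading off homogeneous layers and using $M\Omega^\infty\simeq\Omega^\infty_{T(n)}$ gives $\partial_k\mathrm{id}_{\mathcal{M}_n^f}\simeq L_{T(n)}\partial_k\mathrm{id}$ immediately. The point is that by working with $\mathrm{id}_{\mathcal{M}_n^f}$ rather than $\Theta$, the chain of factorizations runs through functors ($r$, $\iota$, $L_n^f$, the inclusion of $\mathcal{L}_n^f$) each of which individually commutes with $P_k$ for elementary reasons already recorded in Section~\ref{sec:Mnf}. Your ``real content'' step dissolves once the factorization is organized this way; no explicit model for $\Theta$ and no operadic input is needed.
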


\begin{theorem}
\label{thm:PhiThetacoanalytic}
The functor $\Phi\Theta$ is coanalytic.
\end{theorem}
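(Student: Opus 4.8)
The plan is to deduce that $\Phi\Theta$ is coanalytic from a general recognition criterion which I would isolate first (and which, judging from the earlier references, is the crucial Proposition~\ref{prop:siftedcolimitscoanalytic}): \emph{a reduced endofunctor of $\mathrm{Sp}_{T(n)}$ preserving filtered colimits is coanalytic if and only if it preserves geometric realizations}, equivalently all sifted colimits. Granting this criterion, Theorem~\ref{thm:PhiThetacoanalytic} is immediate. Indeed $\Theta$ is a left adjoint, so it preserves all colimits, while $\Phi$ preserves sifted colimits by Lemma~\ref{lem:Phisiftedcolims}; hence $\Phi\Theta$ preserves sifted colimits. It is reduced because $\Theta$ carries the zero spectrum to the zero object $\ast$ of the pointed $\infty$-category $\mathcal{M}_n^f$ (it preserves initial objects) and $\Phi$ preserves finite limits, so $\Phi(\ast)$ is again zero; and it preserves filtered colimits since $\Theta$ preserves all colimits and $\Phi$ preserves filtered colimits by Proposition~\ref{prop:Phifilteredcolimits}.

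Thus the substance is the general criterion. Its forward direction is the observation recorded in the remark after Proposition~\ref{lem:coansymseq}: each $X \mapsto X^{\otimes k}$ preserves sifted colimits, hence so does any coanalytic functor. For the converse, let $F$ be reduced and preserve sifted (hence filtered) colimits. By Theorem~\ref{thm:recognition} it suffices to show the canonical map $\varinjlim_k P^k F \to F$ is an equivalence. First I would check that the source is itself coanalytic: each $P^k F$ is $k$-excisive and preserves filtered colimits, so by Kuhn's vanishing of $T(n)$-local Tate constructions (used already in the proof of Theorem~\ref{thm:recognition}) it splits as a finite direct sum of homogeneous functors and is therefore coanalytic; and since the inclusion $\mathrm{coAn}(\mathrm{Sp}_{T(n)}) \hookrightarrow \mathrm{Fun}^\omega_*(\mathrm{Sp}_{T(n)}, \mathrm{Sp}_{T(n)})$ preserves all colimits by Corollary~\ref{cor:recognition}, the filtered colimit $\widetilde{F} := \varinjlim_k P^k F$ is coanalytic too. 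In particular $\widetilde{F}$ preserves sifted colimits, so $\widetilde{F}\to F$ is a natural transformation between two functors preserving filtered colimits, and to see it is an equivalence it is enough to check this after restriction to the compact objects of $\mathrm{Sp}_{T(n)}$, on which both functors are left Kan extended.

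The main obstacle is exactly this last step: proving that $\widetilde{F}(V)\to F(V)$ is an equivalence for $V$ a compact (equivalently dualizable) $T(n)$-local spectrum. This is where one must genuinely use that $F$ preserves geometric realizations — the dual Goodwillie tower of $F$ is split $T(n)$-locally, so the obstruction to building $F$ from below out of its polynomial approximations is measured by a functor with vanishing Goodwillie derivatives, and one wants to leverage sifted-colimit preservation to force such a functor to be null. Carrying this out cleanly is the delicate heart of the argument (the "slick proof" attributed to Lurie). Once it is in place, Theorem~\ref{thm:PhiThetacoanalytic} follows as indicated above; the derivatives $\partial_k(\Phi\Theta)$ can then be identified separately with $L_{T(n)}\partial_k\mathrm{id}$ (Theorem~\ref{thm:PhiThetaderivatives}) to obtain the explicit formula of Theorem~\ref{thm:PhiThetafunctor}.
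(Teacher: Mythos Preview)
Your deduction of Theorem~\ref{thm:PhiThetacoanalytic} from Proposition~\ref{prop:siftedcolimitscoanalytic} is exactly the paper's proof: $\Theta$ preserves all colimits, $\Phi$ preserves sifted colimits by Lemma~\ref{lem:Phisiftedcolims}, so $\Phi\Theta$ preserves sifted colimits and the proposition applies. There is nothing to add on that front.

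Where you diverge from the paper is in your sketch of Proposition~\ref{prop:siftedcolimitscoanalytic} itself, and there your outline has a genuine gap which you acknowledge. Your strategy is to compare $\widetilde F = \varinjlim_k P^k F$ with $F$ directly on compact objects and argue that the cofiber, having vanishing derivatives, must be null by virtue of preserving sifted colimits. But you do not say how sifted-colimit preservation forces a functor with trivial Goodwillie derivatives to vanish, and in fact no such abstract principle is invoked in the paper. The paper's argument (Lurie's) takes a completely different route: it first establishes Theorem~\ref{thm:SigmaOmegacoanalytic}, the concrete fact that $\Sigma^\infty_{T(n)}\Omega^\infty_{T(n)}$ is coanalytic, using Kuhn's results on the Snaith-type splitting in the $T(n)$-local setting. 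Then for a general sifted-colimit-preserving $F$ it resolves every spectrum $E$ as $\varinjlim_k L_{T(n)}\Sigma^{\infty-k}\Omega^{\infty-k}(E\langle d_{n+1}\rangle)$, uses that $\mathcal{S}_*$ is projectively generated to write $F\circ L_{T(n)}\Sigma^{\infty-k}$ as a left Kan extension from finite pointed sets (this is precisely where sifted-colimit preservation enters), and thereby expresses $F$ as a colimit of functors built from smash powers of $\Sigma^\infty_{T(n)}\Omega^\infty_{T(n)}(\Sigma^k-)$. Each of these is coanalytic by Theorem~\ref{thm:SigmaOmegacoanalytic}, and colimits of coanalytic functors are coanalytic by Corollary~\ref{cor:recognition}.

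So the key idea you are missing is that the proposition is not proved by abstract calculus manipulations but by reducing, via projective generation of $\mathcal{S}_*$, to one explicit example---the comonad $\Sigma^\infty_{T(n)}\Omega^\infty_{T(n)}$---whose coanalyticity is a nontrivial input from Kuhn's work.
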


\begin{proof}[Proof of Theorem \ref{thm:PhiThetaderivatives}]
Since $\Theta$ preserves colimits and $\Phi$ preserves finite limits and filtered colimits, we have 
\begin{equation*}
P_k(\Phi\Theta)(X) \simeq \Phi P_k\mathrm{id}_{\mathcal{S}_{v_n}}(\Theta X).
\end{equation*}
The Goodwillie tower of the identity functor of $\mathcal{S}_{v_n}$ is easily determined. Recall that we write $\iota\colon \mathcal{S}_{v_n} \rightarrow \mathcal{L}_n^f$ for the inclusion and $r$ for its right adjoint. The latter preserves filtered colimits, so we find
\begin{equation*}
P_k\mathrm{id}_{\mathcal{S}_{v_n}} \simeq P_k(r \circ \iota) \simeq r P_k(\mathrm{id}_{\mathcal{L}_n^f}) \iota.
\end{equation*}
The localization $L_n^f$ preserves finite limits, so that
\begin{equation*}
P_k\mathrm{id}_{\mathcal{L}_n^f}(X) \simeq L_n^f P_k\mathrm{id}(X)
\end{equation*}
for $X \in \mathcal{L}_n^f$. Putting these together gives
\begin{equation*}
P_k\mathrm{id}_{\mathcal{S}_{v_n}} \simeq M \circ P_k\mathrm{id} \circ i.
\end{equation*}
It follows that
\begin{equation*}
D_k \mathrm{id}_{\mathcal{S}_{v_n}}(X) \simeq M D_k\mathrm{id}(iX) \simeq M\Omega^\infty(\partial_k \mathrm{id} \otimes \Sigma^\infty X^{\otimes k})_{h\Sigma_k}.
\end{equation*}
Note that $M\Omega^\infty \simeq \Omega^\infty_{T(n)}$ and the expression in parentheses is $T(n)$-equivalent to the smash product of $T(n)$-local spectra
\begin{equation*}
L_{T(n)}\partial_k \mathrm{id} \otimes \Sigma_{T(n)}^\infty X^{\otimes k}.
\end{equation*}
We conclude that
\begin{equation*}
D_k \mathrm{id}_{\mathcal{S}_{v_n}}(X) \simeq \Omega^\infty_{T(n)}(L_{T(n)}\partial_k \mathrm{id} \otimes \Sigma_{T(n)}^\infty X^{\otimes k})_{h\Sigma_k}.
\end{equation*}
Comparing coefficients gives $\partial_k \mathrm{id}_{\mathcal{S}_{v_n}} \simeq L_{T(n)}\partial_k\mathrm{id}$.
\end{proof}

Our original proof of Theorem \ref{thm:PhiThetacoanalytic} is rather involved and consists of the following steps:
\begin{itemize}
\item[(i)] When evaluated on any shift $L_{T(n)}\mathbb{S}^\ell$ of the $T(n)$-local sphere spectrum, the natural transformation $\varinjlim_k P^k(\Phi\Theta) \rightarrow \Phi\Theta$ of Theorem \ref{thm:recognition} is an equivalence. This gives the formula
\begin{equation*}
\Phi\Theta(L_{T(n)}\mathbb{S}^\ell) \simeq \bigoplus_{k=1}^{2p^n} L_{T(n)}(\partial_k\mathrm{id} \otimes (\mathbb{S}^\ell)^{\otimes k})_{h\Sigma_k}.
\end{equation*}
We will come back to this formula in Corollary \ref{cor:aronemahowald}. This first step is the most delicate. It uses the results of Arone-Mahowald \cite{aronemahowald} on the convergence of the $v_n$-periodic Goodwillie tower of spheres, combined with an explicit analysis of the effect of the Bousfield--Kuhn functor on the space $\Theta(L_{T(n)}\mathbb{S}^\ell)$ of which we gave a description in Section \ref{subsec:BKleftadjoint}.
\item[(ii)] The natural transformation $\varinjlim_k P^k(\Phi\Theta) \rightarrow \Phi\Theta$ is also an equivalence when evaluated on a finite sum
\begin{equation*}
X = L_{T(n)}(\mathbb{S}^{\ell_1} \oplus \cdots \oplus \mathbb{S}^{\ell_m})
\end{equation*}
of copies of shifts of the $T(n)$-local sphere spectrum. This statement can be deduced from (i) by using the Hilton-Milnor theorem and its interaction with the Goodwillie tower (as in Theorem 2.4 of \cite{brantnerheuts}) to decompose the spectra $P^k\Phi\Theta(X)$ into summands featuring only $\Phi\Theta$ evaluated on a single copy of a (shifted) sphere spectrum.
\item[(iii)] To conclude that $\varinjlim_k P^k(\Phi\Theta) \rightarrow \Phi\Theta$ is an equivalence in general, one uses that both sides commute with sifted colimits. For the domain this is clear since it is a coanalytic functor; for the codomain this follows because $\Theta$ commutes with all colimits and $\Phi$ with sifted colimits (cf. Lemma \ref{lem:Phisiftedcolims}). A general $T(n)$-local spectrum can be written as a sifted colimit of finite sums of copies of the sphere spectrum, so that one reduces the general case to (ii).
\end{itemize}

The proof of Theorem \ref{thm:PhiThetacoanalytic} we will present here proceeds differently. A corollary of the coanalyticity of $\Phi\Theta$ is that the functor $\Sigma^\infty_{T(n)}\Omega^\infty_{T(n)}$ is coanalytic as well (essentially only using that the latter is the bar construction of the former). This result can be proved directly though, following work of Kuhn \cite{kuhnAQ}. We shall take this fact as our starting point for a proof of Theorem \ref{thm:PhiThetacoanalytic}.

\begin{theorem}
\label{thm:SigmaOmegacoanalytic}
The functor $\Sigma^\infty_{T(n)}\Omega^\infty_{T(n)}$ is coanalytic. More precisely, there is a natural equivalence
\begin{equation*}
\Sigma^\infty_{T(n)}\Omega^\infty_{T(n)} X \simeq L_{T(n)}\bigoplus_{k=1}^\infty (X^{\otimes k})_{h\Sigma_k}.
\end{equation*}
\end{theorem}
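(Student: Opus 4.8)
The plan is to prove the explicit formula directly; coanalyticity is then immediate, since the formula exhibits $\Sigma^\infty_{T(n)}\Omega^\infty_{T(n)}$ in the shape of Definition~\ref{def:symmetricsequences}, with symmetric sequence the constant one $\mathcal{O}(k) = L_{T(n)}S$ equipped with the trivial $\Sigma_k$-action. The argument follows Kuhn's analysis of the $v_n$-periodic homology of infinite loop spaces \cite{kuhnAQ}. First I would unwind the definitions: since $\Sigma^\infty_{T(n)} = L_{T(n)}\Sigma^\infty \circ i$ and $\Omega^\infty_{T(n)} \simeq M\Omega^\infty$ (Remark~\ref{rmk:MnfLTn}), for a $T(n)$-local spectrum $X$ one has $\Sigma^\infty_{T(n)}\Omega^\infty_{T(n)} X \simeq L_{T(n)}\Sigma^\infty(iM\Omega^\infty X)$, where $\Omega^\infty X$ denotes the space underlying the (automatically $L_n^f$-local) spectrum $X$.

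The main input is the classical symmetric-power filtration of $\Sigma^\infty\Omega^\infty$, which is nothing but its Goodwillie tower: for a spectrum $E$ the $k$-th layer is $D_k(\Sigma^\infty\Omega^\infty)(E) \simeq (E^{\otimes k})_{h\Sigma_k}$, since the derivatives $\partial_k(\Sigma^\infty\Omega^\infty)$ are sphere spectra carrying the trivial $\Sigma_k$-action. After $T(n)$-localization this tower splits: the extension problems are governed (via McCarthy's classification \cite{mccarthy}) by Tate spectra of the form $(E^{\otimes k})^{t\Sigma_k}$, which vanish $T(n)$-locally by the theorem of Kuhn \cite{kuhntate} (see also \cite{mathewclausen}) --- this is exactly the mechanism already invoked in Lemma~\ref{lem:nilpotence} and Theorem~\ref{thm:recognition}. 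Hence, whenever the Goodwillie tower of $\Sigma^\infty\Omega^\infty$ converges on a spectrum $E$ (for instance when $E$ is connective), one obtains $L_{T(n)}\Sigma^\infty\Omega^\infty E \simeq L_{T(n)}\bigoplus_{k\geq 1}(E^{\otimes k})_{h\Sigma_k}$.

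To pass from this to the statement about $\Sigma^\infty_{T(n)}\Omega^\infty_{T(n)}$ I would reduce to a convenient class of $X$. Both sides of the desired equivalence preserve filtered colimits --- for $\Sigma^\infty_{T(n)}\Omega^\infty_{T(n)} = L_{T(n)}\Sigma^\infty i M\Omega^\infty$ this is clear since each factor does, and for $L_{T(n)}\bigoplus_k (-)^{\otimes k}_{h\Sigma_k}$ it follows since $(-)^{\otimes k}$ commutes with filtered colimits --- so, as every $T(n)$-local spectrum is a filtered colimit of compact ones (Remark~\ref{rmk:Tncompact}), it suffices to treat $X = L_{T(n)}F$ with $F$ finite of type $n$. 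Choosing a $v_n$ self-map $v\colon\Sigma^d F\to F$, the map $v^j$ is a $T(n)$-equivalence, so $X \simeq L_{T(n)}\Sigma^{jd}F$, and taking $j$ large we may assume $\Sigma^{jd}F \simeq \Sigma^\infty \Sigma^2 V$ with $\Sigma^2 V$ a finite pointed space of arbitrarily high connectivity; in particular we may take $\Sigma^2 V$ (hence also $Q\Sigma^2 V := \Omega^\infty\Sigma^\infty\Sigma^2 V$) to be $d_{n+1}$-connected. Since $M\Omega^\infty$ inverts $T(n)$-equivalences of spectra (because $\Phi\Omega^\infty \simeq L_{T(n)}$ and $M$ inverts $v_n$-periodic equivalences), we get $\Omega^\infty_{T(n)}X \simeq M(Q\Sigma^2 V)$, and so $\Sigma^\infty_{T(n)}\Omega^\infty_{T(n)}X \simeq L_{T(n)}\Sigma^\infty(iM(Q\Sigma^2 V))$. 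Modulo the comparison discussed below, this reduces to $L_{T(n)}\Sigma^\infty\Omega^\infty\Sigma^\infty\Sigma^2 V$, which by the honest Snaith splitting equals $L_{T(n)}\bigoplus_k ((\Sigma^2 V)^{\wedge k})_{h\Sigma_k} = L_{T(n)}\bigoplus_k (X^{\otimes k})_{h\Sigma_k}$, as desired. With the formula in hand, Lemma~\ref{lem:coancoefficients} then identifies the derivatives as $\partial_k(\Sigma^\infty_{T(n)}\Omega^\infty_{T(n)}) \simeq L_{T(n)}S$ with trivial $\Sigma_k$-action.

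The hard part is the comparison just alluded to: one must check that $L_{T(n)}\Sigma^\infty$ does not distinguish the highly connected space $Q\Sigma^2 V$ from its image $iM(Q\Sigma^2 V)$ in $\mathcal{M}_n^f$, i.e.\ that the relevant nullification-and-cover map is a $T(n)_*$-equivalence of spaces. This is precisely the kind of delicate relationship between $v_n$-periodic equivalences and $T(n)_*$-equivalences studied by Bousfield \cite{bousfieldlocalization, bousfieldtelescopic} and by Kuhn \cite{kuhnAQ}; the cleanest route is to invoke Kuhn's theorem on the $T(n)$-localized Goodwillie tower of $\Sigma^\infty\Omega^\infty$ directly, so that no separate space-level comparison is required. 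I expect this point --- not the splitting, which is purely formal once Tate vanishing is available --- to be where the genuine work, and the essential appeal to \cite{kuhnAQ}, is concentrated.
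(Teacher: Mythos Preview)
Your proposal has the right ingredients but two genuine gaps, and your last paragraph in fact points toward the fix the paper uses.

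\textbf{Naturality.} You write that both sides preserve filtered colimits, ``so it suffices to treat $X = L_{T(n)}F$'', but that reduction is only valid once a natural transformation between the two functors is already in hand. Your object-by-object argument (choose a finite type $n$ model, suspend until it is the suspension spectrum of a highly connected space, apply the honest Snaith splitting) produces an equivalence for each compact $X$ separately, not a map of functors. The paper builds the natural map first: apply $\Phi$ to the unit $\Omega^\infty E \to \Omega^\infty\Sigma^\infty\Omega^\infty E$ and use $\Phi\Omega^\infty \simeq L_{T(n)}$ to obtain a natural $\lambda\colon E \to L_{T(n)}\Sigma^\infty\Omega^\infty E$; since the target is a nonunital commutative ring spectrum, $\lambda$ extends to a map out of the free nonunital commutative algebra $L_{T(n)}\bigoplus_k (E^{\otimes k})_{h\Sigma_k}$. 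Kuhn's Theorems~2.5, 2.10 and~2.12 of \cite{kuhnAQ} show this extension is an equivalence whenever $E$ is $d_{n+1}$-connected and $T(i)$-acyclic for $0<i<n$. The paper then takes $E=(M_n^f X)\langle d_{n+1}\rangle$, which is natural in $X$ and satisfies these hypotheses, and your ``hard part'' --- replacing $iM\Omega^\infty E$ by $\Omega^\infty E$ inside $L_{T(n)}\Sigma^\infty$ --- is dispatched exactly as you guessed, via the fact that $v_n$-periodic equivalences of $d_{n+1}$-connected spaces are $T(n)_*$-equivalences.

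\textbf{Sum versus product.} Your second paragraph does not yield the formula you claim. The Goodwillie tower is an inverse system, so even granting convergence on connective $E$ and the $T(n)$-local splitting of each finite stage, what you obtain is $L_{T(n)}\bigl(\varprojlim_k \bigoplus_{j\le k} D_j(E)\bigr)$; neither does $L_{T(n)}$ commute with that limit, nor does the limit itself produce a direct sum rather than a product. Kuhn's map $\lambda$ sidesteps this by mapping \emph{from} the sum.

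Finally, the paper includes a remark immediately after the proof which is a direct warning against your Snaith-splitting slogan: the map $\lambda$ is \emph{not} the $T(n)$-localization of $\Sigma^\infty\eta$, the two differing by Rezk's logarithm. It is $\lambda$, not the Snaith map, to which Kuhn's theorems apply in the generality required here (namely for $E$ that need not be a suspension spectrum). Your closing instinct to ``invoke Kuhn's theorem \ldots\ directly'' is exactly right; that is what the paper does.
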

\begin{proof}
For a spectrum $E$, apply $\Phi$ to the unit map $\Omega^\infty E \xrightarrow{\Omega^\infty\eta} \Omega^\infty\Sigma^\infty\Omega^\infty E$ and use that $\Phi\Omega^\infty$ is equivalent to the localization functor $L_{T(n)}$ to find a natural map
\begin{equation*}
\lambda\colon E \rightarrow L_{T(n)}\Sigma^\infty\Omega^\infty E.
\end{equation*}
The codomain of $\lambda$ is a nonunital commutative $T(n)$-local ring spectrum (since $\Omega^\infty E$ is an $\mathbf{E}_\infty$-space), so that $\lambda$ extends to a map of nonunital $T(n)$-local ring spectra
\begin{equation*}
L_{T(n)}\mathrm{Sym}^*(E) = L_{T(n)}\bigoplus_{k=1}^\infty (E^{\otimes k})_{h\Sigma_k} \rightarrow L_{T(n)}\Sigma^\infty\Omega^\infty E.
\end{equation*}
Theorems 2.5, 2.10, and 2.12 of \cite{kuhnAQ} imply that this map is an equivalence whenever $E$ is $d_{n+1}$-connected and $T(i)_*E = 0$ for $0< i < n$. Now consider a $T(n)$-local spectrum $X$. Recall from Remark \ref{rmk:MnfLTn} that the functor $\Omega^\infty_{T(n)}$ may be described as the composition of the equivalence $M_n^f\colon L_{T(n)}\mathrm{Sp} \rightarrow M_n^f\mathrm{Sp}$ and the functor $\Omega^\infty \langle d_{n+1} \rangle\colon M_n^f\mathrm{Sp} \rightarrow \mathcal{S}_{v_n}$. Hence we can write
\begin{equation*}
\Sigma^\infty_{T(n)} \Omega^\infty_{T(n)} X = L_{T(n)}\Sigma^\infty\Omega^\infty((M_n^f X)\langle d_{n+1} \rangle).
\end{equation*}
The spectrum $E = (M_n^f X)\langle d_{n+1} \rangle$ is $d_{n+1}$-connected and $T(i)$-acyclic for $i < n$, so that we find a natural equivalence
\begin{eqnarray*}
L_{T(n)}\Sigma^\infty \Omega^\infty_{T(n)} X \simeq L_{T(n)}\bigoplus_{k=1}^\infty (E^{\otimes k})_{h\Sigma_k}.
\end{eqnarray*}
This finishes the proof, using that $L_{T(n)} E \simeq L_{T(n)} X$.
\end{proof}

\begin{remark}
It is tempting to think of Theorem \ref{thm:SigmaOmegacoanalytic} as an analog of the Snaith splitting
\begin{equation*}
\Sigma^\infty\Omega^\infty\Sigma^\infty X \simeq \bigoplus_{k=1}^\infty (\Sigma^\infty X^{\otimes k})_{h\Sigma_k}
\end{equation*}
for a pointed space $X$, but this line of thought can be deceiving. Indeed, the latter splitting arises from the extension of the natural map
\begin{equation*}
\Sigma^\infty\eta\colon \Sigma^\infty X \rightarrow \Sigma^\infty\Omega^\infty\Sigma^\infty X
\end{equation*}
to a map of commutative ring spectra. The $T(n)$-localization of this map is in general not homotopic to the map 
\begin{equation*}
\lambda\colon L_{T(n)}\Sigma^\infty X \rightarrow L_{T(n)}\Sigma^\infty\Omega^\infty\Sigma^\infty X
\end{equation*}
used in the proof above; we refer the reader to Kuhn's paper \cite{kuhnAQ} for further discussion. The difference between the two maps can be expressed in terms of Rezk's logarithm \cite{rezk}. 
\end{remark}

Theorem \ref{thm:SigmaOmegacoanalytic} turns out to have the following powerful consequence:

\begin{proposition}
\label{prop:siftedcolimitscoanalytic}
Let $F\colon \mathrm{Sp}_{T(n)} \rightarrow \mathrm{Sp}_{T(n)}$ be a functor that preserves sifted colimits. Then $F$ is coanalytic.
\end{proposition}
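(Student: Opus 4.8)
The plan is to apply the recognition criterion of Theorem~\ref{thm:recognition}. Since coanalytic functors are reduced, we take $F$ to be reduced — the only case we shall need (for instance $\Phi\Theta$ is reduced). As $F$ preserves sifted colimits it preserves filtered colimits, so the dual Goodwillie tower of $F$ and the natural comparison map $\gamma_F\colon \varinjlim_k P^k F \to F$ are defined, and by Theorem~\ref{thm:recognition} we must show $\gamma_F$ is an equivalence. The key structural observation is that \emph{both} sides of $\gamma_F$ preserve sifted colimits: the target $F$ by hypothesis, and the source $\varinjlim_k P^k F$ because it is coanalytic by Corollary~\ref{cor:recognition} and coanalytic functors preserve sifted colimits (as recorded in the remark preceding the proposition). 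Consequently the full subcategory $\mathcal{E}\subseteq\mathrm{Sp}_{T(n)}$ of those $X$ at which $\gamma_F$ is an equivalence is closed under sifted colimits, and it suffices to produce a family of objects lying in $\mathcal{E}$ which generates $\mathrm{Sp}_{T(n)}$ under sifted colimits.

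For this I would take the family of all finite coproducts of shifts $\Sigma^\ell G$ ($\ell\in\mathbf{Z}$) of a fixed compact generator $G=L_{T(n)}F_0$, $F_0$ a finite type $n$ spectrum. That this family generates $\mathrm{Sp}_{T(n)}$ under sifted colimits follows because the class of all small colimits is generated by finite coproducts together with sifted colimits, and finite products of sifted $\infty$-categories are again sifted: hence the closure of $\{\Sigma^\ell G\}_{\ell\in\mathbf{Z}}$ first under finite coproducts and then under sifted colimits is already all of $\mathrm{Sp}_{T(n)}$. Thus we are reduced to showing that $\gamma_F$ is an equivalence at each $W=\bigvee_{j=1}^m\Sigma^{\ell_j}G$.

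The final step, which is the heart of the argument, is where Theorem~\ref{thm:SigmaOmegacoanalytic} enters. By that theorem the functor $\mathbb{P}:=\Sigma^\infty_{T(n)}\Omega^\infty_{T(n)}$ is coanalytic with $\mathbb{P}(X)\simeq L_{T(n)}\bigoplus_{k\geq 1}(X^{\otimes k})_{h\Sigma_k}$, and the inclusion of and projection onto the $k=1$ summand exhibit $X$ as a natural retract of $\mathbb{P}(X)$. By naturality of $\gamma_F$, the map $\gamma_F$ at $X$ is then a retract of $\gamma_F$ at $\mathbb{P}(X)$ in the arrow category, and since a retract of an equivalence is an equivalence it suffices to prove $\mathbb{P}(W)\in\mathcal{E}$. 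Writing $\mathbb{P}(W)=W\oplus\mathbb{P}_{\geq 2}(W)$ with $\mathbb{P}_{\geq 2}(W)=L_{T(n)}\bigoplus_{k\geq 2}(W^{\otimes k})_{h\Sigma_k}$, the summand $\mathbb{P}_{\geq 2}(W)$ (and likewise the cross-terms obtained upon applying $F$ to this direct sum) is of strictly higher connectivity than $W$ when $W$ is connective; I expect the argument to conclude by an induction on connectivity, the base case of sufficiently highly connected inputs being handled by convergence of the Goodwillie tower together with the $T(n)$-local splitting of towers (Lemma~\ref{lem:nilpotence}), and the inductive step descending to a given $W$ via the retract $W\vartriangleleft W\oplus\mathbb{P}_{\geq 2}(W)$ and the connectivity gain of $\mathbb{P}_{\geq 2}$. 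The main obstacle is precisely this last point: one must control the value of $F$ on finite direct sums through its cross-effects and verify that the resulting error terms are captured by the tower, and one must deal with inputs that are not bounded below — and it is exactly here, rather than in the formal reductions above, that preservation of sifted colimits beyond mere filtered colimits, and the $T(n)$-local vanishing of Tate constructions underlying Lemma~\ref{lem:nilpotence}, are genuinely used. Equivalently, one may phrase the whole statement as the assertion that a reduced functor $\mathrm{Sp}_{T(n)}\to\mathrm{Sp}_{T(n)}$ preserving sifted colimits with all Goodwillie derivatives trivial must vanish, which by the above reductions comes down to its vanishing on finite wedges of shifted spheres.
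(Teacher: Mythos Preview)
Your formal reductions up through ``reduce to finite wedges $W=\bigvee_j\Sigma^{\ell_j}G$'' are fine: both sides of $\gamma_F$ do preserve sifted colimits, and such wedges do generate $\mathrm{Sp}_{T(n)}$ under sifted colimits. The gap is entirely in the final step, and it is a real one.

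First, there is no useful notion of connectivity in $\mathrm{Sp}_{T(n)}$. Every nonzero $T(n)$-local spectrum has nontrivial homotopy in infinitely many degrees in both directions (indeed, smashing with a finite type~$n$ complex makes it periodic), so ``induction on connectivity'' has no base case and no inductive direction. You could try to track connectivity of an unlocalized representative, but then your putative base case --- ``for sufficiently highly connected $W$, the dual Goodwillie tower converges at $W$'' --- is simply not a theorem: there is no general convergence result of this kind for an arbitrary reduced $F$ preserving sifted colimits, and Lemma~\ref{lem:nilpotence} does not supply one (it concerns \emph{splitting}, not convergence). Second, the retract trick is circular. To deduce $W\in\mathcal{E}$ from the retract $W\vartriangleleft \mathbb{P}(W)=W\oplus\mathbb{P}_{\ge 2}(W)$ you need $\mathbb{P}(W)\in\mathcal{E}$; but $\mathcal{E}$ is only known to be closed under \emph{sifted} colimits, not direct sums, and in any case one of the summands of $\mathbb{P}(W)$ is $W$ itself. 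Splitting $F(W\oplus Z)$ via cross-effects does not help: the $W$-summand of $\gamma_F$ at $W\oplus Z$ \emph{is} $\gamma_F$ at $W$, so you cannot conclude the former is an equivalence without already knowing the latter.

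The paper's proof avoids connectivity entirely and instead expresses $F$ as a \emph{colimit of coanalytic functors}, which suffices by Corollary~\ref{cor:recognition}. The idea is to precompose $F$ with the equivalence $E\simeq\varinjlim_k L_{T(n)}\Sigma^{\infty-k}\Omega^{\infty-k}(E\langle d_{n+1}\rangle)$, reducing to showing that each functor $E\mapsto F\bigl(L_{T(n)}\Sigma^{\infty-k}\Omega^{\infty-k}(E\langle d_{n+1}\rangle)\bigr)$ is coanalytic. Now one uses the hypothesis on sifted colimits in a specific way: the composite $F\circ L_{T(n)}\Sigma^{\infty-k}\colon\mathcal{S}_*\to\mathrm{Sp}_{T(n)}$ preserves sifted colimits, and since $\mathcal{S}_*$ is projectively generated by finite pointed sets, this composite is the left Kan extension of its restriction to $\mathcal{F}\mathrm{in}_*$. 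Writing out the coend exhibits the functor as a colimit of functors of the form $\mathrm{Map}_*(I,\Omega^{\infty-k}(E\langle d_{n+1}\rangle))\otimes C$, which after unwinding reduce to smash powers of $\Sigma^\infty_{T(n)}\Omega^\infty_{T(n)}(\Sigma^k E)$; these are coanalytic by Theorem~\ref{thm:SigmaOmegacoanalytic}. So the role of Theorem~\ref{thm:SigmaOmegacoanalytic} is not to provide a retract but to identify the building blocks after one has resolved $F$ through spaces.
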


Before proving Proposition \ref{prop:siftedcolimitscoanalytic} we observe the following:

\begin{proof}[Proof of Theorem \ref{thm:PhiThetacoanalytic}]
The functor $\Theta$ is a left adjoint, so it preserves sifted colimits. The Bousfield--Kuhn functor $\Phi$ preserves sifted colimits by Lemma \ref{lem:Phisiftedcolims}. The result now follows immediately from Proposition \ref{prop:siftedcolimitscoanalytic}.
\end{proof}

The following argument was suggested to us by Jacob Lurie:

\begin{proof}[Proof of Proposition \ref{prop:siftedcolimitscoanalytic}]
Let $F$ be as in the statement of the proposition. We will show that $F$ can be written as a colimit of coanalytic functors. 
Corollary \ref{cor:recognition} shows that the full subcategory of coanalytic functors is closed under colimits inside the $\infty$-category $\mathrm{Fun}_*^\omega(\mathrm{Sp}_{T(n)}, \mathrm{Sp}_{T(n)})$, so it will follow that $F$ itself is coanalytic.

For any spectrum $E$, the natural map
\begin{equation*}
\varinjlim_k \Sigma^{\infty - k}\Omega^{\infty-k}(E\langle d_{n+1} \rangle) \rightarrow E
\end{equation*}
is an equivalence. Indeed, if $E$ is already $d_{n+1}$-connected, this is clear (and coincides with the claim that the first Goodwillie derivative of the functor $\Sigma^\infty\Omega^\infty$ is the identity functor of $\mathrm{Sp}$). For a finite spectrum $E$, the claim then follows because $\Omega^{\infty-k}(E\langle d_{n+1} \rangle) \simeq \Omega^{\infty-k} E$ for $k$ sufficiently large; more precisely, large enough for $\Sigma^k E$ to be $d_{n+1}$-connected. For a general spectrum the claim follows by writing it as a filtered colimit of finite spectra. We conclude that for a $T(n)$-local spectrum $E$ we have
\begin{equation*}
F(E) \simeq \varinjlim_k F\bigl(L_{T(n)}\Sigma^{\infty - k}\Omega^{\infty-k}(E \langle d_{n+1} \rangle)\bigr)
\end{equation*}
so it suffices to show that the functor
\begin{equation*}
E \mapsto F\bigl(L_{T(n)}\Sigma^{\infty - k}\Omega^{\infty-k}(E \langle d_{n+1} \rangle)\bigr)
\end{equation*}
is coanalytic for any $k$.

The functor $L_{T(n)}\Sigma^{\infty-k}\colon \mathcal{S}_* \rightarrow \mathrm{Sp}_{T(n)}$ preserves colimits, so that $F \circ L_{T(n)}\Sigma^{\infty -k}$ preserves sifted colimits. The $\infty$-category $\mathcal{S}_*$ is projectively generated (Definition 5.5.8.23 and Example 5.5.8.24 of \cite{htt}) with compact projective generators given by finite pointed sets. In particular, any functor 
\begin{equation*}
\mathcal{S}_* \rightarrow \mathrm{Sp}_{T(n)}
\end{equation*}
preserving sifted colimits is the left Kan extension of its restriction along the inclusion
\begin{equation*}
i\colon \mathcal{F}\mathrm{in}_* \rightarrow \mathcal{S}_*.
\end{equation*}
Let us write $f\colon \mathcal{F}\mathrm{in}_* \rightarrow \mathrm{Sp}_{T(n)}$ for the composition $F \circ L_{T(n)}\Sigma^{\infty -k} \circ i$. The left Kan extension of $f$ along $i$ can be computed by a coend given by the following formula:
\begin{equation*}
\mathrm{Lan}_i f (X) \simeq \int^{I \in \mathcal{F}\mathrm{in}_*} \mathrm{Map}_*(I, X) \otimes f(I).
\end{equation*} 
This coend can equivalently be described as a colimit over the twisted arrow category $\mathrm{TwArr}(\mathcal{F}\mathrm{in}_*)$ (recall that we already discussed the twisted arrow construction in the previous section). Indeed, it is the colimit of the composition of functors
\begin{equation*}
\mathrm{TwArr}(\mathcal{F}\mathrm{in}_*) \rightarrow (\mathcal{F}\mathrm{in}_*)^{\mathrm{op}} \times \mathcal{F}\mathrm{in}_* \xrightarrow{\mathrm{Map}(-,X) \times f} \mathcal{S}_* \times \mathrm{Sp}_{T(n)} \xrightarrow{\otimes} \mathrm{Sp}_{T(n)},
\end{equation*}
where the last step denotes the tensoring of $\mathrm{Sp}_{T(n)}$ over $\mathcal{S}_*$. Note that we are implicitly assuming a choice of functor
\begin{equation*}
\mathrm{Map}_*(-,-)\colon \mathcal{S}_*^{\mathrm{op}} \times \mathcal{S}_* \rightarrow \mathcal{S}_*,
\end{equation*}
which can be taken to be any functor classified by the left fibration $\mathrm{TwArr}(\mathcal{S}_*) \rightarrow \mathcal{S}_*^{\mathrm{op}} \times \mathcal{S}_*$. The upshot of these observations is that $F \circ L_{T(n)}\Sigma^{\infty -k}$ can be written as a colimit of functors of the form
\begin{equation*}
\mathrm{Map}_*(I, -) \otimes F\bigl(L_{T(n)}\Sigma^{\infty -k}(J)\bigr)\colon \mathcal{S}_* \rightarrow \mathrm{Sp}_{T(n)}
\end{equation*}
for finite pointed sets $I$ and $J$.

It follows that the functor 
\begin{equation*}
\mathrm{Sp}_{T(n)} \rightarrow \mathrm{Sp}_{T(n)}\colon E \mapsto F\bigl(L_{T(n)}\Sigma^{\infty - k}\Omega^{\infty-k}(E \langle d_{n+1} \rangle)\bigr)
\end{equation*}
can be obtained as a colimit of functors of the form
\begin{equation*}
E \mapsto \mathrm{Map}_*\bigl(I, \Omega^{\infty-k}(E \langle d_{n+1} \rangle)\bigr) \otimes C,
\end{equation*}
for $C$ a $T(n)$-local spectrum. Thus to prove the theorem it suffices to show that
\begin{equation*}
E \mapsto L_{T(n)} \Sigma^\infty \mathrm{Map}_*\bigl(I, \Omega^{\infty-k}(E \langle d_{n+1} \rangle)\bigr)
\end{equation*}
is coanalytic. Let $I_0$ denote the complement of the basepoint in $I$. Then
\begin{equation*}
L_{T(n)} \Sigma^\infty \mathrm{Map}_*\bigl(I, \Omega^{\infty-k}(E \langle d_{n+1} \rangle)\bigr) \simeq L_{T(n)} \Sigma^\infty\bigl(\Omega^{\infty-k}(E \langle d_{n+1} \rangle)^{\times I_0}\bigr).
\end{equation*}
The right-hand side naturally breaks up as a sum of $T(n)$-local smash powers of the spectrum
\begin{equation*}
L_{T(n)}\Sigma^\infty\Omega^{\infty-k}(E \langle d_{n+1} \rangle) = L_{T(n)}\Sigma^\infty\Omega^\infty \Sigma^k(E \langle d_{n+1} \rangle),
\end{equation*}
so it suffices to prove that each of these factors is coanalytic as a functor of $E$. As in the proof of Theorem \ref{thm:SigmaOmegacoanalytic}, we may use that $\Sigma^k(E \langle d_{n+1} \rangle)$ is $d_{n+1}$-connected and $L_{n-1}^f$-acyclic to conclude that there is a natural equivalence
\begin{equation*}
L_{T(n)}\Sigma^\infty\Omega^{\infty-k}(E \langle d_{n+1} \rangle) \simeq L_{T(n)}\bigoplus_{l=1}^\infty (\Sigma^k E)^{\otimes l}_{h\Sigma_l},
\end{equation*}
finishing the proof.
\end{proof}

\subsection{The monad structure of $\Phi\Theta$}
\label{subsec:PhiThetamonad}

Since the monad $\Phi\Theta$ is a coanalytic functor, it is an operad in $T(n)$-local spectra in the sense of Definition \ref{def:operad}. It is in fact an augmented operad in an obvious way, by using the natural transformation
\begin{equation*}
\Phi\Theta \rightarrow P_1(\Phi\Theta) \simeq \mathrm{id}_{\mathrm{Sp}_{T(n)}}.
\end{equation*}
In terms of the splitting of Theorem \ref{thm:PhiThetafunctor} this augmentation is projection onto the first summand. Dually, the cooperad $\Sigma^\infty_{T(n)}\Omega^\infty_{T(n)}$ is coaugmented via the inclusion of the first summand, which is the natural transformation
\begin{equation*}
\mathrm{id}_{\mathrm{Sp}_{T(n)}} \simeq P^1(\Sigma^\infty_{T(n)}\Omega^\infty_{T(n)}) \rightarrow \Sigma^\infty_{T(n)}\Omega^\infty_{T(n)}.
\end{equation*}
The aim of this section is to show that $\Phi\Theta$ is the Lie operad, i.e., that $\Phi\Theta$ is equivalent to the cobar construction on the commutative cooperad, which is the coanalytic functor $\Sigma^\infty_{T(n)}\Omega^\infty_{T(n)}$. Our first step will be to construct a map
\begin{equation*}
\gamma\colon \Phi\Theta \rightarrow \mathrm{Cobar}(\Sigma^\infty_{T(n)}\Omega^\infty_{T(n)}).
\end{equation*}
As discussed in Section \ref{subsec:Liealg}, such a map corresponds to a lift of the pair $(\Phi\Theta, \Sigma^\infty_{T(n)}\Omega^\infty_{T(n)})$ to an augmented algebra object
\begin{equation*}
\Gamma\colon \Phi\Theta \rightarrow \Sigma^\infty_{T(n)}\Omega^\infty_{T(n)} 
\end{equation*}
of the twisted arrow category of $\mathrm{coAn}(\mathrm{Sp}_{T(n)})$. The natural transformation $\Gamma$ is defined by using the natural transformations
\begin{eqnarray*}
\Phi & \xrightarrow{\Phi\eta} & \Phi\Omega^\infty_{T(n)}\Sigma^\infty_{T(n)} \simeq \Sigma^\infty_{T(n)}, \\
\Theta & \xrightarrow{\eta\Theta} & \Omega^\infty_{T(n)}\Sigma^\infty_{T(n)}\Theta \simeq \Omega^\infty_{T(n)}.
\end{eqnarray*}
Note that $\Gamma$ factors as 
\begin{equation*}
\Phi\Theta \xrightarrow{\Phi\eta\Theta} \Phi\Omega^\infty_{T(n)}\Sigma^\infty_{T(n)}\Theta \simeq \mathrm{id}_{\mathrm{Sp}_{T(n)}} \rightarrow \Sigma^\infty_{T(n)}\Omega^\infty_{T(n)}.
\end{equation*}
We can rewrite this factorization as a diagram
\[
\begin{tikzcd}
\Phi\Theta \ar{r}{\Gamma}\ar{d} & \Sigma^\infty_{T(n)}\Omega^\infty_{T(n)} \\
\mathrm{id}_{\mathrm{Sp}_{T(n)}} \ar{r}{=} &  \mathrm{id}_{\mathrm{Sp}_{T(n)}}, \ar{u}
\end{tikzcd}
\]
which will define the augmentation of $\Gamma$ as an object of the twisted arrow category of $\mathrm{coAn}(\mathrm{Sp}_{T(n)})$.

To find a suitable algebra structure on $\Gamma$, one could try to reason as follows:
\begin{itemize}
\item[(i)] The twisted arrow category of $\mathrm{coAn}(\mathrm{Sp}_{T(n)})$ is a subcategory of the $\infty$-category 
\begin{equation*}
\mathrm{Fun}(\mathrm{TwArr}(\mathrm{Sp}_{T(n)}), \mathrm{TwArr}(\mathrm{Sp}_{T(n)})).
\end{equation*}
The inclusion sends an arrow $F \rightarrow G$ between coanalytic functors to the functor loosely described by
\begin{equation*}
(X \rightarrow Y) \mapsto (F(X) \rightarrow G(Y)),
\end{equation*}
where the latter arrow is the composition of the arrows $F(X) \rightarrow G(X)$ and $G(X) \rightarrow G(Y)$.
\item[(ii)] There are functors
\begin{equation*}
\mathrm{TwArr}(\mathrm{Sp}_{T(n)}) \rightarrow \mathrm{TwArr}(\mathcal{S}_{v_n})\colon (X \rightarrow Y) \mapsto (\Theta X \rightarrow \Omega^\infty_{T(n)}Y)
\end{equation*}
and 
\begin{equation*}
\mathrm{TwArr}(\mathcal{S}_{v_n}) \rightarrow \mathrm{TwArr}(\mathrm{Sp}_{T(n)})\colon (V \rightarrow W) \mapsto (\Phi V \rightarrow \Sigma^\infty_{T(n)} W),
\end{equation*}
with the former left adjoint to the latter. Hence their composition gives a monad on $\mathrm{TwArr}(\mathrm{Sp}_{T(n)})$ or, in other words, an associative algebra object of $\mathrm{Fun}(\mathrm{TwArr}(\mathrm{Sp}_{T(n)}), \mathrm{TwArr}(\mathrm{Sp}_{T(n)}))$.
\item[(iii)] The algebra object of item (ii) is in the image of the inclusion (i), so that one finds the desired algebra object $\Gamma$ of $\mathrm{TwArr}(\mathrm{coAn}(\mathrm{Sp}_{T(n)}))$.
\end{itemize}

This plan can be made precise, but it is a rather tedious exercise which requires some care. We will argue slightly differently, taking our cue from a strategy used by Lurie in Section 4.7 of \cite{higheralgebra}. The $\infty$-category $\mathrm{Fun}(\mathrm{Sp}_{T(n)}, \mathrm{Sp}_{T(n)})$ is a simplicial monoid in an evident way, using composition of functors. Moreover, this simplicial monoid acts on the simplicial set $\mathrm{Fun}(\mathcal{S}_{v_n}, \mathrm{Sp}_{T(n)})$, again by composition. This allows us to regard $\mathrm{Fun}(\mathcal{S}_{v_n}, \mathrm{Sp}_{T(n)})$ as left-tensored over $\mathrm{Fun}(\mathrm{Sp}_{T(n)}, \mathrm{Sp}_{T(n)})$. The functor $\mathrm{TwArr}$ preserves products, so that also $\mathrm{TwArr}(\mathrm{Fun}(\mathcal{S}_{v_n}, \mathrm{Sp}_{T(n)}))$ is left-tensored over the monoidal $\infty$-category $\mathrm{TwArr}(\mathrm{Fun}(\mathrm{Sp}_{T(n)}, \mathrm{Sp}_{T(n)}))$.

In general, if $\mathcal{D}$ is an $\infty$-category left-tensored over a monoidal $\infty$-category $\mathcal{C}$, and one has an action $a\colon E \otimes D \rightarrow D$ with $E \in \mathcal{C}$ and $D \in \mathcal{D}$, then Lurie calls $E$ an \emph{endomorphism object} of $D$ if it is universal for actions on $D$, i.e., if for every $C \in \mathcal{C}$ the induced map
\begin{equation}
\label{eqn:action}
\mathrm{Map}_{\mathcal{C}}(C, E) \xrightarrow{-\otimes D} \mathrm{Map}_{\mathcal{D}}(C \otimes D, E \otimes D) \xrightarrow{- \circ a} \mathrm{Map}_{\mathcal{D}}(C \otimes D, D)
\end{equation}
is an equivalence. In our situation, we have the following (compare Lemma 4.7.4.1 of \cite{higheralgebra}):

\begin{lemma}
\label{lem:PhiThetaendomorphism}
The natural transformation
\begin{equation*}
(\Phi\Theta \rightarrow \Sigma^\infty_{T(n)}\Omega^\infty_{T(n)}) \in \mathrm{TwArr}(\mathrm{Fun}(\mathrm{Sp}_{T(n)}, \mathrm{Sp}_{T(n)}))
\end{equation*}
is an endomorphism object of 
\begin{equation*}
(\Phi \rightarrow \Sigma^\infty_{T(n)}) \in \mathrm{TwArr}(\mathrm{Fun}(\mathcal{S}_{v_n}, \mathrm{Sp}_{T(n)})).
\end{equation*}
\end{lemma}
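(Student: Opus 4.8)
The plan is to verify the defining universal property \eqref{eqn:action} directly, taking as inputs two ``untwisted'' instances of Lemma 4.7.4.1 of \cite{higheralgebra}. Applied to the adjunction $\Theta \dashv \Phi$, that lemma says that $\Phi\Theta$ is the endomorphism object of $\Phi \in \mathrm{Fun}(\mathcal{M}_n^f, \mathrm{Sp}_{T(n)})$, the latter left-tensored over $\mathrm{Fun}(\mathrm{Sp}_{T(n)}, \mathrm{Sp}_{T(n)})$ by composition; concretely, for every endofunctor $H$ of $\mathrm{Sp}_{T(n)}$ the map $\mathrm{Nat}(H, \Phi\Theta) \to \mathrm{Nat}(H\Phi, \Phi)$ given by whiskering with $\Phi$ and composing with $\Phi\epsilon$ (where $\epsilon\colon \Theta\Phi \to \mathrm{id}_{\mathcal{M}_n^f}$ is the counit) is an equivalence. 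Applying the same lemma in the opposite $\infty$-categories to $\Sigma^\infty_{T(n)} \dashv \Omega^\infty_{T(n)}$ gives dually that $\Sigma^\infty_{T(n)}\Omega^\infty_{T(n)}$ is the endomorphism object of $\Sigma^\infty_{T(n)}$ computed opposite, so that $\mathrm{Nat}(\Sigma^\infty_{T(n)}\Omega^\infty_{T(n)}, H) \to \mathrm{Nat}(\Sigma^\infty_{T(n)}, H\Sigma^\infty_{T(n)})$, given by whiskering with $\Sigma^\infty_{T(n)}$ and composing with $\Sigma^\infty_{T(n)}\eta$ (for $\eta\colon \mathrm{id}_{\mathcal{M}_n^f} \to \Omega^\infty_{T(n)}\Sigma^\infty_{T(n)}$ the unit), is an equivalence.

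First I would write down the action $a\colon E \otimes D \to D$ in $\mathrm{TwArr}(\mathrm{Fun}(\mathcal{M}_n^f, \mathrm{Sp}_{T(n)}))$; this can be done at the point-set level since the relevant monoidal structures are strict simplicial monoids preserved by $\mathrm{TwArr}$. On the ``$\Phi$-leg'' the map $a$ is the monad multiplication $\Phi\Theta\Phi \to \Phi$, on the ``$\Sigma^\infty_{T(n)}$-leg'' it is the comonad comultiplication $\Sigma^\infty_{T(n)} \to \Sigma^\infty_{T(n)}\Omega^\infty_{T(n)}\Sigma^\infty_{T(n)}$, and the homotopy exhibiting this as a morphism of twisted arrows (relating the two legs to the structure maps $\Gamma$ of $E$ and $\Phi \to \Sigma^\infty_{T(n)}$ of $D$) is supplied by the factorization of $\Gamma$ through $\mathrm{id}_{\mathrm{Sp}_{T(n)}}$ displayed above together with the triangle identities. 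Next, for a general $C = (F \to G)$ in $\mathrm{TwArr}(\mathrm{Fun}(\mathrm{Sp}_{T(n)}, \mathrm{Sp}_{T(n)}))$, I would use the standard description of mapping spaces in a twisted arrow category to present both mapping spaces in \eqref{eqn:action} as homotopy fibers:
\begin{equation*}
\mathrm{Map}(C, E) \simeq \mathrm{fib}\bigl(\mathrm{Nat}(F, \Phi\Theta) \times \mathrm{Nat}(\Sigma^\infty_{T(n)}\Omega^\infty_{T(n)}, G) \longrightarrow \mathrm{Nat}(F, G)\bigr),
\end{equation*}
\begin{equation*}
\mathrm{Map}(C \otimes D, D) \simeq \mathrm{fib}\bigl(\mathrm{Nat}(F\Phi, \Phi) \times \mathrm{Nat}(\Sigma^\infty_{T(n)}, G\Sigma^\infty_{T(n)}) \longrightarrow \mathrm{Nat}(F\Phi, G\Sigma^\infty_{T(n)})\bigr),
\end{equation*}
the fibers taken over the structure maps of $C$ and of $C \otimes D$, and the displayed maps given by $(\alpha, \beta) \mapsto \beta \circ \Gamma \circ \alpha$ respectively $(\alpha', \beta') \mapsto \beta' \circ \delta \circ \alpha'$, with $\delta\colon \Phi \to \Sigma^\infty_{T(n)}$ the structure map of $D$. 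The two equivalences of the previous paragraph identify the total spaces of these two fibrations.

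It then remains to check that under this identification the two fibrations coincide: one must show that the square with vertical maps the two displayed maps and bottom map $\mathrm{Nat}(F, G) \to \mathrm{Nat}(F\Phi, G\Sigma^\infty_{T(n)})$ (whiskering with $\Phi$ followed by composition with $\delta$) commutes, and that it carries the structure map of $C$ to that of $C \otimes D$. This is a diagram chase: expanding the two adjunction equivalences, both ways around send a pair $(\alpha', \beta')$ to one and the same pasting $F\Phi \to \cdots \to G\Sigma^\infty_{T(n)}$ once one rewrites the $\Gamma$-whiskering in terms of the augmentation $\Phi\Theta \to \mathrm{id}_{\mathrm{Sp}_{T(n)}}$ and the coaugmentation $\mathrm{id}_{\mathrm{Sp}_{T(n)}} \to \Sigma^\infty_{T(n)}\Omega^\infty_{T(n)}$ and invokes the triangle identities for the two adjunctions. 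Granting the square, the homotopy fibers over the two structure maps agree, so \eqref{eqn:action} is an equivalence and $E$ is an endomorphism object of $D$.

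The main obstacle is exactly this compatibility square: in the $\infty$-categorical setting it must be produced as a homotopy-coherent square rather than a strictly commuting one, and organizing that coherence is where the real work lies. The factorization of $\Gamma$ through $\mathrm{id}_{\mathrm{Sp}_{T(n)}}$ is what keeps this manageable, since it reduces every coherence in sight to a whiskering of the triangle identities for $\Theta \dashv \Phi$ and $\Sigma^\infty_{T(n)} \dashv \Omega^\infty_{T(n)}$. An alternative packaging that may streamline the coherence is to use that the projection $\mathrm{TwArr}(\mathcal{E}) \to \mathcal{E} \times \mathcal{E}^{\mathrm{op}}$ is a monoidal left fibration: downstairs the endomorphism object of the pair $(\Phi, \Sigma^\infty_{T(n)})$ is $(\Phi\Theta, \Sigma^\infty_{T(n)}\Omega^\infty_{T(n)})$ by the two untwisted statements, and one lifts this endomorphism object along the left fibration using the map $\Gamma$ and its compatibility with the action on $D$.
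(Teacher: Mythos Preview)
Your approach via homotopy-fibre decompositions is a reasonable repackaging, but the final step contains a genuine gap. A commuting square
\[
\begin{tikzcd}
P \ar[r,"\sim"] \ar[d] & P' \ar[d] \\
B \ar[r] & B'
\end{tikzcd}
\]
with the top map an equivalence does \emph{not} force the induced map between the fibre over $c \in B$ and the fibre over its image in $B'$ to be an equivalence (take $P = P' = \ast$, $B = \ast$, $B' = S^1$). To salvage your argument you would also have to produce the reverse square, i.e.\ show that the inverse equivalence $P' \xrightarrow{\sim} P$ lies over some map $B' \to B$ carrying the basepoint of $C\otimes D$ back to that of $C$; only then are the two induced maps on fibres mutually inverse. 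That reverse square requires the same triangle-identity bookkeeping, so nothing is gained by the detour through total spaces. Your alternative suggestion via the monoidal left fibration $\mathrm{TwArr}(\mathcal{E}) \to \mathcal{E}\times\mathcal{E}^{\mathrm{op}}$ is promising in spirit, but lifting endomorphism objects along a left fibration is not automatic either and would need its own argument.

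The paper's proof is more direct and sidesteps this issue. Rather than passing to total spaces, it writes down an explicit candidate inverse to \eqref{eqn:action} at the level of $\mathrm{TwArr}$-mapping spaces: given a morphism $(F\Phi \to G\Sigma^\infty_{T(n)}) \to (\Phi \to \Sigma^\infty_{T(n)})$, one first precomposes both slots with $(\Theta \to \Omega^\infty_{T(n)})$, and then precomposes the source with the unit--counit square
\[
\begin{tikzcd}
F \ar[r]\ar[d,"F\eta"'] & G \\
F\Phi\Theta \ar[r] & G\Sigma^\infty_{T(n)}\Omega^\infty_{T(n)} \ar[u,"G\varepsilon"']
\end{tikzcd}
\]
to land back in $\mathrm{Map}(C,E)$. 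The triangle identities for the two adjunctions then exhibit this as a homotopy inverse to \eqref{eqn:action}. All the coherence lives inside a single twisted-arrow mapping space rather than across an auxiliary square, which is why the paper's version is short.
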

\begin{proof}
Implicit in the statement of the lemma is that the action of $(\Phi\Theta \rightarrow \Sigma^\infty_{T(n)}\Omega^\infty_{T(n)})$ on $(\Phi \rightarrow \Sigma^\infty_{T(n)})$ is given by the counit of $(\Phi,\Theta)$ and the unit of $(\Sigma^\infty_{T(n)}, \Omega^\infty_{T(n)})$, as in
\[
\begin{tikzcd}
\Phi\Theta\Phi \ar{r}\ar{d}{\Phi\varepsilon} & \Sigma^\infty_{T(n)}\Omega^\infty_{T(n)}\Sigma^\infty_{T(n)} \\
\Phi \ar{r} & \Sigma^\infty_{T(n)}. \ar{u}{\Sigma^\infty_{T(n)}\eta}
\end{tikzcd}
\]
Let $C = (F \rightarrow G)$ be an object of $\mathrm{TwArr}(\mathrm{Fun}(\mathrm{Sp}_{T(n)}, \mathrm{Sp}_{T(n)}))$. Letting $E = (\Phi\Theta \rightarrow \Sigma^\infty_{T(n)}\Omega^\infty_{T(n)})$, the map (\ref{eqn:action}) above takes the form
\begin{equation*}
\mathrm{Map}(F \rightarrow G, \Phi\Theta \rightarrow \Sigma^\infty_{T(n)}\Omega^\infty_{T(n)}) \rightarrow \mathrm{Map}(F\Phi \rightarrow G\Sigma^\infty, \Phi \rightarrow \Sigma^\infty_{T(n)}),
\end{equation*} 
with the mapping spaces computed in the relevant twisted arrow categories. One defines a map in the opposite direction by
\begin{eqnarray*}
\mathrm{Map}(F\Phi \rightarrow G\Sigma^\infty_{T(n)}, \Phi \rightarrow \Sigma^\infty_{T(n)}) & \rightarrow & \mathrm{Map}(F\Phi\Theta \rightarrow G\Sigma^\infty_{T(n)}\Omega^\infty_{T(n)}, \Phi\Theta \rightarrow \Sigma^\infty_{T(n)}\Omega^\infty_{T(n)}) \\
& \rightarrow & \mathrm{Map}(F \rightarrow G, \Phi\Theta \rightarrow \Sigma^\infty_{T(n)}\Omega^\infty_{T(n)}),
\end{eqnarray*}
where the first map is precomposition by $(\Theta \rightarrow \Omega^\infty_{T(n)})$ in both variables and the second map is precomposition in the first variable by the following map in $\mathrm{TwArr}(\mathrm{Sp}_{T(n)}, \mathrm{Sp}_{T(n)})$:
\[
\begin{tikzcd}
F \ar{r}\ar{d}{F\eta} & G \\
F\Phi\Theta \ar{r} & G\Sigma^\infty_{T(n)}\Omega^\infty_{T(n)}. \ar{u}{G\varepsilon}
\end{tikzcd}
\]
The triangle identities show that the map defined in this way is homotopy inverse to (1), proving the lemma.
\end{proof}

\begin{remark}
The preceding lemma and its proof are entirely formal; they apply to any two pairs of adjoint functors
\[
\begin{tikzcd}
\mathcal{C} \arrow[r,"L", shift left] & \mathcal{D} \arrow[r,"K", shift left] \arrow[l, "R", shift left] & \mathcal{C} \arrow[l, "S", shift left] 
\end{tikzcd}
\]
equipped with an identification of left adjoints $KL \simeq \mathrm{id}_{\mathcal{C}}$ (and an induced identification $\mathrm{id}_{\mathcal{C}} \simeq RS$). The relevant natural transformation $R \rightarrow K$ is then defined as the composition
\begin{equation*}
R \xrightarrow{R\eta} RSK \simeq K.
\end{equation*}
\end{remark}


In Section 4.7.2 of \cite{higheralgebra} Lurie proves that an endomorphism object can be given the structure of an associative algebra in an essentially unique way (Theorem 4.7.2.34). Therefore Lemma \ref{lem:PhiThetaendomorphism} above provides the desired algebra object $\Gamma$ of $\mathrm{TwArr}(\mathrm{Fun}(\mathrm{Sp}_{T(n)}, \mathrm{Sp}_{T(n)}))$. It is straightforward to see that this algebra structure is compatible with the augmentation of $\Gamma$ we defined above, for example by modifying the argument of the previous lemma to show that $\Gamma$ with its augmentation is universal not just in the twisted arrow category  $\mathrm{TwArr}(\mathrm{Fun}(\mathrm{Sp}_{T(n)}, \mathrm{Sp}_{T(n)}))$, but also in the slice category of such twisted arrows augmented over the arrow $\mathrm{id}_{\mathrm{Sp}_{T(n)}} = \mathrm{id}_{\mathrm{Sp}_{T(n)}}$. This concludes our discussion of the construction of $\Gamma$ and the associated morphism of operads $\gamma$.

\begin{theorem}
\label{thm:comparisonoperads}
The map of operads
\begin{equation*}
\gamma\colon \Phi\Theta \rightarrow \mathrm{Cobar}(\Sigma^\infty_{T(n)}\Omega^\infty_{T(n)})
\end{equation*}
is an equivalence.
\end{theorem}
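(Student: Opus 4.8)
The plan is to prove Theorem \ref{thm:comparisonoperads} by reducing the statement to the level of underlying functors, where we can use the recognition theorem (Theorem \ref{thm:recognition}) together with Ching's classical computation of the cobar construction of the commutative cooperad. Since the forgetful functor from operads to coanalytic functors is conservative, it suffices to show that the underlying natural transformation of coanalytic functors $\gamma: \Phi\Theta \rightarrow \mathrm{Cobar}(\Sigma^\infty_{T(n)}\Omega^\infty_{T(n)})$ is an equivalence, which by Proposition \ref{lem:coansymseq} can be checked on the level of symmetric sequences, i.e., degreewise on Goodwillie derivatives.

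First I would identify both sides as coanalytic functors. The left side $\Phi\Theta$ is coanalytic by Theorem \ref{thm:PhiThetacoanalytic}, with derivatives $\partial_k(\Phi\Theta) \simeq L_{T(n)}\partial_k\mathrm{id}$ by Theorem \ref{thm:PhiThetaderivatives}. For the right side, I would first argue that $\mathrm{Cobar}(\Sigma^\infty_{T(n)}\Omega^\infty_{T(n)})$ is coanalytic: its underlying functor is the totalization of the cosimplicial coanalytic functor $[\ell] \mapsto (\Sigma^\infty_{T(n)}\Omega^\infty_{T(n)})^{\circ \ell}$, and because the comultiplication is coaugmented and degreewise built from the splitting of Theorem \ref{thm:SigmaOmegacoanalytic}, this totalization is in the image of $\mathrm{coAn}(\mathrm{Sp}_{T(n)})$; more concretely one uses that on $k$th derivatives the cosimplicial object is eventually constant (the cobar filtration is finite in each arity), so the totalization commutes with taking $k$th derivatives. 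Then $\partial_k\mathrm{Cobar}(\Sigma^\infty_{T(n)}\Omega^\infty_{T(n)})$ is computed by applying $\partial_k$ to the cosimplicial cooperad. Since $\partial_j(\Sigma^\infty_{T(n)}\Omega^\infty_{T(n)})\simeq L_{T(n)}S$ for all $j \geq 1$ (Theorem \ref{thm:SigmaOmegacoanalytic}), the cosimplicial symmetric sequence obtained is precisely the $T(n)$-localization of the one computing the cobar construction of the ordinary commutative cooperad of spectra, whose totalization Ching \cite{ching, chingbar} identifies with the derivatives $\partial_k\mathrm{id}$ of the identity functor of $\mathcal{S}_*$ (and $L_{T(n)}$ is smashing enough on these finite spectra that $L_{T(n)}\mathrm{Cobar}(\mathrm{Comm}) \simeq \mathrm{Cobar}(L_{T(n)}\mathrm{Comm})$ in each arity). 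Hence $\partial_k\mathrm{Cobar}(\Sigma^\infty_{T(n)}\Omega^\infty_{T(n)}) \simeq L_{T(n)}\partial_k\mathrm{id}$, matching the left side.

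Next I would check that the map $\gamma$ induces the correct equivalence on derivatives, rather than merely that source and target abstractly agree. For this I would trace through the construction of $\Gamma$ as an endomorphism object (Lemma \ref{lem:PhiThetaendomorphism}) and observe that, after passing to the two-sided bar/cobar picture discussed in Section \ref{subsec:Liealg}, the induced map on $k$th derivatives is compatible with the identification of both sides with the bar resolution $\mathrm{Cobar}(\mathbf{1}, \Sigma^\infty_{T(n)}\Omega^\infty_{T(n)}, \mathbf{1})$. Concretely, the factorization of $\Gamma$ through $\mathrm{id}_{\mathrm{Sp}_{T(n)}}$ exhibited above shows that $\gamma$ on the linear part is the identity of $\mathrm{id}_{\mathrm{Sp}_{T(n)}}$, and the monad multiplication of $\Phi\Theta$ is, via the endomorphism-object universal property, exactly the one coming from the cosimplicial object; thus on each derivative $\gamma$ realizes the totalization spectral sequence comparison as an equivalence. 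Alternatively, one can invoke the uniqueness clause in Lurie's bar-cobar adjunction: the map $\gamma$ is by construction the counit of the $(\mathrm{Bar},\mathrm{Cobar})$ adjunction applied to the tautological algebra object in the twisted arrow category, so it suffices to know it is an equivalence, which follows from the derivative computation.

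The main obstacle I anticipate is the second paragraph: carefully justifying that $\mathrm{Cobar}(\Sigma^\infty_{T(n)}\Omega^\infty_{T(n)})$ is coanalytic and that its arity-$k$ part is the $T(n)$-localization of Ching's cobar complex, i.e., that totalization commutes with passage to $k$th Goodwillie derivatives here and that $L_{T(n)}$ can be pulled out of the finite cobar tower in each arity. Both are true because in fixed arity the cobar construction of the commutative cooperad is a finite homotopy limit of finite spectra (the partition complex model of \cite{aronemahowald}), so $L_{T(n)}$ — being exact — commutes with it, and the relevant cosimplicial diagram is built from maps that are degreewise split after one localizes. Once this is in place, combining Theorems \ref{thm:PhiThetaderivatives}, \ref{thm:PhiThetacoanalytic}, and \ref{thm:SigmaOmegacoanalytic} with Ching's theorem and Proposition \ref{lem:coansymseq} yields that $\gamma$ is an equivalence of operads.
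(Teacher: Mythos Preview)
Your overall reduction---checking that $\gamma$ is an equivalence by checking it on Goodwillie derivatives, using Proposition~\ref{lem:coansymseq}---is exactly the paper's first move. But from there your route diverges, and the divergence creates a real gap. You compute the derivatives of source and target \emph{separately} (Theorem~\ref{thm:PhiThetaderivatives} for $\Phi\Theta$, Ching's stable cobar computation for the other side) and then attempt to argue that $\gamma$ realizes the resulting identification. The paper instead never separates the two sides: it rewrites the cobar complex as $\mathrm{Tot}\bigl(\Phi(\Omega^\infty_{T(n)}\Sigma^\infty_{T(n)})^{\bullet+1}\Theta\bigr)$, identifies $\gamma$ concretely as $\Phi\eta\Theta$ coming from the coaugmentation $\mathrm{id}_{\mathcal{M}_n^f}\to(\Omega^\infty_{T(n)}\Sigma^\infty_{T(n)})^{\bullet+1}$, pushes $P_k$ inside (using that $\Phi$, $\Theta$ commute appropriately with $P_k$), and then invokes the Arone--Ching result that $P_k\mathrm{id}_{\mathcal{M}_n^f}\to\mathrm{Tot}\bigl(P_k((\Omega^\infty_{T(n)}\Sigma^\infty_{T(n)})^{\bullet+1})\bigr)$ is an equivalence. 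This directly shows $P_k\gamma$ is an equivalence, without ever needing to match up two independently computed symmetric sequences.

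Your third paragraph is where the argument becomes unconvincing. ``Tracing through'' the endomorphism-object construction to see that $\partial_k\gamma$ is the correct map is exactly the hard part, and you have not done it; the paper's rewriting of $\gamma$ as $\Phi\eta\Theta$ is precisely what makes this tractable. Your alternative appeal to the bar--cobar adjunction is circular: $\gamma$ is not a counit of that adjunction (it is the adjoint of the canonical map $\mathrm{Bar}(\Phi\Theta)\to\Sigma^\infty_{T(n)}\Omega^\infty_{T(n)}$, which has no reason to be an equivalence a priori), so uniqueness of adjoints gives you nothing. To repair your approach you would need either to identify $\gamma$ as the paper does, or to prove that any map of augmented operads between $\Phi\Theta$ and $\mathrm{Cobar}(\Sigma^\infty_{T(n)}\Omega^\infty_{T(n)})$ which is the identity on the linear part is automatically an equivalence---a statement that is true here but essentially amounts to the Arone--Ching result again.
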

\begin{proof}
Since $\gamma$ is in particular a natural transformation between coanalytic functors, it suffices (by Lemma \ref{lem:coancoefficients}) to check that $\gamma$ induces an equivalence on $k$th derivatives, for every $k \geq 1$, or equivalently on $k$-excisive approximations $P_k$ for every $k$. The cobar construction $\mathrm{Cobar}(\Sigma^\infty_{T(n)}\Omega^\infty_{T(n)})$ is the totalization (in the $\infty$-category of coanalytic functors) of the cosimplicial object
\[
\begin{tikzcd}
\mathrm{id}_{\mathrm{Sp}_{T(n)}} \ar[shift left = .06cm]{r} \ar[shift right = .06cm]{r} & \Sigma^\infty_{T(n)}\Omega^\infty_{T(n)} \ar[shift left = .12cm]{r} \ar{r} \ar[shift right = .12cm]{r} &\Sigma^\infty_{T(n)}\Omega^\infty_{T(n)}\Sigma^\infty_{T(n)}\Omega^\infty_{T(n)} \ar[shift right = .18cm]{r}\ar[shift right = .06cm]{r}\ar[shift left = .06cm]{r}\ar[shift left = .18cm]{r} & \cdots
\end{tikzcd}
\]
which in turn arises from the cosimplicial object
\[
\begin{tikzcd}
\Omega^\infty_{T(n)}\Sigma^\infty_{T(n)} \ar[shift left = .06cm]{r} \ar[shift right = .06cm]{r} & \Omega^\infty_{T(n)}\Sigma^\infty_{T(n)}\Omega^\infty_{T(n)}\Sigma^\infty_{T(n)} \ar[shift left = .12cm]{r} \ar{r} \ar[shift right = .12cm]{r} & (\Omega^\infty_{T(n)}\Sigma^\infty_{T(n)})^3 \ar[shift right = .18cm]{r}\ar[shift right = .06cm]{r}\ar[shift left = .06cm]{r}\ar[shift left = .18cm]{r} & \cdots
\end{tikzcd}
\]
by precomposing with $\Theta$ and postcomposing with $\Phi$. In other words,
\begin{equation*}
\mathrm{Cobar}(\Sigma^\infty_{T(n)}\Omega^\infty_{T(n)}) \simeq \mathrm{Tot}\bigl(\Phi (\Omega^\infty_{T(n)}\Sigma^\infty_{T(n)})^{\bullet + 1}\Theta\bigr).
\end{equation*}
The cosimplicial object $(\Omega^\infty_{T(n)}\Sigma^\infty_{T(n)})^{\bullet + 1}$ has an evident coaugmentation (namely the unit $\eta$) from $\mathrm{id}_{\mathcal{S}_{v_n}}$. Unravelling the definitions, the natural transformation underlying $\gamma$ can then be identified as follows:
\begin{equation*}
\Phi\Theta = \Phi\mathrm{id}_{\mathcal{S}_{v_n}}\Theta \xrightarrow{\Phi\eta\Theta} \mathrm{Tot}\bigl(\Phi(\Omega^\infty_{T(n)}\Sigma^\infty_{T(n)})^{\bullet + 1}\Theta\bigr).
\end{equation*}
We stress again that the totalizations in the previous two displays are to be computed in the $\infty$-category of coanalytic functors. As in the proof of Theorem \ref{thm:PhiThetaderivatives} one can compute the $k$-excisive approximation of $\Phi\Theta$ by 
\begin{equation*}
P_k(\Phi\Theta) = \Phi P_k(\mathrm{id}_{\mathcal{S}_{v_n}}) \Theta
\end{equation*}
and similarly on the right-hand side one has
\begin{equation*}
P_k\mathrm{Tot}\bigl(\Phi (\Omega^\infty_{T(n)}\Sigma^\infty_{T(n)})^{\bullet + 1}\Theta\bigr) \simeq \Phi\mathrm{Tot}\bigl(P_k((\Omega^\infty_{T(n)}\Sigma^\infty_{T(n)})^{\bullet + 1})\bigr)\Theta.
\end{equation*}
Now the totalization on the right-hand side is computed in the $\infty$-category of functors from $\mathcal{S}_{v_n}$ to itself; the equivalence uses the fact that a limit of $k$-excisive functors is $k$-excisive, so that the right-hand side is indeed a coanalytic (even $k$-excisive) functor. It now suffices to verify that the natural transformation
\begin{equation*}
P_k(\mathrm{id}_{\mathcal{S}_{v_n}}) \rightarrow \mathrm{Tot}\bigl(P_k((\Omega^\infty_{T(n)}\Sigma^\infty_{T(n)})^{\bullet + 1})\bigr)
\end{equation*}
is an equivalence for every $k$. But this is a variation on a crucial result of Arone and Ching, namely Theorem 0.3 of \cite{aroneching}. It is also formulated in a way that applies here directly as Proposition B.4 of \cite{heutsgoodwillie}.


\end{proof}

Recall that our main result, Theorem \ref{thm:MnfLiealg}, states that $\mathcal{S}_{v_n}$ is equivalent to the $\infty$-category of Lie algebras in $\mathrm{Sp}_{T(n)}$. We have now established the necessary ingredients to conclude its proof:

\begin{proof}[Proof of Theorem \ref{thm:MnfLiealg}]
Combine Theorems \ref{thm:Phimonadic} and \ref{thm:comparisonoperads}.
\end{proof}

\subsection{Some applications}
\label{subsec:applications}

In this section we indicate how to deduce Theorems \ref{thm:Phitypenspace} and \ref{thm:Sigmainftyfiltration} from the results established above. Also, we include a discussion of the values of $\Phi\Theta$ on (shifts of) the $T(n)$-local sphere spectrum.

\begin{theorem*}[Theorem \ref{thm:Phitypenspace}]
Suppose $V$ is a pointed finite type $n$ space with a $v_n$ self-map and write $W = \Sigma^2 V$. Then there is an equivalence of spectra as follows:
\begin{equation*}
\Phi(W) \simeq L_{T(n)}\bigoplus_{k \geq 1}(\partial_k \mathrm{id} \otimes \Sigma^\infty W^{\otimes k})_{h\Sigma_k}.
\end{equation*}
\end{theorem*}
\begin{proof}[Proof of Theorem \ref{thm:Phitypenspace}]
By Lemma \ref{lem:valuetheta} and the subsequent remark we have
\begin{equation*}
L_n^f W \simeq \Theta(\Sigma^\infty_{T(n)} W).
\end{equation*}
(Here we are tacitly taking the space $V_{n+1}$ defining $L_n^f$ as in Remark \ref{lem:valuetheta}; the statement of Theorem \ref{thm:Phitypenspace} is clearly independent of this choice.) Therefore
\begin{equation*}
\Phi(W) \simeq \Phi(L_n^f W) \simeq \Phi\Theta(\Sigma^\infty_{T(n)} W)
\end{equation*}
and applying Theorem \ref{thm:PhiThetafunctor} to the last expression gives the desired conclusion.
\end{proof}

Our next goal is to give a formula for $\Phi\Theta(L_{T(n)}\mathbb{S}^\ell)$ for any integer $\ell$. For this we use work of Arone and Mahowald \cite{aronemahowald} on the $v_n$-periodic Goodwillie tower of spheres:

\begin{theorem}[Arone--Mahowald]
\label{thm:aronemahowald}
Let $X$ be a pointed space for which $L_{T(n)}\Sigma^\infty X$ is equivalent to $L_{T(n)} \mathbb{S}^\ell$, i.e., a shift of the $T(n)$-local sphere spectrum, with $\ell$ any integer. Then $\Phi D_k\mathrm{id}(X)$ is contractible for $k > p^n$ (if $\ell$ is odd) or $k > 2p^n$ (if $\ell$ is even). In other words, the tower
\begin{equation*}
\cdots \rightarrow \Phi P_3\mathrm{id}(X) \rightarrow \Phi P_2\mathrm{id}(X) \rightarrow \Phi P_1\mathrm{id}(X)
\end{equation*}
becomes constant at stage $p^n$ or $2p^n$, depending on the parity of $\ell$.
\end{theorem}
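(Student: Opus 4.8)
The plan is to make the statement stable, identify $\Phi D_k\mathrm{id}(X)$ explicitly, and reduce to the computation of Arone and Mahowald for spheres. The starting point is that the $k$th layer of the Goodwillie tower of $\mathrm{id}_{\mathcal{S}_*}$ is an infinite loop space,
\[
D_k\mathrm{id}(X) \simeq \Omega^\infty\bigl(\partial_k\mathrm{id}\otimes(\Sigma^\infty X)^{\otimes k}\bigr)_{h\Sigma_k},
\]
so that, using $\Phi\Omega^\infty\simeq L_{T(n)}$, one gets a natural equivalence
\[
\Phi D_k\mathrm{id}(X) \simeq L_{T(n)}\bigl(\partial_k\mathrm{id}\otimes(\Sigma^\infty X)^{\otimes k}\bigr)_{h\Sigma_k}.
\]
I would then observe that the right-hand side depends only on $L_{T(n)}\Sigma^\infty X$: the $T(n)$-acyclic spectra form a $\otimes$-ideal, so the unit map $\Sigma^\infty X\to L_{T(n)}\Sigma^\infty X$ induces a $\Sigma_k$-equivariant $T(n)$-equivalence on $k$-fold smash powers, hence (after smashing with $\partial_k\mathrm{id}$ and passing to homotopy $\Sigma_k$-orbits, which is a colimit and therefore commutes with $T(n)\otimes(-)$) a $T(n)$-equivalence of the whole expression. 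Feeding in the hypothesis $L_{T(n)}\Sigma^\infty X\simeq L_{T(n)}\Sigma^\ell S$ and applying the same remark once more to replace $L_{T(n)}\Sigma^\ell S$ by $\Sigma^\ell S$, I obtain
\[
\Phi D_k\mathrm{id}(X)\simeq L_{T(n)}\bigl(\partial_k\mathrm{id}\otimes(\Sigma^\ell S)^{\otimes k}\bigr)_{h\Sigma_k},
\]
where $\Sigma_k$ acts on $(\Sigma^\ell S)^{\otimes k}=\Sigma^{k\ell}S$ by permuting the smash factors. The claim to be proved is thus that this spectrum is contractible for $k>p^n$ (for $\ell$ odd) or $k>2p^n$ (for $\ell$ even).

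This last assertion is exactly the content of the work of Arone and Mahowald: Theorem 4.1 of \cite{aronemahowald} may be read as the statement that $\Phi$ of the Goodwillie layers of the identity on spheres vanishes in this range, and their proof is really a computation of the $\mathbf{F}_p$-homology of $(\partial_k\mathrm{id}\otimes(\Sigma^\ell S)^{\otimes k})_{h\Sigma_k}$, which makes sense and yields the same vanishing for every integer $\ell$. For completeness I would recall the shape of that computation: one first shows these layers are $\mathbf{F}_p$-acyclic unless $k$ is a power of $p$ (for $\ell$ odd) or $k\in\{p^j,2p^j\}$ (for $\ell$ even); for $k=p^j$ one identifies the mod $p$ homology with a suspension --- twisted by the sign character when $\ell$ is odd --- of the $j$th Steinberg summand of $H_*(B(\mathbf{Z}/p)^j;\mathbf{F}_p)$; and one deduces from the structure of this module over the Steenrod algebra, via the (unstable) Adams or Adams--Novikov spectral sequence, that its $T(n)$-localization vanishes once $j>n$. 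The parity bookkeeping for even spheres --- which accounts for the extra layers at $k=2p^j$, and hence for the tower becoming constant only past $2p^n$ rather than $p^n$ --- is handled using the $p$-local James fibration $S^{2m}\to\Omega S^{2m+1}\to\Omega S^{4m+1}$ to relate the even-sphere layers to odd-sphere ones.

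The hard part is precisely this homological input: the identification of the layers with Steinberg summands and the resulting vanishing of $v_n$-periodic homotopy for $j>n$. That is the technical heart of \cite{aronemahowald} and will be cited rather than reproved here; what remains on our side is the (routine but slightly delicate) translation of the first paragraph, in particular the verification that $\Phi D_k\mathrm{id}$ is insensitive to $T(n)$-equivalences of the stable input $\Sigma^\infty X$, so that the theorem for an arbitrary $X$ with the prescribed stable localization --- including the cases of negative $\ell$ relevant to spaces such as $\Theta(L_{T(n)}\Sigma^\ell S)$ --- follows uniformly from the sphere case.
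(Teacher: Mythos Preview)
Your proposal is correct and follows the same overall reduction as the paper: identify $\Phi D_k\mathrm{id}(X)$ with $L_{T(n)}(\partial_k\mathrm{id}\otimes(\Sigma^\infty X)^{\otimes k})_{h\Sigma_k}$, observe this depends only on $L_{T(n)}\Sigma^\infty X$, and invoke Arone--Mahowald for the sphere case.

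The one place where the two arguments diverge is the case of \emph{negative} $\ell$. You assert that the Arone--Mahowald mod $p$ cohomology computation ``makes sense and yields the same vanishing for every integer $\ell$''; the paper acknowledges this is true and known to experts, but rather than re-examine the Steenrod-algebra argument it gives a self-contained deduction of the negative case from the positive one. After using the EHP sequence (as you also do) to reduce to odd $\ell$, the paper rewrites the layer for a negative odd sphere as a Spanier--Whitehead dual, uses $T(n)$-local Tate vanishing to pass from homotopy orbits to homotopy fixed points, and then applies the Arone--Dwyer self-duality of the partition complex (Theorem~1.16 of \cite{aronedwyer}) to identify the result with the dual of a layer for a \emph{positive} odd sphere, which vanishes by the original Arone--Mahowald theorem. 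Your route is shorter if one accepts that the cohomological input extends to negative $\ell$; the paper's route is longer but avoids revisiting that computation, at the cost of importing an additional duality result.
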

\begin{proof}
In Sections 4.1 and 4.2 of \cite{aronemahowald}, Arone and Mahowald prove that for $X = S^\ell$ a sphere of dimension at least 1, the spectrum
\begin{equation*}
L_{T(n)}(\partial_k \mathrm{id} \otimes \Sigma^\infty X^{\otimes k})_{h\Sigma_k}
\end{equation*}
is contractible for $k > p^n$ if $\ell$ is odd or $k > 2p^n$ otherwise. Also, if $\ell$ is odd then it is always contractible if $k$ is not a power of $p$ (or twice a power of $p$ in case $\ell$ is even). These statements only depend on the $T(n)$-localization of $\Sigma^\infty X$, so that they are true for any pointed space $X$ whose suspension spectrum is $T(n)$-equivalent to $L_{T(n)}\mathbb{S}^\ell$ for $\ell \geq 1$. The calculations of Arone and Mahowald are equally valid for negative values of $\ell$; this observation is already known at least to Greg Arone and Nick Kuhn. There are also various ways to deduce the result for negative $\ell$ from the positive one. In private communication Kuhn suggests using the Thom isomorphism to relate the mod $p$ cohomology of $(\partial_k \mathrm{id} \otimes Y^{\otimes k})_{h\Sigma_k}$ to that of $(\partial_k \mathrm{id} \otimes \Sigma^2 Y^{\otimes k})_{h\Sigma_k}$, where one determines the action of the Steenrod algebra on the latter by examining the effect of Steenrod operations on the Thom class. Once one establishes that the cohomology of these spectra is free over an appropriate subalgebra of the Steenrod algebra, a standard vanishing line argument in the Adams spectral sequence gives the required vanishing of $T(n)$-localizations. Rather than spell this out we offer an alternative proof using duality results of Arone and Dwyer. 

First we make a reduction using the EHP sequence, following Propositions 4.6 and 4.7 of \cite{aronemahowald} or Section 2.1 of \cite{behrensehp}. For positive odd $\ell$ it gives a fiber sequence
\begin{equation*}
S^\ell \rightarrow \Omega S^{\ell+1} \rightarrow \Omega S^{2\ell+1}.
\end{equation*}
By taking derivatives of the functors involved, one concludes that for any $k > 0$ the corresponding sequence
\begin{equation*}
\mathbf{D}_k(E) \rightarrow \Omega\mathbf{D}_k(\Sigma E) \rightarrow \Omega\mathbf{D}_{\frac{k}{2}}\bigl(\Sigma E^{\otimes 2} \bigr)
\end{equation*}
becomes a fiber sequence when evaluated on shifts of the sphere spectrum $E = \mathbb{S}^\ell$ for \emph{any} odd $\ell$ (not necessarily positive). The last term is to be interpreted as $0$ when $k$ is odd. Here $\mathbf{D}_k$ denotes the functor which gives $D_k$ after applying $\Omega^\infty$, i.e., 
\begin{equation*}
\mathbf{D}_k(E) = (\partial_k \mathrm{id} \otimes E^{\otimes k})_{h\Sigma_k}.
\end{equation*}
The fiber sequence above resolves the Goodwillie layers for an even sphere in terms of the layers of odd spheres, so it suffices to treat the odd case.

Take $\ell$ to be a positive odd integer. We aim to show that the spectrum
\begin{equation*}
L_{T(n)}(\partial_k \mathrm{id} \otimes (\mathbb{S}^{-\ell})^{\otimes k})_{h\Sigma_k}
\end{equation*}
is contractible for $k > p^n$. Note that the Thom isomorphism implies that if $k$ is not a power of $p$ then this spectrum is null, even before $T(n)$-localization, simply because the mod $p$ homology of these layers vanishes for positive odd spheres. Therefore it suffices to consider the case $k = p^m$ for $m > n$. Recall that the derivative $\partial_k\mathrm{id}$ is the Spanier--Whitehead dual of the partition complex,
\begin{equation*}
\partial_k\mathrm{id} \simeq \mathbf{D}(\Sigma \mathbf{Part}_k^{\diamond}).
\end{equation*}
The spectrum under consideration can be described as
\begin{eqnarray*}
L_{T(n)}\bigl(\mathbf{D}(\Sigma \mathbf{Part}_k^{\diamond} \otimes (\mathbb{S}^\ell)^{\otimes k})\bigr)_{h\Sigma_k} & \simeq &  \bigl(L_{T(n)} \mathbf{D}(\Sigma \mathbf{Part}_k^{\diamond} \otimes (\mathbb{S}^\ell)^{\otimes k})\bigr)^{h\Sigma_k} \\
& \simeq & L_{T(n)}\mathbf{D}\bigl((\Sigma \mathbf{Part}_k^{\diamond} \otimes (\mathbb{S}^\ell)^{\otimes k})_{h\Sigma_k}\bigr),
\end{eqnarray*}
where the first equivalence uses that Tate spectra for finite groups vanish in the $\infty$-category of $T(n)$-local spectra. Theorem 1.16 of \cite{aronedwyer}, which concerns a certain self-duality of the partition complex, gives an equivalence 
\begin{equation*}
L_{T(n)}\bigl((\Sigma \mathbf{Part}_k^{\diamond} \otimes (\mathbb{S}^\ell)^{\otimes k})_{h\Sigma_k}\bigr) \simeq L_{T(n)}\mathbf{D}\bigl(\Sigma^{2k}(\partial_k \mathrm{id} \otimes (\mathbb{S}^\ell)^{\otimes k})_{h\Sigma_k}\bigr).
\end{equation*} 
The spectrum on the right is contractible by the result of Arone and Mahowald for positive odd spheres.
\end{proof}

\begin{corollary}
\label{cor:aronemahowald}
There is a natural equivalence
\begin{equation*}
\Phi\Theta(L_{T(n)}\mathbb{S}^\ell) \simeq L_{T(n)}\bigoplus_{k=1}^{2p^n}(\partial_k \mathrm{id} \otimes (\mathbb{S}^\ell)^{\otimes k})_{h\Sigma_k},
\end{equation*}
where $2p^n$ may be replaced by $p^n$ if $\ell$ is odd.
\end{corollary}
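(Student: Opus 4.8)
The plan is to read off the statement as an almost immediate consequence of Theorem~\ref{thm:PhiThetafunctor} together with the Arone--Mahowald vanishing result Theorem~\ref{thm:aronemahowald}. First I would apply Theorem~\ref{thm:PhiThetafunctor} to the $T(n)$-local spectrum $X = L_{T(n)}\Sigma^\ell S$, obtaining a natural equivalence
\begin{equation*}
\Phi\Theta(L_{T(n)}\Sigma^\ell S) \simeq L_{T(n)}\bigoplus_{k \geq 1}\bigl(\partial_k\mathrm{id} \otimes (L_{T(n)}\Sigma^\ell S)^{\otimes k}\bigr)_{h\Sigma_k}.
\end{equation*}
Since $T(n)$-localization depends only on the $T(n)$-local homotopy type of its input, each summand on the right may be rewritten as $L_{T(n)}(\partial_k\mathrm{id} \otimes (\Sigma^\ell S)^{\otimes k})_{h\Sigma_k}$.

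Next I would identify this $k$th summand with the spectrum $\Phi D_k\mathrm{id}(X)$ that appears in Theorem~\ref{thm:aronemahowald}. Indeed, for any pointed space $X$ one has $D_k\mathrm{id}(X) \simeq \Omega^\infty(\partial_k\mathrm{id} \otimes \Sigma^\infty X^{\otimes k})_{h\Sigma_k}$, and since $\Phi\Omega^\infty \simeq L_{T(n)}$ this gives $\Phi D_k\mathrm{id}(X) \simeq L_{T(n)}(\partial_k\mathrm{id} \otimes \Sigma^\infty X^{\otimes k})_{h\Sigma_k}$; choosing $X$ with $L_{T(n)}\Sigma^\infty X \simeq L_{T(n)}\Sigma^\ell S$ (for instance a suitable sphere or a shift thereof) this agrees with the summand above. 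Theorem~\ref{thm:aronemahowald} then tells us that $\Phi D_k\mathrm{id}(X)$ is contractible once $k > p^n$ (if $\ell$ is odd) or once $k > 2p^n$ (if $\ell$ is even). Consequently all but finitely many summands in the direct sum vanish, and the sum collapses to $L_{T(n)}\bigoplus_{k=1}^{2p^n}(\partial_k\mathrm{id} \otimes (S^\ell)^{\otimes k})_{h\Sigma_k}$, with the upper bound improved to $p^n$ when $\ell$ is odd. Naturality is inherited from the naturality in Theorem~\ref{thm:PhiThetafunctor}.

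I do not expect a genuine obstacle here: the corollary is a formal bookkeeping consequence of two results already in hand. The only points requiring a little care are (i) making sure the summand produced by Theorem~\ref{thm:PhiThetafunctor} is literally the layer $\Phi D_k\mathrm{id}$ to which the Arone--Mahowald theorem applies, using $\Phi\Omega^\infty \simeq L_{T(n)}$ and the standard description of the homogeneous layers of the identity, and (ii) matching the truncation bound with the parity of $\ell$ exactly as in the statement of Theorem~\ref{thm:aronemahowald}. One could additionally note, if desired, that the surviving summands are further concentrated in degrees $k$ a power of $p$ (for $\ell$ odd) or twice a power of $p$ (for $\ell$ even), but this refinement is not needed for the stated equivalence.
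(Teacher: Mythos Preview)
Your proposal is correct and follows the same route as the paper: apply Theorem~\ref{thm:PhiThetafunctor} and then kill the summands with $k$ large via Theorem~\ref{thm:aronemahowald}. The paper's proof is just the two-line version of what you wrote. One small sharpening: where you say ``choosing $X$ with $L_{T(n)}\Sigma^\infty X \simeq L_{T(n)}\Sigma^\ell S$ (for instance a suitable sphere or a shift thereof),'' the paper simply takes $X = \Theta(L_{T(n)}\Sigma^\ell S)$ and invokes $\Sigma^\infty_{T(n)}\Theta \simeq \mathrm{id}_{\mathrm{Sp}_{T(n)}}$; this is cleaner because it works uniformly for all integers $\ell$, including negative ones, whereas an actual sphere $S^\ell$ is only available for $\ell \geq 1$.
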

\begin{proof}
Since $\Sigma^\infty_{T(n)}\Theta$ is equivalent to the identity functor of $\mathrm{Sp}_{T(n)}$, the spaces $\Theta(L_{T(n)}\mathbb{S}^\ell)$ satisfy the hypotheses of Theorem \ref{thm:aronemahowald}. The conclusion follows by applying Theorem \ref{thm:PhiThetafunctor}.
\end{proof}

Arone and Mahowald prove that when $X$ is a sphere, the natural map
\begin{equation*}
\Phi(\varprojlim_k P_k\mathrm{id}(X)) \rightarrow \varprojlim_k \Phi P_k\mathrm{id}(X)
\end{equation*}
is an equivalence. The previous corollary shows that the same is true when $X = \Theta(L_{T(n)}\mathbb{S}^\ell)$, thus giving a new class of nontrivial examples of $\Phi$-good spaces. These spaces can be thought of as `fake spheres'. Their suspension spectra are $T(n)$-locally equivalent to those of spheres, but their $v_n$-periodic homotopy groups are quite different. Indeed, the spectrum $\Phi\Theta(L_{T(n)}\mathbb{S}^\ell)$ splits as a sum of layers corresponding to the derivatives of the functor $\Phi\Theta$, whereas the Goodwillie tower for $\Phi(X)$ does not split when $X$ is an ordinary sphere. For example, when the height $n$ is 1 there is a fiber sequence
\begin{equation*}
\Phi(S^{2\ell + 1}) \rightarrow \Phi D_1\mathrm{id}(S^{2\ell + 1}) \rightarrow \Sigma \Phi D_p\mathrm{id}(S^{2\ell + 1}).
\end{equation*}
For $p$ odd, work of Bousfield \cite{bousfieldHspace} shows that this sequence can be identified with a fiber sequence 
\begin{equation*}
\Phi(S^{2\ell + 1}) \rightarrow L_{T(1)} \mathbb{S}^{2\ell + 1} \xrightarrow{p^\ell} L_{T(1)} \mathbb{S}^{2\ell + 1},
\end{equation*}
so that $\Phi(S^{2\ell + 1}) \simeq L_{T(1)}\mathbb{S}^{2\ell}/p^\ell$. In particular, the `attaching map' between $D_1$ and $D_p$ is nontrivial. By contrast, one finds
\begin{equation*}
\Phi\Theta(L_{T(1)}\mathbb{S}^\ell) \simeq L_{T(1)}\bigl(\mathbb{S}^\ell \oplus \mathbb{S}^{\ell - 1}).
\end{equation*}

The remainder of this section is devoted to the following result from Section \ref{sec:mainresults}:

\begin{theorem*}[Theorem \ref{thm:Sigmainftyfiltration}]
For $X \in \mathcal{S}_{v_n}$, the suspension spectrum $\Sigma^\infty_{T(n)} X$ admits a natural filtration with associated graded the spectrum
\begin{equation*}
\mathrm{Sym}^\ast(\Phi (X)) = L_{T(n)}\bigoplus_{k \geq 1}\Phi(X)^{\otimes k}_{h\Sigma_k}.
\end{equation*}
\end{theorem*}

We will now describe the filtration of the theorem. Recall that by Theorem \ref{thm:Phimonadic} every $X \in \mathcal{S}_{v_n}$ has a standard bar resolution
\[
\begin{tikzcd}
\cdots \ar[shift right = .18cm]{r}\ar[shift right = .06cm]{r}\ar[shift left = .06cm]{r}\ar[shift left = .18cm]{r}  & \Theta\Phi\Theta\Phi\Theta\Phi X  \ar[shift left = .12cm]{r} \ar{r} \ar[shift right = .12cm]{r} & \Theta\Phi\Theta\Phi X  \ar[shift left = .06cm]{r} \ar[shift right = .06cm]{r} & \Theta\Phi X \ar{r} & X
\end{tikzcd}
\]
and therefore the suspension spectrum $\Sigma^\infty_{T(n)} X$ will have a corresponding resolution
\[
\begin{tikzcd}
\cdots \ar[shift right = .18cm]{r}\ar[shift right = .06cm]{r}\ar[shift left = .06cm]{r}\ar[shift left = .18cm]{r}  & \Phi\Theta\Phi\Theta\Phi X  \ar[shift left = .12cm]{r} \ar{r} \ar[shift right = .12cm]{r} & \Phi\Theta\Phi X  \ar[shift left = .06cm]{r} \ar[shift right = .06cm]{r} & \Phi X \ar{r} & \Sigma^\infty_{T(n)} X,
\end{tikzcd}
\]
using that $\Sigma^\infty_{T(n)}\Theta \simeq \mathrm{id}_{\mathrm{Sp}_{T(n)}}$. We filter $\Sigma^\infty_{T(n)} X$ by filtering the terms in this resolution. Define
\begin{equation*}
F^k \Sigma^\infty_{T(n)} X := |P^k\bigl((\Phi\Theta)^\bullet \Phi\bigr)|(X). 
\end{equation*}
This gives an increasing filtration
\begin{equation*}
\Phi X \simeq F^1\Sigma^\infty_{T(n)} X \rightarrow F^2 \Sigma^\infty_{T(n)} X \rightarrow F^3 \Sigma^\infty_{T(n)} X \rightarrow \cdots
\end{equation*}
of $\Sigma^\infty_{T(n)}X$. Since $\Phi$ preserves limits, we also have
\begin{equation*}
P^k\bigl((\Phi\Theta)^\bullet \Phi\bigr)(X) \simeq P^k\bigl((\Phi\Theta)^\bullet\bigr) \Phi(X).
\end{equation*}
Recall that $\Phi\Theta$ corresponds to the symmetric sequence given by the ($T(n)$-local) derivatives of the identity functor, and $(\Phi\Theta)^\bullet$ corresponds to the $\bullet$-fold composition product of that symmetric sequence with itself. The functor $P^k\bigl((\Phi\Theta)^\bullet\bigr)$ then simply corresponds to the first $k$ terms of that composition product. (This is also how Behrens and Rezk define a filtration in Section 4 of \cite{behrensrezk}, although there the relevant operad is the commutative rather than the Lie operad. They refer to this as the \emph{Kuhn filtration}, which is developed much more generally in \cite{kuhnpereira}.) Thus the object $P^k\bigl((\Phi\Theta)^\bullet\bigr) \Phi(X)$ consists of $k$ graded pieces. The simplicial structure maps respect this grading, except possibly for ($P^k$ applied to) the `final face maps'
\begin{equation*}
(\Phi\Theta)^\bullet \Phi(X) = (\Phi\Theta)^{\bullet-1}\Phi\Theta\Phi(X) \xrightarrow{(\Phi\Theta)^{\bullet-1}\Phi\varepsilon} (\Phi\Theta)^{\bullet-1}\Phi(X)
\end{equation*}
coming from the $\Phi\Theta$-algebra structure of $\Phi(X)$. Note that Theorem \ref{thm:PhiThetacoanalytic} implies that
\begin{equation*}
\varinjlim_k P^k\bigl((\Phi\Theta)^\bullet\bigr) \simeq (\Phi\Theta)^\bullet,
\end{equation*}
so that the filtration we have defined is exhaustive:
\begin{equation*}
\varinjlim_k F^k \Sigma^\infty_{T(n)} X \simeq \Sigma^\infty_{T(n)} X.
\end{equation*}
We offer further remarks on this filtration after we have settled the following:

\begin{proof}[Proof of Theorem \ref{thm:Sigmainftyfiltration}]
Write $\mathrm{gr}^k\Sigma^\infty_{T(n)} X$ for the cofiber of $F^{k-1}\Sigma^\infty_{T(n)} X \rightarrow F^k \Sigma^\infty_{T(n)} X$. As already noted above, 
\begin{equation*}
\mathrm{gr}^1\Sigma^\infty_{T(n)} X = F^1\Sigma^\infty_{T(n)} X \simeq \Phi X,
\end{equation*}
simply because the simplicial object $P^1\bigl((\Phi\Theta)^\bullet\bigr) \Phi X$ is constant with value $\Phi X$. For $k >1$ we find
\begin{eqnarray*}
\mathrm{gr}^k\Sigma^\infty_{T(n)} X & \simeq & |D^k\bigl((\Phi\Theta)^\bullet\bigr) \Phi X | \\
& \simeq & \bigl(D^k \mathrm{Bar}(\Phi\Theta)\bigr)(\Phi X) \\
& \simeq & \bigl(D^k \Sigma^\infty_{T(n)}\Omega^\infty_{T(n)})(\Phi X) \\
& \simeq & L_{T(n)}(\Phi X)^{\otimes k}_{h\Sigma_k}.
\end{eqnarray*}
Here on the second line, $\mathrm{Bar}(\Phi\Theta)$ is the geometric realization of the simplicial object
\[
\begin{tikzcd}
\cdots \ar[shift right = .18cm]{r}\ar[shift right = .06cm]{r}\ar[shift left = .06cm]{r}\ar[shift left = .18cm]{r}  & \Phi\Theta\Phi\Theta  \ar[shift left = .12cm]{r} \ar{r} \ar[shift right = .12cm]{r} & \Phi\Theta  \ar[shift left = .06cm]{r} \ar[shift right = .06cm]{r} & \mathrm{id}_{\mathrm{Sp}_{T(n)}}.
\end{tikzcd}
\]
This simplicial object arises by precomposing with $\Omega^\infty_{T(n)}$ and postcomposing with $\Sigma^\infty_{T(n)}$ from the standard resolution of the identity functor of $\mathcal{S}_{v_n}$ associated with the comonad $\Theta\Phi$:
\[
\begin{tikzcd}
\cdots \ar[shift right = .18cm]{r}\ar[shift right = .06cm]{r}\ar[shift left = .06cm]{r}\ar[shift left = .18cm]{r}  & \Theta\Phi\Theta\Phi\Theta\Phi  \ar[shift left = .12cm]{r} \ar{r} \ar[shift right = .12cm]{r} & \Theta\Phi\Theta\Phi  \ar[shift left = .06cm]{r} \ar[shift right = .06cm]{r} & \Theta\Phi \ar{r} & \mathrm{id}_{\mathcal{S}_{v_n}}.
\end{tikzcd}
\]
In particular $\mathrm{Bar}(\Phi\Theta) \simeq \Sigma^\infty_{T(n)}\Omega^\infty_{T(n)}$, giving the equivalence between the second and the third line above. The identification of the first and second line follows because the `final face' maps in the simplicial object $D^k\bigl((\Phi\Theta)^\bullet\bigr) \Phi X$ are canonically null (compare also the proof of Proposition 4.5 of \cite{behrensrezk}). 
Theorem \ref{thm:SigmaOmegacoanalytic} gives the equivalence between the third and fourth line.
\end{proof}

\begin{remark}
\label{rmk:Kuhnfiltration}
The kind of filtration we have just defined exists quite generally for algebras over an operad. In the case of commutative ring spectra used by Behrens and Rezk \cite{behrensrezk} it essentially originates with Kuhn's work \cite{kuhnmccord}, whereas a much more general version is treated by Kuhn and Pereira \cite{kuhnpereira}. In fact, the filtration of Theorem \ref{thm:Sigmainftyfiltration} already exists at the level of algebras. Let us only outline this informally, since we will not need it. Consider an operad $\mathbf{O}$ in spectra (or some other stable symmetric monoidal homotopy theory with sufficiently many colimits) and for simplicity assume $\mathbf{O}(1) = \mathbb{S}$ and our operads are nonunital (i.e. there is no $\mathbf{O}(0)$ term). The stabilization of the $\infty$-category of $\mathbf{O}$-algebras can be identified with the $\infty$-category of spectra (this was first observed, perhaps in different language, by Basterra-Mandell \cite{basterramandell}) and the corresponding functor
\begin{equation*}
\Sigma^\infty\colon \mathrm{Alg}(\mathbf{O}) \rightarrow \mathrm{Sp}
\end{equation*}
can be thought of as \emph{derived $\mathbf{O}$-indecomposables} and is also called \emph{topological Quillen homology}. We will describe a filtration of any $\mathbf{O}$-algebra $A$ which reproduces the kind of filtration considered above after applying $\Sigma^\infty$. Consider the category $\mathbf{Op}_{\leq k}$ of \emph{$k$-truncated operads} in spectra, which are simply truncated symmetric sequences $\{\mathbf{O}(j)\}_{1 \leq j \leq k}$ equipped with the usual structure maps required of an operad, as long as they make sense; in other words, multiplication maps
\begin{equation*}
\mathbf{O}(m) \otimes \mathbf{O}(j_1) \otimes \cdots \otimes \mathbf{O}(j_m) \rightarrow \mathbf{O}(j_1 + \cdots + j_m)
\end{equation*}
are only defined if $j_1 + \cdots + j_m \leq k$. There is an evident forgetful functor $\mathbf{Op} \rightarrow \mathbf{Op}_{\leq k}$ from all operads to $k$-truncated operads. This functor has both a left and a right adjoint. The right adjoint is easy to describe: it simply extends a truncated symmetric sequence by zero in arities above $k$. The left adjoint (let us write $f_k$) is more interesting: for a truncated operad $\mathbf{O}_{\leq k}$, the operad $f_k\mathbf{O}_{\leq k}$ agrees with $\mathbf{O}_{\leq k}$ in arities up to $k$ and is `freely generated' above that. In other words, all generators and relations are determined by $\mathbf{O}_{\leq k}$. One could give an explicit formula for $f_k$ using the construction of free operads, but we will not. For an operad $\mathbf{O}$, write $\varepsilon_k\colon f_k(\mathbf{O}_{\leq k}) \rightarrow \mathbf{O}$ for the counit. Corresponding to the morphism $\varepsilon_k$ there is an adjunction
\[
\begin{tikzcd}
\mathrm{Alg}(f_k(\mathbf{O}_{\leq k})) \ar[shift left]{r}{(\varepsilon_k)_!} & \mathrm{Alg}(\mathbf{O}). \ar[shift left]{l}{\varepsilon_k^*}
\end{tikzcd}
\]
For an $\mathbf{O}$-algebra $A$ one now obtains the promised filtration by
\begin{equation*}
(\varepsilon_1)_!\varepsilon_1^* A \rightarrow (\varepsilon_2)_!\varepsilon_2^* A \rightarrow (\varepsilon_3)_!\varepsilon_3^* A\rightarrow \cdots \rightarrow A.
\end{equation*}
Note that the operad $f_1(\mathbf{O}_{\leq 1})$ is simply the trivial operad, and the adjoint pair $((\varepsilon_1)_!,\varepsilon_1^*)$ can be identified with the free-forgetful adjunction for the $\infty$-category $\mathrm{Alg}(\mathbf{O})$. Composing topological Quillen homology with the free algebra functor gives the identity, so the composition of topological Quillen homology with the first stage of the filtration $(\varepsilon_1)_!\varepsilon_1^*$ is the forgetful functor; note that the `forgetful functor' $\Phi$ for $\Phi\Theta$-algebras was also the starting point of our filtration of $\Sigma^\infty_{T(n)} X$.

A final remark is that the filtration $\{(\varepsilon_k)_!\varepsilon_k^*\}_{k\geq1}$ of the identity functor of $\mathrm{Alg}(\mathbf{O})$ plays a role dual to that of the Goodwillie tower of the identity: the functor $(\varepsilon_k)_!\varepsilon_k^*$ can be shown to be $k$-\emph{co}excisive, and we would like to say that it is the universal $k$-coexcisive functor with a natural transformation to the identity functor. One obstruction to proving this is the lack of a good theory of dual Goodwillie calculus in an unstable setting. We will comment more on issues related to dual calculus in Appendix \ref{app:Goodwillie}.
\end{remark}

\subsection{A variation for $K(n)$-local homotopy theory}
\label{subsec:Knlocal}

The purpose of this section is to derive the following from Theorem \ref{thm:MnfLiealg}: 

\begin{corollary*}[Corollary \ref{cor:Knlocal}]
The localization of $\mathcal{S}_{v_n}$ at the $\Phi_{K(n)}$-equivalences exists. More precisely, there exists a full subcategory $\mathcal{M}_{K(n)} \rightarrow \mathcal{S}_{v_n}$ for which the inclusion admits a left adjoint, satisfying the following two properties: 
\begin{itemize}
\item[(i)] The unit is a $\Phi_{K(n)}$-equivalence.
\item[(ii)] A map in $\mathcal{M}_{K(n)}$ is an equivalence if and only if it is a $\Phi_{K(n)}$-equivalence.
\end{itemize}
Moreover, the $\infty$-category $\mathcal{M}_{K(n)}$ is equivalent to the $\infty$-category $\mathrm{Lie}(\mathrm{Sp}_{K(n)})$ of Lie algebras in $K(n)$-local spectra.
\end{corollary*}

Essentially one only needs the following observation:

\begin{lemma}
\label{lem:KnPhiTheta}
The functor $\Phi\Theta$ preserves $K(n)$-equivalences of $T(n)$-local spectra.
\end{lemma}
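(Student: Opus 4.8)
The plan is to exploit the explicit formula for $\Phi\Theta$ provided by Theorem \ref{thm:PhiThetafunctor}, namely
\begin{equation*}
\Phi\Theta(X) \simeq L_{T(n)}\bigoplus_{k \geq 1}(\partial_k\mathrm{id} \otimes X^{\otimes k})_{h\Sigma_k},
\end{equation*}
and to check the claim summand by summand. So the first step is to reduce the statement to showing that, for each fixed $k \geq 1$, the functor $X \mapsto L_{T(n)}(\partial_k\mathrm{id} \otimes X^{\otimes k})_{h\Sigma_k}$ sends $K(n)$-equivalences of $T(n)$-local spectra to $K(n)$-equivalences. A direct sum of $K(n)$-equivalences is a $K(n)$-equivalence (since $K(n)_*$ commutes with arbitrary direct sums), and $L_{T(n)}$ preserves $K(n)$-equivalences because $K(n)$-localization factors through $T(n)$-localization (a $T(n)$-local equivalence induced by $L_{T(n)}$ on any map is in particular detected correctly by $K(n)$-homology). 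Hence the reduction is routine.

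Next I would treat the individual layer. Since $\partial_k\mathrm{id}$ is a \emph{finite} spectrum with $\Sigma_k$-action, smashing with it preserves $K(n)$-equivalences, so it suffices to show $X \mapsto (X^{\otimes k})_{h\Sigma_k}$ preserves $K(n)$-equivalences between $T(n)$-local spectra, where $\otimes$ is the $T(n)$-local smash product. The key point here is the vanishing of the relevant Tate constructions in the $T(n)$-local (hence $K(n)$-local) setting, which is already invoked repeatedly in the paper (e.g. in the proof of Theorem \ref{thm:recognition} and Theorem \ref{thm:aronemahowald}, citing Kuhn \cite{kuhntate} and \cite{mathewclausen}): for a finite group $G$ and a $T(n)$-local $G$-spectrum $Y$ one has $Y_{hG} \simeq Y^{hG}$ after $T(n)$-localization. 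Using this, one analyzes $(X^{\otimes k})_{h\Sigma_k}$ by filtering by isotropy along the $\Sigma_k$-set structure on $\{1,\dots,k\}$; the associated graded pieces are of the form $\big((X^{\otimes |H\backslash \Sigma_k|})^{\wedge \text{orbit}}\big)_{h(\text{subgroup})}$-type expressions built out of smash powers of $X$ induced up from proper subgroups, together with the top piece which is a Tate-type construction. The functor $X \mapsto X^{\otimes k}$ visibly preserves $K(n)$-equivalences (smash powers of a $K(n)$-equivalence are $K(n)$-equivalences, using that we are in a setting where $K(n)_*(X\otimes Y)$ is computed via a Künneth formula over the graded field $K(n)_*$), and taking homotopy orbits (or, via the Tate vanishing, homotopy fixed points) for a finite group preserves $K(n)$-equivalences because $K(n)$-homology of a finite group is finite and the relevant homotopy orbit/fixed-point spectral sequences converge with $K(n)_*$-finite input. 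Assembling these pieces by induction on $k$ (or on the subgroup lattice) yields the claim for each layer.

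The main obstacle is the passage from "$X \mapsto X^{\otimes k}$ preserves $K(n)$-equivalences" — which is elementary — to "$(X^{\otimes k})_{h\Sigma_k}$ preserves $K(n)$-equivalences", since homotopy orbits by a finite group do not in general interact well with a Bousfield localization. This is exactly where $T(n)$-locality is essential: the Tate vanishing lets one replace orbits by fixed points (or, more directly, shows the norm map is an equivalence so that the two agree), and for a $K(n)$-equivalence $f: X \to X'$ of $T(n)$-local spectra one concludes $(f^{\otimes k})_{h\Sigma_k}$ is a $K(n)$-equivalence by a finite-filtration argument whose subquotients are all of the controllable forms above. Once this is in hand the rest of the proof is bookkeeping, and the lemma follows.
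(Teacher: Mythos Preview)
Your proposal is correct and begins with the same strategy as the paper (invoke Theorem \ref{thm:PhiThetafunctor} and work summand by summand), but you significantly overcomplicate the handling of homotopy orbits. The paper dispatches the entire argument in two lines: if $f$ is a $K(n)$-equivalence then so is $Z \otimes f$ for any spectrum $Z$, and $K(n)$-equivalences (equivalently, maps with $K(n)$-acyclic cofiber) are closed under arbitrary colimits because $K(n)$-acyclics form a localizing subcategory. Since $(\partial_k\mathrm{id} \otimes (-)^{\otimes k})_{h\Sigma_k}$, the direct sum over $k$, and $L_{T(n)}$ are all built from iterated smash products and colimits, they automatically preserve $K(n)$-equivalences.

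The ``main obstacle'' you identify is not an obstacle at all. You are not being asked to commute homotopy orbits with $K(n)$-localization; you only need that the homotopy orbits of a pointwise $K(n)$-equivalence of $\Sigma_k$-spectra is again a $K(n)$-equivalence, and this is immediate from closure under colimits. Your appeals to Tate vanishing, isotropy filtrations, Künneth arguments for the homotopy orbit spectral sequence, and induction on the subgroup lattice are all unnecessary. They are not wrong, and would eventually yield the result, but they obscure what is in fact a one-line observation.
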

\begin{proof}
If $f\colon X \rightarrow Y$ is a $K(n)$-equivalence, then so is $Z \otimes f$ for any spectrum $Z$. Moreover, $K(n)$-equivalences (or equivalences for any homology theory) are closed under colimits. Hence the induced map
\begin{equation*}
L_{T(n)}\bigoplus_{k \geq 1} (\partial_k\mathrm{id} \otimes X^{\otimes k})_{h\Sigma_k} \rightarrow L_{T(n)}\bigoplus_{k \geq 1} (\partial_k\mathrm{id} \otimes Y^{\otimes k})_{h\Sigma_k}
\end{equation*}
is a $K(n)$-equivalence. The lemma now follows by applying Theorem \ref{thm:PhiThetafunctor}.
\end{proof}

We write $\mathrm{Lie}(\mathrm{Sp}_{K(n)})$ for the full subcategory on objects of $\mathrm{Lie}(\mathrm{Sp}_{T(n)})$ whose underlying spectrum is $K(n)$-local. Corollary \ref{cor:Knlocal} is a straightforward consequence of Theorem \ref{thm:MnfLiealg} together with the following:

\begin{proposition}
\label{prop:KnlocalLiealg}
The inclusion $\mathrm{Lie}(\mathrm{Sp}_{K(n)}) \rightarrow \mathrm{Lie}(\mathrm{Sp}_{T(n)})$ admits a left adjoint, which on the level of underlying spectra is the $K(n)$-localization functor $L_{K(n)}$. In particular, $\mathrm{Lie}(\mathrm{Sp}_{K(n)})$ is the localization of $\mathrm{Lie}(\mathrm{Sp}_{T(n)})$ at the $K(n)$-equivalences.
\end{proposition}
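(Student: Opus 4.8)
The plan is to show that the monad $\Phi\Theta$ on $\mathrm{Sp}_{T(n)}$ descends to a monad on $\mathrm{Sp}_{K(n)}$ along the localization-inclusion adjunction $(L_{K(n)}, \iota)$ between $\mathrm{Sp}_{K(n)}$ and $\mathrm{Sp}_{T(n)}$, and then to identify the resulting category of algebras with the full subcategory $\mathrm{Lie}(\mathrm{Sp}_{K(n)})$. The key input is Lemma \ref{lem:KnPhiTheta}: since $\Phi\Theta$ preserves $K(n)$-equivalences of $T(n)$-local spectra, the composite $L_{K(n)}\circ\Phi\Theta\circ\iota$ is a well-defined endofunctor of $\mathrm{Sp}_{K(n)}$, and — because the unit $X \to \iota L_{K(n)} X$ is a $K(n)$-equivalence and $\Phi\Theta$ inverts such — the natural map $\Phi\Theta(X) \to \Phi\Theta(\iota L_{K(n)} X)$ is itself a $K(n)$-equivalence of $T(n)$-local spectra for every $X$. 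From this it follows formally that the localization $L_{K(n)}$ is compatible with the monad structure: the monad multiplication and unit of $\Phi\Theta$ descend, and one obtains a monad $(\Phi\Theta)_{K(n)} := L_{K(n)}(\Phi\Theta)(\iota(-))$ on $\mathrm{Sp}_{K(n)}$ together with a morphism of monads realizing $\mathrm{Sp}_{K(n)}$ as a "localized" module category. This is a standard Bousfield-localization-of-monads argument (one can also phrase it via the reflective-subcategory criterion: $\mathrm{Sp}_{K(n)} \hookrightarrow \mathrm{Sp}_{T(n)}$ is a localization, $\Phi\Theta$ carries the localizing class into itself, hence it induces a monad on the localization and the induced adjunction on algebra categories exhibits $\mathrm{LMod}$ over the localized monad as a reflective localization of $\mathrm{LMod}$ over the original one).

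Next I would unwind what this says after the identification $\mathrm{LMod}_{\Phi\Theta}(\mathrm{Sp}_{T(n)}) \simeq \mathrm{Lie}(\mathrm{Sp}_{T(n)})$ from Theorem \ref{thm:MnfLiealg} (and its identification of the underlying-spectrum functor with the forgetful functor). An algebra for the localized monad is precisely a $K(n)$-local spectrum $E$ with an action $L_{K(n)}(\Phi\Theta)(E)\to E$; pulling this back along $\Phi\Theta(E)\to L_{K(n)}(\Phi\Theta)(E)$ (which is the unit, a $K(n)$-equivalence) and using that $E$ is already $K(n)$-local shows that such algebras are exactly $\Phi\Theta$-algebras whose underlying spectrum is $K(n)$-local, i.e. objects of the full subcategory $\mathrm{Lie}(\mathrm{Sp}_{K(n)})$. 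The left adjoint to the inclusion is the functor sending a Lie algebra $L$ to the free $(\Phi\Theta)_{K(n)}$-algebra-style localization; concretely, one checks that the reflection is computed on underlying spectra simply by $L_{K(n)}$. For this last point the crucial observation is again Lemma \ref{lem:KnPhiTheta}: the bar construction computing the reflection has all its terms of the form $\Phi\Theta$ applied to things, and $L_{K(n)}$ commutes with this geometric realization (as $L_{K(n)}$ is a left adjoint) while $\Phi\Theta$ turns the $K(n)$-localization unit into a $K(n)$-equivalence, so applying $L_{K(n)}$ termwise collapses the whole resolution to $L_{K(n)} L$ with its induced Lie structure. One should spell out that this induced structure genuinely exists — i.e. that $L_{K(n)} L$ carries a canonical Lie-algebra structure for which $L \to L_{K(n)} L$ is a map of Lie algebras — which again is immediate from the fact that all structure maps of the Lie operad, applied to $L$, are compatible with $L_{K(n)}$ up to the $K(n)$-equivalences that $\Phi\Theta$ inverts.

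The main obstacle, I expect, is not any single computation but the bookkeeping of making "the localization is compatible with the operad/monad" precise at the $\infty$-categorical level: one must produce the morphism of monads and the descended algebra structures coherently, not just on homotopy categories. The cleanest route is probably to invoke a general lemma about localizing a presentable monoidal (or merely a monad-carrying) $\infty$-category at a class of morphisms closed under the monad — for instance by exhibiting $\mathrm{Sp}_{K(n)}$ as the localization of $\mathrm{Sp}_{T(n)}$ at the saturated class $W$ of $K(n)$-equivalences, noting $\Phi\Theta(W)\subseteq W$ by Lemma \ref{lem:KnPhiTheta}, and then citing that a monad preserving a strongly saturated class of a presentable $\infty$-category descends to the corresponding Bousfield localization with the algebra categories related by a reflective localization whose reflector is computed by the localization functor on underlying objects. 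Granting such a lemma (which is formal and presumably available in the literature on localizations of monads, or derivable from the adjoint functor theorem since $\Phi\Theta$ is accessible and $\mathrm{Sp}_{K(n)}$ is presentable), the proposition follows. Finally, the identification of the reflector on underlying spectra with $L_{K(n)}$, and the statement that $\mathrm{Lie}(\mathrm{Sp}_{K(n)})$ is the localization of $\mathrm{Lie}(\mathrm{Sp}_{T(n)})$ at the $K(n)$-equivalences (meaning: maps of Lie algebras whose underlying map of spectra is a $K(n)$-equivalence), is read off directly from the description of the reflector and the conservativity of the forgetful functor.
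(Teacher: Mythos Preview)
Your approach is essentially the same as the paper's: both rest on Lemma~\ref{lem:KnPhiTheta} and the observation that since $\Phi\Theta$ preserves $K(n)$-equivalences, the localization $L_{K(n)}$ is compatible with the monad structure and induces a reflective localization on algebra categories whose reflector is $L_{K(n)}$ on underlying spectra. The difference is that where you correctly flag the $\infty$-categorical coherence as the main obstacle and gesture at ``a general lemma about localizing a monad-carrying $\infty$-category,'' the paper supplies exactly this lemma: it encodes the action of $\Phi\Theta$ on $\mathrm{Sp}_{T(n)}$ via Lurie's $\mathcal{LM}^\otimes$ framework (so that $\mathrm{Sp}_{T(n)}$ is left-tensored over the monoidal $\infty$-category $\mathrm{Fun}_{K(n)}(\mathrm{Sp}_{T(n)},\mathrm{Sp}_{T(n)})$ of $K(n)$-equivalence-preserving endofunctors), then applies Proposition~2.2.1.9 of \cite{higheralgebra} on localizations compatible with a monoidal structure to produce the morphism of $\infty$-operads $L_{K(n)}^\otimes$ and hence the left adjoint on module categories. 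Your bar-resolution argument for computing the reflector on underlying objects is not needed in the paper's setup, since the identification with $L_{K(n)}$ is immediate from the construction of $L_{K(n)}^\otimes$.
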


The idea is that if $E$ is a $T(n)$-local spectrum that is an algebra for the monad $\Phi\Theta$, then $L_{K(n)} E$ is also an algebra by using the map
\begin{equation*}
\Phi\Theta(L_{K(n)} E) \rightarrow L_{K(n)}\Phi\Theta(L_{K(n)} E) \simeq L_{K(n)}\Phi\Theta(E) \rightarrow L_{K(n)} E.
\end{equation*}
The first map is $K(n)$-localization, the middle equivalence follows from Lemma \ref{lem:KnPhiTheta}, and the last map is the $K(n)$-localization of the structure map $\Phi\Theta(E) \rightarrow E$. The proof of Proposition \ref{prop:KnlocalLiealg} is simply an exercise in making this precise.

\begin{proof}[Proof of Proposition \ref{prop:KnlocalLiealg}]
Let us first give an explicit description of the $\infty$-category $\mathrm{LMod}_{\Phi\Theta}(\mathrm{Sp}_{T(n)})$. In Section 4.2.1 of \cite{higheralgebra} Lurie defines an $\infty$-operad $\mathcal{LM}^\otimes$; it is the $\infty$-operad associated to the ordinary operad $\mathbf{LM}$ (in sets) on two colours $a$ and $m$, for which an algebra is precisely an associative monoid (corresponding to $a$) and a left module over it (corresponding to $m$). In particular, the full suboperad of $\mathbf{LM}$ on the colour $a$ is isomorphic to the associative operad. 

Let us write $\mathrm{Fun}_{K(n)}(\mathrm{Sp}_{T(n)},\mathrm{Sp}_{T(n)})$ for the full subcategory of $\mathrm{Fun}(\mathrm{Sp}_{T(n)},\mathrm{Sp}_{T(n)})$ on functors that preserve $K(n)$-equivalences. Lemma \ref{lem:KnPhiTheta} states that $\Phi\Theta$ is contained in this subcategory. The simplicial set $\mathrm{Fun}_{K(n)}(\mathrm{Sp}_{T(n)},\mathrm{Sp}_{T(n)})$ is in fact a simplicial monoid, using composition of functors. Moreover, $\mathrm{Sp}_{T(n)}$ is a left module for this monoid. This situation can thus be described as an algebra for the operad $\mathbf{LM}$ in the category of simplicial sets; passing to nerves, this algebra classifies a coCartesian fibration of $\infty$-operads which we denote $\mathcal{N}^\otimes \rightarrow \mathcal{LM}^\otimes$. Its fiber over the vertex $a$ is $\mathrm{Fun}_{K(n)}(\mathrm{Sp}_{T(n)},\mathrm{Sp}_{T(n)})$, and the fiber over $m$ is $\mathrm{Sp}_{T(n)}$. As in Definition 4.2.1.19 of \cite{higheralgebra}, one says that $\mathcal{N}^\otimes$ exhibits $\mathrm{Sp}_{T(n)}$ as left tensored over $\mathrm{Fun}_{K(n)}(\mathrm{Sp}_{T(n)},\mathrm{Sp}_{T(n)})$. By Definition 4.2.1.13, the $\infty$-category $\mathrm{LMod}_{\Phi\Theta}(\mathrm{Sp}_{T(n)})$ is the $\infty$-category of sections (preserving inert morphisms) of the fibration $\mathcal{N}^\otimes \rightarrow \mathcal{LM}^\otimes$ that restrict to the associative monoid $\Phi\Theta$ along the inclusion of the associative $\infty$-operad into $\mathcal{LM}^\otimes$.

Now consider the full subcategory $\mathrm{Sp}_{K(n)} \rightarrow \mathrm{Sp}_{T(n)}$. There is a corresponding full subcategory $\mathcal{N}_{K(n)}^\otimes \subseteq \mathcal{N}^\otimes$; an object $C_1 \oplus \cdots \oplus C_n$ of $\mathcal{N}^\otimes$ lies in $\mathcal{N}_{K(n)}^\otimes$ if all the $C_i$ that lie over $m \in \mathcal{LM}^\otimes$ are contained in $\mathrm{Sp}_{K(n)}$ (cf. the definition at the start of Section 2.2.1 of \cite{higheralgebra}). We consider the two localization functors $L_{K(n)}\colon \mathrm{Sp}_{T(n)} \rightarrow \mathrm{Sp}_{K(n)}$ and (trivially) the identity functor of $\mathrm{Fun}_{K(n)}(\mathrm{Sp}_{T(n)},\mathrm{Sp}_{T(n)})$. These localizations are compatible with the $\mathcal{LM}^\otimes$-monoidal structure of $\mathcal{N}^\otimes$ in the sense of Definition 2.2.1.6 of \cite{higheralgebra}. Indeed, this statement is simply the fact that for $F \in \mathrm{Fun}_{K(n)}(\mathrm{Sp}_{T(n)},\mathrm{Sp}_{T(n)})$, the map $F(E) \rightarrow F(L_{K(n)} E)$ is a $K(n)$-equivalence for every $T(n)$-local spectrum $E$. Proposition 2.2.1.9 of \cite{higheralgebra} then states that there is a morphism of $\infty$-operads
\[
\begin{tikzcd}
\mathcal{N}^\otimes \ar{rr}{L_{K(n)}^\otimes} \ar{dr} && \mathcal{N}^\otimes_{K(n)} \ar{dl} \\
& \mathcal{LM}^\otimes &
\end{tikzcd}
\]
such that $L_{K(n)}^\otimes$ is left adjoint to the inclusion and restricts to the functor $L_{K(n)}$ on $\mathrm{Sp}_{T(n)}$. This morphism $L_{K(n)}^\otimes$ gives the desired left adjoint functor
\begin{equation*}
\mathrm{Lie}(\mathrm{Sp}_{T(n)}) = \mathrm{LMod}_{\Phi\Theta}(\mathrm{Sp}_{T(n)}) \rightarrow \mathrm{LMod}_{\Phi\Theta}(\mathrm{Sp}_{K(n)}) = \mathrm{Lie}(\mathrm{Sp}_{K(n)})
\end{equation*}
by applying it to sections.
\end{proof}

\begin{remark}
\label{rmk:KnLiealgs}
One can also argue that $L_{K(n)}\Phi\Theta$ has the structure of a monad on the $\infty$-category $\mathrm{Sp}_{K(n)}$ of $K(n)$-local spectra and that $\mathrm{Lie}(\mathrm{Sp}_{K(n)})$ can equivalently be described as the $\infty$-category of algebras for this monad. Indeed, one uses that the $\infty$-category $\mathrm{Fun}(\mathrm{Sp}_{K(n)}, \mathrm{Sp}_{K(n)})$ is a (monoidal) localization of the $\infty$-category $\mathrm{Fun}_{K(n)}(\mathrm{Sp}_{T(n)}, \mathrm{Sp}_{T(n)})$ used in the preceding proof to construct a further morphism of $\infty$-operads from $\mathcal{N}_{K(n)}^\otimes$ to the $\infty$-operad which exhibits $\mathrm{Sp}_{K(n)}$ as left-tensored over $\mathrm{Fun}(\mathrm{Sp}_{K(n)}, \mathrm{Sp}_{K(n)})$. This morphism will induce an equivalence of $\infty$-categories between $\mathrm{LMod}_{\Phi\Theta}(\mathrm{Sp}_{K(n)})$ and $\mathrm{LMod}_{L_{K(n)}\Phi\Theta}(\mathrm{Sp}_{K(n)})$.
\end{remark}

\subsection{The Whitehead bracket}
\label{subsec:whiteheadbracket}

The aim of this short section is to make sense of Remark \ref{rmk:whiteheadbracket}, relating the classical Whitehead bracket to the Lie algebra structure on the Bousfield--Kuhn functor we have established. Essentially the relation will follow from applying Goodwillie calculus to the Hilton--Milnor theorem. Consider two pointed connected spaces $X$ and $Y$. The Hilton--Milnor theorem provides a certain splitting of the space $\Omega\Sigma(X \vee Y)$. To state it, write $B_2 = \{x,y,[x,y],\ldots\}$ for an ordered set of Lie words in the symbols $x$ and $y$ forming a basis for the free Lie algebra on two generators $x$ and $y$. For every such word $w$, define a space $w(X,Y)$ by letting the bracket act as the smash: for example, a word $w = [x,y]$ gives $w(X,Y) = X \wedge Y$. The Whitehead bracket (or Samelson product) corresponding to $w$ defines a map
\begin{equation*}
w(X,Y) \rightarrow \Omega\Sigma(X \vee Y),
\end{equation*}
which canonically extends to a loop map
\begin{equation*}
\Omega\Sigma w(X,Y) \rightarrow \Omega\Sigma(X \vee Y).
\end{equation*}
The theorem of Hilton--Milnor now states that multiplying these maps, for $w$ ranging through $B_2$, defines an equivalence from the restricted product (meaning the filtered colimit of finite products) over the set $B_2$ of the spaces $\Omega\Sigma w(X,Y)$ to the space $\Omega\Sigma(X \vee Y)$. Applying the Bousfield--Kuhn functor and using that it preserves products and filtered colimits, one finds a decomposition
\begin{equation*}
\bigoplus_{w \in B_2} \Phi(\Sigma w(X,Y)) \xrightarrow{\simeq} \Phi(\Sigma(X \vee Y)).
\end{equation*}
Let us write $\iota_w$ for the restriction of this map to the summand $\Phi(\Sigma w(X,Y))$ corresponding to $w$. Now suppose $\alpha, \beta \in \pi_* \Phi(X)$ are represented by maps 
\begin{equation*}
\mathbb{S}^{a+1} \xrightarrow{\alpha} \Phi(X), \quad \mathbb{S}^{b+1} \xrightarrow{\beta} \Phi(X).
\end{equation*}
By adjunction (and suppressing $T(n)$-localizations from the notation), these determine a map
\begin{equation*}
\Theta(\mathbb{S}^{a+1} \oplus \mathbb{S}^{b+1}) \xrightarrow{\hat{\alpha} + \hat{\beta}} X.
\end{equation*}
We define the \emph{Whitehead bracket} $[\alpha,\beta] \in \pi_{a+b+1} \Phi(X)$ to be the class represented by the following composite, with $w = [x,y]$:
\begin{equation*}
\mathbb{S}^{a+b+1} = \Sigma w(\mathbb{S}^a,\mathbb{S}^b) \xrightarrow{\iota_w} \Phi\Theta(\mathbb{S}^{a+1} \oplus \mathbb{S}^{b+1}) \xrightarrow{\Phi(\hat{\alpha} + \hat{\beta})} \Phi(X).
\end{equation*}
The Lie algebra structure of the functor $\Phi$ induces another product on homotopy groups. Indeed, identifying $\partial_2\mathrm{id}$ with $\mathbb{S}^{-1}$, the algebra structure of any $L \in \mathrm{Lie}(\mathrm{Sp}_{T(n)})$ in particular gives a map
\begin{equation*}
\mu\colon \mathbb{S}^{-1} \otimes L^{\otimes 2} \rightarrow (\mathbb{S}^{-1} \otimes L^{\otimes 2})_{h\Sigma_2} \rightarrow \Phi\Theta(L) \rightarrow L.
\end{equation*}
Here the second arrow is the inclusion of the degree 2 part of the free Lie algebra functor $\Phi\Theta$ (cf. Theorem \ref{thm:PhiThetafunctor}). The maps $\alpha$ and $\beta$ now give rise to a composite 
\begin{equation*}
\mathbb{S}^{a+b+1} = \mathbb{S}^{-1} \otimes \mathbb{S}^{a+1} \otimes \mathbb{S}^{b+1} \xrightarrow{\mathbb{S}^{-1} \otimes \alpha \otimes \beta} \mathbb{S}^{-1} \otimes (\Phi X)^{\otimes 2} \xrightarrow{\mu} \Phi X
\end{equation*}
which deserves to be called the \emph{Lie bracket} of the two.

Clearly there is nothing particular about $\mathbb{S}^a$ and $\mathbb{S}^b$ here; one might as well use general $T(n)$-local spectra $A$ and $B$ with maps $\alpha\colon \Sigma A \rightarrow \Phi X$ and $\beta\colon \Sigma B \rightarrow \Phi X$, in which case both procedures defined above then give a `bracket' $\Sigma A \otimes B \rightarrow \Phi X$. To show that the two operations agree, it suffices to treat the universal case where $X = \Theta(\Sigma(A \oplus B))$ and $\alpha$ and $\beta$ are adjoint to the inclusions $i_A\colon \Theta(\Sigma A) \rightarrow X$ and $i_B\colon \Theta(\Sigma B) \rightarrow X$ respectively. The brackets constructed above then yield two natural transformations
\begin{equation*}
\Sigma A \otimes B \rightarrow \Phi\Theta(\Sigma(A \oplus B))
\end{equation*}
between two-variable functors $\mathrm{Sp}_{T(n)} \times \mathrm{Sp}_{T(n)} \rightarrow \mathrm{Sp}_{T(n)}$. Let us call them $B_W$ and $B_L$ respectively.

\begin{proposition}
\label{prop:whiteheadbracket}
The natural transformations $B_W$ and $B_L$, corresponding to the Whitehead and Lie brackets respectively, agree up to a natural automorphism of the pair $(A,B)$, i.e., an automorphism of the identity functor of $\mathrm{Sp}_{T(n)} \times \mathrm{Sp}_{T(n)}$.
\end{proposition}
\begin{remark}
The indeterminacy in the statement of the proposition stems from the fact that we did not precisely specify the various identifications made in the definitions of the two products. With more care this can be avoided.
\end{remark}
\begin{proof}
Write $F(A,B) = \Sigma A \otimes B$ and $G(A,B) = \Phi\Theta(\Sigma(A \oplus B))$. The point is that $B_W$ and $B_L$ admit the same characterization in terms of (multivariable) Goodwillie calculus (as in \cite{aronekankaanrinta2,brantnerheuts}).  Indeed, both exhibit $F$ as $D_{1,1}G$, the linearization of $G$ in both variables separately. For the Whitehead bracket, defined in terms of the Hilton--Milnor splitting, this follows from Lemma 2.4 of \cite{brantnerheuts}. Indeed, that result shows that only the term corresponding to the word $w = [x,y]$ can contribute to $D_{1,1} G$ and that moreover the inclusion $\iota_w\colon F(A,B) \rightarrow G(A,B)$ induces an equivalence
\begin{equation*}
F(A,B) = \Sigma A \otimes B \xrightarrow{\simeq} D_1(\Phi)(\Sigma w(\Theta A,\Theta B)) \simeq D_{1,1} G(A,B).
\end{equation*}
For the Lie bracket, one first applies \cite[Lemma 2.1, Remark 2.2]{brantnerheuts} to see that the inclusion
\begin{equation*}
D_2 (\Phi\Theta)(\Sigma(A \oplus B)) \rightarrow \Phi\Theta(\Sigma(A \oplus B)) = G(A,B)
\end{equation*} 
induces an equivalence on $(1,1)$-derivatives. Secondly, it is clear that the map
\begin{equation*}
\Sigma A \otimes B \rightarrow D_2 (\Phi\Theta)(\Sigma(A \oplus B)) = (\mathbb{S}^{-1} \otimes (\Sigma(A \oplus B))^{\otimes 2})_{h\Sigma_2}
\end{equation*}
used to define the Lie bracket is precisely the $(1,1)$-derivative of the right-hand side.
\end{proof}

\section{The Goodwillie tower of $\mathcal{S}_{v_n}$}
\label{sec:MnfGoodwillie}

The main goal of this section is to explain the statement and give a proof of Theorem \ref{thm:GoodwillietowerMnf}. First we discuss the notion of a Goodwillie tower associated to a suitable $\infty$-category developed in \cite{heutsgoodwillie}. If $\mathcal{C}$ is a pointed compactly generated $\infty$-category (such as $\mathcal{S}_*$ or $\mathcal{S}_{v_n}$) then it admits a Goodwillie tower, which is a tower of pointed compactly generated $\infty$-categories
\[
\begin{tikzcd}
\mathcal{C} \ar[dr,"{\Sigma^\infty_3}" description] \ar[drr, bend left = 4, "{\Sigma^\infty_2}" description] \ar[drrr, bend left = 10, "{\Sigma^\infty_1}" description] &&& \\
\cdots \ar{r} & \mathcal{P}_3\mathcal{C} \ar{r} & \mathcal{P}_2\mathcal{C} \ar{r} & \mathcal{P}_1\mathcal{C}
\end{tikzcd}
\]
in which all functors are left adjoints. The first approximation $\mathcal{P}_1\mathcal{C}$ can be identified with the stabilization $\mathrm{Sp}(\mathcal{C})$ of $\mathcal{C}$. Writing $\Omega^\infty_k$ for the right adjoint of $\Sigma^\infty_k$, this tower enjoys the following properties:
\begin{itemize}
\item[(i)] The identity functor $\mathrm{id}_{\mathcal{P}_k\mathcal{C}}$ of $\mathcal{P}_k\mathcal{C}$ is a $k$-excisive functor.
\item[(ii)] The natural transformation $\Sigma^\infty_k\Omega^\infty_k \rightarrow \mathrm{id}_{\mathcal{P}_k\mathcal{C}}$ induced by the counit of the adjoint pair $(\Sigma^\infty_k, \Omega^\infty_k)$ gives an equivalence 
\begin{equation*}
P_k(\Sigma^\infty_k\Omega^\infty_k) \rightarrow \mathrm{id}_{\mathcal{P}_k\mathcal{C}}.
\end{equation*}
\item[(iii)] The functor $\Omega^\infty_k\Sigma^\infty_k$ is $k$-excisive and the unit $\mathrm{id}_{\mathcal{C}} \rightarrow \Omega^\infty_k\Sigma^\infty_k$ gives an equivalence
\begin{equation*}
P_k\mathrm{id}_{\mathcal{C}} \rightarrow \Omega^\infty_k\Sigma^\infty_k.
\end{equation*}
\end{itemize}
In fact, property (i) can be strengthened to say that $\mathcal{P}_k\mathcal{C}$ is a \emph{$k$-excisive $\infty$-category} (see Definition 2.3 of \cite{heutsgoodwillie}). The details of this property will not concern us here, but it might be helpful to know that an $\infty$-category is 1-excisive precisely if it is stable. The functors $\Sigma_k^\infty\colon \mathcal{C} \rightarrow \mathcal{P}_k\mathcal{C}$ have a universal property expressed in Theorem 2.7 of \cite{heutsgoodwillie}. In particular, the Goodwillie tower of $\mathcal{C}$ is unique up to canonical equivalence. 

The results of \cite{heutsgoodwillie} provide an explicit description of $\mathcal{P}_k\mathcal{C}$ in terms of ($k$-truncated) \emph{Tate coalgebras} in $\mathrm{Sp}(\mathcal{C})$. In the case of pointed spaces, i.e. $\mathcal{C} = \mathcal{S}_*$, a Tate coalgebra in $\mathrm{Sp}$ is roughly a commutative coalgebra $X$ for which the comultiplication maps
\begin{equation*}
\delta_k\colon X \rightarrow (X^{\otimes k})^{h\Sigma_k}
\end{equation*}
are required to be compatible with certain \emph{Tate diagonals}
\begin{equation*}
\tau_k\colon X \rightarrow (X^{\otimes k})^{t\Sigma_k}.
\end{equation*}
This leads to an equivalence between the $\infty$-category of simply-connected pointed spaces with the $\infty$-category of simply-connected Tate coalgebras in $\mathrm{Sp}$. The point here is not to give a detailed description of these Tate coalgebras, but rather to note that the situation simplifies drastically in the $T(n)$-local setting. There all Tate constructions for finite groups are contractible, meaning that there is no essential distinction between Tate coalgebras and `ordinary' commutative coalgebras. In Section \ref{sec:GoodwillietowerMnf} we will prove Theorem \ref{thm:GoodwillietowerMnf}, which describes the Goodwillie tower of $\mathcal{S}_{v_n}$ in terms of $\mathrm{coAlg}^{\mathrm{ind}}(\mathrm{Sp}^\otimes_{T(n)})$, the $\infty$-category of commutative ind-coalgebras in $T(n)$-local spectra (see Definition \ref{def:indcoalgebras}). 

The Goodwillie tower of a pointed compactly generated $\infty$-category $\mathcal{C}$ in particular gives a functor
\begin{equation*}
\mathcal{C} \rightarrow \varprojlim_k \mathcal{P}_k\mathcal{C}, 
\end{equation*}
which is generally not an equivalence of $\infty$-categories. However, it is fully faithful when restricted to the full subcategory $\mathcal{C}^{\mathrm{conv}} \subseteq \mathcal{C}$ on objects $X$ for which the Goodwillie tower of the identity converges, in the sense that the canonical map
\begin{equation*}
X \rightarrow \varprojlim_k P_k\mathrm{id}_{\mathcal{C}}(X)
\end{equation*}
is an equivalence. This means that the Goodwillie tower of the $\infty$-category $\mathcal{S}_{v_n}$ is a potentially useful tool in studying the spaces $X \in \mathcal{S}_{v_n}$ which are $\Phi$-good. In Section \ref{sec:GoodwillieBK} we will demonstrate that this is indeed the case by proving Theorem \ref{thm:BR} and Corollary \ref{cor:BR}.

\subsection{Stabilizations and Goodwillie towers}
\label{sec:GoodwillietowerMnf}

The main result of \cite{heutsgoodwillie} is a classification of Goodwillie towers of $\infty$-categories. To explain what we need of this classification, we begin with a digression on stable $\infty$-operads. We will use Lurie's version of the theory of $\infty$-operads, developed in \cite{higheralgebra}. His $\infty$-operads are certain fibrations $\mathcal{O}^\otimes \rightarrow N\mathcal{F}\mathrm{in}_*$, enjoying properties similar to those of (the nerve of) the category of operators associated to a simplicial operad. The $\infty$-operads we need are \emph{nonunital}, meaning that their structure map factors through $N\mathrm{Surj} \subseteq N\mathcal{F}\mathrm{in}_*$, with $\mathrm{Surj}$ denoting the category of finite pointed sets and surjections. The following definition is used in \cite{heutsgoodwillie}:

\begin{definition}
A nonunital $\infty$-operad $p\colon \mathcal{O}^\otimes \rightarrow N\mathrm{Surj}$ is \emph{stable} if it satisfies the following conditions:
\begin{itemize}
\item[(1)] It is corepresentable, meaning that the map $p$ is a locally coCartesian fibration. Equivalently, for every nonempty collection $X_1, \ldots, X_n$ of objects of $\mathcal{O}$ the functor
\begin{equation*}
\mathcal{O}^\otimes(X_1, \ldots, X_n; -)\colon \mathcal{O} \rightarrow \mathcal{S},
\end{equation*}
parametrizing operations of $\mathcal{O}$ with the given inputs, is corepresentable. We will denote the corepresenting object by $X_1 \otimes \cdots \otimes X_n$. This determines for every nonempty finite set $I$ a functor
\begin{equation*}
\otimes^I\colon \mathcal{O}^I \rightarrow \mathcal{O}\colon \{X_i\}_{i \in I} \mapsto \otimes^I\{X_i\}_{i \in I}.
\end{equation*}
\item[(2)] Its underlying $\infty$-category $\mathcal{O}$ is stable and compactly generated.
\item[(3)] For every finite set $I$, the functor $\otimes^I$ preserves colimits in each of its variables separately.
\end{itemize}
\end{definition}

As the tensor product notation already suggests, one can produce examples of stable $\infty$-operads from symmetric monoidal stable $\infty$-categories. More precisely, if $p\colon \mathcal{C}^\otimes \rightarrow N\mathcal{F}\mathrm{in}_*$ is a stable compactly generated symmetric monoidal $\infty$-category for which the tensor product commutes with colimits in each variable separately, one obtains a stable $\infty$-operad by pulling back the map $p$ along the inclusion $N\mathrm{Surj} \rightarrow N\mathcal{F}\mathrm{in}_*$. However, not all stable $\infty$-operads arise in this way. Generally, the definition forces the `tensor products' $\otimes^I$ to be associative only in a weak sense; see Remark 2.11 of \cite{heutsgoodwillie} and Section 6.3 of \cite{higheralgebra} for more discussion of this point. In this paper we will only be concerned with the special class of examples coming from symmetric monoidal $\infty$-categories.

Now consider a pointed compactly generated $\infty$-category $\mathcal{C}$. One can form a symmetric monoidal $\infty$-category $\mathcal{C}^\times \rightarrow N\mathcal{F}\mathrm{in}_*$ by using the Cartesian product as monoidal structure (cf. Section 2.4.1 of \cite{higheralgebra}). Pulling back to $N\mathrm{Surj}$ gives a nonunital $\infty$-operad for which we write $\mathcal{C}_{\mathrm{nu}}^\times$. Lurie proves that this $\infty$-operad admits a \emph{stabilization} $\mathrm{Sp}(\mathcal{C})^\otimes$ (cf. Section 6.2.5 of \cite{higheralgebra}). For our purposes, this is another nonunital $\infty$-operad equipped with a map of $\infty$-operads $\Omega^{\otimes,\infty}_{\mathcal{C}}\colon \mathrm{Sp}(\mathcal{C})^\otimes \rightarrow \mathcal{C}^{\times}_{\mathrm{nu}}$ satisfying the following properties:
\begin{itemize}
\item[(1)] The $\infty$-operad $\mathrm{Sp}(\mathcal{C})^\otimes$ is stable.
\item[(2)] The map $\Omega^{\otimes,\infty}_{\mathcal{C}}$ coincides with the functor $\Omega^\infty_{\mathcal{C}}\colon \mathrm{Sp}(\mathcal{C}) \rightarrow \mathcal{C}$ on underlying $\infty$-categories.
\item[(3)] The natural transformation 
\begin{equation*}
\times^I \circ (\Omega^\infty_{\mathcal{C}})^I \rightarrow \Omega^\infty_{\mathcal{C}} \circ \otimes^I
\end{equation*}
induced by the map of $\infty$-operads $\Omega^{\otimes,\infty}_{\mathcal{C}}$ exhibits $\otimes^I$ as the multilinearization of $\times^I$.
\end{itemize}
The example to keep in mind is $\mathcal{C} = \mathcal{S}_*$, in which case $\mathrm{Sp}(\mathcal{C})^\otimes$ is the symmetric monoidal $\infty$-category $\mathrm{Sp}^\otimes$ of spectra with their smash product. Generally, one can show that the functor
\begin{equation*}
\otimes^k\colon \mathrm{Sp}(\mathcal{C})^k \rightarrow \mathrm{Sp}(\mathcal{C})
\end{equation*}
induced by the stable $\infty$-operad $\mathrm{Sp}(\mathcal{C})^\otimes$ gives the $k$th derivative of $\Sigma^\infty_{\mathcal{C}}\Omega^\infty_{\mathcal{C}}$; more precisely
\begin{equation*}
D_k(\Sigma^\infty_{\mathcal{C}}\Omega^\infty_{\mathcal{C}})(X) \simeq (X^{\otimes k})_{h\Sigma_k}.
\end{equation*}
One can think of the stable $\infty$-operad $\mathrm{Sp}(\mathcal{C})^\otimes$ as capturing these derivatives together with their cooperadic structure. See Section 6.3 of \cite{higheralgebra} or Section 2 of \cite{heutsgoodwillie} for more discussion of this correspondence.

The $\infty$-category $\mathcal{P}_k\mathcal{C}$, for $k \geq 1$, can now be described in terms of certain $k$-truncated Tate coalgebras in the stable $\infty$-operad $\mathrm{Sp}(\mathcal{C})^\otimes$. Such a coalgebra is a spectrum $X \in \mathrm{Sp}(\mathcal{C})$ equipped with comultiplication maps
\begin{equation*}
\delta_j\colon X \rightarrow (X^{\otimes j})^{h\Sigma_j}
\end{equation*}
for $j \leq k$, together with a coherent system of homotopies expressing the associativity of these comultiplications (we will give a precise definition below in the case of interest to us here). Moreover, these maps should be compatible with certain `Tate diagonals'
\begin{equation*}
\tau_j\colon X \rightarrow (X^{\otimes j})^{t\Sigma_j}
\end{equation*}
which are determined by $\mathcal{C}$. In this paper we will only be concerned with stable $\infty$-categories in which all Tate spectra for finite groups are null. Then Corollary 2.23 of \cite{heutsgoodwillie} gives the following:

\begin{proposition}
\label{prop:nstages}
Let $\mathcal{C}$ be a pointed compactly generated $\infty$-category for which all Tate spectra associated to the symmetric groups are null in $\mathrm{Sp}(\mathcal{C})$. Then for every $k \geq 1$ there is a canonical equivalence of $\infty$-categories
\begin{equation*}
\mathcal{P}_k\mathcal{C} \simeq \mathrm{coAlg}^{\mathrm{ind}}(\tau_k\mathrm{Sp}(\mathcal{C}^\otimes)),
\end{equation*}
where $\mathrm{coAlg}^{\mathrm{ind}}(\tau_k\mathrm{Sp}(\mathcal{C})^\otimes)$ denotes the $\infty$-category of ind-coalgebras in $\tau_k\mathrm{Sp}(\mathcal{C})^\otimes$ (Definition 5.21 of \cite{heutsgoodwillie}, see Definitions \ref{def:coalgebras} and \ref{def:indcoalgebras} below).
\end{proposition}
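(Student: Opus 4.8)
The plan is to deduce this as the special case of the classification of Goodwillie towers of $\infty$-categories established in \cite{heutsgoodwillie} in which all the relevant Tate constructions vanish; indeed the statement is essentially Corollary 2.23 of that paper, and what follows is a sketch of the logic.

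Recall that for a pointed compactly generated $\infty$-category $\mathcal{C}$, the work of \cite{heutsgoodwillie} identifies $\mathcal{P}_k\mathcal{C}$ with the $\infty$-category of $k$-truncated \emph{Tate coalgebras} in the stable $\infty$-operad $\mathrm{Sp}(\mathcal{C})^\otimes$. Such a Tate coalgebra is an object $X \in \mathrm{Sp}(\mathcal{C})$ together with comultiplications $\delta_j\colon X \to (X^{\otimes j})^{h\Sigma_j}$ for $2 \leq j \leq k$, a coherent system of homotopies making these associative, and the extra datum of a coherent compatibility between the $\delta_j$ and the \emph{Tate diagonals} $\tau_j\colon X \to (X^{\otimes j})^{t\Sigma_j}$ attached to $\mathcal{C}$ --- concretely, that the composite of $\delta_j$ with the canonical map $(X^{\otimes j})^{h\Sigma_j} \to (X^{\otimes j})^{t\Sigma_j}$ is identified with $\tau_j$, together with higher coherences. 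The key point is that under the standing hypothesis that $(Y^{\otimes j})^{t\Sigma_j} \simeq 0$ in $\mathrm{Sp}(\mathcal{C})$ for all $Y$ and all $j \geq 2$, the target of every Tate diagonal is contractible, so the space of such compatibility data is itself contractible.

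The heart of the argument is therefore to show that the forgetful functor from $k$-truncated Tate coalgebras to $k$-truncated commutative ind-coalgebras (Definitions \ref{def:coalgebras} and \ref{def:indcoalgebras}), which remembers only the $\delta_j$ and their associativity coherences, is an equivalence of $\infty$-categories once the Tate functors vanish. This is carried out at the level of the defining limit diagrams in \cite{heutsgoodwillie}: the Tate-coalgebra structure is encoded by a diagram built from the functors $(-)^{h\Sigma_j}$ together with the norm maps into the functors $(-)^{t\Sigma_j}$, and setting the latter functors to zero collapses this diagram onto the sub-diagram computing ordinary coalgebra structures; each comparison of mapping spaces then becomes the assertion that a fiber over a contractible space is an equivalence. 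One also checks that the \emph{ind} decoration is matched on both sides, which is formal: vanishing of Tate spectra is inherited by the compact generators of $\mathrm{Sp}(\mathcal{C})$, and the equivalence respects filtered colimits.

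The main obstacle I anticipate is not conceptual but bookkeeping: one must carefully reconcile the precise definition of $k$-truncated Tate coalgebra used in \cite{heutsgoodwillie} with the definition of $\mathrm{coAlg}^{\mathrm{ind}}(\tau_k\mathrm{Sp}(\mathcal{C})^\otimes)$ given below, and verify that the comparison functor is an equivalence on mapping spaces, not merely a bijection on equivalence classes. Once the relevant diagrams are written out, the Tate-vanishing hypothesis makes every such check routine, so no input beyond \cite{heutsgoodwillie} and that hypothesis is required.
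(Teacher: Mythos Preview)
Your proposal is correct and matches the paper's approach exactly: the paper does not give an independent proof but simply cites Corollary 2.23 of \cite{heutsgoodwillie}, which is precisely the specialization you describe. Your sketch of why the Tate-vanishing hypothesis collapses Tate coalgebras to ordinary ind-coalgebras is accurate and more detailed than what the paper provides.
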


In particular, under the conditions of the previous proposition the Goodwillie tower of $\mathcal{C}$ is completely determined by $\mathrm{Sp}(\mathcal{C})^\otimes$. For us the relevant stable $\infty$-operad will be $\mathrm{Sp}_{T(n)}^\otimes$, the symmetric monoidal $\infty$-category of $T(n)$-local spectra with smash product. In this case the definition of ind-coalgebras of \cite{heutsgoodwillie} is unnecessarily complicated and one may use the following instead. We will compare the two definitions in Appendix \ref{app:coalgebras}. All our coalgebras will be \emph{without} counits; we will not include explicit mention of this in our terminology or notation:

\begin{definition}
\label{def:coalgebras}
The $\infty$-category $\mathrm{coAlg}(\mathrm{Sp}_{T(n)}^\otimes)$ of commutative coalgebras in $T(n)$-local spectra is the opposite of the $\infty$-category of nonunital commutative algebra objects in the symmetric monoidal $\infty$-category $(\mathrm{Sp}_{T(n)}^{\mathrm{op}})^{\otimes}$. Recall that a nonunital commutative algebra object of a symmetric monoidal $\infty$-category $\mathcal{C}^\otimes$ is a section of the structure map $\mathcal{C}^\otimes \xrightarrow{p} N\mathcal{F}\mathrm{in}_*$ defined over $N\mathrm{Surj}$, preserving inert morphisms (cf. Definitions 2.1.2.7 and 5.4.4.1 of \cite{higheralgebra}):
\[
\begin{tikzcd}
& \mathcal{C}^\otimes \ar[d,"p"] \\
N\mathrm{Surj} \ar[r]\ar[ur, dashed] & N\mathcal{F}\mathrm{in}_*.
\end{tikzcd}
\]
Similarly, a \emph{$k$-truncated} nonunital commutative algebra object is such a section defined over $N\mathrm{Surj}_{\leq k}$, with $\mathrm{Surj}_{\leq k}$ the full subcategory of $\mathrm{Surj}$ on finite sets of cardinality at most $k$:
\[
\begin{tikzcd}
& \mathcal{C}^\otimes \ar[d,"p"] \\
N\mathrm{Surj}_{\leq k} \ar[r]\ar[ur, dashed] & N\mathcal{F}\mathrm{in}_*.
\end{tikzcd}
\]
Finally, the $\infty$-category $\mathrm{coAlg}(\tau_k\mathrm{Sp}_{T(n)}^\otimes)$ of $k$-truncated commutative coalgebras in $T(n)$-local spectra is the opposite of the $\infty$-category of $k$-truncated nonunital commutative algebra objects in the symmetric monoidal $\infty$-category $(\mathrm{Sp}_{T(n)}^{\mathrm{op}})^{\otimes}$.
\end{definition}

\begin{definition}
\label{def:indcoalgebras}
The $\infty$-category of commutative ind-coalgebras in $T(n)$-local spectra is defined by
\begin{equation*}
\mathrm{coAlg}^{\mathrm{ind}}(\mathrm{Sp}_{T(n)}^\otimes) := \mathrm{Ind}(\mathrm{coAlg}^{\mathrm{fin}}(\mathrm{Sp}_{T(n)}^\otimes)),
\end{equation*}
where $\mathrm{coAlg}^{\mathrm{fin}}(\mathrm{Sp}_{T(n)}^\otimes)$ is the full subcategory of $\mathrm{coAlg}(\mathrm{Sp}_{T(n)}^\otimes)$ on coalgebras whose underlying spectrum is a compact object of $\mathrm{Sp}_{T(n)}$. The $\infty$-category of $k$-truncated commutative ind-coalgebras in $T(n)$-local spectra is defined similarly:
\begin{equation*}
\mathrm{coAlg}^{\mathrm{ind}}(\tau_k\mathrm{Sp}_{T(n)}^\otimes) := \mathrm{Ind}(\mathrm{coAlg}^{\mathrm{fin}}(\tau_k\mathrm{Sp}_{T(n)}^\otimes)).
\end{equation*}
\end{definition}

\begin{remark}
\label{rmk:truncation}
It should be noted that there are evident `truncation functors'
\begin{equation*}
\mathrm{coAlg}^{\mathrm{ind}}(\mathrm{Sp}_{T(n)}^\otimes) \rightarrow \mathrm{coAlg}^{\mathrm{ind}}(\tau_k\mathrm{Sp}_{T(n)}^\otimes)
\end{equation*}
obtained by precomposing with the inclusion $N\mathrm{Surj}_{\leq k} \rightarrow N\mathrm{Surj}$. These functors are the identity on underlying objects $X$, but they simply forget the part of the coalgebraic structure concerning comultiplication maps
\begin{equation*}
\delta_j\colon X \rightarrow (X^{\otimes j})^{h\Sigma_j}
\end{equation*}
for $j > k$.
\end{remark}

\begin{remark}
\label{rmk:indvsnoind}
Introducing ind-coalgebras, as opposed to just coalgebras, is a necessary complication in \cite{heutsgoodwillie}, since the methods of Goodwillie towers apply to compactly generated $\infty$-categories only. The $\infty$-category $\mathrm{coAlg}(\mathrm{Sp}_{T(n)}^\otimes)$ is presentable (see Corollary 3.1.4 of \cite{lurieelliptic}), so the inclusion
\begin{equation*}
\mathrm{coAlg}^{\mathrm{fin}}(\mathrm{Sp}_{T(n)}^\otimes) \rightarrow \mathrm{coAlg}(\mathrm{Sp}_{T(n)}^\otimes)
\end{equation*}
gives by left Kan extension a colimit-preserving functor
\begin{equation*}
j\colon \mathrm{coAlg}^{\mathrm{ind}}(\mathrm{Sp}_{T(n)}^\otimes) \rightarrow \mathrm{coAlg}(\mathrm{Sp}_{T(n)}^\otimes).
\end{equation*}
However, there seems to be no reason why a finite coalgebra $X$ should be a compact object of the latter $\infty$-category, so that this functor need not be fully faithful. (In fact it is doubtful that the $\infty$-category of coalgebras in $T(n)$-local spectra has any nonzero compact objects at all.) The situation is much better in the truncated setting: a truncated coalgebra $X \in \mathrm{coAlg}^{\mathrm{fin}}(\tau_k\mathrm{Sp}_{T(n)})$ is a compact object of $\mathrm{coAlg}(\tau_k\mathrm{Sp}_{T(n)})$ and hence the corresponding functor
\begin{equation*}
j: \mathrm{coAlg}^{\mathrm{ind}}(\tau_k\mathrm{Sp}_{T(n)}^\otimes) \rightarrow \mathrm{coAlg}(\tau_k\mathrm{Sp}_{T(n)}^\otimes)
\end{equation*}
\emph{is} fully faithful (Corollary 6.15 of \cite{heutsgoodwillie}).
\end{remark}

In order to apply Proposition \ref{prop:nstages} we need to identify the stable $\infty$-operad $\mathrm{Sp}(\mathcal{S}_{v_n})^\otimes$. First of all, there is the morphism of $\infty$-operads
\begin{equation*}
\Omega^{\otimes, \infty}\colon \mathrm{Sp}^\otimes \rightarrow \mathcal{S}_*^{\times}
\end{equation*}
which exhibits $\mathrm{Sp}^{\otimes}$ as the stabilization of $\mathcal{S}_*^{\times}$. We could also replace $\mathcal{S}_*$ by $\mathcal{S}_*\langle d_{n+1} \rangle$ and take the $d_{n+1}$-connected cover of $\Omega^\infty$, because taking $d_{n+1}$-connected covers is a product-preserving functor. We can now restrict $\Omega^{\otimes, \infty}$ to the full symmetric monoidal subcategory of spectra which are $L_n^f$-local and obtain a morphism of $\infty$-operads
\begin{equation*}
\Omega^{\otimes, \infty}_{L_n^f}\colon L_n^f\mathrm{Sp}^\otimes \rightarrow (\mathcal{L}_n^f)^{\times}.
\end{equation*}
Recall the right adjoint functor $r\colon \mathcal{L}_n^f \rightarrow \mathcal{S}_{v_n}$ which, because it preserves products, induces a morphism
\begin{equation*}
r^\times\colon (\mathcal{L}_n^f)^{\times} \rightarrow (\mathcal{S}_{v_n})^{\times}.
\end{equation*}
Restricting $\Omega^{\otimes, \infty}_{L_n^f}$ further and postcomposing with $r^\times$ gives a morphism
\begin{equation*}
\Omega^{\otimes, \infty}_{M_n^f}\colon M_n^f\mathrm{Sp}^\otimes \rightarrow (\mathcal{S}_{v_n})^{\times}.
\end{equation*}
This expression $M_n^f\mathrm{Sp}^\otimes$ makes sense because $M_n^f$ is closed under smash products in $L_n^f$, which is immediate from the fact that $L_n^f$ is a smashing localization.

\begin{proposition}
\label{prop:SpMnf}
The morphism $\Omega^{\otimes,\infty}_{M_n^f}\colon M_n^f\mathrm{Sp}^\otimes \rightarrow (\mathcal{S}_{v_n})^\times$ exhibits $M_n^f\mathrm{Sp}^\otimes$ as the stabilization of $(\mathcal{S}_{v_n})^\times$. In other words, it induces an equivalence
\begin{equation*}
M_n^f\mathrm{Sp}^\otimes \simeq \mathrm{Sp}(\mathcal{S}_{v_n})^\otimes.
\end{equation*}
\end{proposition}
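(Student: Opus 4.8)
The plan is to verify the three properties that characterize the stabilization of a nonunital $\infty$-operad, recalled just above the proposition: that $M_n^f\mathrm{Sp}^\otimes$ is stable (property (1)); that the underlying functor of $\Omega^{\otimes,\infty}_{M_n^f}$ is $\Omega^\infty_{\mathcal{M}_n^f}$ (property (2)); and that for every nonempty finite set $I$ the induced natural transformation $\times^I \circ (\Omega^\infty_{\mathcal{M}_n^f})^I \rightarrow \Omega^\infty_{\mathcal{M}_n^f} \circ \otimes^I$ exhibits $\otimes^I$, the $|I|$-fold $T(n)$-local smash product, as the multilinearization of $\times^I$ (property (3)). Properties (1) and (2) are immediate from what has already been done: $M_n^f\mathrm{Sp} \simeq \mathrm{Sp}_{T(n)}$ is stable and compactly generated and its smash product preserves colimits in each variable separately, being the restriction of a smashing localization of $\mathrm{Sp}$; and the identification of the underlying functor of $\Omega^{\otimes,\infty}_{M_n^f}$ with $\Omega^\infty_{\mathcal{M}_n^f} = M\Omega^\infty$ is precisely the content of Proposition \ref{prop:Mnfstabilization}. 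So the real work is in property (3).

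To establish (3) I would transport the known case $\mathcal{C} = \mathcal{S}_*$, where $\mathrm{Sp}^\otimes$ is the stabilization of $\mathcal{S}_*^\times$, along the factorization $M \simeq r \circ L_n^f : \mathcal{S}_*\langle d_{n+1}\rangle \rightarrow \mathcal{L}_n^f \rightarrow \mathcal{M}_n^f$ through which $\Omega^{\otimes,\infty}_{M_n^f}$ was built. The key inputs are that both stages preserve the relevant structure: $L_n^f$ on $d_{n+1}$-connected spaces preserves finite limits by Theorem \ref{thm:Lnfinitelimits} and colimits because it is a left adjoint, while the inclusion $\iota : \mathcal{M}_n^f \rightarrow \mathcal{L}_n^f$ (and hence, in the relevant range, its right adjoint $r$) preserves finite limits and colimits by Lemma \ref{lem:Mnffinitelim}. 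In particular $\mathcal{M}_n^f$ is closed under finite products inside $\mathcal{L}_n^f$, and $M_n^f\mathrm{Sp}$ is closed under smash products inside $L_n^f\mathrm{Sp}$, so the two squares of $\infty$-operads relating the Cartesian structures on $\mathcal{S}_*\langle d_{n+1}\rangle$, $\mathcal{L}_n^f$, $\mathcal{M}_n^f$ to the smash products on $\mathrm{Sp}$, $L_n^f\mathrm{Sp}$, $M_n^f\mathrm{Sp}$ commute. Since the multilinearization of $\times^I$ is built out of $I$-fold cross-effects (total fibers of cubes, hence finite limits) followed by linearization in each variable (a construction made from suspensions, loops, and filtered colimits), it commutes with finite-limit- and colimit-preserving functors; thus the multilinearization of $\times^I$ on $\mathcal{M}_n^f$ is the image under $L_n^f$, and then under $r$ — a passage that is transparent in the $d_{n+1}$-connected range — of the multilinearization of $\times^I$ on $\mathcal{S}_*$, namely the smash product of spectra, which under the stabilization equivalence $\mathrm{Sp}(\mathcal{M}_n^f)^\otimes \simeq M_n^f\mathrm{Sp}^\otimes$ is exactly $\otimes^I$. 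On underlying derivatives this identification can moreover be read off directly from Theorem \ref{thm:SigmaOmegacoanalytic}, which gives $D_k(\Sigma^\infty_{T(n)}\Omega^\infty_{T(n)})(X) \simeq L_{T(n)}(X^{\otimes k})_{h\Sigma_k}$.

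The main obstacle is upgrading these statements from individual functors to the level of $\infty$-operads, i.e. producing the identification of $\otimes^I$ with the multilinearization of $\times^I$ coherently over all $I$ and compatibly with the operad structure maps, and checking that it is the one induced by $\Omega^{\otimes,\infty}_{M_n^f}$ rather than merely an abstract equivalence. I expect this to be handled not by hand but by invoking the relevant machinery: Lurie's construction and universal property of the stabilization of an $\infty$-operad (Section 6.2.5 of \cite{higheralgebra}) and his identification of the multi-operations of $\mathrm{Sp}(\mathcal{O})^\otimes$ with the derivatives of those of $\mathcal{O}$ (Section 6.3 of \cite{higheralgebra}), together with the functoriality of this construction in product-preserving functors of Cartesian $\infty$-categories that additionally preserve finite limits and colimits — applied to the left-exact reflective localization $L_n^f$ and the coreflective localization $r$, whose stabilizations are by Propositions \ref{prop:Lnfstabilization} and \ref{prop:Mnfstabilization} the smashing localization $L_n^f\mathrm{Sp}$ and its monoidal subcategory $M_n^f\mathrm{Sp}$. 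A secondary, purely technical annoyance is bookkeeping with connectivity: Theorem \ref{thm:Lnfinitelimits} only provides left-exactness of $L_n^f$ on $\mathcal{S}_*\langle d_{n+1}\rangle$, so the whole argument must be carried out inside the $d_{n+1}$-connected subcategory, using that smash powers of $d_{n+1}$-connected spaces are highly connected so that the connected-cover corrections implicit in $M$ are invisible to the constructions at hand.
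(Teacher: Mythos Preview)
Your approach is essentially the paper's: verify properties (1)--(3), with (1) and (2) immediate from Proposition \ref{prop:Mnfstabilization}, and (3) the substance. The paper carries out (3) by a direct computation rather than by invoking abstract functoriality of the stabilization-of-operads construction: it observes (via Lemma \ref{lem:Mnffinitelim}) that the product in $\mathcal{M}_n^f$ is computed in $\mathcal{S}_*$, computes the coreduction of $\times^k$ as $L_n^f(X_1 \wedge \cdots \wedge X_k)$, and then linearizes in each variable, using Theorem \ref{thm:Lnfinitelimits} to commute $L_n^f$ past the loops and the known case of $\mathcal{S}_*$ to identify the answer with $L_n^f\Omega^\infty(\Sigma^\infty X_1 \otimes \cdots \otimes \Sigma^\infty X_k)$. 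This is exactly your ``transport along $r \circ L_n^f$'' made concrete.

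Your ``main obstacle'' is a phantom. Properties (1)--(3) are \emph{characterizing} properties of the stabilization map: they are conditions on a given morphism of $\infty$-operads, not extra structure to be produced. The morphism $\Omega^{\otimes,\infty}_{M_n^f}$ is already a map of $\infty$-operads by construction, so the natural transformation $\times^I \circ (\Omega^\infty_{M_n^f})^I \rightarrow \Omega^\infty_{M_n^f} \circ \otimes^I$ is already there; you only need to check, one $I$ at a time, that it exhibits the multilinearization. There is no separate operadic coherence to assemble, and no need to invoke functoriality of Lurie's construction or Theorem \ref{thm:SigmaOmegacoanalytic}. A minor point: the paper uses the \emph{coreduction} (a cofiber, Construction 6.2.3.6 of \cite{higheralgebra}) rather than the cross-effect (a total fiber); since $L_n^f$ and $\iota$ preserve both finite limits and colimits on the relevant subcategories, either description would work, but it is worth keeping them straight.
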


The result we are after follows immediately from this:

\begin{proof}[Proof of Theorem \ref{thm:GoodwillietowerMnf}]
As in Remark \ref{rmk:MnfLTn}, we are free to replace $M_n^f\mathrm{Sp}^\otimes$ by $\mathrm{Sp}_{T(n)}^\otimes$. The theorem then follows by combining Proposition \ref{prop:SpMnf} with Proposition \ref{prop:nstages}.
\end{proof}

\begin{proof}[Proof of Proposition \ref{prop:SpMnf}]
We already showed that $r\Omega^\infty\colon M_n^f\mathrm{Sp} \rightarrow \mathcal{S}_{v_n}$ exhibits $M_n^f\mathrm{Sp}$ as the stabilization of $\mathcal{S}_{v_n}$ (see Proposition \ref{prop:Mnfstabilization}). Therefore it remains to show that the natural maps 
\begin{equation*}
\times^I \circ (r\Omega^\infty)^I \rightarrow r\Omega^\infty \circ \otimes^I
\end{equation*}
induced by $\Omega^{\otimes,\infty}_{M_n^f}$ exhibit the smash product functors $\otimes^I$ on $M_n^f\mathrm{Sp}$ as the multilinearization of the product functors $\times^I$ on $\mathcal{S}_{v_n}$. The product $\times^I$ on the left is to be interpreted in the $\infty$-category $\mathcal{S}_{v_n}$, but by Lemma \ref{lem:Mnffinitelim} it can be computed simply as the usual product of pointed spaces.

To form the multilinearization of $\times^k$, one first forms its \emph{coreduction} (see Construction 6.2.3.6 of \cite{higheralgebra}), denoted $\mathrm{cored}(\times^k)$, and then linearizes this coreduction in each of its variables. The coreduction of $\times^k$ is the initial functor which is reduced in each of its variables and receives a natural transformation from $\times^k$. It can be constructed as follows:
\begin{equation*}
\mathrm{cored}(\times^k)(X_1, \ldots, X_k) \simeq L_n^f\mathrm{cof}\bigl(\varinjlim_{S \subsetneq \{1, \ldots, k\}} \prod_{i \in S} X_i \rightarrow \prod_{i = 1, \ldots, k} X_i\bigr).
\end{equation*}
Therefore
\begin{equation*}
\mathrm{cored}(\times^k)(X_1, \ldots, X_k) \simeq L_n^f(X_1 \wedge \cdots \wedge X_k).
\end{equation*}
Linearizing this expression in each of its variables gives
\begin{eqnarray*}
\varinjlim_{j_1, \ldots, j_k} \Omega^{j_1 + \cdots + j_k}L_n^f(\Sigma^{j_1}X_1 \wedge \cdots \wedge \Sigma^{j_k}X_k) & \simeq &  \varinjlim_{j_1, \ldots, j_k} L_n^f \Omega^{j_1 + \cdots + j_k}(\Sigma^{j_1}X_1 \wedge \cdots \wedge \Sigma^{j_k}X_k) \\
& \simeq & L_n^f\Omega^\infty(\Sigma^\infty X_1 \otimes \cdots \otimes \Sigma^\infty X_k).
\end{eqnarray*}
Here the first equivalence uses Theorem \ref{thm:Lnfinitelimits} and the second uses the fact that the smash product of spectra is the multilinearization of the smash product of spaces. This concludes the proof of the proposition.
\end{proof}


\subsection{The Goodwillie tower of the Bousfield--Kuhn functor}
\label{sec:GoodwillieBK}

The goal of this section is to prove Theorem \ref{thm:BR} and Corollary \ref{cor:BR}. We begin with a discussion of the functor
\begin{equation*}
\mathrm{prim}\colon \mathrm{coAlg}(\mathrm{Sp}_{T(n)}^\otimes) \rightarrow \mathrm{Sp}_{T(n)}
\end{equation*}
taking the primitives of a coalgebra, as well as its variant
\begin{equation*}
\mathrm{prim}^{\mathrm{ind}}\colon \mathrm{coAlg}^{\mathrm{ind}}(\mathrm{Sp}_{T(n)}^\otimes) \rightarrow \mathrm{Sp}_{T(n)}
\end{equation*}
taking the primitives of an ind-coalgebra. We will construct the latter as the right adjoint of the trivial ind-coalgebra functor
\begin{equation*}
\mathrm{triv}^{\mathrm{ind}}\colon \mathrm{Sp}_{T(n)} \rightarrow \mathrm{coAlg}^{\mathrm{ind}}(\mathrm{Sp}_{T(n)}^\otimes).
\end{equation*}
This functor will be the left Kan extension of a corresponding functor
\begin{equation*}
\mathrm{triv}^{\mathrm{fin}}\colon \mathrm{Sp}_{T(n)}^{\mathrm{fin}} \rightarrow \mathrm{coAlg}^{\mathrm{fin}}(\mathrm{Sp}_{T(n)}^\otimes)
\end{equation*}
defined on compact objects (which are the retracts of $T(n)$-localizations of finite type $n$ spectra). Since finite spectra are dualizable with respect to the smash product, Spanier--Whitehead duality gives an equivalence of symmetric monoidal $\infty$-categories (see Section 3.2 of \cite{lurieelliptic})
\begin{equation*}
\mathbf{D}\colon \bigl(\mathrm{Sp}_{T(n)}^{\mathrm{fin}}\bigr)^{\otimes} \rightarrow \bigl((\mathrm{Sp}_{T(n)}^{\mathrm{fin}})^{\mathrm{op}}\bigr)^\otimes
\end{equation*}
and hence also an equivalence (which we denote by the same symbol)
\begin{equation*}
\mathbf{D}\colon \mathrm{coAlg}^{\mathrm{fin}}(\mathrm{Sp}_{T(n)}^\otimes) \rightarrow \bigl(\mathrm{CAlg}^{\mathrm{nu}}((\mathrm{Sp}^{\mathrm{fin}}_{T(n)})^\otimes)\bigr)^{\mathrm{op}}.
\end{equation*}
Here $\mathrm{CAlg}^{\mathrm{nu}}(\mathcal{C}^\otimes)$ denotes the $\infty$-category of nonunital commutative algebras in a symmetric monoidal $\infty$-category $\mathcal{C}$.

This $\infty$-category of nonunital commutative algebras admits a stabilization
\begin{equation*}
\Omega^\infty_{\mathrm{CAlg}}\colon \mathrm{Sp}_{T(n)}^{\mathrm{fin}} \rightarrow \mathrm{CAlg}^{\mathrm{nu}}((\mathrm{Sp}_{T(n)}^{\mathrm{fin}})^\otimes).
\end{equation*}
This follows from Theorem 7.3.4.7 of \cite{higheralgebra} for the $\infty$-category of augmented commutative algebras, which is equivalent to that of nonunital commutative algebras via the functor taking the augmentation ideal. The functor $\Omega^\infty_{\mathrm{CAlg}}$ can be thought of as equipping an object with the trivial (nonunital) commutative algebra structure. It preserves finite limits and the composition of $\Omega^\infty_{\mathrm{CAlg}}$ with the forgetful functor is naturally equivalent to the identity functor of $\mathrm{Sp}_{T(n)}^{\mathrm{fin}}$. It can be extended to a functor 
\begin{equation*}
\Omega^\infty_{\mathrm{CAlg}}\colon \mathrm{Sp}_{T(n)} \rightarrow \mathrm{CAlg}^{\mathrm{nu}}((\mathrm{Sp}_{T(n)})^\otimes)
\end{equation*}
and as such it admits a left adjoint
\begin{equation*}
\Sigma^\infty_{\mathrm{CAlg}}\colon \mathrm{CAlg}^{\mathrm{nu}}((\mathrm{Sp}_{T(n)})^\otimes) \rightarrow \mathrm{Sp}_{T(n)}.
\end{equation*}
This is the functor taking the \emph{derived indecomposables} or \emph{topological Andr\'{e}-Quillen homology} of a commutative ring spectrum. It was originally described in this fashion by Basterra and Mandell \cite{basterramandell}. Section 7.3 of \cite{higheralgebra} includes a treatment in the $\infty$-categorical setting.

As promised, we define the \emph{trivial coalgebra functor} for compact objects of $\mathrm{Sp}_{T(n)}$ to be the composition
\begin{equation*}
\mathrm{Sp}_{T(n)}^{\mathrm{fin}} \xrightarrow{\mathbf{D}} (\mathrm{Sp}_{T(n)}^{\mathrm{fin}})^{\mathrm{op}} \xrightarrow{\Omega^\infty_{\mathrm{CAlg}}} \mathrm{CAlg}^{\mathrm{nu}}((\mathrm{Sp}_{T(n)}^{\mathrm{fin}})^\otimes)^{\mathrm{op}} \xrightarrow{\mathbf{D}^{-1}} \mathrm{coAlg}^{\mathrm{fin}}(\mathrm{Sp}_{T(n)}^\otimes)
\end{equation*}
and we write 
\begin{equation*}
\mathrm{triv}^{\mathrm{ind}}\colon \mathrm{Sp}_{T(n)} \rightarrow \mathrm{coAlg}^{\mathrm{ind}}(\mathrm{Sp}_{T(n)}^\otimes)
\end{equation*}
for the left Kan extension of this functor. Since it preserves colimits and the $\infty$-category of ind-coalgebras is presentable, the adjoint functor theorem (Corollary 5.5.2.9(1) and Remark 5.5.2.10 of \cite{htt}) applies to guarantee the existence of the desired right adjoint
\begin{equation*}
\mathrm{prim}^{\mathrm{ind}}\colon \mathrm{coAlg}^{\mathrm{ind}}(\mathrm{Sp}_{T(n)}^\otimes) \rightarrow \mathrm{Sp}_{T(n)}.
\end{equation*}
Moreover, this right adjoint preserves filtered colimits by virtue of the fact that $\mathrm{triv}^{\mathrm{ind}}$ preserves compact objects, which is true by construction. Composing with the truncation functor (cf. Remark \ref{rmk:truncation})
\begin{equation*}
\mathrm{coAlg}^{\mathrm{ind}}(\mathrm{Sp}_{T(n)}^\otimes) \rightarrow \mathrm{coAlg}^{\mathrm{ind}}(\tau_k\mathrm{Sp}_{T(n)}^\otimes) 
\end{equation*}
we obtain ($k$-truncated) trivial coalgebra functors
\begin{equation*}
\mathrm{triv}^{\mathrm{ind}}_k\colon \mathrm{Sp}_{T(n)} \rightarrow \mathrm{coAlg}^{\mathrm{ind}}(\tau_k\mathrm{Sp}_{T(n)}^\otimes)
\end{equation*}
with right adjoints
\begin{equation*}
\mathrm{prim}^{\mathrm{ind}}_k\colon \mathrm{coAlg}^{\mathrm{ind}}(\tau_k\mathrm{Sp}_{T(n)}^\otimes) \rightarrow \mathrm{Sp}_{T(n)}.
\end{equation*}
Of course we could also take a left Kan extension of $\mathrm{triv}^{\mathrm{fin}}$ to a functor
\begin{equation*}
\mathrm{triv}\colon \mathrm{Sp}_{T(n)} \rightarrow \mathrm{coAlg}(\mathrm{Sp}_{T(n)}^\otimes)
\end{equation*}
which has a corresponding adjoint 
\begin{equation*}
\mathrm{prim}\colon \mathrm{coAlg}(\mathrm{Sp}_{T(n)}^\otimes) \rightarrow \mathrm{Sp}_{T(n)},
\end{equation*}
using that $\mathrm{coAlg}(\mathrm{Sp}_{T(n)}^\otimes)$ is a presentable $\infty$-category (cf. Remark \ref{rmk:indvsnoind}). Similarly one defines $\mathrm{triv}_k$ and $\mathrm{prim}_k$.

\begin{remark}
\label{rmk:indvsnoind2}
It is useful to note that in the truncated case the distinction between $\mathrm{prim}^{\mathrm{ind}}_k$ and $\mathrm{prim}_k$ is of little importance; indeed, as in Remark \ref{rmk:indvsnoind} the $\infty$-category of $k$-truncated ind-coalgebras is a full subcategory of that of $k$-truncated coalgebras via the fully faithful colimit-preserving functor
\begin{equation*}
j\colon \mathrm{coAlg}^{\mathrm{ind}}(\tau_k\mathrm{Sp}_{T(n)}^\otimes) \rightarrow \mathrm{coAlg}(\tau_k\mathrm{Sp}_{T(n)}^\otimes).
\end{equation*}
Writing $s$ for its right adjoint, we have $sj \simeq \mathrm{id}$. By construction we have $j\circ \mathrm{triv}^{\mathrm{ind}}_k = \mathrm{triv}_k$, so that by adjunction $\mathrm{prim}^{\mathrm{ind}}_k \circ s = \mathrm{prim}_k$. Thus, for a $k$-truncated ind-coalgebra $X$, we have
\begin{equation*}
\mathrm{prim}^{\mathrm{ind}}_k X \simeq \mathrm{prim}^{\mathrm{ind}}_k(sjX) \simeq \mathrm{prim}_k(jX). 
\end{equation*}
For this reason we will not distinguish in notation between $\mathrm{prim}^{\mathrm{ind}}_k$ and $\mathrm{prim}_k$ any longer and simply write the latter.
\end{remark}

\begin{remark}
The adjoint pair $(\mathrm{triv}^{\mathrm{ind}}, \mathrm{prim}^{\mathrm{ind}})$ is related to the forgetful-cofree pair by a diagram
\[
\begin{tikzcd}
\mathrm{Sp}_{T(n)} \arrow[r,"\mathrm{triv}^{\mathrm{ind}}", shift left] & \mathrm{coAlg}^{\mathrm{ind}}(\mathrm{Sp}_{T(n)}^\otimes) \arrow[r,"\mathrm{forget}", shift left] \arrow[l, "\mathrm{prim}^{\mathrm{ind}}", shift left] & \mathrm{Sp}_{T(n)} \arrow[l, "\mathrm{cofree}", shift left] 
\end{tikzcd}
\]
in which both horizontal composites are naturally equivalent to the identity functor of $\mathrm{Sp}_{T(n)}$. This is of course reminiscent of the pairs $(\Theta, \Phi)$ and $(\Sigma^\infty_{T(n)}, \Omega^\infty_{T(n)})$ with the identities $\Sigma^\infty_{T(n)}\Theta \simeq \mathrm{id}_{\mathrm{Sp}_{T(n)}}$ and $\Phi\Omega^\infty_{T(n)} \simeq \mathrm{id}_{\mathrm{Sp}_{T(n)}}$. In fact, we will demonstrate shortly that $\mathrm{triv}^{\mathrm{ind}}$ can be identified with the composite of $\Theta$ and the functor 
\begin{equation*}
\mathcal{S}_{v_n} \rightarrow \mathrm{coAlg}^{\mathrm{ind}}(\mathrm{Sp}_{T(n)}^\otimes)
\end{equation*}
assigning to $X \in \mathcal{S}_{v_n}$ its $T(n)$-local suspension spectrum with its natural ind-coalgebra structure.
\end{remark}

\begin{lemma}
\label{lem:primSWdual}
Let $X \in \mathrm{coAlg}^{\mathrm{fin}}(\mathrm{Sp}_{T(n)}^\otimes)$ be a finite coalgebra. Then $\mathrm{prim} X \simeq \mathbf{D}\Sigma^\infty_{\mathrm{CAlg}}(\mathbf{D}X)$. More precisely, there is a natural equivalence of functors between the restriction of $\mathrm{prim}$ to $\mathrm{coAlg}^{\mathrm{fin}}(\mathrm{Sp}_{T(n)}^\otimes)$ and the composition
\begin{equation*}
\mathrm{coAlg}^{\mathrm{fin}}(\mathrm{Sp}_{T(n)}^\otimes) \xrightarrow{\mathbf{D}} \mathrm{CAlg}^{\mathrm{nu}}((\mathrm{Sp}_{T(n)}^{\mathrm{fin}})^\otimes)^{\mathrm{op}} \xrightarrow{\Sigma^\infty_{\mathrm{CAlg}}} (\mathrm{Sp}_{T(n)})^{\mathrm{op}} \xrightarrow{\mathbf{D}} \mathrm{Sp}_{T(n)}.
\end{equation*}
\end{lemma}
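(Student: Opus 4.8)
The plan is to run a Yoneda argument: for a finite coalgebra $X$ I will show that $\mathrm{prim}\,X$ and $\mathbf{D}\Sigma^\infty_{\mathrm{CAlg}}(\mathbf{D}X)$ represent the same functor $\mathrm{Sp}_{T(n)}^{\mathrm{op}}\to\mathcal{S}$, compatibly in $X$. Throughout, $\mathbf{D}$ on all of $\mathrm{Sp}_{T(n)}$ means the function spectrum $F(-,L_{T(n)}S)$; it restricts to Spanier--Whitehead duality on $\mathrm{Sp}_{T(n)}^{\mathrm{fin}}$ and satisfies $\mathrm{Map}_{\mathrm{Sp}_{T(n)}}(E,\mathbf{D}Y)\simeq\mathrm{Map}_{\mathrm{Sp}_{T(n)}}(Y,\mathbf{D}E)$ for all $E,Y$.

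First I would unwind the adjunction $(\mathrm{triv},\mathrm{prim})$ together with the fact that $\mathrm{triv}$ is the left Kan extension of $\mathrm{triv}^{\mathrm{fin}}$ along $\mathrm{Sp}_{T(n)}^{\mathrm{fin}}\hookrightarrow\mathrm{Sp}_{T(n)}$. For $E\in\mathrm{Sp}_{T(n)}$ the pointwise formula gives $\mathrm{triv}(E)\simeq\varinjlim_{F\to E}\mathrm{triv}^{\mathrm{fin}}(F)$, a colimit in $\mathrm{coAlg}(\mathrm{Sp}_{T(n)}^\otimes)$ over the (filtered) comma category of compact $F$ mapping to $E$, a category for which moreover $E\simeq\varinjlim_{F\to E}F$. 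Mapping into a fixed $X\in\mathrm{coAlg}^{\mathrm{fin}}(\mathrm{Sp}_{T(n)}^\otimes)$ then yields
\begin{equation*}
\mathrm{Map}_{\mathrm{Sp}_{T(n)}}(E,\mathrm{prim}\,X)\simeq\mathrm{Map}_{\mathrm{coAlg}}(\mathrm{triv}(E),X)\simeq\varprojlim_{F\to E}\mathrm{Map}_{\mathrm{coAlg}^{\mathrm{fin}}}(\mathrm{triv}^{\mathrm{fin}}(F),X),
\end{equation*}
using that $\mathrm{coAlg}^{\mathrm{fin}}$ is a full subcategory and both objects lie in it. Next I would dualize: the symmetric monoidal equivalence $\mathbf{D}\colon\mathrm{coAlg}^{\mathrm{fin}}(\mathrm{Sp}_{T(n)}^\otimes)\xrightarrow{\ \sim\ }\mathrm{CAlg}^{\mathrm{nu}}((\mathrm{Sp}_{T(n)}^{\mathrm{fin}})^\otimes)^{\mathrm{op}}$ together with $\mathbf{D}\,\mathrm{triv}^{\mathrm{fin}}(F)\simeq\Omega^\infty_{\mathrm{CAlg}}(\mathbf{D}F)$ (the very definition of $\mathrm{triv}^{\mathrm{fin}}$) turns the last term into $\varprojlim_{F\to E}\mathrm{Map}_{\mathrm{CAlg}^{\mathrm{nu}}}(\mathbf{D}X,\Omega^\infty_{\mathrm{CAlg}}\mathbf{D}F)$, the mapping space computed in $\mathrm{CAlg}^{\mathrm{nu}}(\mathrm{Sp}_{T(n)}^\otimes)$ via the fully faithful inclusion of $\mathrm{CAlg}^{\mathrm{nu}}((\mathrm{Sp}_{T(n)}^{\mathrm{fin}})^\otimes)$ and the compatibility of the finite and global $\Omega^\infty_{\mathrm{CAlg}}$ along it. Since $\Omega^\infty_{\mathrm{CAlg}}$, being a right adjoint, preserves limits, and $\mathbf{D}$ carries $E\simeq\varinjlim_{F\to E}F$ to $\mathbf{D}E\simeq\varprojlim_{F\to E}\mathbf{D}F$, this collapses to $\mathrm{Map}_{\mathrm{CAlg}^{\mathrm{nu}}}(\mathbf{D}X,\Omega^\infty_{\mathrm{CAlg}}\mathbf{D}E)$, which by the adjunction $(\Sigma^\infty_{\mathrm{CAlg}},\Omega^\infty_{\mathrm{CAlg}})$ equals $\mathrm{Map}_{\mathrm{Sp}_{T(n)}}(\Sigma^\infty_{\mathrm{CAlg}}\mathbf{D}X,\mathbf{D}E)$, and finally by self-duality of $\mathbf{D}$ equals $\mathrm{Map}_{\mathrm{Sp}_{T(n)}}(E,\mathbf{D}\Sigma^\infty_{\mathrm{CAlg}}\mathbf{D}X)$.

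Assembling the chain gives an equivalence $\mathrm{Map}_{\mathrm{Sp}_{T(n)}}(E,\mathrm{prim}\,X)\simeq\mathrm{Map}_{\mathrm{Sp}_{T(n)}}(E,\mathbf{D}\Sigma^\infty_{\mathrm{CAlg}}\mathbf{D}X)$ natural in $E$ and in $X$; feeding this through the Yoneda embedding $\mathrm{Sp}_{T(n)}\hookrightarrow\mathrm{Fun}(\mathrm{Sp}_{T(n)}^{\mathrm{op}},\mathcal{S})$, which is fully faithful, upgrades it to the desired natural equivalence of functors $\mathrm{coAlg}^{\mathrm{fin}}(\mathrm{Sp}_{T(n)}^\otimes)\to\mathrm{Sp}_{T(n)}$. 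There is no deep obstacle here; the points requiring care are all bookkeeping: (a) making every step natural in $X$, not merely objectwise, which is why I route the conclusion through representability in the functor $\infty$-category; (b) verifying that the finite and global incarnations of $\Omega^\infty_{\mathrm{CAlg}}$ are compatible along the fully faithful inclusion $\mathrm{CAlg}^{\mathrm{nu}}((\mathrm{Sp}_{T(n)}^{\mathrm{fin}})^\otimes)\hookrightarrow\mathrm{CAlg}^{\mathrm{nu}}(\mathrm{Sp}_{T(n)}^\otimes)$, so that the adjunction $(\Sigma^\infty_{\mathrm{CAlg}},\Omega^\infty_{\mathrm{CAlg}})$ may be applied to $\mathbf{D}X$ regarded in the large category; and (c) identifying $\mathrm{triv}(E)$ with the colimit $\varinjlim_{F\to E}\mathrm{triv}^{\mathrm{fin}}(F)$ computed in $\mathrm{coAlg}$. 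No input beyond what the surrounding text already cites --- the stabilization of commutative algebras (Theorem 7.3.4.7 of \cite{higheralgebra}) and Spanier--Whitehead self-duality of $\mathrm{Sp}_{T(n)}^{\mathrm{fin}}$ as a symmetric monoidal $\infty$-category (Section 3.2 of \cite{lurieelliptic}) --- is needed.
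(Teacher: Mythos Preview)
Your proposal is correct and follows essentially the same Yoneda/adjunction strategy as the paper: use the $(\mathrm{triv},\mathrm{prim})$ adjunction, pass to algebras via the symmetric monoidal Spanier--Whitehead duality $\mathbf{D}$, apply the $(\Sigma^\infty_{\mathrm{CAlg}},\Omega^\infty_{\mathrm{CAlg}})$ adjunction, and dualize back. The only difference is that the paper tests against \emph{compact} $Y\in\mathrm{Sp}_{T(n)}^{\mathrm{fin}}$ rather than arbitrary $E$, which lets it stay entirely within the finite subcategories and avoid your colimit bookkeeping (items (b) and (c)); since compact objects generate $\mathrm{Sp}_{T(n)}$, this suffices for the Yoneda argument and makes the chain of equivalences a bit shorter.
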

\begin{proof}
The proof follows from straightforward manipulations of the adjunctions described above and is easily summarized by the following chain of natural equivalences, for $Y$ a compact object of $\mathrm{Sp}_{T(n)}$:
\begin{eqnarray*}
\mathrm{Map}_{\mathrm{Sp}_{T(n)}}(Y, \mathrm{prim}(X)) & \simeq & \mathrm{Map}_{\mathrm{coAlg}^{\mathrm{fin}}(\mathrm{Sp}_{T(n)}^\otimes)}(\mathrm{triv}(Y), X) \\
& \simeq & \mathrm{Map}_{ \mathrm{CAlg}^{\mathrm{nu}}((\mathrm{Sp}_{T(n)}^{\mathrm{fin}})^\otimes)}(\mathbf{D} X, \Omega^\infty_{\mathrm{CAlg}}(\mathbf{D}Y)) \\
& \simeq & \mathrm{Map}_{\mathrm{Sp}_{T(n)}}(\Sigma^\infty_{\mathrm{CAlg}}(\mathbf{D}X), \mathbf{D}Y) \\
& \simeq & \mathrm{Map}_{\mathrm{Sp}_{T(n)}}(Y, \mathbf{D}\Sigma^\infty_{\mathrm{CAlg}}(\mathbf{D}X)).
\end{eqnarray*}
\end{proof}

\begin{remark}
From the previous lemma one can also conclude that $\mathrm{prim}^{\mathrm{ind}}$ can be characterized as the best possible approximation of the functor
\begin{equation*}
X \mapsto  \mathbf{D}\Sigma^\infty_{\mathrm{CAlg}}(\mathbf{D}X)
\end{equation*}
by a functor which preserves filtered colimits.
\end{remark}

The suspension spectrum functor $\Sigma^\infty_{T(n)}\colon \mathcal{S}_{v_n} \rightarrow \mathrm{Sp}_{T(n)}$ is symmetric monoidal with respect to smash products. Every $X \in \mathcal{S}_{v_n}$ is canonically a commutative coalgebra with respect to the Cartesian product and hence also with respect to the smash product, simply using the natural map from product to smash product. Hence, we obtain a functor
\begin{equation*}
(\mathcal{S}_{v_n})^{\omega} \rightarrow \mathrm{coAlg}^{\mathrm{fin}}(\mathrm{Sp}_{T(n)}^\otimes)
\end{equation*}
which on underlying objects is simply $\Sigma^\infty_{T(n)}$. (The reader looking for a more detailed account can consult Construction 5.20 of \cite{heutsgoodwillie}.) Formally extending by filtered colimits (i.e., taking a left Kan extension) gives a functor
\begin{equation*}
C_{T(n)}\colon \mathcal{S}_{v_n} \rightarrow \mathrm{coAlg}^{\mathrm{ind}}(\mathrm{Sp}_{T(n)}^\otimes).
\end{equation*}

\begin{lemma}
\label{lem:Thetatriv}
The composite
\begin{equation*}
\mathrm{Sp}_{T(n)} \xrightarrow{\Theta} \mathcal{S}_{v_n} \xrightarrow{C_{T(n)}} \mathrm{coAlg}^{\mathrm{ind}}(\mathrm{Sp}_{T(n)}^\otimes)
\end{equation*}
is naturally equivalent to
\begin{equation*}
\mathrm{triv}^{\mathrm{ind}}\colon \mathrm{Sp}_{T(n)} \rightarrow \mathrm{coAlg}^{\mathrm{ind}}(\mathrm{Sp}_{T(n)}^\otimes).
\end{equation*}
In fact, our proof will yield a preferred equivalence.
\end{lemma}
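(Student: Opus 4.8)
The plan is to exploit that both $C_{T(n)}\circ\Theta$ and $\mathrm{triv}^{\mathrm{ind}}$ preserve all colimits: $\Theta$ is a left adjoint by Theorem \ref{thm:Mnf2}(ii), $C_{T(n)}$ is a left Kan extension by construction, and $\mathrm{triv}^{\mathrm{ind}}$ is likewise the left Kan extension of $\mathrm{triv}^{\mathrm{fin}}$. Since $\mathrm{Sp}_{T(n)}\simeq\mathrm{Ind}(\mathrm{Sp}_{T(n)}^{\mathrm{fin}})$, any colimit-preserving functor out of $\mathrm{Sp}_{T(n)}$ is the left Kan extension of its restriction to $\mathrm{Sp}_{T(n)}^{\mathrm{fin}}$, and a natural transformation between two such is determined by its restriction; so it suffices to produce a natural equivalence after restricting to compact objects. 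Moreover $\Theta$ preserves compact objects, being left adjoint to a functor that preserves filtered colimits (Proposition \ref{prop:Phifilteredcolimits}), so both restrictions factor through the fully faithful embedding $\mathrm{coAlg}^{\mathrm{fin}}(\mathrm{Sp}_{T(n)}^{\otimes})\hookrightarrow\mathrm{coAlg}^{\mathrm{ind}}(\mathrm{Sp}_{T(n)}^{\otimes})$, and we are reduced to comparing two functors $\mathrm{Sp}_{T(n)}^{\mathrm{fin}}\to\mathrm{coAlg}^{\mathrm{fin}}(\mathrm{Sp}_{T(n)}^{\otimes})$.

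On compact objects I would use the explicit description of $\Theta$ from Section \ref{subsec:BKleftadjoint}: for a finite type $n$ spectrum $F$, a $v_n$ self-map furnishes a $T(n)$-equivalence $\Sigma^{jd}F\to F$ and an identification $\Sigma^{\infty}_{T(n)}\Sigma^{2}V\simeq L_{T(n)}\Sigma^{jd}F\simeq L_{T(n)}F$ for a sufficiently highly connected finite type $n$ space $V$, so that Lemma \ref{lem:valuetheta} (and Remark \ref{rmk:valuetheta}) yield $\Theta(L_{T(n)}F)\simeq L_{n}^{f}\Sigma^{2}V$. Consequently $C_{T(n)}\Theta(L_{T(n)}F)$ is, on underlying spectra, $\Sigma^{\infty}_{T(n)}\Sigma^{2}V\simeq L_{T(n)}F$ — which matches the underlying spectrum of $\mathrm{triv}^{\mathrm{fin}}(L_{T(n)}F)$ via $\Sigma^{\infty}_{T(n)}\Theta\simeq\mathrm{id}_{\mathrm{Sp}_{T(n)}}$ — equipped with the commutative coalgebra structure coming from the diagonal of the \emph{double suspension} $\Sigma^{2}V$. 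The crucial point is that this coalgebra structure is the trivial one: the reduced diagonals $\bar{\Delta}_{k}\colon\Sigma^{2}V\to(\Sigma^{2}V)^{\wedge k}$ and all their higher coherences are canonically null because $\Sigma^{2}V$ is a (homotopy-commutative) co-$H$-object. Passing through Spanier--Whitehead duality this says exactly that the nonunital commutative algebra $\mathbf{D}\Sigma^{\infty}_{T(n)}\Sigma^{2}V$ is the trivial square-zero algebra $\Omega^{\infty}_{\mathrm{CAlg}}(\mathbf{D}Y)$ on $\mathbf{D}Y$, which by the definition of $\mathrm{triv}^{\mathrm{fin}}$ (compare Lemma \ref{lem:primSWdual}) is precisely $\mathbf{D}\,\mathrm{triv}^{\mathrm{fin}}(Y)$. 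Since comparing the truncations $\tau_{k}C_{T(n)}\Theta$ and $\mathrm{triv}^{\mathrm{ind}}_{k}$ (cf. Remark \ref{rmk:truncation}) involves only finitely many arities at a time, one may arrange $\Theta(L_{T(n)}F)$ to be an arbitrarily high suspension, supplying all the coherence needed at each finite stage; assembling over $k$ gives the equivalence of the untruncated functors, and left Kan extending produces the natural equivalence $C_{T(n)}\circ\Theta\simeq\mathrm{triv}^{\mathrm{ind}}$.

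The main obstacle — the step requiring genuine care rather than bookkeeping — is making the identification ``$C_{T(n)}$ of a suspension is a trivial coalgebra'' \emph{natural and coherent} as a statement about commutative coalgebras, not merely about each comultiplication $\delta_{k}$ individually. The cleanest way I know to organize this uses the operadic machinery already in place: by Theorems \ref{thm:MnfLiealg} and \ref{thm:comparisonoperads}, $\Theta$ is the free Lie algebra functor and $C_{T(n)}$ refines topological Quillen homology to a commutative-coalgebra-valued functor, so the claim becomes the assertion that the bar construction on a free algebra over the Lie operad on generators $Y$ is the cofree trivial coalgebra over the commutative cooperad on $Y$ --- a form of Koszul duality for free algebras that can be read off from the two-sided bar resolution of $\Theta Y$ together with the identification $\mathrm{Bar}(\Phi\Theta)\simeq\Sigma^{\infty}_{T(n)}\Omega^{\infty}_{T(n)}$ established in the proof of Theorem \ref{thm:Sigmainftyfiltration} (and Theorem \ref{thm:SigmaOmegacoanalytic}). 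Either route produces a preferred natural equivalence, as asserted in the statement.
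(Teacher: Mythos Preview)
Your reduction to compact objects is correct and matches the paper's first step. The gap is in the core argument on compacts. You correctly identify that showing ``$C_{T(n)}$ of a suspension is trivial'' \emph{coherently and naturally} is the hard part, but neither of your two routes closes it. Route (a) has two problems: first, $\Theta(L_{T(n)}F)$ is a fixed object of $\mathcal{M}_n^f$, not something you can ``arrange'' to be a higher suspension; different suspension models for it are related by $v_n$-periodic equivalences, not by suspension maps, so there is no evident way to make the nullhomotopies of the reduced diagonals cohere across $k$. Second, even granting object-by-object triviality, you need a \emph{natural transformation} $C_{T(n)}\Theta \Rightarrow \mathrm{triv}^{\mathrm{fin}}$, and your choices of $V$, $j$, etc.\ are not functorial in $F$. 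Route (b) presupposes a coalgebra-valued refinement of the statement ``TAQ of a free Lie algebra is the generators,'' but the paper has not established how $C_{T(n)}$ (as opposed to its underlying-spectrum functor $\Sigma^\infty_{T(n)}$) interacts with the Lie algebra structure, so this is not available as a black box.

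The paper's argument sidesteps all of this with a single universal-property move. After Spanier--Whitehead duality on compacts, one obtains a functor
\[
\theta := (\mathbf{D}\circ C_{T(n)}\circ\Theta\circ\mathbf{D})^{\mathrm{op}}\colon \mathrm{Sp}_{T(n)}^{\mathrm{fin}} \longrightarrow \mathrm{CAlg}^{\mathrm{nu}}\bigl((\mathrm{Sp}_{T(n)}^{\mathrm{fin}})^{\otimes}\bigr)
\]
which preserves \emph{finite limits} (since $C_{T(n)}\circ\Theta$ preserves finite colimits). By the universal property of stabilization, $\theta$ factors canonically as $\Omega^{\infty}_{\mathrm{CAlg}}\circ\psi$ for some exact $\psi\colon\mathrm{Sp}_{T(n)}^{\mathrm{fin}}\to\mathrm{Sp}_{T(n)}^{\mathrm{fin}}$. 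Composing with the forgetful functor and using $\Sigma^{\infty}_{T(n)}\Theta\simeq\mathrm{id}$ shows $\psi\simeq\mathrm{id}$, hence $\theta\simeq\Omega^{\infty}_{\mathrm{CAlg}}$, which is exactly the definition of $\mathrm{triv}^{\mathrm{fin}}$ after dualizing back. This produces the preferred equivalence with no coherence bookkeeping: the coherences are packaged into the universal property of $\Omega^{\infty}_{\mathrm{CAlg}}$. The point you are missing is that ``trivial algebra'' is not just ``all multiplications null'' but ``factors through the stabilization,'' and the latter formulation makes the lemma nearly automatic once you check exactness.
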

\begin{proof}
Restricting to compact objects gives a functor
\begin{equation*}
C_{T(n)} \circ \Theta\colon \mathrm{Sp}_{T(n)}^{\mathrm{fin}} \rightarrow \mathrm{coAlg}^{\mathrm{fin}}(\mathrm{Sp}_{T(n)}^\otimes)
\end{equation*}
which preserves finite colimits. Under Spanier--Whitehead duality we obtain a functor
\begin{equation*}
\theta := (\mathbf{D} \circ C_{T(n)} \circ \Theta \circ \mathbf{D})^{\mathrm{op}}\colon \mathrm{Sp}_{T(n)}^{\mathrm{fin}} \rightarrow \mathrm{CAlg}^{\mathrm{nu}}((\mathrm{Sp}_{T(n)}^{\mathrm{fin}})^\otimes)
\end{equation*}
which preserves finite limits. Therefore $\theta$ canonically factors over the stabilization
\begin{equation*}
\Omega^\infty_{\mathrm{CAlg}}\colon \mathrm{Sp}_{T(n)}^{\mathrm{fin}} \rightarrow  \mathrm{CAlg}^{\mathrm{nu}}((\mathrm{Sp}_{T(n)}^{\mathrm{fin}})^\otimes)
\end{equation*}
i.e., we find an exact functor $\psi\colon \mathrm{Sp}_{T(n)}^{\mathrm{fin}} \rightarrow \mathrm{Sp}_{T(n)}^{\mathrm{fin}}$ and an equivalence $\theta \simeq \Omega^\infty_{\mathrm{CAlg}} \circ \psi$. We will show that $\psi$ is equivalent to the identity functor. Indeed, writing
\begin{equation*}
U\colon \mathrm{CAlg}^{\mathrm{nu}}((\mathrm{Sp}_{T(n)}^{\mathrm{fin}})^\otimes) \rightarrow \mathrm{Sp}_{T(n)}^{\mathrm{fin}}
\end{equation*}
for the forgetful functor and using the natural equivalence $U \circ \Omega^\infty_{\mathrm{CAlg}} \simeq \mathrm{id}_{\mathrm{Sp}_{T(n)}}$ gives an equivalence $U \theta \simeq \psi$. On the other hand, for $X \in \mathrm{Sp}_{T(n)}$,
\begin{eqnarray*}
U\theta(X) & = & U\mathbf{D}C_{T(n)}\Theta(\mathbf{D}X) \\
& \simeq & \mathbf{D}\Sigma^\infty_{T(n)}\Theta(\mathbf{D}X) \\
& \simeq & \mathbf{D}\mathbf{D}X \\
& \simeq & X.
\end{eqnarray*}
Here the second line uses the fact that the underlying spectrum of a coalgebra $C_{T(n)}(Y)$ is $\Sigma^\infty_{T(n)} Y$ and the third line uses the equivalence $\Sigma^\infty_{T(n)}\Theta \simeq \mathrm{id}_{\mathrm{Sp}_{T(n)}}$. Thus $\psi \simeq \mathrm{id}_{\mathrm{Sp}_{T(n)}}$ and we conclude that $\theta \simeq \Omega^\infty_{\mathrm{CAlg}}$. By our definition of $\mathrm{triv}$, it follows that we have found an equivalence $C_{T(n)} \circ \Theta \simeq \mathrm{triv}$, at least on compact objects. But since both functors preserve colimits this suffices.
\end{proof}

\begin{remark}
\label{rmk:BRcomparison}
Lemma \ref{lem:Thetatriv} yields a variant of the \emph{comparison map} of Behrens and Rezk (as in Section 6 of \cite{behrensrezk}). Indeed, write $D_{T(n)}$ for the right adjoint of $C_{T(n)}$, of which the existence is again guaranteed by the adjoint functor theorem. Taking right adjoints of the functors of the lemma gives an equivalence
\begin{equation*}
\Phi \circ D_{T(n)} \simeq \mathrm{prim}^{\mathrm{ind}}
\end{equation*}
and the unit of the adjoint pair $(C_{T(n)}, D_{T(n)})$  then induces a natural map
\begin{equation*}
\Phi \rightarrow \Phi \circ D_{T(n)} \circ C_{T(n)} \simeq \mathrm{prim}^{\mathrm{ind}} \circ C_{T(n)},
\end{equation*}
which is the version of the comparison map we will employ here.
\end{remark}

Recall the functors
\begin{equation*}
\mathcal{S}_{v_n} \xrightarrow{\Sigma^\infty_k} \mathcal{P}_k\mathcal{S}_{v_n} \simeq \mathrm{coAlg}^{\mathrm{ind}}(\tau_k\mathrm{Sp}_{T(n)}^\otimes),
\end{equation*}
where the second is the equivalence of Proposition \ref{prop:nstages}. This composite can be described explicitly as
\begin{equation*}
\mathcal{S}_{v_n} \xrightarrow{C_{T(n)}} \mathrm{coAlg}^{\mathrm{ind}}(\mathrm{Sp}_{T(n)}^\otimes) \xrightarrow{\tau_k} \mathrm{coAlg}^{\mathrm{ind}}(\tau_k\mathrm{Sp}_{T(n)}^\otimes),
\end{equation*}
where the second functor is the truncation discussed in Remark \ref{rmk:truncation}. Then Lemma \ref{lem:Thetatriv} immediately implies:

\begin{corollary}
\label{cor:Thetatrivk}
Under the identification of Proposition \ref{prop:nstages} the composite
\begin{equation*}
\mathrm{Sp}_{T(n)} \xrightarrow{\Theta} \mathcal{S}_{v_n} \xrightarrow{\Sigma^\infty_k} \mathcal{P}_k\mathcal{S}_{v_n}
\end{equation*}
is naturally equivalent to
\begin{equation*}
\mathrm{triv}_k\colon \mathrm{Sp}_{T(n)} \rightarrow \mathrm{coAlg}^{\mathrm{ind}}(\tau_k\mathrm{Sp}_{T(n)}^\otimes).
\end{equation*}
The proof of Lemma \ref{lem:Thetatriv} induces a preferred such equivalence.
\end{corollary}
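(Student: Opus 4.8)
The plan is to read the statement off from Lemma~\ref{lem:Thetatriv} together with the explicit description of $\Sigma^\infty_k$ recorded immediately above the corollary. Recall that, under the equivalence $\mathcal{P}_k\mathcal{M}_n^f \simeq \mathrm{coAlg}^{\mathrm{ind}}(\tau_k\mathrm{Sp}_{T(n)}^\otimes)$ of Proposition~\ref{prop:nstages}, the functor $\Sigma^\infty_k \colon \mathcal{M}_n^f \to \mathcal{P}_k\mathcal{M}_n^f$ is identified with the composite $\tau_k \circ C_{T(n)}$, where $\tau_k$ is the truncation functor of Remark~\ref{rmk:truncation}. Thus the composite in the statement becomes $\tau_k \circ C_{T(n)} \circ \Theta$, and the task reduces to computing $C_{T(n)} \circ \Theta$ and then truncating.

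First I would invoke Lemma~\ref{lem:Thetatriv}, which supplies a preferred natural equivalence $C_{T(n)} \circ \Theta \simeq \mathrm{triv}^{\mathrm{ind}}$. Post-composing this equivalence with $\tau_k$ yields a natural equivalence $\tau_k \circ C_{T(n)} \circ \Theta \simeq \tau_k \circ \mathrm{triv}^{\mathrm{ind}}$, and by the very definition of the $k$-truncated trivial coalgebra functor the right-hand side is $\mathrm{triv}^{\mathrm{ind}}_k$. Using Remark~\ref{rmk:indvsnoind2} to suppress the superscript $\mathrm{ind}$, this is exactly the asserted equivalence $\Sigma^\infty_k \circ \Theta \simeq \mathrm{triv}_k$. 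Since every equivalence used here is the distinguished one coming from Proposition~\ref{prop:nstages} and from the proof of Lemma~\ref{lem:Thetatriv}, the composite equivalence is itself preferred, which establishes the last sentence of the corollary.

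The one point I would take care over is the identification $\Sigma^\infty_k \simeq \tau_k \circ C_{T(n)}$ under Proposition~\ref{prop:nstages}: this matches the universal functor $\Sigma^\infty_k$ into the $k$th Goodwillie approximation with the natural coalgebra structure on $\Sigma^\infty_{T(n)}X$ (Construction~5.20 of~\cite{heutsgoodwillie}), passed to ind-objects and then truncated. This is, however, purely a bookkeeping matter which has already been asserted in the text preceding the corollary and involves no genuine difficulty; everything else in the argument is a formal composition of natural equivalences, so I expect no real obstacle.
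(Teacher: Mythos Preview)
Your proposal is correct and matches the paper's approach exactly: the paper simply records the identification $\Sigma^\infty_k \simeq \tau_k \circ C_{T(n)}$ under Proposition~\ref{prop:nstages} and then states that Lemma~\ref{lem:Thetatriv} ``immediately implies'' the corollary, which is precisely the composition-of-equivalences argument you spell out. The appeal to Remark~\ref{rmk:indvsnoind2} is slightly superfluous---in the statement of the corollary the target is $\mathrm{coAlg}^{\mathrm{ind}}(\tau_k\mathrm{Sp}_{T(n)}^\otimes)$, so the $\mathrm{triv}_k$ there is really $\mathrm{triv}^{\mathrm{ind}}_k$---but this is a harmless notational point.
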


\begin{proof}[Proof of Theorem \ref{thm:BR}]
As observed before, the fact that $\Phi$ preserves limits guarantees $P_k\Phi \simeq \Phi P_k\mathrm{id}_{\mathcal{S}_{v_n}}$. In turn, the natural transformation $\mathrm{id}_{\mathcal{S}_{v_n}} \rightarrow P_k\mathrm{id}_{\mathcal{S}_{v_n}}$ can be identified with the unit of the adjoint pair
\[
\begin{tikzcd}
\mathcal{S}_{v_n} \arrow[r,"\Sigma^\infty_k", shift left] & \mathcal{P}_k\mathcal{S}_{v_n} \arrow[l, "\Omega^\infty_k", shift left].
\end{tikzcd}
\]
In particular $P_k\Phi \simeq \Phi\Omega^\infty_k\Sigma^\infty_k$. The composite $\Phi\Omega^\infty_k$ is right adjoint to $\Sigma^\infty_k\Theta$ and therefore equivalent to $\mathrm{prim}_k$ by Corollary \ref{cor:Thetatrivk}. Using $\Sigma^\infty_k \simeq \tau_k C_{T(n)}$ we conclude that
\begin{equation*}
P_k\Phi \simeq \mathrm{prim}_k \circ \tau_k C_{T(n)}
\end{equation*}
as claimed.
\end{proof}

Recall that a space $X \in \mathcal{S}_{v_n}$ is \emph{$\Phi$-good} if the natural map
\begin{equation*}
\Phi(X) \rightarrow \varprojlim_k P_k\Phi(X)
\end{equation*}
is an equivalence. Thus to prove Corollary \ref{cor:BR} it suffices to combine Theorem \ref{thm:BR} with Lemma \ref{lem:primconvergence} below. Note that the statement concerns the functor
\begin{equation*}
\mathrm{prim}\colon \mathrm{coAlg}(\mathrm{Sp}_{T(n)})^\otimes \rightarrow \mathrm{Sp}_{T(n)}
\end{equation*}
rather than its variant for ind-coalgebras.

\begin{lemma}
\label{lem:primconvergence}
The evident natural transformation
\begin{equation*}
\mathrm{prim} \simeq \varprojlim_k (\mathrm{prim}_k \circ \tau_k)
\end{equation*}
is an equivalence.
\end{lemma}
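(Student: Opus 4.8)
The plan is to reduce this to a limit decomposition of the $\infty$-category of commutative coalgebras, after which the statement becomes a formal consequence of the behaviour of right adjoints under limits of $\infty$-categories.

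The first and only mildly delicate step is to check that the truncation functors of Remark \ref{rmk:truncation} exhibit
\begin{equation*}
\mathrm{coAlg}(\mathrm{Sp}_{T(n)}^\otimes) \simeq \varprojlim_k \mathrm{coAlg}(\tau_k\mathrm{Sp}_{T(n)}^\otimes),
\end{equation*}
the maps in the tower on the right being the further truncations, and the projection onto the $k$th stage being $\tau_k$. This is purely formal: the category $\mathrm{Surj}$ is the filtered union of the full subcategories $\mathrm{Surj}_{\leq k}$, so $N\mathrm{Surj} \simeq \varinjlim_k N\mathrm{Surj}_{\leq k}$ in $\mathrm{Cat}_\infty$; since $\mathrm{Fun}(-,\mathcal{E})$ carries colimits of $\infty$-categories to limits, and since being a section over $N\mathcal{F}\mathrm{in}_*$ and preserving inert morphisms are conditions that may be checked after restriction to each $N\mathrm{Surj}_{\leq k}$ (every morphism of $\mathrm{Surj}$ — in particular every inert morphism — lies in some $\mathrm{Surj}_{\leq k}$), the $k$-truncated nonunital commutative algebra objects of $(\mathrm{Sp}_{T(n)}^{\mathrm{op}})^{\otimes}$ assemble into a tower with limit $\mathrm{CAlg}^{\mathrm{nu}}((\mathrm{Sp}_{T(n)}^{\mathrm{op}})^{\otimes})$. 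Passing to opposite categories gives the displayed equivalence. By construction $\tau_k \circ \mathrm{triv} \simeq \mathrm{triv}_k$, so under this identification $\mathrm{triv}$ corresponds to the compatible family $(\mathrm{triv}_k)_{k \geq 1}$ of functors into the stages of the tower.

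The second step is a mapping-space computation. Using the standard fact that mapping spaces in a limit of $\infty$-categories are the corresponding limit of mapping spaces, for arbitrary $Y \in \mathrm{Sp}_{T(n)}$ and $X \in \mathrm{coAlg}(\mathrm{Sp}_{T(n)}^\otimes)$ one has
\begin{align*}
\mathrm{Map}_{\mathrm{Sp}_{T(n)}}(Y, \mathrm{prim}\, X)
&\simeq \mathrm{Map}_{\mathrm{coAlg}(\mathrm{Sp}_{T(n)}^\otimes)}(\mathrm{triv}\, Y, X) \\
&\simeq \varprojlim_k \mathrm{Map}_{\mathrm{coAlg}(\tau_k\mathrm{Sp}_{T(n)}^\otimes)}(\mathrm{triv}_k\, Y, \tau_k X) \\
&\simeq \varprojlim_k \mathrm{Map}_{\mathrm{Sp}_{T(n)}}(Y, \mathrm{prim}_k(\tau_k X)) \\
&\simeq \mathrm{Map}_{\mathrm{Sp}_{T(n)}}\bigl(Y, \varprojlim_k \mathrm{prim}_k(\tau_k X)\bigr),
\end{align*}
the last equivalence because $\mathrm{Sp}_{T(n)}$ is complete. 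These equivalences are natural in $Y$ and compatible with the evident comparison map $\mathrm{prim}\, X \to \varprojlim_k \mathrm{prim}_k(\tau_k X)$ (obtained by applying each $\tau_k$ to the counit $\mathrm{triv}\,\mathrm{prim}\,X \to X$ and passing to right adjoints), so the Yoneda lemma shows this comparison map is an equivalence; naturality in $X$ is automatic.

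I expect the only real bookkeeping to lie in the first step — verifying that the ``section'' and ``inert-preserving'' conditions glue correctly along the exhaustion $N\mathrm{Surj}_{\leq 1} \subset N\mathrm{Surj}_{\leq 2} \subset \cdots$ — and even this is routine. The mapping-space formula for a limit of $\infty$-categories is entirely standard (limits of complete Segal spaces are computed pointwise, and mapping spaces are fibers of the source–target map). It is worth noting that this argument genuinely relies on the limit decomposition of $\mathrm{coAlg}(\mathrm{Sp}_{T(n)}^\otimes)$, which is not available for $\mathrm{coAlg}^{\mathrm{ind}}(\mathrm{Sp}_{T(n)}^\otimes)$ (cf. Remark \ref{rmk:indvsnoind}); this is precisely why the lemma is stated for $\mathrm{prim}$ rather than $\mathrm{prim}^{\mathrm{ind}}$.
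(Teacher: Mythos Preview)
Your proposal is correct and follows essentially the same approach as the paper: both reduce to the limit decomposition $\mathrm{coAlg}(\mathrm{Sp}_{T(n)}^\otimes) \simeq \varprojlim_k \mathrm{coAlg}(\tau_k\mathrm{Sp}_{T(n)}^\otimes)$ and then run the same mapping-space computation via the adjunctions $(\mathrm{triv},\mathrm{prim})$ and $(\mathrm{triv}_k,\mathrm{prim}_k)$. The only difference is that the paper cites Lemma~C.30 of \cite{heutsgoodwillie} for the limit decomposition, whereas you sketch it directly from the exhaustion $N\mathrm{Surj} = \varinjlim_k N\mathrm{Surj}_{\leq k}$.
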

\begin{proof}
For $Y \in \mathrm{Sp}_{T(n)}$ and $X \in \mathrm{coAlg}(\mathrm{Sp}_{T(n)})$, observe the following natural equivalences:
\begin{eqnarray*}
\mathrm{Map}_{\mathrm{Sp}_{T(n)}}(Y, \varprojlim_k (\mathrm{prim}_k(\tau_k X)) & \simeq & \varprojlim_k \mathrm{Map}_{\mathrm{coAlg}(\tau_k\mathrm{Sp}_{T(n)}^\otimes)}(\mathrm{triv}_k Y, \tau_k X) \\
& \simeq & \varprojlim_k \mathrm{Map}_{\mathrm{coAlg}(\tau_k\mathrm{Sp}_{T(n)}^\otimes)}(\tau_k\mathrm{triv} Y, \tau_k X).
\end{eqnarray*}
Lemma C.30 of \cite{heutsgoodwillie} implies that the evident functor
\begin{equation*}
\mathrm{coAlg}(\mathrm{Sp}_{T(n)}^\otimes) \rightarrow \varprojlim_k \mathrm{coAlg}(\tau_k\mathrm{Sp}_{T(n)}^\otimes)
\end{equation*}
is an equivalence, which gives further equivalences
\begin{eqnarray*}
\varprojlim_k \mathrm{Map}_{\mathrm{coAlg}(\tau_k\mathrm{Sp}_{T(n)}^\otimes)}(\tau_k\mathrm{triv} Y, \tau_k X) & \simeq & \mathrm{Map}_{\mathrm{coAlg}(\mathrm{Sp}_{T(n)}^\otimes)}(\mathrm{triv} Y, X) \\
& \simeq & \mathrm{Map}_{\mathrm{Sp}_{T(n)}}(Y, \mathrm{prim} X),
\end{eqnarray*}
finishing the proof.
\end{proof}


\appendix

\section{Dual Goodwillie calculus}
\label{app:Goodwillie}

The calculus of functors was developed by Goodwillie in \cite{goodwillie3}. It provides for every suitable functor $F\colon \mathcal{S}_* \rightarrow \mathcal{S}_*$ (or $\mathcal{S}_* \rightarrow \mathrm{Sp}$, $\mathrm{Sp} \rightarrow \mathrm{Sp}$, etc.) a natural transformation $F \rightarrow P_k F$, for every $k\geq 0$. Here $P_k F$ is a $k$-excisive functor and the natural transformation is initial with respect to maps to $k$-excisive functors. Goodwillie's arguments are sufficiently conceptual to generalize to a wide variety of homotopy theories. We make use of the version of the theory described in Chapter 6 of \cite{higheralgebra}, which works for $\infty$-categories satisfying only a few mild assumptions. 

Write $\mathrm{Fun}^{\omega}(\mathrm{Sp}_{T(n)}, \mathrm{Sp}_{T(n)})$ for the $\infty$-category of functors from $\mathrm{Sp}_{T(n)}$ to itself that preserve filtered colimits and $\mathrm{Fun}^{\leq k}(\mathrm{Sp}_{T(n)}, \mathrm{Sp}_{T(n)})$ for the full subcategory on $k$-excisive such functors. Writing $\mathrm{Sp}_{T(n)}^\omega$ for the full subcategory on compact objects of $\mathrm{Sp}_{T(n)}$, restriction along the inclusion gives an equivalence of $\infty$-categories
\begin{equation*}
\mathrm{Fun}^{\omega}(\mathrm{Sp}_{T(n)}, \mathrm{Sp}_{T(n)}) \rightarrow \mathrm{Fun}(\mathrm{Sp}_{T(n)}^\omega, \mathrm{Sp}_{T(n)})
\end{equation*}
and similarly for subcategories of $k$-excisive functors. Theorem 6.1.1.10 of \cite{higheralgebra} shows that the inclusion $\mathrm{Fun}^{\leq k}(\mathrm{Sp}_{T(n)}^\omega, \mathrm{Sp}_{T(n)}) \rightarrow \mathrm{Fun}(\mathrm{Sp}_{T(n)}^\omega, \mathrm{Sp}_{T(n)})$ admits a left adjoint
\begin{equation*}
\mathrm{Fun}(\mathrm{Sp}_{T(n)}^\omega, \mathrm{Sp}_{T(n)}) \rightarrow \mathrm{Fun}^{\leq k}(\mathrm{Sp}_{T(n)}^\omega, \mathrm{Sp}_{T(n)})\colon F \mapsto P_k F.
\end{equation*}
In the body of this paper we also need a notion of \emph{dual} calculus. A dual version of calculus for functors to spectra was developed by McCarthy \cite{mccarthy}. We will not need much theory though, only the following statement:

\begin{theorem}
\label{thm:dualcalculus}
The inclusion $\mathrm{Fun}^{\leq k}(\mathrm{Sp}_{T(n)}^\omega, \mathrm{Sp}_{T(n)}) \rightarrow \mathrm{Fun}(\mathrm{Sp}_{T(n)}^\omega, \mathrm{Sp}_{T(n)})$ admits a right adjoint $F \mapsto P^k F$. In other words, the subcategory of $k$-excisive functors is also a colocalization of $\mathrm{Fun}(\mathrm{Sp}_{T(n)}^\omega, \mathrm{Sp}_{T(n)})$.
\end{theorem}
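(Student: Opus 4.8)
The plan is to deduce this from the adjoint functor theorem. Since the inclusion $\mathrm{Fun}^{\leq k}(\mathrm{Sp}_{T(n)}^\omega, \mathrm{Sp}_{T(n)}) \hookrightarrow \mathrm{Fun}(\mathrm{Sp}_{T(n)}^\omega, \mathrm{Sp}_{T(n)})$ already admits the left adjoint $P_k$, it preserves limits, so the real content is producing a \emph{right} adjoint. For this it suffices to verify two things: (a) both $\infty$-categories are presentable, and (b) the inclusion preserves all small colimits. Granting these, Corollary 5.5.2.9 of \cite{htt} (the adjoint functor theorem for colimit-preserving functors between presentable $\infty$-categories applied in its dual form, or rather: a limit-preserving accessible functor between presentable categories is a right adjoint — here more directly, a colimit-preserving functor between presentable categories is a left adjoint, so we instead phrase it so that the \emph{inclusion} is the left adjoint) yields a right adjoint $F \mapsto P^k F$, and the counit $P^k F \to F$ is then terminal among natural transformations into $F$ from $k$-excisive functors, as required.

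First I would check presentability. The $\infty$-category $\mathrm{Sp}_{T(n)}^\omega$ is essentially small and $\mathrm{Sp}_{T(n)}$ is presentable, so $\mathrm{Fun}(\mathrm{Sp}_{T(n)}^\omega, \mathrm{Sp}_{T(n)})$ is presentable. By Theorem 6.1.1.10 of \cite{higheralgebra} the inclusion of $k$-excisive functors admits the left adjoint $P_k$ and exhibits $\mathrm{Fun}^{\leq k}(\mathrm{Sp}_{T(n)}^\omega, \mathrm{Sp}_{T(n)})$ as an accessible localization of $\mathrm{Fun}(\mathrm{Sp}_{T(n)}^\omega, \mathrm{Sp}_{T(n)})$; an accessible localization of a presentable $\infty$-category is again presentable. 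This settles (a).

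The heart of the matter is (b): a small colimit of $k$-excisive functors, formed in $\mathrm{Fun}(\mathrm{Sp}_{T(n)}^\omega, \mathrm{Sp}_{T(n)})$, is again $k$-excisive. Colimits of functors are computed pointwise, so if $\{F_i\}_{i \in I}$ is a diagram of $k$-excisive functors with colimit $F$ and $\mathcal{X}$ is a strongly cocartesian $(k+1)$-cube in $\mathrm{Sp}_{T(n)}^\omega$, then $F(\mathcal{X}) \simeq \varinjlim_{i} F_i(\mathcal{X})$ as $(k+1)$-cubes in $\mathrm{Sp}_{T(n)}$. Each $F_i(\mathcal{X})$ is cartesian by hypothesis. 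Here I use that $\mathrm{Sp}_{T(n)}$ is stable: in a stable $\infty$-category a cube is cartesian if and only if it is cocartesian, so each $F_i(\mathcal{X})$ is a cocartesian cube; cocartesian cubes are colimit diagrams and hence closed under colimits (colimits commute with colimits), so $F(\mathcal{X})$ is cocartesian, hence — by stability once more — cartesian. Therefore $F$ is $k$-excisive, proving (b).

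The main obstacle is precisely step (b), and it is worth stressing that stability of the target $\mathrm{Sp}_{T(n)}$ is essential there: for functors valued in $\mathcal{S}_*$ the subcategory of $k$-excisive functors is not closed under colimits, since pushouts of cartesian cubes of spaces need not be cartesian — this is exactly why a clean theory of dual calculus is available in the stable setting but not in general. With (a) and (b) in hand the adjoint functor theorem applies and produces $F \mapsto P^k F$ together with the universal natural transformations $P^k F \to F$.
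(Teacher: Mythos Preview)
Your proof is correct, but it takes a somewhat different route from the paper's. The paper's argument is a one-liner: it simply applies Theorem 6.1.1.10 of \cite{higheralgebra} (the existence of $P_k$ as a left adjoint to the inclusion of $k$-excisive functors) to the opposite $\infty$-category
\[
\mathrm{Fun}(\mathrm{Sp}_{T(n)}^\omega, \mathrm{Sp}_{T(n)})^{\mathrm{op}} \simeq \mathrm{Fun}((\mathrm{Sp}_{T(n)}^\omega)^{\mathrm{op}}, \mathrm{Sp}_{T(n)}^{\mathrm{op}}),
\]
noting that $\mathrm{Sp}_{T(n)}^{\mathrm{op}}$ is still differentiable because it is stable and cocomplete. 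Passing back to the original category turns this left adjoint into the desired right adjoint $P^k$.

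Your approach instead invokes the adjoint functor theorem directly: you verify presentability and then show that the inclusion of $k$-excisive functors preserves colimits, using stability of the target to pass between cartesian and cocartesian cubes. This is entirely valid and makes the role of stability quite transparent. The paper's argument is slicker and more portable (it literally reuses the same theorem that produced $P_k$), while yours has the virtue of isolating exactly which closure property of $k$-excisive functors is being used. Both arguments rest on the same essential point---that stability of $\mathrm{Sp}_{T(n)}$ makes the class of $k$-excisive functors symmetric under passing to opposites---and your closing remark about why this fails for $\mathcal{S}_*$-valued functors is exactly the observation the paper makes immediately after its proof.
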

\begin{proof}
We apply Theorem 6.1.1.10 of \cite{higheralgebra} to the opposite $\infty$-category 
\begin{equation*}
\mathrm{Fun}(\mathrm{Sp}_{T(n)}^\omega, \mathrm{Sp}_{T(n)})^{\mathrm{op}} = \mathrm{Fun}((\mathrm{Sp}_{T(n)}^\omega)^{\mathrm{op}}, \mathrm{Sp}_{T(n)}^{\mathrm{op}}).
\end{equation*}
The reason that this works is that $\mathrm{Sp}_{T(n)}^{\mathrm{op}}$, being stable and cocomplete, is still a \emph{differentiable} $\infty$-category in the sense of Definition 6.1.1.6 of \cite{higheralgebra}.
\end{proof}

Of course this theorem does not use any of the specifics of the $\infty$-category $\mathrm{Sp}_{T(n)}$ and works for any complete stable $\infty$-category. We have stated it only for emphasis and ease of reference. In applications we will always identify $\mathrm{Fun}(\mathrm{Sp}_{T(n)}^\omega, \mathrm{Sp}_{T(n)})$ with $\mathrm{Fun}^\omega(\mathrm{Sp}_{T(n)}, \mathrm{Sp}_{T(n)})$.

Theorem \ref{thm:dualcalculus} uses in an essential way that $\mathrm{Sp}_{T(n)}$ is stable to conclude that its opposite category is also differentiable. One could try to develop a dual version of the theory of Goodwillie calculus for functors valued in an $\infty$-category that is not stable. In this case one can ask if any functor $F$ admits a universal approximation $P^k F \rightarrow F$ by a $k$-\emph{co}excisive functor. (In the stable setting there is no distinction between $k$-excisive and $k$-coexcisive functors.) The fundamental role of the stabilization of an $\infty$-category $\mathcal{C}$ would then be taken over by its \emph{costabilization} $\mathrm{coSp}(\mathcal{C})$. For the $\infty$-category $\mathcal{S}_*$ the costabilization $\mathrm{coSp}(\mathcal{S}_*)$ is trivial, as a consequence of the fact that any infinite suspension space is contractible, and any attempt at a formally dual theory of Goodwillie calculus for functors to $\mathcal{S}_*$ therefore seems futile. However, the same cannot be said of $\mathcal{S}_{v_n}$, which contains many infinite suspension objects, namely spaces of the form $\Theta(E)$. Similarly, the $\infty$-category of algebras over an operad in the category of spectra has many infinite suspension objects, namely the free algebras. Moreover, as already explained in Remark \ref{rmk:Kuhnfiltration}, there seems to be a natural candidate for the \emph{dual Goodwillie filtration} of the identity functor of $\mathcal{S}_{v_n}$, or of the $\infty$-category of algebras over an operad in general, which should be formally dual to the Goodwillie tower of the identity. We intend to return to these questions in future work.

\section{A nilpotence lemma for differentiation}
\label{app:akhil}

\subsection{Statement of the lemma}

The goal of this section is to explain that the dual $k$-excisive approximation $P^k$ (for functors from $\mathrm{Sp}_{T(n)}$ to itself) commutes with a very specific kind of colimit, as in the following:

\begin{lemma}[Mathew]
\label{lem:mathew}
If 
\begin{equation*}
F = \bigoplus_{j=1}^\infty F_j
\end{equation*}
with $F_j\colon \mathrm{Sp}_{T(n)} \rightarrow \mathrm{Sp}_{T(n)}$ a $j$-homogeneous functor, then the natural map
\begin{equation*}
\bigoplus_{j=1}^k F_j \rightarrow P^k F
\end{equation*}
is an equivalence.
\end{lemma}

We learned of this result from Akhil Mathew. Since a proof has not yet appeared in the literature, we offer it here.

\begin{remark}
Our strategy of proof will apply equally well to show that in the $T(n)$-local setting one has
\begin{equation*}
P_k\Bigl(\prod_{j=1}^\infty F_j\Bigr) \simeq \prod_{j=1}^k F_j,
\end{equation*}
even though the $k$-excisive approximation $P_k$ need not commute with infinite products in general.
\end{remark}

The lemma above is equivalent to the following statement:

\begin{lemma}
\label{lem:statement2}
With $F_j$ as above, the functor
\begin{equation*}
D^\ell\Bigl(\bigoplus_{j=k+1}^\infty F_j \Bigr)
\end{equation*}
is null for every $\ell \leq k$.
\end{lemma}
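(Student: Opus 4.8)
The plan is to reduce Lemma \ref{lem:statement2} to a vanishing statement about the $\ell$th cross-effect (equivalently, the $\ell$th Goodwillie derivative) of a single $j$-homogeneous functor $F_j$ with $j > k \geq \ell$, and then to exploit a nilpotence/Tate-vanishing phenomenon special to $\mathrm{Sp}_{T(n)}$. First I would note that differentiation commutes with direct sums (this was already used in the proof of Lemma \ref{lem:coancoefficients}), so $D^\ell(\bigoplus_{j > k} F_j) \simeq \bigoplus_{j > k} D^\ell F_j$, and it suffices to show $D^\ell F_j \simeq 0$ whenever $j > \ell$. Since $F_j$ is $j$-homogeneous, by Goodwillie's classification (Theorem 3.5 of \cite{goodwillie3}) it has the form $F_j(X) \simeq (C_j \otimes X^{\otimes j})_{h\Sigma_j}$ for some spectrum $C_j$ with $\Sigma_j$-action. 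So the whole problem is to compute the $\ell$th \emph{co-derivative} (the coefficient of $P^\ell$, not $P_\ell$) of such a functor and show it vanishes for $\ell < j$.

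The key input is that in $\mathrm{Sp}_{T(n)}$ the dual calculus behaves very differently from the usual one: whereas $P_\ell$ of a $j$-homogeneous functor with $j > \ell$ is automatically zero, $P^\ell$ of the \emph{same} functor is a priori a genuine limit. The mechanism that forces it to vanish is the vanishing of the relevant Tate constructions. Concretely, I would analyze the tower defining $P^\ell F_j$, whose successive fibers/cofibers involve norm maps $(C_j \otimes X^{\otimes j})_{h\Sigma_j} \to (C_j \otimes X^{\otimes j})^{h\Sigma_j}$ and, more to the point, partial Tate constructions associated to subgroups of $\Sigma_j$ acting on the smash factors. Here one invokes the theorem of Kuhn \cite{kuhntate} (with the alternative account \cite{mathewclausen}) that all such Tate spectra for finite groups vanish $T(n)$-locally. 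The point is that the obstruction to $P^\ell F_j$ being zero is built entirely out of these Tate terms, so they all die and the tower collapses to $0$.

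The main obstacle I expect is organizing the homotopy-coherent bookkeeping: one must identify precisely which Tate (or partial-norm) constructions appear in the fibers of the dual tower $\{P^\ell F_j\}$ and verify that the vanishing of all of them actually implies $P^\ell F_j \simeq 0$, rather than merely making the associated graded trivial. I would handle this by passing to opposite categories — $\mathrm{Sp}_{T(n)}^{\mathrm{op}}$ is still differentiable (cf. the proof of Theorem \ref{thm:dualcalculus}) — so that $P^\ell$ on $\mathrm{Sp}_{T(n)}$ becomes ordinary $P_\ell$ on $(\mathrm{Sp}_{T(n)}^\omega)^{\mathrm{op}} \to \mathrm{Sp}_{T(n)}^{\mathrm{op}}$, and the functor dual to $(C_j \otimes (-)^{\otimes j})_{h\Sigma_j}$ becomes, roughly, $(\mathbf{D}C_j \otimes (-)^{\otimes j})^{h\Sigma_j}$ on the opposite category. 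Then Goodwillie's formula for $P_\ell$ of a functor of the form $X \mapsto (E \otimes X^{\otimes j})^{h\Sigma_j}$ expresses $P_\ell$ through cross-effects, and McCarthy's analysis \cite{mccarthy} shows the relevant layers are controlled by Tate spectra of symmetric groups; their $T(n)$-local vanishing then gives $P_\ell = 0$ for $\ell < j$ directly. Running the argument on the opposite category is cleanest because it lets me cite the established classification of Goodwillie towers of functors from $\mathrm{Sp}$ to $\mathrm{Sp}$ (which was already the engine behind Theorem \ref{thm:recognition}) without re-deriving a dual version by hand.
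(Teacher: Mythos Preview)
There is a genuine gap at the very first step. You claim that ``differentiation commutes with direct sums'' and conclude
\[
D^\ell\Bigl(\bigoplus_{j>k} F_j\Bigr) \simeq \bigoplus_{j>k} D^\ell F_j.
\]
But $D^\ell$ here is the \emph{dual} derivative: it is built from the colinearization
\[
\varprojlim_m \Sigma^{m\ell}(\mathrm{cr}^\ell F)(\Sigma^{-m}X_1,\ldots,\Sigma^{-m}X_\ell),
\]
an inverse limit, followed by homotopy \emph{fixed} points. Such constructions do not commute with infinite direct sums. Your citation of Lemma~\ref{lem:coancoefficients} is for the ordinary Goodwillie derivatives $\partial_k$ (equivalently $D_k$), which are colimit constructions and do commute with sums; that is a different functor. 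Indeed, the paper flags exactly this point just before Lemma~\ref{lem:nilpotence}: $P_k$ commutes with filtered colimits of functors, but $P^k$ generally does not. Lemma~\ref{lem:statement2} (via Lemma~\ref{lem:mathew}) is precisely the statement that, in this particular $T(n)$-local situation, the commutation nonetheless holds --- so you cannot assume it as input.

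Your later analysis (Tate vanishing, passing to opposites, McCarthy) would indeed show $D^\ell F_j \simeq 0$ for a single $j$-homogeneous $F_j$ with $j>\ell$, but that is not the difficulty. The paper's proof keeps the entire sum $G=\bigoplus_{j>k}F_j$ together and invokes Lemma~\ref{lem:statement3}, whose essential content is a \emph{uniform} nilpotence bound: there is a constant $C$, depending only on the finite type $n$ spectrum $V$ and not on $j$, such that the structure map in the pro-system $\{V\otimes \Sigma^{m\ell}\mathrm{cr}^\ell(F_j)\circ\Sigma^{-m}\Delta\}_m$ is null after $C$ steps. Because $C$ is uniform in $j$, the direct sum of these pro-systems is still pro-trivial, and hence the inverse limit computing $V\otimes D^\ell G$ vanishes. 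Without that uniformity --- which ultimately comes from the bounded-exponent nilpotence of $V$ in $\mathrm{Fun}(BC_p,\mathrm{Sp}_{T(n)})$ and the Euler class argument --- knowing that each individual pro-system is pro-trivial would not suffice.
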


The dual derivative $D^\ell$ of a functor $F$ may be constructed from the cocross effects of $F$. In detail, the cocross effect $\mathrm{cr}^\ell$ is the functor of $\ell$ variables defined by the total cofiber 
\begin{equation*}
(\mathrm{cr}^\ell F)(X_1, \ldots, X_\ell) = \mathrm{tcof}(F\mathcal{X}),
\end{equation*}
where $\mathcal{X}$ is the cube
\begin{equation*}
\mathcal{X}\colon \mathcal{P}(\ell) \rightarrow \mathrm{Sp}_{T(n)}\colon (U \subseteq \{1, \ldots, \ell\}) \mapsto \bigvee_{i \in U} X_i. 
\end{equation*}
Here $\mathcal{P}(\ell)$ denotes the power set of $\{1, \ldots, \ell\}$, regarded as a poset under inclusion. Colinearizing the cocross effect in each variable defines a functor
\begin{equation*}
L(X_1, \ldots, X_\ell) := \varprojlim_{m} \Sigma^{m\ell} (\mathrm{cr}^\ell F)(\Sigma^{-m}X_1, \ldots, \Sigma^{-m}X_\ell)
\end{equation*}
and the dual derivative is given by
\begin{equation*}
D^\ell F(X) \simeq L(X, \ldots, X)^{h\Sigma_\ell}.
\end{equation*}

Write $\Delta_\ell$ for the diagonal functor
\begin{equation*}
\mathrm{Sp}_{T(n)} \rightarrow \mathrm{Sp}_{T(n)}^\ell: X \mapsto (X, \ldots, X).
\end{equation*}
Lemma \ref{lem:statement2} above will be a consequence of the following `uniform nilpotence' result:

\begin{lemma}
\label{lem:statement3}
Let $V$ be a finite type $n$ spectrum. Then there exists a constant $C$ with the following property: for any $j > \ell$ and $j$-homogeneous functor $H$, the natural transformation
\begin{equation*}
V \otimes (\Sigma^{C\ell}\mathrm{cr}^\ell(H) \circ \Sigma^{-C}\Delta) \rightarrow V \otimes (\mathrm{cr}^\ell(H) \circ \Delta)
\end{equation*}
is null.
\end{lemma}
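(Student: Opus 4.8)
The plan is to unwind the cocross-effect and colinearization constructions until the statement becomes a \emph{uniform} nilpotence assertion for equivariant Euler classes, and then to prove that assertion using the vanishing of $T(n)$-local Tate spectra together with the fact that a fixed finite spectrum has bounded chromatic complexity.

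First I would make the source of the natural transformation explicit. Writing $H(X) \simeq (\partial_j H \otimes X^{\otimes j})_{h\Sigma_j}$ and evaluating $\mathrm{cr}^\ell(H)$ on wedges, the total cofiber isolates the part which is nonconstant in each variable, and one obtains a natural splitting
\begin{equation*}
\mathrm{cr}^\ell(H)(X_1, \ldots, X_\ell) \simeq \bigoplus_{\substack{d_1, \ldots, d_\ell \geq 1 \\ d_1 + \cdots + d_\ell = j}} \bigl(\partial_j H \otimes X_1^{\otimes d_1} \otimes \cdots \otimes X_\ell^{\otimes d_\ell}\bigr)_{h(\Sigma_{d_1} \times \cdots \times \Sigma_{d_\ell})}.
\end{equation*}
Since $j > \ell$, every index in this sum has some $d_i \geq 2$. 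Because $(\Sigma^{-C}X)^{\otimes d} \simeq S^{-C\rho_d} \otimes X^{\otimes d}$ with $\rho_d$ the permutation representation of $\Sigma_d$, a direct computation identifies, on each summand, the map $\Sigma^{C\ell}(\text{summand})(\Sigma^{-C}X, \ldots) \to (\text{summand})(X, \ldots, X)$ with the natural transformation of homogeneous functors classified by the $\prod_i\Sigma_{d_i}$-equivariant map $\mathrm{id}_{\partial_j H} \otimes a_{\bar\rho}^{\otimes C}$, where $\bar\rho = \bar\rho_{d_1}\oplus\cdots\oplus\bar\rho_{d_\ell}$, $\bar\rho_d$ is the $(d-1)$-dimensional reduced permutation representation of $\Sigma_d$, and $a_{\bar\rho_d}\colon S^{-\bar\rho_d}\to S^0$ is the inclusion of the origin. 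Using Goodwillie's classification of homogeneous functors into $\mathrm{Sp}_{T(n)}$, the fact that the Euler class of a direct sum is the smash product of the Euler classes, and pulling back along the projection $\prod_i\Sigma_{d_i}\twoheadrightarrow\Sigma_{d_{i_0}}$ for an index with $d_{i_0}\geq 2$, the lemma reduces to the following: there is a constant $C = C(V,n)$ such that for every $d \geq 2$ the Euler class $e_{\bar\rho_d}$ is nilpotent of degree $\leq C$ in the $T(n)$-local cohomology ring $(\mathbf{D}V\otimes V)^{*}(B\Sigma_d)$ of $B\Sigma_d$ with coefficients in the finite ring spectrum $\mathbf{D}V\otimes V \simeq \mathrm{End}(V)$.

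To prove this I would exploit a trivial-summand phenomenon: if $P \leq \Sigma_d$ does not act transitively on $\{1, \ldots, d\}$, then $\bar\rho_d|_P$ contains a trivial one-dimensional summand, whose Euler class $S^{-1}\to S^0$ is null because $\pi_{-1}\mathbb S = 0$; hence $a_{\bar\rho_d}|_P \simeq 0$. Since we work $p$-locally and a transitive $p$-subgroup of $\Sigma_d$ forces $d$ to be a power of $p$, this already handles all $d$ that are not powers of $p$: there $a_{\bar\rho_d}$ restricts to $0$ on a Sylow $p$-subgroup, and as $B\Sigma_d$ is a $p$-local retract of the classifying space of its Sylow, $e_{\bar\rho_d} = 0$ already with $C = 1$. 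For $d = p^s$ I would pass to the Sylow $p$-subgroup $W_s = W_{s-1}\wr(\mathbb Z/p)$ and observe that $a_{\bar\rho_{p^s}}$ restricts to $0$ on the \emph{normal} subgroup $N = (W_{s-1})^{\times p}$, which acts non-transitively. The fibration $BN \to BW_s \to B\mathbb Z/p$ then equips $(\mathbf{D}V\otimes V)^*(BW_s)$ with a multiplicative filtration pulled back from the skeletal filtration of $B\mathbb Z/p$, in which $e_{\bar\rho_{p^s}}$ lies in filtration $\geq 1$ (it restricts to $0$ on the fiber $BN$). A cup-length estimate bounds the length of this filtration by the degree of nilpotence $C_0$ of the augmentation ideal of $(\mathbf{D}V\otimes V)^*(B\mathbb Z/p)$, so $e_{\bar\rho_{p^s}}^{C_0} = 0$; and $C_0$ is finite — uniformly in $s$, and in fact a bare invariant of the finite spectrum $\mathbf{D}V\otimes V$ — because inverting the Euler class over $B\mathbb Z/p$ computes the Tate construction $(\mathbf{D}V\otimes V)^{t\mathbb Z/p}_{T(n)}$, which vanishes by the theorem of Kuhn (and, independently, Clausen--Mathew), while $\mathbf{D}V\otimes V$ being a finite spectrum forces that vanishing to be witnessed after finitely many applications of the Euler class. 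Taking $C = C_0$ then works for all $d$.

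The main obstacle is precisely the \emph{uniformity of $C$ in $d$}: for any single $d$ the nilpotence is an easy consequence of Tate vanishing and compactness, but the deduction of Lemma~\ref{lem:statement2} requires one $C$ valid for all $d$ simultaneously, in order to commute the colinearizing inverse limit past the infinite direct sum $\bigoplus_{j>k}F_j$. The trivial-summand phenomenon is what resolves this — it confines the non-collapsing behaviour of $a_{\bar\rho_d}$ to transitive $p$-subgroups of $\Sigma_{p^s}$, and the Hochschild--Serre-type filtration coming from the top wreath-product layer $W_s \twoheadrightarrow \mathbb Z/p$ has length controlled by a single invariant of $\mathbf{D}V\otimes V$, independent of $s$. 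Making this filtration and cup-length bookkeeping precise — in particular checking that the relevant filtration genuinely annihilates $e_{\bar\rho_{p^s}}^{C_0}$, and that the Sylow retraction is compatible with all the reductions — is the technical heart of Mathew's lemma.
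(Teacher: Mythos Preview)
Your reduction is correct and matches the paper's strategy exactly: both arguments unwind $\mathrm{cr}^\ell(H)$ into a sum over Young subgroups $\Sigma_{d_1}\times\cdots\times\Sigma_{d_\ell}$, identify the map in question with a smash product of Euler classes of the reduced standard representations $\bar\rho_{d_i}$, observe that at least one $d_i\geq 2$, and reduce to showing that $V\otimes e_{\bar\rho_d}^C$ is null in $\mathrm{Fun}(B\Sigma_d,\mathrm{Sp}_{T(n)})$ for all $d\geq 2$ with a single $C$. Both arguments then exploit the same geometric fact --- that $\bar\rho_d$ acquires a trivial summand on any non-transitive subgroup, making the Euler class vanish there --- and both key the uniform bound on an invariant living over $C_p$.

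The difference is in how the $C_p$-reduction is packaged. You pass to the Sylow $W_s$ of $\Sigma_{p^s}$, use the wreath-product fibration $BN\to BW_s\to BC_p$, and argue via a multiplicative filtration and a cup-length bound. The paper instead works with Mathew's thick-subcategory language: it defines the \emph{$H$-exponent} of an object of $\mathrm{Fun}(BG,\mathrm{Sp}_{T(n)})$, proves that whenever $v_p(G)-v_p(H)\leq 1$ the $H$-exponent of $V\otimes X$ is bounded by the exponent of $V$ in $\mathrm{Fun}(BC_p,\mathrm{Sp}_{T(n)})$ (this is where Tate vanishing plus compactness of $V$ enters, via the retraction $V\to V\otimes(\mathrm{sk}_m EC_p)_+\to V$), and then invokes the general principle that a composite of $m$ maps each null on $H$ is null whenever the $H$-exponent is $\leq m$. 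For $\Sigma_j$ the relevant $H$ is a Young subgroup with $v_p$ one less. Your filtration step is the one place that needs tightening: the claim that the filtration has length bounded by the ``nilpotence degree of the augmentation ideal of $(\mathbf{D}V\otimes V)^*(BC_p)$'' is not quite the right formulation, and the cup-length heuristic does not directly give $F^{C_0}=0$. What actually makes it work is precisely the retraction $V\hookrightarrow V\otimes(\mathrm{sk}_{C_0}EC_p)_+$ mentioned above, pulled back along $W_s\to C_p$; once phrased this way your argument becomes the paper's. The exponent formalism buys you a clean, spectral-sequence-free proof; your route is more concrete but requires more care at this step.
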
 

\begin{remark}
The reason for calling this \emph{uniform nilpotence} is that the constant $C$ only depends on $V$, but not on $j$, $\ell$, and $H$.
\end{remark}

\begin{proof}[Proof of Lemma \ref{lem:statement2}]
Write $G$ for the functor featuring in the lemma, so that we should show $D^\ell G \simeq 0$. A consequence of Lemma \ref{lem:statement3} is that the pro-system
\begin{equation*}
\{V \otimes (\Sigma^{m\ell} \mathrm{cr}^\ell (G)\circ \Sigma^{-m}\Delta)\}_{m \geq 0}
\end{equation*}
is pro-trivial; indeed, it is the direct sum of pro-systems which are nilpotent of exponent $C$. Therefore
\begin{equation*}
V \otimes \varprojlim_m \bigl(\Sigma^{m\ell} \mathrm{cr}^\ell (G)\circ \Sigma^{-m}\Delta \bigr) \simeq 0
\end{equation*}
and the lemma follows.
\end{proof}

The remainder of this section is concerned with the proof of Lemma \ref{lem:statement3}.

\subsection{Some preliminaries on nilpotence}

We will denote by $V$ a finite type $n$ spectrum, which is fixed throughout. We work in the $\infty$-category $\mathrm{Sp}_{T(n)}$ and consistently omit $L_{T(n)}$ from the notation. For example, $\mathbb{S}$ will stand for the $T(n)$-local sphere spectrum. In this section we review some material from \cite{mathew} and observe a few elementary consequences.

\begin{definition}
Let $G$ be a finite group.
\begin{itemize}
\item[(i)] An object $X \in \mathrm{Fun}(BG, \mathrm{Sp}_{T(n)})$ is \emph{nilpotent} if it belongs to the thick subcategory generated by free $G$-objects, i.e., objects of the form $G_+ \otimes Z$ for $Z \in \mathrm{Sp}_{T(n)}$.
\item[(ii)] If $X \in \mathrm{Fun}(BG, \mathrm{Sp}_{T(n)})$ is nilpotent, we define the \emph{exponent} of $X$, denoted $\mathrm{exp}(X)$, as in 2.2 of \cite{mathew}. To be precise, write $\mathcal{F}$ for the collection $\{G_+ \otimes Z \, | \, Z \in \mathrm{Sp}_{T(n)}\}$ and define full subcategories $\mathrm{Thick}_m(\mathcal{F})$ of $\mathrm{Fun}(BG,\mathrm{Sp}_{T(n)})$ inductively as follows. The $\infty$-category $\mathrm{Thick}_1(\mathcal{F})$ is the full subcategory on retracts of objects in $\mathcal{F}$. An object $X \in \mathrm{Fun}(BG,\mathrm{Sp}_{T(n)})$ is in $\mathrm{Thick}_m(\mathcal{F})$ if it is a retract of an object $X_0$ which fits in a cofiber sequence
\begin{equation*}
Y \rightarrow X_0 \rightarrow Z,
\end{equation*}
where $Y \in \mathrm{Thick}_1(\mathcal{F})$ and $Z \in \mathrm{Thick}_{m-1}(\mathcal{F})$. Finally, the exponent of a nilpotent object $X$ is the least $m$ for which $X \in \mathrm{Thick}_m(\mathcal{F})$.
\item[(iii)] For a subgroup $H \leq G$, one similarly defines \emph{$H$-nilpotence} and the \emph{$H$-exponent} $\mathrm{exp}_H(X)$ by using the collection of induced objects 
\begin{equation*}
\{G/H_+ \otimes Z \, | \, Z \in \mathrm{Fun}(BH,\mathrm{Sp}_{T(n)})\}
\end{equation*}
in place of $\mathcal{F}$.
\end{itemize}
\end{definition}

The following is the crucial feature of exponents, indicating their use in the theory of descent. The proof is straightforward.

\begin{lemma}
\label{lem:charnilpotent}
Suppose $X \in \mathrm{Fun}(BG, \mathrm{Sp}_{T(n)})$ has $H$-exponent $m$ and suppose
\begin{equation*}
X_0 \rightarrow X_1 \rightarrow \cdots \rightarrow X_m = X
\end{equation*}
is a sequence of maps such that each $X_i \rightarrow X_{i+1}$ becomes null when restricted to $\mathrm{Fun}(BH,\mathrm{Sp}_{T(n)})$. Then the composite $X_0 \rightarrow X$ is null in $\mathrm{Fun}(BG,\mathrm{Sp}_{T(n)})$.
\end{lemma}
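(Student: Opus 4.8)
The statement to prove, Lemma~\ref{lem:charnilpotent}, asserts: if $X$ has $H$-exponent $m$ and we have a composable chain $X_0 \to X_1 \to \cdots \to X_m = X$ in which each map is null after restriction along $\mathrm{Fun}(BG,\mathrm{Sp}_{T(n)}) \to \mathrm{Fun}(BH,\mathrm{Sp}_{T(n)})$, then the composite $X_0 \to X$ is null $G$-equivariantly.

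The plan is to induct on the $H$-exponent $m$. For the base case $m=1$, the object $X$ is a retract of an induced object $G/H_+ \otimes Z$ for some $Z \in \mathrm{Fun}(BH,\mathrm{Sp}_{T(n)})$; I would use that maps \emph{into} an induced object $G/H_+ \otimes Z$ from any $W \in \mathrm{Fun}(BG,\mathrm{Sp}_{T(n)})$ are detected by the restriction-induction adjunction, i.e. $\mathrm{Map}_{\mathrm{Fun}(BG,\mathrm{Sp}_{T(n)})}(W, G/H_+ \otimes Z) \simeq \mathrm{Map}_{\mathrm{Fun}(BH,\mathrm{Sp}_{T(n)})}(\mathrm{Res}^G_H W, \mathrm{Res}^G_H(G/H_+ \otimes Z))$ (using that $G/H_+ \otimes (-)$, being both a left and right adjoint here up to the relevant finiteness, is the induction functor and that $\mathrm{Res}$ detects nullity of maps into it). Since $X_0 \to X_1 = X$ is null after restriction to $H$, it is therefore null $G$-equivariantly; the retract case follows formally.

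For the inductive step, suppose the claim holds for $H$-exponent $m-1$. Write $X$ as a retract of $X'$ sitting in a cofiber sequence $Y \to X' \to Z$ with $Y$ of $H$-exponent $1$ and $Z$ of $H$-exponent $m-1$. Given the chain $X_0 \to \cdots \to X_m = X$, I would first reduce to the case $X = X'$ by absorbing the retraction. Then I consider the composite $X_0 \to X_m \to Z$ through the quotient map $X_m = X' \to Z$: this is a chain of $m$ maps (prepending the chain to $X_m$, then applying $X_m \to Z$) landing in $Z$, but $Z$ has $H$-exponent only $m-1$, so I want to say the composite $X_{1} \to \cdots \to X_m \to Z$ — a chain of $m-1$ maps, each null after restriction to $H$ (the map $X_m \to Z$ is an arbitrary $G$-map, so I actually get nullity from the $m-1$ composable maps $X_1 \to \cdots \to X_m$ followed by $X_m \to Z$; here the key bookkeeping is that the chain $X_1 \to \cdots \to X_m \to Z$ has exactly $m-1$ maps that are null-after-restriction plus one extra, and $Z$ has exponent $m-1$) — is null by the inductive hypothesis. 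Hence $X_1 \to X_m$ factors $G$-equivariantly through the fiber $Y \to X_m = X'$. Since $Y$ has $H$-exponent $1$, and the remaining map $X_0 \to X_1 \to Y$ is a single map (or short chain) null after restriction to $H$, the base case applies to show $X_0 \to Y$ is $G$-equivariantly null, whence $X_0 \to X_m$ is null.

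The main obstacle I anticipate is the careful bookkeeping of exactly how many maps in the chain are consumed at each stage of the induction, and cleanly setting up the two fundamental detection principles — ``maps into an $H$-induced object are detected after restriction to $H$'' and ``a cofiber sequence lets you trade the target for fiber/cofiber at the cost of one step of exponent.'' Both of these are essentially the statements underlying Proposition~2.3 and the surrounding discussion in \cite{mathew}, and I would cite that directly rather than reprove the detection lemma from scratch; the content here is purely the inductive assembly, which is formal once those inputs are in place.
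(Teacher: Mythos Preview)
Your proposal is correct and spells out precisely the standard inductive argument that the paper has in mind; the paper's own proof consists of the single sentence ``The proof is straightforward,'' so there is nothing to compare beyond noting that your write-up is the natural unpacking of that remark (and essentially the argument of Proposition~2.3 in \cite{mathew}, as you say). One small cleanup: in the inductive step, rather than carrying ``$m-1$ null maps plus one extra,'' simply compose the last two arrows to obtain the chain $X_1 \to X_2 \to \cdots \to X_{m-1} \to Z$ of exactly $m-1$ maps, each null after restriction to $H$ (the final one because $X_{m-1}\to X_m$ already is); this makes the bookkeeping you flag transparent.
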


If $X \in \mathrm{Fun}(BG,\mathrm{Sp}_{T(n)})$ is nilpotent then $X^{tG} = 0$. Indeed, this is clearly the case for free $G$-objects, so that it is equally true for any object in the thick subcategory generated by such. This implication can be reversed for compact objects of $\mathrm{Sp}_{T(n)}$ (cf. Section 4.1 of \cite{mathew}):

\begin{lemma}
\label{lem:expTatevanishing}
For any $X \in \mathrm{Fun}(BG,\mathrm{Sp}_{T(n)})$, the object $V \otimes X$ is nilpotent. Moreover, the exponent $\mathrm{exp}(V \otimes X)$ depends only on $V$.
\end{lemma}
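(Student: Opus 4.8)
The goal is Lemma \ref{lem:expTatevanishing}: for any $X \in \mathrm{Fun}(BG, \mathrm{Sp}_{T(n)})$, the object $V \otimes X$ is nilpotent with exponent depending only on $V$ (not on $X$ or the action).

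The plan is to reduce to a universal case. First I would observe that the assignment $X \mapsto V \otimes X$ is exact and commutes with colimits, and that $V \otimes X \simeq (V \otimes S) \otimes X$ where the tensor is over the $T(n)$-local sphere with its trivial $G$-action. So it suffices to treat $X = S$ with the trivial action, i.e. to show $V$ itself — regarded as an object of $\mathrm{Fun}(BG, \mathrm{Sp}_{T(n)})$ with trivial $G$-action — is nilpotent, and that its exponent is then an upper bound for $\mathrm{exp}(V \otimes X)$ for every $X$. Indeed, if $V$ lies in $\mathrm{Thick}_m(\mathcal{F})$ where $\mathcal{F} = \{G_+ \otimes Z\}$, then applying the exact functor $(-) \otimes X$ (which sends $G_+ \otimes Z$ to $G_+ \otimes (Z \otimes X)$, again a free object, and sends cofiber sequences to cofiber sequences and retracts to retracts) shows $V \otimes X \in \mathrm{Thick}_m(\mathcal{F})$ as well. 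This already gives the ``depends only on $V$'' part, conditional on nilpotence of $V$.

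So the heart of the matter is: \emph{$V$, with trivial $G$-action, is a nilpotent object of $\mathrm{Fun}(BG, \mathrm{Sp}_{T(n)})$.} Here I would invoke the vanishing of $T(n)$-local Tate constructions for finite groups (the input already used elsewhere in the paper, e.g. in the proof of Theorem \ref{thm:recognition} and Theorem \ref{thm:aronemahowald}, going back to Kuhn \cite{kuhntate} and Mathew--Clausen), combined with the thick-subcategory/descent machinery of \cite{mathew}: in a setting where $X^{tG} = 0$ for all $X$ built from $V$, the unit map exhibits the trivial subgroup as generating, equivalently $V$ is nilpotent. Concretely, $V \wedge EG_+ \to V$ is then an equivalence after tensoring into any object of $\mathrm{Fun}(BG,\mathrm{Sp}_{T(n)})$-mod generated by $V$, and $V \wedge EG_+$ is a (filtered) colimit of finite free $G$-CW spectra; using that $V$ is a \emph{finite} spectrum and that the relevant category is compactly generated, one upgrades this to $V$ lying in the thick — not just localizing — subcategory generated by free objects. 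This is Proposition 4.1 (or the surrounding discussion) of \cite{mathew}, applied to the symmetric monoidal stable $\infty$-category $\mathrm{Sp}_{T(n)}$ and the finite dualizable object $V$.

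The main obstacle I anticipate is the passage from ``$V$ is in the \emph{localizing} subcategory generated by free $G$-objects'' (which is essentially formal from Tate vanishing) to ``$V$ is in the \emph{thick} subcategory'', i.e. genuine nilpotence with a \emph{finite} exponent. This is exactly where finiteness of $V$ and compactness arguments enter, and where one must quote Mathew's work rather than reprove it: the point is that the descent datum assembling $V$ from its free pieces has bounded length because $V$ is compact in a compactly generated category, so only finitely many stages of the bar/Amitsur filtration are needed. Once that is in hand, the exponent of $V$ is a finite number $C_0 = C_0(V, G)$ and the opening paragraph's argument propagates it to all $V \otimes X$, completing the proof. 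I would keep the writeup short, citing Section 4.1 of \cite{mathew} for the nilpotence and exponent bound and spelling out only the exactness reduction explicitly.
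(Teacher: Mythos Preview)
Your proposal is correct and follows essentially the same approach as the paper: reduce to showing that $V$ with trivial $G$-action is nilpotent (propagating the bound to $V\otimes X$ by the observation that tensoring a free object with anything yields a free object), then use $T(n)$-local Tate vanishing together with compactness of $V$ to produce a retract of $V$ off a finite skeleton of $V\otimes EG_+$. The paper spells out this last step concretely (factoring $V\otimes\eta$ through $V\otimes((\mathrm{sk}_m EG)_+\otimes S)^{hG}$ and adjointing to a retract diagram) rather than citing \cite{mathew}, but the argument is the same one you outline.
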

\begin{proof}
Clearly it suffices to prove that $V$ (with trivial $G$-action) is nilpotent; indeed, the observation that $Y \otimes X$ is a free $G$-object whenever $Y$ is free immediately shows that $V \otimes X$ will be nilpotent with exponent at most $\mathrm{exp}(V)$, for any $G$-object $X$. The lemma is a consequence of the $T(n)$-local vanishing of Tate spectra (compare the proof of Theorem 4.9 of \cite{mathew}).  Indeed, write $EG_\bullet = G^{\bullet + 1}$ for the usual simplicial model of the universal $G$-space and $\mathrm{sk}_m EG$ for its $m$-skeleton. Since Tate spectra of objects in $\mathrm{Fun}(BG,\mathrm{Sp}_{T(n)})$ vanish in $\mathrm{Sp}_{T(n)}$, the homotopy fixed point functor preserves filtered colimits and
\begin{equation*}
\mathbb{S}^{hG} \simeq \varinjlim_m \bigl((\mathrm{sk}_m EG)_+ \otimes \mathbb{S}\bigr)^{hG}.
\end{equation*}
The homotopy fixed points $\mathbb{S}^{hG}$ form a ring spectrum; we write 
\begin{equation*}
\mathbb{S} \xrightarrow{\eta} \mathbb{S}^{hG}
\end{equation*}
for its unit. Since $V$ is compact in $\mathrm{Sp}_{T(n)}$, the map $V \otimes \eta$ factors as
\begin{equation*}
V \otimes \mathbb{S} \rightarrow V \otimes \bigl((\mathrm{sk}_m EG)_+ \otimes \mathbb{S}\bigr)^{hG} \rightarrow V \otimes \mathbb{S}^{hG}
\end{equation*}
for some $m$. By adjunction it follows that there is a retract diagram 
\begin{equation*}
V \rightarrow V \otimes (\mathrm{sk}_m EG)_+ \rightarrow V
\end{equation*}
in $\mathrm{Fun}(BG,\mathrm{Sp}_{T(n)})$, where $V$ has the trivial $G$-action. But $V \otimes (\mathrm{sk}_m EG)_+$ is in the thick subcategory generated by $V \otimes G_+$ and therefore nilpotent.
\end{proof}

\begin{lemma}
\label{lem:expV}
There exists a constant $C$ (depending only on $V$) with the following property: for any finite group $G$, subgroup $H \leq G$ with $v_p(G) - v_p(H) \leq 1$, and any object $X \in \mathrm{Fun}(BG,\mathrm{Sp}_{T(n)})$, the $H$-exponent of $V \otimes X \in \mathrm{Fun}(BG,\mathrm{Sp}_{T(n)})$ is at most $C$. (Here $v_p(G)$ is the $p$-adic valuation of the order of $G$.)
\end{lemma}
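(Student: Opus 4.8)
The plan is to reduce Lemma~\ref{lem:expV} to Lemma~\ref{lem:expTatevanishing} by a transfer/induction argument that controls how the exponent of a nilpotent object changes when one passes between a group and a subgroup of $p$-power-index difference at most one. First I would recall that by Lemma~\ref{lem:expTatevanishing} the object $V$ (with trivial action) is nilpotent in $\mathrm{Fun}(BK,\mathrm{Sp}_{T(n)})$ for \emph{every} finite group $K$, with an exponent $e(V)$ that does not depend on $K$; tensoring a free object with anything stays free, so $V \otimes X$ is nilpotent with exponent at most $e(V)$ for any $X$. The point of the present lemma is to upgrade ``nilpotent'' (i.e.\ built from genuinely free $G$-objects) to ``$H$-nilpotent'' (built from induced-from-$H$ objects $G/H_+ \otimes Z$) with a uniform bound, under the hypothesis $v_p(G) - v_p(H) \le 1$.

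The key step is the following observation: if $v_p(G/H)$ is at most $1$ — say $G/H$ has order prime to $p$ times a factor of $p$ — then a free $G$-object, restricted appropriately, is itself $H$-nilpotent with a bounded exponent. Concretely, $G_+$ as a $G$-object is $\mathrm{Ind}_H^G(H_+)$ up to summands, but more to the point $G/H_+$ is a ``small'' $G$-object: since $|G/H|$ has $p$-adic valuation $\le 1$, the projection formula together with the $T(n)$-local vanishing of Tate constructions shows that $V \otimes G/H_+$ (hence any $V \otimes \bigl(G/H_+ \otimes Z\bigr)$) is, after smashing with $V$ once more, a retract of something built from $G/H'_+$-objects with $H'$ a $p$-Sylow-containing subgroup — and crucially the \emph{number} of extension steps needed is bounded purely in terms of $V$ via the argument of Lemma~\ref{lem:expTatevanishing} applied to the group $G/H$ (or rather to a $p$-group quotient of it). So I would run the skeletal-filtration argument of Lemma~\ref{lem:expTatevanishing} with $G$ replaced by $G$ acting on $EG/H$ (the universal space for the family of subgroups of $H$), again using that $S^{hH}$, viewed as a $G/H$-equivariant-ish ring object, has vanishing Tate cohomology $T(n)$-locally; compactness of $V$ lets the unit $V \to V \otimes S^{hH}$ factor through a finite skeleton whose ``size'' — the number of cells, hence the length of the thick-subcategory filtration — depends only on $V$, not on $G$, $H$, or the coefficient object $X$. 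Setting $C$ to be this bound finishes the proof.

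The main obstacle I anticipate is making precise the claim that the filtration length is independent of $G$ and $H$ when $v_p(G)-v_p(H)\le 1$. The naive skeleton of $EG/H$ grows with $|G|$, so one cannot directly quote a ``bounded number of cells''; instead one must argue at the level of the Tate vanishing that the obstruction to descent along $\mathrm{Sp}_{T(n)}^{BG} \to \mathrm{Sp}_{T(n)}^{BH}$ is governed by nilpotence of a ring map whose ``height of nilpotence'' is uniform because $G/H$ is (essentially) a single cyclic $p$-group — and here one uses that a finite $p$-group has bounded-exponent Tate vanishing when smashed with $V$, exactly the content of Lemma~\ref{lem:expTatevanishing} with $G$ a $p$-group. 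So the real work is: (i) reduce to $H$ normal in $G$ with $G/H$ cyclic of order $p$ (or prime to $p$, which is trivial) by a standard subgroup-lattice argument, and (ii) for such an extension show $\mathrm{exp}_H(V\otimes X) \le \mathrm{exp}_{?}(V \otimes -)$ for the group $G/H$, which is bounded by $e(V)$. I would expect step (i) to be routine bookkeeping and step (ii) to be where all the care goes, but no genuinely new idea beyond Lemma~\ref{lem:expTatevanishing} and the projection formula should be required.
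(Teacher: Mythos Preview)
Your proposal is correct and lands on essentially the same argument as the paper: reduce to the case where $H$ is normal of index $p$ in $G$ (the paper phrases this as ``reduce to $p$-groups''), then observe that pulling back along $G \to G/H \cong C_p$ turns free $C_p$-objects into objects induced from $H$, so the exponent of $V$ in $\mathrm{Fun}(BC_p,\mathrm{Sp}_{T(n)})$ furnished by Lemma~\ref{lem:expTatevanishing} is the desired uniform $C$. Your write-up takes some detours (the skeletal filtration of $EG/H$, the discussion of free $G$-objects being $H$-nilpotent) before arriving at this, but the core step (ii) you isolate is exactly the paper's one-line proof.
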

\begin{proof}
One easily reduces to $p$-groups, so that $H$ has index $p$ in $G$ and is therefore normal with $G/H \simeq C_p$. One can now take $C$ to be the exponent of $V$ in $\mathrm{Fun}(BC_p,\mathrm{Sp}_{T(n)})$, which exists by Lemma \ref{lem:expTatevanishing}.
\end{proof}

We write $\rho_j$ for the standard $j$-dimensional (real) representation of $\Sigma_j$ and $\bar{\rho}_j$ for the reduced standard representation obtained from $\rho_j$ by quotienting out the diagonal, on which $\Sigma_j$ acts trivially. As usual, we write $\mathbb{S}^{\rho_j}$ and $\mathbb{S}^{\bar{\rho}_j}$ for the associated representation spheres, which are the (suspensoin spectra of the) one-point compactifications. The natural $\Sigma_j$-equivariant map
\begin{equation*}
X^{\otimes j} \rightarrow \Omega((\Sigma X)^{\otimes j})
\end{equation*}
when evaluated at $X = \mathbb{S}$ gives a map
\begin{equation*}
\mathbb{S} \xrightarrow{e} \mathbb{S}^{\bar{\rho}_j}
\end{equation*}
which is the \emph{Euler class} of $\bar{\rho}_j$.

\begin{lemma}
\label{lem:Eulerclass}
Take $C$ as in Lemma \ref{lem:expV} and $j \geq 2$. Then the map
\begin{equation*}
V \otimes e^C\colon V \rightarrow V \otimes \mathbb{S}^{C\bar{\rho}_j}
\end{equation*}
is nullhomotopic in $\mathrm{Fun}(B\Sigma_j, \mathrm{Sp}_{T(n)})$.
\end{lemma}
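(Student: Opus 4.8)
The plan is to realize $V \otimes e^{C}$ as the total composite of a length-$C$ chain of maps, each of which becomes null after restriction to a well-chosen subgroup $H \leq \Sigma_j$, and then apply Lemma~\ref{lem:charnilpotent}. Concretely, I would work with the chain in $\mathrm{Fun}(B\Sigma_j, \mathrm{Sp}_{T(n)})$
\[
V = V \otimes S \xrightarrow{V \otimes e} V \otimes S^{\bar{\rho}_j} \xrightarrow{V \otimes e} V \otimes S^{2\bar{\rho}_j} \to \cdots \xrightarrow{V \otimes e} V \otimes S^{C\bar{\rho}_j},
\]
whose total composite is exactly $V \otimes e^{C}$. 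To invoke Lemma~\ref{lem:charnilpotent} with $G = \Sigma_j$, $m = C$ and $X = V \otimes S^{C\bar{\rho}_j}$ I need two inputs: (a) a subgroup $H \leq \Sigma_j$ for which each individual map $V\otimes e$ becomes null after restriction along $\mathrm{Fun}(B\Sigma_j,\mathrm{Sp}_{T(n)}) \to \mathrm{Fun}(BH,\mathrm{Sp}_{T(n)})$; and (b) that $X$ has $H$-exponent at most $C$. Point (b) is provided by Lemma~\ref{lem:expV} (with $C$ the constant occurring there, which depends only on $V$) as soon as $v_p(\Sigma_j) - v_p(H) \leq 1$.

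For (a): restricting $e(\bar{\rho}_j)$ to $H$ gives $e(\bar{\rho}_j|_H)$, and since real $H$-representations are completely reducible we may split $\bar{\rho}_j|_H \cong (\bar{\rho}_j)^{H} \oplus W$. Multiplicativity of Euler classes yields $e(\bar{\rho}_j|_H) \simeq e((\bar{\rho}_j)^{H}) \wedge e(W)$, and the Euler class of a nonzero \emph{trivial} representation $\mathbf{R}^{t}$ with $t \geq 1$ is the null map $S \to S^{t}$; hence $e(\bar{\rho}_j|_H) \simeq 0$ whenever $(\bar{\rho}_j)^{H} \neq 0$, i.e.\ whenever $H$ fails to act transitively on $\{1,\dots,j\}$. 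Since each step in the chain is $\mathrm{id}_{V} \wedge \mathrm{id}_{S^{k\bar{\rho}_j|_H}} \wedge e(\bar{\rho}_j|_H)$ after restriction to $H$, it is therefore null, and Lemma~\ref{lem:charnilpotent} then gives that $V \otimes e^{C}$ is null in $\mathrm{Fun}(B\Sigma_j, \mathrm{Sp}_{T(n)})$.

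The real content, and the step I expect to require the most care, is producing for \emph{every} $j \geq 2$ a single subgroup $H \leq \Sigma_j$ that is simultaneously non-transitive and satisfies $v_p(\Sigma_j) - v_p(H) \leq 1$, with no dependence on $j$ — this is precisely where the uniformity of $C$ comes from. I would first try a Young subgroup $H = \Sigma_a \times \Sigma_b$ with $a + b = j$ and $a,b \geq 1$: it is always non-transitive, fixes the vector $(b,\dots,b,-a,\dots,-a) \in \bar{\rho}_j$, and satisfies $v_p(\Sigma_j) - v_p(\Sigma_a \times \Sigma_b) = v_p\binom{j}{a}$, which by Kummer's theorem counts the base-$p$ carries in $a + b$. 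Taking $a = p^{k}$ with $p^{k}$ the smallest $p$-power appearing in the base-$p$ expansion of $j$ makes this count $0$ — \emph{unless} $j = p^{m}$ is itself a power of $p$, in which case no Young splitting succeeds. For $j = p^{m}$ the remedy is to take instead $H = \prod_{i=1}^{p}\Sigma_{p^{m-1}} \leq \Sigma_{p^{m}}$, the subgroup permuting within each of $p$ equal blocks but not across them: it is non-transitive, and the identity $v_p((p^{r})!) = (p^{r}-1)/(p-1)$ gives $v_p(\Sigma_{p^{m}}) - v_p(H) = 1$ exactly (degenerating harmlessly to $H$ trivial when $m = 1$). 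Once this case distinction is in place, the rest of the argument is a formal manipulation of restriction functors, Euler classes, and Lemmas~\ref{lem:charnilpotent} and~\ref{lem:expV}.
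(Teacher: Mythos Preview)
Your proposal is correct and follows essentially the same approach as the paper's proof: choose a non-transitive Young subgroup $H \leq \Sigma_j$ with $v_p(\Sigma_j) - v_p(H) \leq 1$ (handling the case $j = p^m$ separately via $H = \Sigma_{p^{m-1}}^{\times p}$), observe that $\bar{\rho}_j$ has a nonzero $H$-fixed vector so that the restricted Euler class is null, and conclude by combining Lemmas~\ref{lem:charnilpotent} and~\ref{lem:expV}. Your write-up in fact supplies more detail than the paper does, giving the Kummer-theorem justification for the choice of $a$ in the non-prime-power case and the explicit valuation count for $j = p^m$.
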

\begin{proof}
Pick a Young subgroup $H < \Sigma_j$ with $v_p(\Sigma_j) - v_p(H) \leq 1$. The existence of such $H$ is easy to establish; for $j$ not a power of $p$ one can even arrange $v_p(\Sigma_j) = v_p(H)$, whereas for $j = p^m$ one could use
$H = \Sigma_{p^{m-1}} \times \cdots \times \Sigma_{p^{m-1}}$. When restricted to $H$ the representation $\bar{\rho}_j$ admits a nonzero fixed point, which implies that the restriction of $e$ to $\mathrm{Fun}(BH,\mathrm{Sp}_{T(n)})$ is null. The lemma now follows by combining Lemmas \ref{lem:charnilpotent} and \ref{lem:expV}.
\end{proof}

\subsection{Proof}

\begin{proof}[Proof of Lemma \ref{lem:statement3}]
To orient the reader, let us first treat the case $\ell = 1$. Write $L$ for the multilinear functor of $j$ variables corresponding to $H$ via Goodwillie's classification of homogeneous functors, so that
\begin{equation*}
H \simeq (L \circ \Delta_j)_{h\Sigma_j}.
\end{equation*}
Take $C$ as in Lemma \ref{lem:Eulerclass} applied to the Spanier--Whitehead dual $\mathbf{D}V$ of $V$. Then it suffices to show that the $\Sigma_j$-equivariant map
\begin{equation*}
V \otimes \Sigma^{C}L \circ (\Sigma^{-C}\Delta_j) \rightarrow V \otimes L \circ \Delta_j
\end{equation*}
is null. Indeed, the claim of the lemma then follows upon taking homotopy orbits for $\Sigma_j$. After evaluating at $X \in \mathrm{Sp}_{T(n)}$ this map can be written
\begin{equation*}
V \otimes \mathbb{S}^{C(1 - \rho_j)} \otimes L(X, \ldots, X) \rightarrow V \otimes \mathbb{S} \otimes L(X, \ldots, X),
\end{equation*}
with $\mathbb{S}^{C(1 - \rho_j)}$ the representation sphere of the (virtual) representation $C(1-\rho_j) = -C\bar{\rho}_j$. Thus it suffices to show that the map
\begin{equation*}
V \otimes \mathbb{S}^{-C\bar{\rho}_j} \rightarrow V \otimes \mathbb{S}
\end{equation*}
is null in $\mathrm{Fun}(B\Sigma_j, \mathrm{Sp}_{T(n)})$. By Spanier--Whitehead duality this is equivalent to showing that
\begin{equation*}
\mathbf{D}V \otimes \mathbb{S} \rightarrow \mathbf{D}V \otimes \mathbb{S}^{C \bar{\rho}_j}
\end{equation*}
is null (this map is the $C$th power of the Euler class of $\bar{\rho}_j$, as before), which is precisely the content of Lemma \ref{lem:Eulerclass}.

We now treat the case of general $\ell$. We will show that the $\Sigma_j$-equivariant natural transformation
\begin{equation*}
V \otimes \Sigma^{C\ell}\mathrm{cr}^\ell(L \circ \Sigma^{-C} \Delta_j) \rightarrow V \otimes \mathrm{cr}^\ell(L \circ \Delta)
\end{equation*}
is null. The multilinear functor $L$ can be written as
\begin{equation*}
L(X_1, \ldots, X_j) \simeq \partial L \otimes X_1 \otimes \cdots \otimes X_j
\end{equation*}
for some spectrum $\partial L$ with $\Sigma_j$-action. Using this, one easily verifies that
\begin{equation*}
\Sigma^{C\ell} \mathrm{cr}^\ell(L \circ \Sigma^{-C}\Delta_j) \simeq \Bigl(\bigoplus_{f\colon \mathbf{j} \rightarrow \mathbf{l}} \mathbb{S}^{-C\bar{\rho}_{f^{-1}\{1\}}} \otimes \cdots \otimes \mathbb{S}^{-C\bar{\rho}_{f^{-1}\{\ell\}}} \Bigr) \otimes (L \circ \Delta_j),
\end{equation*}
where the sum is over all \emph{surjections} $f\colon \{1, \ldots, j\} \rightarrow \{1, \ldots, \ell\}$ and the notation 
\begin{equation*}
\mathbb{S}^{-C\bar{\rho}_{f^{-1}\{1\}}} \otimes \cdots \otimes \mathbb{S}^{-C\bar{\rho}_{f^{-1}\{\ell\}}}
\end{equation*}
indicates the action of the stabilizer $\Sigma_{f^{-1}\{1\}} \times \cdots \times \Sigma_{f^{-1}\{\ell\}}$ of a surjection $f$. In other words, the sum on the right-hand side is a sum of representation spheres induced from Young subgroups
\begin{equation*}
\Sigma_{j_1} \times \cdots \times \Sigma_{j_\ell} \leq \Sigma_j
\end{equation*}
with $j_1 + \cdots + j_\ell = j$. (Note that the assumption $j > \ell$ guarantees that there exists $i$ with $j_i \geq 2$.) Thus it suffices to show that for each such subgroup, the corresponding map
\begin{equation*}
V \otimes \mathbb{S}^{-C\bar{\rho}_{j_1}} \otimes \cdots \otimes \mathbb{S}^{-C\bar{\rho}_{j_\ell}} \rightarrow V \otimes \mathbb{S}
\end{equation*}
is null in $\mathrm{Fun}(B(\Sigma_{j_1} \times \cdots \times \Sigma_{j_\ell}), \mathrm{Sp}_{T(n)})$. Dualizing as before, we may as well consider the map
\begin{equation*}
\mathbf{D}V \otimes \mathbb{S} \rightarrow \mathbf{D}V \otimes \mathbb{S}^{C\bar{\rho}_{j_1}} \otimes \cdots \otimes \mathbb{S}^{C\bar{\rho}_{j_\ell}}.
\end{equation*}
This map is the smash product of $\ell$ maps of which at least one is null by the conclusion of Lemma \ref{lem:Eulerclass}.
\end{proof}

\section{Coalgebras}
\label{app:coalgebras}

We owe the reader a comparison between the notion of commutative coalgebras in a symmetric monoidal $\infty$-category $\mathcal{C}^\otimes$ used in this paper and the one used in \cite{heutsgoodwillie}. To simplify notation it will be convenient to replace $\mathcal{C}$ by $\mathcal{C}^{\mathrm{op}}$ and compare the following two notions of commutative algebras:

\begin{itemize}
\item[(1)] Lurie defines a commutative algebra in $\mathcal{C}^\otimes$ to be a section of the structure map $p\colon \mathcal{C}^\otimes \rightarrow N\mathcal{F}\mathrm{in}_*$ which preserves inert morphisms and writes $\mathrm{CAlg}(\mathcal{C})$ for the $\infty$-category of such objects (cf. Definition 2.1.3.1 of \cite{higheralgebra}).
\item[(2)] Definition 5.14 of \cite{heutsgoodwillie} (after passing to opposites and including units) specializes to the following: a commutative algebra object of $\mathcal{C}^\otimes$ is a fibration of $\infty$-operads $f\colon \mathcal{X}^\otimes \rightarrow \mathcal{C}^\otimes$ such that
\begin{itemize}
\item[(a)] the composite $pf\colon \mathcal{X}^\otimes \rightarrow N\mathcal{F}\mathrm{in}_*$ is a coCartesian fibration (making $\mathcal{X}^\otimes$ a symmetric monoidal $\infty$-category),
\item[(b)] the map $f$ is a symmetric monoidal functor,
\item[(c)] the map of underlying $\infty$-categories $\mathcal{X} \rightarrow \mathcal{C}$ is of the form $\mathcal{C}_{/X} \rightarrow \mathcal{C}$ for some $X \in \mathcal{C}$.
\end{itemize}
More informally, giving $X$ the structure of a commutative algebra is equivalent to upgrading the slice category $\mathcal{C}_{/X}$ to a symmetric monoidal $\infty$-category (compatible with the forgetful functor to $\mathcal{C}$). We write $\mathrm{CAlg}'(\mathcal{C})$ for the $\infty$-category of commutative algebra objects of $\mathcal{C}$ according to this second definition.
\end{itemize}

To compare the two we define an $\infty$-category $\mathcal{A}$ as follows. An object of $\mathcal{A}$ is an object $\mathcal{X}^\otimes \rightarrow \mathcal{C}^\otimes$ of $\mathrm{CAlg}'(\mathcal{C})$ together with a map $s\colon N\mathcal{F}\mathrm{in}_* \rightarrow \mathcal{X}^\otimes$ which preserves inert morphisms, and such that the composition
\begin{equation*}
N\mathcal{F}\mathrm{in}_* \xrightarrow{s} \mathcal{X}^\otimes \xrightarrow{f} \mathcal{C}^\otimes \xrightarrow{p} N\mathcal{F}\mathrm{in}_*
\end{equation*}
is the identity, and so that $s(\langle n \rangle)$ is a final object of $\mathcal{X}_{\langle n \rangle}^\otimes$ for every $n$. These last two properties can be summarized by saying that $s$ is a \emph{final section} of the structure map
\begin{equation*}
\mathcal{X}^\otimes \xrightarrow{pf} N\mathcal{F}\mathrm{in}_*.
\end{equation*}
In particular, $s$ defines a commutative algebra object of the symmetric monoidal $\infty$-category $\mathcal{X}^\otimes$ in the sense of (1). The $\infty$-category $\mathcal{A}$ can be defined as the full subcategory of $(\mathbf{Cat}_\infty)_{N\mathcal{F}\mathrm{in}_*/ /\mathcal{C}^\otimes}$ on the objects $(\mathcal{X}^\otimes \rightarrow \mathcal{C}^\otimes, s)$ as above. Note that there are evident forgetful maps
\begin{equation*}
\mathrm{CAlg}(\mathcal{C}) \xleftarrow{\varphi_1} \mathcal{A} \xrightarrow{\varphi_2} \mathrm{CAlg}'(\mathcal{C}).
\end{equation*}
Here $\varphi_1$ forms the composition $fs\colon N\mathcal{F}\mathrm{in}_* \rightarrow \mathcal{C}^\otimes$ and forgets $\mathcal{X}^\otimes$, whereas $\varphi_2$ forgets $s$ and simply retains $\mathcal{X}^\otimes \rightarrow \mathcal{C}^\otimes$.

\begin{proposition}
The maps $\varphi_1$ and $\varphi_2$ are equivalences.
\end{proposition}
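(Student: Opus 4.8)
The strategy is to show that both $\varphi_1$ and $\varphi_2$ are trivial fibrations, which I would do by identifying their fibres and lifting properties. The key observation is that a \emph{final section} of $\mathcal{X}^\otimes \to N\mathcal{F}\mathrm{in}_*$ is the same thing as a \emph{terminal} object of the $\infty$-category $\mathrm{CAlg}(\mathcal{X})$ of commutative algebra objects of the symmetric monoidal $\infty$-category $\mathcal{X}^\otimes$ in the sense of (1): unwinding the definitions, a section preserving inert morphisms is final (its values at each $\langle n\rangle$ are final in the fibre) precisely when it represents the terminal commutative algebra. Since for $\mathcal{X}^\otimes$ coming from an object of $\mathrm{CAlg}'(\mathcal{C})$ the underlying $\infty$-category $\mathcal{X} = \mathcal{C}_{/X}$ has a terminal object $\mathrm{id}_X$, and the terminal object of any symmetric monoidal $\infty$-category carries an essentially unique commutative algebra structure (by Corollary 3.2.2.5 of \cite{higheralgebra} the forgetful functor $\mathrm{CAlg}(\mathcal{X}) \to \mathcal{X}$ creates limits), it follows that the space of final sections of such an $\mathcal{X}^\otimes$ is \emph{contractible}.

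I would dispose of $\varphi_2$ first. It is the restriction to $\mathcal{A}$ of the forgetful map $(\mathbf{Cat}_\infty)_{N\mathcal{F}\mathrm{in}_*//\mathcal{C}^\otimes} \to (\mathbf{Cat}_\infty)_{/\mathcal{C}^\otimes}$; its fibre over $\mathcal{X}^\otimes \in \mathrm{CAlg}'(\mathcal{C})$ is the space of final sections, contractible by the previous paragraph. Furthermore, given a morphism $F\colon \mathcal{X}^\otimes \to \mathcal{X}'^\otimes$ in $\mathrm{CAlg}'(\mathcal{C})$ whose target is equipped with its final section $s'$, finality of $s'$ furnishes an essentially unique morphism $F\circ s \to s'$ for the final section $s$ of the source, so $F$ lifts essentially uniquely to $\mathcal{A}$. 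Thus $\varphi_2$ is a right fibration with contractible fibres, hence a trivial fibration and in particular an equivalence. (Equivalently, $\mathcal{A} \to \mathrm{CAlg}'(\mathcal{C})$ is the right fibration classified by the functor $\mathcal{X}^\otimes \mapsto \{\text{final sections of }\mathcal{X}^\otimes\}$, which takes contractible values.)

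For $\varphi_1$ I would construct an explicit inverse. To $A \in \mathrm{CAlg}(\mathcal{C})$ with underlying object $X_A = A(\langle 1\rangle)$ I associate the symmetric monoidal $\infty$-category $\mathcal{C}^\otimes_{/A}$ with underlying $\infty$-category $\mathcal{C}_{/X_A}$ and tensor product
\begin{equation*}
(Y \to X_A)\otimes(Z \to X_A) \;=\; \bigl(Y\otimes Z \to X_A\otimes X_A \xrightarrow{\ \mu_A\ } X_A\bigr),
\end{equation*}
together with its forgetful symmetric monoidal functor to $\mathcal{C}^\otimes$ and the final section $s_A = \mathrm{id}_{X_A}$. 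Applying $\varphi_1$ returns $A$, since the forgetful functor carries the terminal algebra $\mathrm{id}_{X_A}$ back to $A$ by construction. In the other direction, starting from $(\mathcal{X}^\otimes, s) \in \mathcal{A}$ and putting $A = f\circ s$, the monoidal structure on $\mathcal{X} = \mathcal{C}_{/X}$ is forced: the underlying tensor is $Y\otimes Z$ because $f$ is symmetric monoidal over $\mathcal{C}^\otimes$, and the map $Y\otimes Z \to X$ is determined by finality of $s$ together with $f\circ s = A$; hence $\mathcal{X}^\otimes \simeq \mathcal{C}^\otimes_{/A}$ naturally and $s \simeq s_A$. The one genuinely non-formal point — and the main obstacle — is to make the assignment $A \mapsto (\mathcal{C}^\otimes_{/A}, s_A)$ a functor of $\infty$-categories (not merely of homotopy categories) and to verify that the two composites are naturally equivalent to the identities: the coherences displayed above are exactly those encoded in the commutative algebra structure of $A$, and I expect to handle this by appealing to the construction of symmetric monoidal structures on overcategories from \cite{higheralgebra} (or by straightening the family of $\infty$-operads over $N\mathcal{F}\mathrm{in}_*$ whose fibre over $\langle n\rangle$ is $(\mathcal{C}_{/X_A})^{\times n}$) rather than assembling the higher coherences by hand. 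Everything about $\varphi_2$, and the contractibility of all fibres in sight, is then formal from the terminal-commutative-algebra description above.
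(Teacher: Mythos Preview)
Your overall strategy matches the paper's. For $\varphi_2$ the paper simply invokes (the dual of) Proposition 2.4.4.9 of \cite{htt} on the essential uniqueness of final sections of a coCartesian fibration, which amounts to your argument. For $\varphi_1$ the paper also constructs the section $\sigma\colon A \mapsto \mathcal{C}^\otimes_{/A}$ (citing Notation 2.2.2.3 of \cite{higheralgebra}) and observes $\varphi_1\sigma = \mathrm{id}$.

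Where you diverge is in the handling of the ``genuinely non-formal point'' you flag. Rather than trying to make $\sigma$ manifestly functorial and then check both triangle identities, the paper sidesteps this by directly computing the space of natural transformations from $\sigma\varphi_1$ to $\mathrm{id}_{\mathcal{A}}$. It expresses this space as a homotopy limit of mapping spaces of the form
\[
\mathrm{Map}_{N\mathcal{F}\mathrm{in}_*/\,/\mathcal{C}^\otimes}(\mathcal{C}_{/X}^\otimes, \mathcal{X}^\otimes),
\]
decomposes further over each $\langle n\rangle \in N\mathcal{F}\mathrm{in}_*$ to reduce to $\mathrm{Map}_{\{\langle 1\rangle\}/\,/\mathcal{C}}(\mathcal{C}_{/X}, \mathcal{C}_{/X})$, and then observes that $\{\mathrm{id}_X\} \hookrightarrow \mathcal{C}_{/X}$ is right anodyne while $\mathcal{C}_{/X} \to \mathcal{C}$ is a right fibration, so this space is a point. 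The same argument in the other direction shows the space of natural transformations in either direction is contractible, hence $\sigma\varphi_1 \simeq \mathrm{id}_{\mathcal{A}}$. This avoids having to assemble higher coherences or appeal to a straightening construction: the contractibility of the relevant mapping spaces does all the work. Your proposed route via the overcategory construction in \cite{higheralgebra} should also succeed, but the paper's computation is more self-contained.
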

\begin{proof}
The map $\varphi_2$ is a trivial fibration as a consequence of (the dual of) Proposition 2.4.4.9 of \cite{htt}, which states that final sections of a coCartesian fibration are homotopically unique in a strong sense. For $\varphi_1$, one can define a section $\sigma\colon \mathrm{CAlg}(\mathcal{C}) \rightarrow \mathcal{A}$ which assigns to a commutative algebra object $A$ the slice $\mathcal{C}^\otimes_{/A}$ (as in Notation 2.2.2.3 of \cite{higheralgebra}). This slice encodes the symmetric monoidal structure of $\mathcal{C}_{/A}$ induced by the commutative algebra structure of $A$. Then $\varphi_1 \sigma$ is the identity by construction and it remains to show that the functor $\sigma\varphi_1$ is naturally isomorphic to $\mathrm{id}_{\mathcal{A}}$. The space of natural transformations from $\sigma\varphi_1$ to $\mathrm{id}_{\mathcal{A}}$ can be expressed as a homotopy limit of spaces of the form
\begin{equation*}
\mathrm{Map}_{N\mathcal{F}\mathrm{in}_*/ /\mathcal{C}^\otimes}(\mathcal{C}_{/X}^\otimes, \mathcal{X}^\otimes)
\end{equation*}
where $\mathrm{Map}$ here is the maximal Kan complex in the $\infty$-category of all functors from $\mathcal{C}_{/X}^\otimes$ to $\mathcal{X}^\otimes$ compatible with the maps from $N\mathcal{F}\mathrm{in}_*$ and to $\mathcal{C}^\otimes$. In turn this space can be expressed as a homotopy limit of the spaces
\begin{equation*}
\mathrm{Map}_{\{\langle n \rangle\}/ /\mathcal{C}^\otimes_{\langle n \rangle}}(\mathcal{C}_{/X}^{\times n}, \mathcal{X}_{\langle n \rangle}^\otimes).
\end{equation*}
The $\infty$-category $\mathcal{X}_{\langle n \rangle}^\otimes$ is equivalent to $(\mathcal{X}_{\langle 1 \rangle}^{\otimes})^{\times n} = \mathcal{C}_{/X}^{\times n}$, and under this equivalence the vertex $\{\langle n \rangle\} \rightarrow \mathcal{X}_{\langle n \rangle}^\otimes$ is equivalent to the final object $(\mathrm{id}_X, \ldots, \mathrm{id}_X)$. Therefore the space above is equivalent to an $n$-fold Cartesian product of the space  
\begin{equation*}
\mathrm{Map}_{\{\langle 1 \rangle\}/ /\mathcal{C}}(\mathcal{C}_{/X}, \mathcal{C}_{/X}).
\end{equation*}
The map $\{\langle 1 \rangle\} \rightarrow \mathcal{C}_{/X}$ picks out the vertex $\mathrm{id}_X$ and is right anodyne. The projection $\mathcal{C}_{/X} \rightarrow \mathcal{C}$ is a right fibration, so that the space under consideration is equivalent to the one-point space 
\begin{equation*}
\mathrm{Map}_{\{\langle 1 \rangle\}/ /\mathcal{C}}(\{\mathrm{id}_X\}, \mathcal{C}_{/X}) \cong \Delta^0.
\end{equation*}
Since any limit of contractible spaces is contractible, there is an essentially unique natural transformation from $\sigma\varphi_1$ to $\mathrm{id}_{\mathcal{A}}$. The same argument applies in the other direction, from which we conclude that $\sigma\varphi_1$ and $\mathrm{id}_{\mathcal{A}}$ are naturally isomorphic.
\end{proof}


\bibliographystyle{plain}
\bibliography{biblio}

\end{document}